\numberwithin{equation}{section}
\def\@settitle{\begin{center}%
  \baselineskip14\p@\relax
  \bfseries
  \uppercasenonmath\@title
  \@title
  \ifx\@subtitle\@empty\else
     \\[1ex]\uppercasenonmath\@subtitle
     \footnotesize\mdseries\@subtitle
  \fi
  \end{center}%
}
\def\subtitle#1{\gdef\@subtitle{#1}}
\def\@subtitle{}
\theoremstyle{plain}
\newtheorem{thm}{Theorem}[subsection] 
\theoremstyle{definition}
\newtheorem{defi}[thm]{Definition}
\newtheorem{conj}[thm]{Conjecture}
\newtheorem{rem}[thm]{Remark}
\theoremstyle{definition}
\theoremstyle{plain}
\newtheorem{prop}[thm]{Proposition}
\theoremstyle{plain}
\newtheorem{lemma}[thm]{Lemma}
\theoremstyle{plain}
\newtheorem{cor}[thm]{Corollary}
\theoremstyle{plain}
\newtheorem{thmintro}{Theorem}
\newcounter{parentnumber}
\newcommand{\cO}{\mathcal{O}}
\newcommand{\scO}{\mathscr{O}}
\newcommand{\BK}{{\rm BK}}
\newcommand{\st}{{\tau}}
\newcommand{\nr}{\mathbf{N}^{-r}}
\newcommand{\bQ}{\mathbb{Q}}
\newcommand{\colim@}[2]{%
  \vtop{\m@th\ialign{##\cr
    \hfil$#1\operator@font colim$\hfil\cr
    \noalign{\nointerlineskip\kern1.5\ex@}#2\cr
    \noalign{\nointerlineskip\kern-\ex@}\cr}}%
}
\newcommand{\colim}{%
  \mathop{\mathpalette\colim@{\rightarrowfill@\scriptscriptstyle}}\nmlimits@
}
\renewcommand{\varprojlim}{%
  \mathop{\mathpalette\varlim@{\leftarrowfill@\scriptscriptstyle}}\nmlimits@
}
\renewcommand{\varinjlim}{%
  \mathop{\mathpalette\varlim@{\rightarrowfill@\scriptscriptstyle}}\nmlimits@
}
\newcommand{\Z}{\mathbb{Z}}
\newcommand{\Q}{\mathbb{Q}}
\newcommand{\R}{\mathbb{R}}
\newcommand{\C}{\mathbb{C}}
\newcommand{\bC}{\mathbb{C}}
\newcommand{\PP}{\mathfrak{P}}
\newcommand{\cc}{\mathfrak{c}}
\font\wncyr=wncyr9.8
\newcommand{\sha}{\text{\wncyr{W}}}
\newcommand{\rH}{\mathrm{H}}
\newcommand{\Lg}{\mathcal{L}}
\begin{document}

\title[TNC for CM modular forms and Rankin--Selberg convolutions]
{Tamagawa number conjecture for CM modular forms and Rankin--Selberg convolutions} 

\author{Francesc Castella}
\address{University of California Santa Barbara, South Hall, Santa Barbara, CA 93106, USA}
\email{castella@ucsb.edu}

\date{\today}

\dedicatory{In memory of Jan Nekov{\'a}{\v{r}}}
\thanks{This research was partially supported by the NSF grants DMS-2101458 and DMS-2401321.}

\maketitle
\vspace{-5.mm}
\begin{center}
	\footnotesize{(with an appendix by FRANCESC CASTELLA AND MYCHELLE PARKER)}
\end{center}

\begin{abstract}
Let $E/F$ be an elliptic curve defined over a number field $F$ with complex multiplication by the ring of integers of  an imaginary quadratic field $K$ such that the torsion points of $E$ generate over $F$ an abelian extension of $K$. In this paper we prove the $p$-part of the Birch--Swinnerton-Dyer formula for $E/F$ in analytic rank $1$ for primes $p>3$  split in $K$. This was previously known for  $F=\Q$ by work of Rubin \cite{rubin-IMC} as a consequence of his proof of Mazur's Main Conjecture for rational CM elliptic curves,  
but the problem for $[F:\Q]>1$ remained wide open. 

The approach introduced in this paper also yields a proof of similar results for CM abelian varieties $A/K$ and for CM modular forms, as well as an analogue in this setting of Skinner's $p$-converse to the theorem of Gross--Zagier and Kolyvagin.  
%
\end{abstract}

\tableofcontents

\addtocontents{toc}{\protect\setcounter{tocdepth}{1}}

\section{Introduction}

\subsection{Statement of the main results}

In this paper we prove the following result towards the Birch and Swinnerton-Dyer conjecture for elliptic curves with complex multiplication.

\begin{thmintro}\label{thmintro:p-BSD-E}
Let $E$ be an elliptic curve  over a number field $F$ with  complex multiplication by the ring of integers of an imaginary quadratic field $K$, ${\rm End}_F(E)\simeq\cO_K$, such that $F(E_{\rm tors})/K$ is abelian. Let $\psi_E:F^\times\backslash\mathbb{A}_F^\times\rightarrow\bC^\times$ be the Hecke character associated to $E/F$. Suppose 
\[
{\rm ord}_{s=1}L(\psi_E,s)=1,
\] 
and let $p\nmid 6h_K$ be a prime split in $K$, where $h_K:=\#{\rm Pic}(\cO_K)$ is the class number of $K$. Assume the conductor $\mathfrak{f}_\psi$ of $\psi_E$ is prime to $p$ and satisfies $\mathfrak{d}_K\Vert N_{F/K}(\mathfrak{f}_\psi)$, where $\mathfrak{d}_K:=(\sqrt{-D_K})$. Then 
\[
{\rm rank}_{\Z}E(F)={\rm ord}_{s=1}L(E/F,s).
\] 
Moreover, 
for all primes $\wp\mid p$ in $K$ we have $\#\sha(E/F)[\wp^\infty]<\infty$, with
\[
{\rm ord}_{\wp}\biggl(\frac{L^*(E/F,1)}{{\rm Reg}(E)\cdot\Omega(E)}\biggr)={\rm ord}_\wp(\#\sha(E/F)[\wp^\infty]\cdot{\rm Tam}(E/F)),
\]
where $L^*(E/F,1)$ is the leading Taylor coefficient of the Hasse--Weil $L$-function for $E/F$ at $s=1$. 
In other words, the $\wp$-part of the Birch--Swinnerton-Dyer formula holds for $E/F$.
\end{thmintro}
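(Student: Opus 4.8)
The plan is to translate the arithmetic statement for $E/F$ into one for the Hecke character $\psi_E$ over $K$, and then attack that via an Iwasawa-theoretic argument using a decomposable Rankin--Selberg convolution. First I would note that since $F(E_{\rm tors})/K$ is abelian, the compatible system attached to $E/F$ is (up to the action of $\Gal(F/K)$) induced from a Hecke character $\psi_E$ of $K$, or rather of a subfield cut out by the reflex condition; concretely $L(E/F,s)=\prod_\chi L(\psi_E\chi,s)$ over the relevant characters $\chi$ of $\Gal(F/K)$, and likewise the Bloch--Kato Selmer group, regulator, period, Tamagawa factors and $\sha$ all decompose compatibly. Thus it suffices to prove the $\wp$-part of the Bloch--Kato/Tamagawa number formula for each motive $M(\psi_E\chi)$ over $K$ in analytic rank $1$, for $\wp\mid p$, which is exactly the content the later sections of the paper will supply for CM modular forms (the rank-$1$ CM case). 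The running hypotheses — $p>3$, $p$ split in $K$, $p\nmid h_K$, the conductor condition $\mathfrak{d}_K\Vert N_{F/K}(\mathfrak{f}_\psi)$ — are precisely what is needed for (a) the $p$-adic $L$-function of Katz/Bertolini--Darmon--Prasanna to be available and to interpolate the relevant $L$-values, and (b) the local conditions at $p$ to be in the "Panchishkin"/ordinary range so that Perrin-Riou's machinery applies.

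Next, the core of the argument: following the Bertolini--Darmon--Prasanna idea advertised in the abstract, I would choose a second CM Hecke character $\psi'$ over $K$ so that the Rankin--Selberg convolution $L(\psi_E\chi \otimes \psi', s)$ factors as a product of two Hecke $L$-functions, one of which is the $L(\psi_E\chi,s)$ of interest (with its rank-$1$ vanishing) and the other of which has nonvanishing central value. On the algebraic side this decomposability means the Beilinson--Flach or diagonal-cycle Euler system class for the convolution decomposes accordingly, and its $p$-adic regulator image is an explicit product: a $p$-adic $L$-function for $\psi'$ (nonzero, by the nonvanishing of its $L$-value together with the split-prime interpolation) times a $p$-adic logarithm of a global point — a Heegner-type point — on the $\psi_E\chi$-isotypic piece. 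This is the mechanism that lets one avoid $p$-adic heights and Bertrand's transcendence results: instead of pairing two Heegner objects via a height, one extracts a single point from a decomposable cycle class whose image under dual exponential/regulator maps is computed by an explicit reciprocity law.

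From there the endgame is the standard Euler system / Iwasawa Main Conjecture package. The nonvanishing of the extracted point (which follows from the rank-$1$ analytic hypothesis via the Gross--Zagier / $p$-adic Gross--Zagier-type formula that the explicit reciprocity law provides) feeds into Kolyvagin's method, or directly into the two-variable anticyclotomic Iwasawa Main Conjecture for $\psi_E$ (known in this split, $p\nmid h_K$ setting by Rubin-type arguments, Perrin-Riou, and the Bertolini--Darmon--Prasanna-style explicit reciprocity), to yield: $\dim_{\Q_\wp}\mathrm{Sel}(K,M(\psi_E\chi))=1$ hence $\mathrm{rank}_\Z E(F)=\mathrm{ord}_{s=1}L(E/F,s)=\#\{\chi\}$, finiteness of $\sha[\wp^\infty]$, and the precise $\wp$-valuation identity matching $L^*/(\reg\cdot\Omega)$ with $\#\sha\cdot\mathrm{Tam}/\#E(F)_{\rm tors}^2$. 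The main obstacle, and the step that requires real care, is the explicit reciprocity law linking the decomposable Rankin--Selberg Euler system class to the product of the $p$-adic $L$-function of $\psi'$ and the logarithm of the CM point on the $\psi_E$-part — in particular controlling all the period and local Tamagawa normalizations so that the final equality of $\wp$-valuations holds on the nose rather than merely up to a $p$-adic unit of uncontrolled provenance; a secondary subtlety is ensuring the auxiliary character $\psi'$ can be chosen to simultaneously satisfy the decomposability, the nonvanishing of the central value, and the conductor/ramification conditions at $p$ and at the primes dividing $\mathfrak{d}_K$.
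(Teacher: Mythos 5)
Your headline blueprint---realize $L(\psi_E\chi,s)$ as a factor of a decomposable Rankin--Selberg convolution of two CM forms, prove an anticyclotomic main conjecture for the pair, and descend---is indeed the strategy of the paper (and your first reduction is essentially the paper's: one passes to the Weil restriction ${\rm Res}_{F/K}(E)$, decomposes it up to $K$-isogeny into CM abelian varieties, and invokes isogeny-invariance of BSD to reduce to Theorem~\ref{thmintro:p-BSD-A}). But the mechanism you propose diverges from, and falls short of, what is actually needed. The Euler-system input is not a Beilinson--Flach or diagonal-cycle class: the paper works with the $\Lambda_\scO$-adic generalized Heegner classes $\mathbf{z}_{g,\chi}$ (Theorem~\ref{thm:ERL}) on one side and twisted elliptic units with Kato's explicit reciprocity law and Katz $p$-adic $L$-functions on the other, glued by the factorization of Proposition~\ref{prop:factor-BDP} and the Selmer decomposition of Proposition~\ref{prop:dec-Sel}. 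The genuinely delicate point, which your sketch does not address, is that the $\lambda$-constituent of the Selmer group occurring in the BDP-type main conjecture carries \emph{reversed} local conditions at the primes above $p$, so it is not the Bloch--Kato Selmer group of $\lambda$ and is not covered by the known one-character main conjectures; handling it (Theorem~\ref{thm:AH-lambda}, via Agboola--Howard/Arnold descent from the two-variable main conjecture, with a root-number dichotomy) is a core step, not part of a ``standard package.''

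More fundamentally, your claim that the method avoids height pairings altogether is a misreading of what is avoided: the paper dispenses with \emph{$p$-adic} heights and Bertrand's transcendence results, but the complex Gross--Zagier formula of Yuan--Zhang--Zhang, in the explicit form of Cai--Shu--Tian (Theorem~\ref{thm:YZZ-CST}), is indispensable, because the BSD quotient in the statement involves the archimedean leading term $L^{*}(E/F,1)$ and the N\'eron--Tate regulator, which no purely $p$-adic reciprocity law or logarithm formula reaches; the BDP formula (Theorem~\ref{thm:BDP-formula}) only controls the $p$-adic logarithm of the class, and is used for nonvanishing and for the Iwasawa-theoretic index formula, not for the final valuation identity. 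Feeding the archimedean formula into that identity forces an exact ($p$-integral) comparison of the Petersson/congruence-number period $\Omega_g^{\rm can}$ with $\Omega^2$ (Proposition~\ref{prop:petersson-CM}), which is where $p\nmid h_K$, the \emph{strong} good-pair conditions with a unit Katz value (via Finis' mod-$p$ nonvanishing, Lemma~\ref{lem:good}), the Hida--Tilouine/Hida anticyclotomic main conjecture for CM fields, and the ARS divisibility of the modular degree into the congruence number all enter. You correctly identify the period and Tamagawa normalizations as the crux, but you supply no mechanism for them, and the purely decomposable-class route you outline would at best give rank and finiteness of $\sha$, not the stated equality of $\wp$-valuations.
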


\begin{rem}
\hfill
\begin{enumerate}
\item
Upon the choice of an identification ${\rm End}_F(E)\simeq\cO_K$,  the action of $\cO_K$ on the Lie algebra of $E/F$ induces an embedding $K\hookrightarrow F$, and classical results of Deuring \cite{deuring} (see also \cite[Thm.~10.5]{silverman-adv}) show that
\[
L(E/F,s)=L(\psi_E,s)\cdot L(\overline{\psi}_E,s),
\]
where $\overline{\psi}_E$ is the complex conjugate of $\psi_E$.  
Hence the first assertion in  Theorem~\ref{thmintro:p-BSD-E} is that ${\rm rank}_{\Z}E(F)=2$. 
\item
The relevance of the condition that the extension $F(E_{\rm tors})/K$ be  abelian for the application of Iwasawa theory of $K$ to the arithmetic of elliptic curves with CM by $K$ was first highlighted in work of Arthaud \cite{arthaud} generalizing Coates--Wiles \cite{coates-wiles} (see also \cite{rubin-CW,goldstein-shappacher}).
\end{enumerate}
\end{rem}

For $F=\bQ$ (which forces $K$ to have class number one), Theorem~\ref{thmintro:p-BSD-E} was obtained by Rubin \cite{rubin-IMC} as a consequence of his proof of the Iwasawa Main Conjecture for $K$, the Gross--Zagier formula \cite{grosszagier}, and Perrin-Riou's work \cite{PR-HP,PR-GZ}. 
The restriction to $F=\Q$ seems essential to Rubin's result, as it relies on a link 
with Mazur's Main Conjecture \cite{mazur-towers,M-SwD} for rational elliptic curves. 

Our proof of Theorem~\ref{thmintro:p-BSD-E} also gives a new proof of Rubin's result in the case $F=\Q$\footnote{Indeed, this follows from Theorem~\ref{thmintro:p-BSD-E} by the same arguments from Milne's work \cite{milne-invmath-abvar} as in \cite[Cor.~2]{burungale-flach}.}, 
circumventing the use of $p$-adic heights and Bertrand's transcendence results \cite{bertrand-LNM} (which appear to be  unknown in higher dimensions). The method also yields a similar result on the Birch--Swinnerton-Dyer conjecture for higher-dimensional CM abelian varieties $A/K$. 

\begin{thmintro}\label{thmintro:p-BSD-A}
Let $A/K$ be an abelian variety with ${\rm End}_K(A)\simeq\cO_{L}$ for a CM field $L$ with $[L:K]={\rm dim}(A)$, and let $\lambda:K^\times\backslash\mathbb{A}_K^\times\rightarrow\bC^\times$ be the associated Hecke character of conductor $\cc$. 
Suppose 
\[
{\rm ord}_{s=1}L(\lambda,s)=1,
\] 
and let $p\nmid 6h_K$ be a prime split in $K$. Assume $\cc$ is prime to $p$ and satisfies $\mathfrak{d}_K\Vert\cc$. Then 
\[
{\rm rank}_{\Z}A(K)={\rm ord}_{s=1}L(A/K,s). 
\] 
Moreover, for all primes $\PP\mid p$ in $L$ we have $\#\sha(A/K)[\PP^\infty]<\infty$, with
\[
{\rm ord}_{\PP}\biggl(\frac{L^*(A/K,1)}{{\rm Reg}(A)\cdot\Omega(A)}\biggr)={\rm ord}_{\PP}(\#\sha(A/K)[\PP^\infty]\cdot{\rm Tam}(A/K)),
\]
and hence the $\mathfrak{P}$-part of the Birch--Swinnerton-Dyer formula holds for $A/K$.
\end{thmintro}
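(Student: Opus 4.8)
The plan is to establish Theorem~\ref{thmintro:p-BSD-A} one prime above $p$ at a time, reducing it to the $p$-part of the Tamagawa number conjecture for the Hecke character $\lambda$, much as in the proof of Theorem~\ref{thmintro:p-BSD-E}. First I would fix a prime $\PP\mid p$ in $L$ and localize the $p$-adic Tate module of $A$ at $\PP$; since $p$ splits in $K$ this yields a $G_K$-representation $V_\PP$ free of rank one over the finite extension $L_\PP$ of $\Q_p$ and cut out by the $\PP$-adic avatar of $\lambda$. Dually, $L(A/K,s)=\prod_{\sigma\colon L\hookrightarrow\bC}L(\lambda^\sigma,s)$, and under this factorization the invariants $A(K)\otimes\Z_p$, $\sha(A/K)[p^\infty]$, $A^\vee(K)\otimes\Z_p$ and the local terms of ${\rm Tam}(A/K)$ all decompose; the hypothesis $\ord_{s=1}L(\lambda,s)=1$ together with the functional equation places us at each $\sigma$ in the analytic rank $\leq 1$ situation handled by the method. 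It thus suffices to prove, for each $\PP\mid p$, the $\PP$-part of the displayed identity, i.e.\ the Bloch--Kato conjecture at $\PP$ for the motive of $\lambda$. Attaching to a suitable normalization of $\lambda$ the CM newform $\theta_\lambda=\sum_{\mathfrak{a}}\lambda(\mathfrak{a})q^{{\rm N}\mathfrak{a}}$ (of weight two in the present case), with $L(\theta_\lambda,s)=L(\lambda,s)$ up to shift, I would identify $V_\PP$ with the $\PP$-adic Galois representation of $\theta_\lambda$, whose Bloch--Kato Selmer group $\mathrm{Sel}(K,V_\PP)$ is the object to control.

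\noindent\textbf{A non-torsion class from a decomposable convolution.} Using $\ord_{s=1}L(\lambda,s)=1$ I would next produce a non-torsion class $z_\lambda\in\mathrm{Sel}(K,V_\PP)$ --- a Heegner point in this weight-two case, a generalized Heegner cycle of Bertolini--Darmon--Prasanna type in the higher-weight variants. The new idea, which is what allows one to bypass $p$-adic heights and Bertrand's $p$-adic transcendence results, is to realize $L(\lambda,s)$ as one of the two factors of a \emph{decomposable} Rankin--Selberg convolution
\[
L(\theta_\lambda\times\theta_\mu,s)=L(\lambda\mu,s)\cdot L(\lambda\overline{\mu},s),
\]
for an auxiliary CM Hecke character $\mu$ of $K$ (where $\overline{\mu}=\mu\circ c$, with $c$ the nontrivial automorphism of $K$) chosen so that one factor is $L(\lambda,s)$ up to a controlled twist while the companion factor is nonvanishing at $s=1$, and then to compare the Bertolini--Darmon--Prasanna $p$-adic $L$-function of $\theta_\lambda\times\theta_\mu$ with the complex $L$-value. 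In the relevant ``analytic'' range the BDP formula identifies that $p$-adic $L$-value with an explicit $\PP$-adic logarithm of $z_\lambda$, while the decomposition shows the same value is, up to a $\PP$-adic unit, built from CM special values with essential contribution $L'(\lambda,1)$. This pins down the $\PP$-valuation of the regulator term $\frac{L^*(A/K,1)}{{\rm Reg}(A)\,\Omega(A)}$ in terms of $\PP$-adic periods, with no height pairing entering anywhere.

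\noindent\textbf{Upper bound and the exact formula.} I would then bound $\mathrm{Sel}(K,V_\PP)$ from above and control $\sha(A/K)[\PP^\infty]$ using the Iwasawa main conjecture for $K$ (Rubin, for the relevant branch), via the explicit reciprocity law and the $\Lambda$-adic Gross--Zagier relation between the classes $z_\lambda$ along the anticyclotomic tower and the BDP $p$-adic $L$-function. The hypotheses $p\nmid 6h_K$, $\cc$ prime to $p$ and $\mathfrak{d}_K\Vert\cc$ are exactly what ensures that the relevant Selmer complexes are of the expected shape: no exceptional zeros, generic local conditions at $p$ and at the ramified primes, and the main conjecture available in this situation. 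The lower bound of the previous step and this upper bound together force $\dim_{L_\PP}\mathrm{Sel}(K,V_\PP)=1$ and $\#\sha(A/K)[\PP^\infty]<\infty$; a Bockstein / Euler-characteristic computation for the Selmer complex --- in the spirit of Nekov{\'a}{\v{r}}'s theory of Selmer complexes --- would finally promote the resulting divisibility of characteristic ideals to the exact $\PP$-adic identity, and hence, upon varying $\PP$ and combining the $\sigma$-components, to the full Birch and Swinnerton-Dyer formula for $A/K$ asserted in the theorem.

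\noindent\textbf{Main obstacle.} The hardest part will be the second step: choosing $\mu$ so that, simultaneously, (i) the convolution $\theta_\lambda\times\theta_\mu$ genuinely decomposes with $L(\lambda,s)$ isolated as a factor; (ii) the companion value is \emph{provably} nonzero --- a non-vanishing theorem for an $L$-value of a CM form, where the condition $\mathfrak{d}_K\Vert\cc$ (respectively $\mathfrak{d}_K\Vert{\rm N}_{F/K}(\mathfrak{f}_\psi)$ in Theorem~\ref{thmintro:p-BSD-E}) and the choice of an appropriate self-dual twist intervene; and (iii) the BDP $p$-adic $L$-function of the convolution falls, at the point of interest, in the analytic range where its value is the $\PP$-adic logarithm of $z_\lambda$ rather than an interpolated central value. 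Throughout, the delicate point is to match the archimedean periods on the complex side with the $\PP$-adic periods on the $p$-adic side, so that the outcome is an equality of $\PP$-valuations and not merely a statement up to an unspecified $p$-adic unit --- precisely the bookkeeping the Bertolini--Darmon--Prasanna formalism is designed to handle.
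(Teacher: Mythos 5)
Your proposal has the right general flavor (a decomposable Rankin--Selberg convolution in the style of Bertolini--Darmon--Prasanna, then an Iwasawa-theoretic Euler characteristic argument), but the central step is set up in a way that cannot work as written. You keep $\theta_\lambda$ itself as the main form and propose to convolve it with an auxiliary theta series $\theta_\mu$; but the factorization $L(\theta_\lambda\times\theta_\mu,s)=L(\lambda\mu,s)\cdot L(\lambda\mu^\st,s)$ never has $L(\lambda,s)$ as a factor unless $\mu$ is trivial (in which case $\theta_\mu$ is Eisenstein and the self-dual pair and local sign conditions degenerate), and ``up to a controlled twist'' is not good enough: the BSD formula for $A/K$ requires $L(\lambda,s)$ on the nose. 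More seriously, the hypothesis $\mathfrak{d}_K\Vert\cc$ forces $D_K^2$ to divide the level of $\theta_\lambda$, so $K$ fails the Heegner hypothesis relative to that level, and the Heegner point/BDP machinery is simply not available for $\theta_\lambda$ over $K$ --- this is precisely the obstruction the construction is designed to circumvent. The correct maneuver, and the one the paper uses, is to factor the character itself, $\lambda=\psi\chi$, with $\psi$ of infinity type $(-1,0)$ and conductor a cyclic ideal prime to $pD_K$ and $\chi\in\Sigma_{\rm cc}(c,\mathfrak{N}_g,\varepsilon_g)$ (a good pair in the sense of Definition~\ref{def:good}, whose existence requires the nonvanishing results entering Lemma~\ref{lem:good}); then $(g,\chi)=(\theta_\psi,\chi)$ satisfies \eqref{eq:Heeg} and the $\epsilon$-factor conditions, and $L(g/K,\chi,s)=L(\lambda,s)\cdot L(\psi^\st\chi,s)$ with companion value $L(\psi^\st\chi,1)\neq 0$.

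Even granting the right pair, your route from the BDP formula to the displayed valuation identity has gaps. You assert that ``no height pairing enters anywhere,'' but the archimedean N\'eron--Tate height is unavoidable --- it is the regulator in the BSD formula --- and the proof uses the general Gross--Zagier formula of Yuan--Zhang--Zhang in its explicit form (Theorem~\ref{thm:YZZ-CST}); what is avoided is the $p$-adic height and Bertrand's transcendence results. The BDP special value computes the $p$-adic logarithm of $z_{g,\chi}$, not the $\PP$-valuation of $L^*(A/K,1)/({\rm Reg}(A)\cdot\Omega(A))$; to compare the two sides one further needs (i) the period identity $\Omega_g^{\rm can}\sim\Omega^2$ of Proposition~\ref{prop:petersson-CM}, resting on Hida--Tilouine/Hida, the strong good pair conditions (S5)--(S9), and the comparison of congruence number with modular degree --- this is where $p\nmid h_K$ actually enters, not in excluding exceptional zeros; (ii) the exact $p$-part of the rank-zero Tamagawa number conjecture for the companion character $\psi^\st\chi$ (Theorem~\ref{thm:pTNC-rk0}) so that its contribution cancels in the valuation; and (iii) the main conjecture and Euler characteristic input of Theorem~\ref{thmintro:sha}, whose proof must handle the fact that the Selmer group $\mathcal{X}_v(\lambda\nr)$ has the local conditions at $p$ reversed relative to Bloch--Kato and a root-number dichotomy, none of which is supplied by citing Rubin's main conjecture ``for the relevant branch.'' Finally, the passage from the Serre tensor $B_{g,\chi}$ back to $A$ via a $K$-isogeny and isogeny-invariance of the BSD invariants is a step your sketch omits.
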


The periods $\Omega(A)$ and $\Omega(E)$ in the above results are as defined in \cite[Def.~30]{flach-siebel}, which as explained in \cite[\S{1}]{burungale-flach} agree with the periods in the conjecture of Birch Swinnerton-Dyer for abelian varieties over number fields.

\subsection{About the proofs}

The starting point in the proof of our main results is an idea introduced by  Bertolini--Darmon--Prasanna \cite{BDP-PJM} in their proof of (a generalization of) Rubin's formula \cite{rubin-points} expressing the $p$-adic logarithm of Heegner points in terms of special values of Katz $p$-adic $L$-functions. In Rubin's original proof of the formula, the arithmetic of  CM elliptic curves $E/\bQ$ is studied using Heegner points for an auxiliary imaginary quadratic field $K'$ satisfying the Heegner hypothesis relative to the conductor of $E$. In particular, one is forced to take $K'\neq K$, as the $L$-function 
\[
L(E/K,s)=L(E,s)^2
\] 
has always sign $+1$. Letting $\lambda$ be the Hecke character of $K$ attached to $E/\Q$, the ingenious idea of \cite{BDP-PJM} is to write 
\[
\lambda=\psi\chi
\] 
as the product 
of a suitable Hecke character $\psi$ of infinity type $(-1,0)$ and a ray class character $\chi$, so that $L(E,s)=L(\lambda,s)$ appears as a factor of the Rankin--Selberg convolution
\[
L(g/K,\chi,s)=L(\lambda,s)\cdot L(\psi^\st\chi,s)
\]
for $g=\theta_\psi$, where $\psi^\st$ denotes the composition of $\psi$ with the action of the non-trivial automorphism of $K/\Q$, and exploit Heegner cycles for the pair $(g,\chi)$,  as featured in the general Gross--Zagier formula \cite{YZZ}, to study the arithmetic of $\lambda$.  

Towards the application of this idea to our problem, the first part of the paper is devoted to the study of the anticyclotomic Iwasawa Main Conjecture for ``self-dual pairs'' $(g,\chi)$ of the form 
\[
(g,\chi)=(\theta_\psi,\chi)
\]
with $\psi$ a Hecke a character of $K$ of infinity type $(1-2r,0)$ for any $r\geq 1$ and $\chi$ a finite order Hecke character with central character $\varepsilon_\chi=\varepsilon_g^{-1}$, where $\varepsilon_g$ is the nebentypus of $g$.  
Expanding on the work of Agboola--Howard \cite{AH-ord}  and Arnold \cite{arnold}, 
incorporating key ideas and results from Kato \cite{kato-295} and Johnson-Leung--Kings \cite{kings-JL}, we prove the Main Conjecture  for $(g,\chi)$ in this setting, both 
\begin{itemize}
\item in terms of the $p$-adic $L$-function $\mathscr{L}_w(g,\chi)$ of Bertolini--Darmon--Prasanna (Theorem~\ref{thm:BDP-IMC}),  
\item in terms of the $\Lambda_\scO$-adic Heegner classes $\mathbf{z}_{g,\chi}$ of \cite{cas-hsieh1} (Corollary~\ref{cor:HP-IMC}).
\end{itemize}

After inverting $p$ and for $r=1$, the ``lower bound'' divisibility predicted by these Main Conjectures was obtained in earlier work of the author with Burungale, Skinner, and Tian in \cite{BCST}; 
here we further develop the method to prove the equality predicted by the Main Conjecture without any ambiguity by powers of $p$ (as is essential for results such as Theorem~\ref{thmintro:p-BSD-E}). 

The next step is to deduce from the equality of characteristic ideals in the  Iwasawa Main Conjectures, a formula for the order of 
the Bloch--Kato Tate--Shafarevich group attached 
of the pair $(g,\chi)$. In our rank $1$ case, the formula we obtain is in terms of the index of a Heegner class $z_{g,\chi}$ (introduced Theorem~\ref{thm:BDP-formula}) inside the Bloch--Kato Selmer group ${\rm Sel}_\BK(K,T_{g,\chi})$. 

With future arithmetic applications in mind (see Remark~\ref{rem:Parker}), this result also applies in arbitrary weights $2r\geq 2$. For the statement, let $\mathscr{L}_v(\lambda\mathbf{N}^{-r})$ denote the anticyclotomic Katz $p$-adic $L$-function introduced in $\S\ref{subsubsec:katz-ac}$, which has the trivial character $\mathds{1}$ outside the range of interpolation. 

\begin{thmintro}\label{thmintro:sha}
Let $\lambda$ be a Hecke character of $K$ of infinity type $(1-2r,0)$ for some $r\geq 1$, central character $\varepsilon_\lambda=\eta_K$, where $\eta_K$ is the quadratic character associated to $K/\Q$, and conductor $\cc$. Suppose $p\nmid 6h_K$ splits in $K$, $\mathfrak{c}$ is coprime to $p$ and satisfies $\mathfrak{d}_K\Vert\mathfrak{c}$, and  $\mathscr{L}_v(\lambda\mathbf{N}^{-r})(\mathds{1})\neq 0$.  
Let $(\psi,\chi)$ be a good pair for $\lambda$ in the sense of Definition~\ref{def:good}, and let 
\[
z_{g,\chi}\in{\rm Sel}_\BK(K,T_{g,\chi})
\] 
be the Heegner class associated to the self-dual pair $(g,\chi)=(\theta_\psi,\chi)$. Then ${\rm rank}_\scO\,{\rm Sel}_\BK(K,T_{g,\chi})=1$, $z_{g,\chi}$ is non-torsion, and
\[
\#\bigl({\rm Sel}_\BK(K,T_{g,\chi})/\scO\cdot z_{g,\chi}\bigr)^2=\#\sha_\BK(W_{g,\chi}/K)
\cdot\prod_{\substack{w\in\Sigma,v\nmid p}}c_w(W_{g,\chi}/K),
\]
where $c_w(W_{g,\chi}/K)$ 
is the Tamagawa number of $W_{g,\chi}$ at $w$. 
\end{thmintro}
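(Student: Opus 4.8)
The plan is to derive Theorem~\ref{thmintro:sha} from the equality of characteristic ideals in the anticyclotomic Iwasawa main conjectures for the self-dual pair $(g,\chi)=(\theta_\psi,\chi)$, proved earlier as Theorem~\ref{thm:BDP-IMC} and Corollary~\ref{cor:HP-IMC}, via a standard ``Iwasawa Main Conjecture $\Rightarrow$ Bloch--Kato formula'' descent argument. First I would set up the three-variable (or rather one-variable anticyclotomic) Iwasawa modules: the $\Lambda_\scO$-adic Selmer group $\mathfrak{X}=\mathfrak{X}(T_{g,\chi})$ built from the big Galois representation, the compactified Heegner class module generated by $\mathbf{z}_{g,\chi}$, and the $p$-adic $L$-function $\mathscr{L}_w(g,\chi)$. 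The hypothesis that $\mathscr{L}_v(\lambda\mathbf{N}^{-r})(\mathds{1})\neq 0$ together with the factorization of $\mathscr{L}_w(g,\chi)$ coming from the Bertolini--Darmon--Prasanna construction (as recalled in the discussion preceding the theorem) forces $\mathscr{L}_w(g,\chi)$ to not vanish at the relevant trivial-character point; the root number $-1$ hypothesis is what places us in the rank-one situation, so that the specialization of $\mathbf{z}_{g,\chi}$ at $\mathds{1}$ recovers the Heegner class $z_{g,\chi}$ which is then non-torsion by Theorem~\ref{thm:Heeg-nonzero}. This already gives $\mathrm{rank}_\scO\,\mathrm{Sel}_\BK(K,T_{g,\chi})=1$ and the non-torsion assertion.

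Next I would run the control/descent at the trivial character. Using the main conjecture in its Heegner-class form, $\mathrm{char}_{\Lambda_\scO}(\mathfrak{X}_{\mathrm{tors}})=\mathrm{char}_{\Lambda_\scO}(\mathfrak{X}/\Lambda_\scO\mathbf{z}_{g,\chi})^2$ (up to the precise shape proved in Corollary~\ref{cor:HP-IMC}), I would specialize at the augmentation ideal of $\Lambda_\scO$. The key point is that since the Heegner class is non-torsion, the characteristic ideal on the right is generated by a non-zero-divisor, so Iwasawa-theoretic specialization of characteristic ideals commutes with the descent up to the controlled error terms (the $\mathrm{H}^0$ factors and the finitely many ``bad'' local conditions). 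Concretely, one uses a Poitou--Tate / global Euler characteristic computation to identify the cokernel and kernel of the descent map $\mathfrak{X}/I_{\mathrm{aug}}\mathfrak{X}\to X_\BK(W_{g,\chi}/K)$ and, dually, of the map $\mathrm{Sel}_\BK(K,T_{g,\chi})\otimes\Lambda_\scO/I_{\mathrm{aug}}$. This produces an equality of the form
\[
\#\bigl(\mathrm{Sel}_\BK(K,T_{g,\chi})/\scO\cdot z_{g,\chi}\bigr)^2=\frac{\#\sha_\BK(W_{g,\chi}/K)}{\#\mathrm{H}^0(K,W_{g,\chi})\cdot\#\mathrm{H}^0(K,W_{g,\chi}^\st)}\cdot\prod_{w\in\Sigma,\,w\nmid p}c_w(W_{g,\chi}/K),
\]
where the Tamagawa factors at the non-$p$ places $w\in\Sigma$ arise precisely as the local discrepancy between the Greenberg/Iwasawa local condition used to build $\mathfrak{X}$ and the Bloch--Kato local condition at those places, and the two $\mathrm{H}^0$ terms come from the kernel/cokernel of multiplication by a uniformizer in the descent, exactly as in Jetchev--Skinner--Wan, Burungale--Castella--Kim and Kato's formalism.

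The main obstacle, I expect, will be bookkeeping the local conditions at $p$: because $p$ splits in $K$ and the pair is self-dual, one side $w=v$ carries an ordinary/Greenberg-type condition while $\bar v$ carries the orthogonal one, and one must check that the $p$-adic $L$-function $\mathscr{L}_w(g,\chi)$ attached to the ``$w$''-side is the one whose $\mathds{1}$-specialization is a non-zero $p$-adic number (no exceptional zero) — this is where the $\mathfrak{d}_K\Vert\mathfrak{c}$ hypothesis and the ``good pair'' conditions of Definition~\ref{def:good} get used, to ensure the interpolation factors and local root numbers do not introduce spurious vanishing, and to ensure the comparison between $\mathscr{L}_v(\lambda\mathbf{N}^{-r})$ and $\mathscr{L}_w(g,\chi)$ is exact on the nose rather than up to an unspecified $p$-adic unit. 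A secondary technical point is verifying that the $\Lambda_\scO$-module $\mathfrak{X}$ has no nonzero pseudo-null submodule (or handling it via the standard trick of passing to the characteristic ideal and using that the Heegner module is free of rank one on a dense open), so that the descent exact sequence is clean; I would invoke the structure theory as developed in Agboola--Howard \cite{AH-ord} and Arnold \cite{arnold}, adapted as in the earlier sections of this paper. Once these local and structural subtleties are pinned down, the displayed formula follows by comparing the two Iwasawa main conjecture identities (the $p$-adic $L$-function form and the Heegner class form) under specialization, with all powers of $p$ accounted for.
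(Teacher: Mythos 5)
Your broad strategy (descend from the Iwasawa main conjectures using the explicit reciprocity law and a $\Gamma$-Euler characteristic computation) points in the right direction, but the route you propose — a direct JSW-style descent from the Heegner-class form of the main conjecture, i.e.\ Corollary~\ref{cor:HP-IMC} — is genuinely different from what the paper does, and you have not supplied the ideas that make the CM case work. The paper does \emph{not} descend from the rank-one Iwasawa module $\mathcal{X}_\BK(g,\chi)$. Instead it uses the $p$-adic $L$-function form of the main conjecture (Theorem~\ref{thm:BDP-IMC}) for the \emph{rank-zero} module $\mathcal{X}_v(g,\chi)$, and exploits the decomposition
$\mathcal{X}_v(g,\chi)\simeq\mathcal{X}_v(\lambda\nr)\oplus\mathcal{X}_v(\psi^\st\chi\nr)$
of Proposition~\ref{prop:dec-Sel}. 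The key observation — flagged in the Introduction as the most novel feature of the argument and in Remark~\ref{rem:reversed} — is that the two summands behave asymmetrically: $\mathcal{X}_v(\psi^\st\chi\nr)$ interpolates the Bloch--Kato Selmer group, so a standard control theorem (Proposition~\ref{prop:control-BK}) gives an Euler characteristic formula directly in terms of $\sha_\BK(W_{\psi^\st\chi}(r)/K)$; but for $\mathcal{X}_v(\lambda\nr)$ the local conditions at $p$ are \emph{reversed} relative to Bloch--Kato, so its Euler characteristic (Proposition~\ref{prop:control-opp-BK}) naturally produces the cokernel of $\mathrm{loc}_{\overline{v}/\mathrm{tors}}$ rather than $\sha_\BK$. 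That cokernel is what pulls the Heegner class into the picture: a BCGS-style explicit reciprocity argument (Proposition~\ref{prop:BCGS}, using Theorem~\ref{thm:ERL} and the Euler factor $\mathcal{E}_p(g,\chi)$) identifies it with $\#(\scO/\mathscr{L}_v(g,\chi)(0))/\#(\mathrm{Sel}_\BK(K,T_{g,\chi})/\scO\cdot z_{g,\chi})$, up to $\rH^0$ terms. Multiplying the two component Euler characteristics and cancelling against the main conjecture identity $\#(\scO/\mathscr{F}_v(g,\chi)(0))=\#(\scO/\mathscr{L}_v(g,\chi)(0))^2$ then yields the displayed formula. None of this structure appears in your proposal, and it is precisely what lets the $\mathscr{L}_v(g,\chi)(0)^2$ terms drop out cleanly.

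Two further concrete gaps. First, you assert that $z_{g,\chi}$ is non-torsion ``by Theorem~\ref{thm:Heeg-nonzero}'', but that theorem only gives non-torsion of the $\Lambda_\scO$-adic class $\mathbf{z}_{g,\chi}$; the non-torsion of the specialization $z_{g,\chi}$ requires the Bertolini--Darmon--Prasanna formula (Theorem~\ref{thm:BDP-formula}) combined with the nonvanishing $\mathscr{L}_v(g,\chi)(\mathds{1})\neq 0$ (which itself uses the factorization of Proposition~\ref{prop:factor-BDP}, the hypothesis on $\mathscr{L}_v(\lambda\nr)(\mathds{1})$, and the good-pair condition (S4)); this is Proposition~\ref{prop:BCGS}(1)--(2). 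Second, a non-torsion class only gives ${\rm rank}_\scO\,{\rm Sel}_\BK(K,T_{g,\chi})\geq 1$; the upper bound ${\rm rank}\leq 1$ comes from the Euler system argument of Theorem~\ref{thm:nekovar-bis} applied to the $\lambda$-piece, combined with the finiteness of ${\rm Sel}_\BK(K,T_{\psi^\st\chi}(r))$ from Theorem~\ref{thm:AH}. A direct descent from Corollary~\ref{cor:HP-IMC} would require its own control theorem for a $\Lambda_\scO$-rank-one module, a relation between $\check{\mathcal{S}}_\BK(\mathds{1})$ and ${\rm Sel}_\BK(K,T_{g,\chi})$ accounting for the Euler factor $\mathcal{E}_p(g,\chi)$, and a treatment of pseudo-null submodules; none of this is addressed. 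That route may ultimately be viable, but it buys you nothing over the paper's decomposition, which trades the rank-one descent for two rank-zero Euler characteristics.
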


\begin{rem}
When $r=1$, the assumption $\mathscr{L}_v(\lambda\mathbf{N}^{-r})(\mathds{1})\neq 0$ 
follows from ${\rm ord}_{s=r}L(\lambda,s)=1$ (see Remark~\ref{rem:wt2-ran1}); the same implication is expected to hold for $r>1$, but this is not known at present. 
\end{rem}

Let $\Gamma={\rm Gal}(K_\infty/K)$ denote the Galois group of the anticyclotomic $\Z_p$-extension of $K$. The proof of Theorem~\ref{thmintro:sha} is based on a new calculation of the $\Gamma$-Euler characteristic (in the sense of \cite[\S{4}]{greenberg-cetraro}) of a certain Selmer group $\mathcal{X}_v(g,\chi)$. 
%
The starting point is the decomposition
\begin{equation}\label{eq:intro-dec-Sel}
\mathcal{X}_v(g,\chi)\simeq\mathcal{X}_v(\lambda\nr)\oplus\mathcal{X}_v(\psi^\st\chi\nr)
\end{equation}
of Proposition~\ref{prop:dec-Sel}, where $\mathcal{X}_v(\psi^\st\chi\nr)$ interpolates the Bloch--Kato Selmer group of $\psi^\st\chi\nr$ over $K_\infty/K$, while $\mathcal{X}_v(\lambda\nr)$ is seen to agree with the Selmer group obtained by \emph{reversing} the local conditions at the primes above $p$ defining the Bloch--Kato Selmer group of $\lambda\nr$ over the anticyclotomic tower. 
After computing the $\Gamma$-Euler characteristic for each of the summands in \eqref{eq:intro-dec-Sel}, 
the expressions we obtain combine quite pleasantly into the formula of Theorem~\ref{thmintro:sha}.

The proof of Theorem~\ref{thmintro:p-BSD-E} is easily deduced from an application of Theorem~\ref{thmintro:p-BSD-A} to (the isogeny factors of)  $B={\rm Res}_{F/K}(E)$, so it remains to explain how to go from Theorem~\ref{thmintro:sha} to Theorem~\ref{thmintro:p-BSD-A}, for which we rely on the general Gross--Zagier formula \cite{YZZ} (in its explicit form in \cite{CST}). Letting $B_{g,\chi}/K$ be the CM abelian variety (well-defined up to isogeny) attached to the pair $(g,\chi)$ arising from Serre's tensor construction, the deduction relies on the factorization
\[
L(B_{g,\chi}/K,s)=\prod_{\sigma:L\hookrightarrow\C}L(g^\sigma/K,\chi^\sigma,s)
\]
and the explicit description of the motivic structure attached to Hecke characters in the work of Kato \cite{kato-295} and Burungale--Flach \cite{burungale-flach}. The existence of \emph{strong good pairs} in the sense introduced in Definition~\ref{def:good} (whose existence is established in Lemma~\ref{lem:good} building  on Hida's mod $p$ non-vanishing results \cite{hida-durham,hida-ICTS}) is also used at this point 
to address a certain period comparison building on work of  Hida--Tilouine \cite{HT-ENS,HT-invmath}, as completed by Hida \cite{hida-coates}, on the anticyclotomic Iwasawa Main Conjecture for CM fields. 

%

\begin{rem}\label{rem:Parker}
Granted a Gross--Zagier type formula for the generalized Heegner cycles of \cite{BDP} in the style of S.-W.\,Zhang \cite{zhang-GZ}\footnote{See recent work of Lilienfeldt--Shnidman \cite{GZ-GHC} for progress in this direction.}, Theorem~\ref{thmintro:sha} should yield a proof of the $p$-part of the equivariant Tamagawa number conjecture of Burns--Flach \cite{burns-flach-I} for CM motives in analytic rank $1$ under standard hypotheses\footnote{That is, injectivity of the $p$-adic Abel--Jacobi map and non-degeneracy of a (Gillet--Soul\'{e}) height pairing.}. (See \cite{LV-TNC} for results in this direction in the non-CM case building on \cite{zhang-GZ}.) 
\end{rem}

\begin{rem}\label{rem:kol-conv}
In the course of proving Theorem~\ref{thmintro:sha}, we show the implication 
\begin{equation}\label{eq:intro-kol}
z_{g,\chi}\neq 0\quad\Longrightarrow\quad{\rm rank}_{\scO}\,{\rm Sel}_{\BK}(K,T_{g,\chi})=1.
\end{equation}
This extends 
\cite[Thm.\,B]{cas-hsieh1} to the case of newforms with CM by $K$ (note that this case is excluded by the Heegner hypothesis in \emph{op.\,cit.}). 
For the proof, we build on the Euler system of elliptic units. 

On the other hand, as a consequence of our results on the Main Conjecture for $(g,\chi)$, we also deduce a proof of a converse implication to \eqref{eq:intro-kol}, which together with the expected Gross--Zagier formula for 
$z_{g,\chi}$ yields an analogue of Skinner's $p$-converse to the theorem of Gross--Zagier and Kolyvagin \cite{skinner-converse} for higher weight CM modular forms (see Theorem~\ref{thm:p-conv}).
\end{rem}

\subsection{About the hypotheses}

The restriction on the conductor in our results arises from the \emph{Heegner hypothesis} present in \cite{cas-hsieh1,JLZ}, and could be removed by a suitable refinement of the results in \cite{LZZ}. Our second additional hypothesis that $p\nmid h_K$ would also become superfluous with a better understanding of the relation between modular degrees and congruence numbers studied in \cite{ARS}. 
Removing the hypotheses that $p>3$ and $p$ be split in $K$ would seem to require new ideas. We hope to come back to this in future work.


\subsection{Acknowledgements} It is a pleasure to thank Chris Skinner for stimulating exchanges related to this work. 
We would also like to thank Ashay Burungale, Henri Darmon, Giada Grossi, Ye Tian, and Xin Wan for interesting conversations related to the topics in this paper. We also thank Matthias Flach, Dick Gross, and Haruzo Hida for comments on an earlier draft. Finally, we thank the referee for a very careful reading of the paper, and a number of useful comments and suggestions. 

We would like to dedicate this work to the memory of the late  Jan Nekov{\'a}{\v{r}}, whose encouragement was heartfelt and instrumental to us, specially in the early career stages.


\addtocontents{toc}{\protect\setcounter{tocdepth}{2}}

\section{$p$-adic $L$-functions}

In this section we describe the $p$-adic $L$-functions needed for our arguments. The key result is the factorization of Proposition~\ref{prop:factor-BDP}. The discussion in this section parallels \cite[\S{4}]{BCST}, 
but here we need to pay more attention to integrality properties.

\subsection{$p$-adic $L$-functions for self-dual pairs}\label{subsec:Lp-pairs}

\subsubsection{Self-dual pairs}

Let $g\in S_{2r}(\Gamma_0(N_g),\varepsilon_g)$ be a newform of even weight $k=2r\geq 2$, and let $K/\Q$ be an imaginary quadratic field of discriminant $-D_K<0$ satisfying the 
\emph{Heegner hypothesis}: 
\begin{equation}\label{eq:Heeg}
\textrm{there exists an ideal $\mathfrak{N}_g$ with $\cO_K/\mathfrak{N}_g\simeq\Z/N_g\Z$.}\tag{Heeg}
\end{equation}
We also fix once and for all embeddings  $\imath_\infty:\overline{\Q}\hookrightarrow\C$ and $\imath_p:\overline{\Q}\hookrightarrow\overline{\Q}_p$, and assume that
\begin{equation}\label{eq:spl}
\textrm{$(p)=v\overline{v}$ splits in $K$,}\tag{spl}
\end{equation}
with $v$ the prime of $K$ above $p$ induced by $\imath_p$.

As in \cite[\S{2.1}]{BCST}, we say that a Hecke character $\psi=(\psi_w)_w:K^\times\backslash\mathbb{A}_K^\times\rightarrow\C^\times$ has \emph{infinity type} $(a,b)\in\mathbb{Z}^2$ if 
$\psi_\infty(z)=z^a\overline{z}^b$ for all $z\in(K\otimes_\Q\R)^\times\simeq\C^\times$ under the identification induced by $\imath_\infty$. In particular, the norm character $\mathbf{N}$ given by $\mathfrak{a}\mapsto\#(\cO_K/\mathfrak{a})$ on ideals of $\cO_K$, has infinity type $(-1,-1)$.  Then the \emph{central character} of $\psi$ is the Dirichlet character $\varepsilon_\psi$ defined by
\[
\psi\vert_{\mathbb{A}^\times}=\varepsilon_\psi\cdot\vert\;\vert_{\mathbb{A}^\times}^{a+b}.
\]
We say that $\psi$ is \emph{anticyclotomic} if $\psi\vert_{\mathbb{A}^\times}=1$; in particular, such $\psi$ has trivial central character and its infinity type is of the form $(n,-n)$.

\begin{defi}\label{def:sd-pair}
Let $\chi$ be a finite order character of $K$. We say that $(g,\chi)$ is a \emph{self-dual pair} if 
\[
\varepsilon_\chi=\varepsilon_g^{-1}.
\]
Then the Rankin--Selberg $L$-function $L(g/K,\chi,s)$ is self-dual, with a functional equation relating its values at $s$ and $2r-s$. The sign $\epsilon(g,\chi)\in\{\pm{1}\}$ in the functional equation is a product of local signs:
\[
\epsilon(g,\chi)=\prod_{q}\epsilon_q(g,\chi),
\]
where $q$ runs over all places of $\Q$. 
\end{defi}

Let $\hat K^\times$ denote the group of finite id\`{e}les of $K$. Attached to a character $\psi$ of $K$ of infinity type $(a,b)$ is its \emph{$p$-adic avatar} $\hat\psi:K^\times\backslash\hat K^\times\rightarrow\C_p^\times$ defined by
\[
\hat\psi(x)=\imath_p\circ\imath_\infty^{-1}(\psi(x))\sigma(x_p)^a\overline{\sigma}(x_p)^b
\]
for all $x\in\hat K^\times$, where $\sigma:K\otimes_{\Q}\Q_p\rightarrow\C_p$ is the map induced by $\imath_p$, and $\overline{\sigma}:=\sigma\circ\st$ for the non-trivial automorphism $\st$ of $K/\Q$. Via the (geometrically normalized) reciprocity map ${\rm rec}_K:K^\times\backslash\hat{K}^\times\rightarrow G_K^{\rm ab}$, we shall view $\hat\psi$ as a Galois character.   Since it should not lead to confusion, in the following we shall still use $\psi$ to denote its $p$-adic avatar $\hat\psi$ (except for $\psi=\mathbf{N}$, whose $p$-adic avatar is $\varepsilon_{\rm cyc}^{-1}$ for the $p$-adic cyclotomic character $\varepsilon_{\rm cyc}:G_K\rightarrow\Z_p^\times$).

For $\scO$ the ring of integers of a finite extension $\Phi$ of $\Q_p$, we say that the pair $(g,\chi)$ is \emph{defined over $\scO$} if (under our fixed embeddings $\imath_\infty,\imath_p$) the Fourier coefficients of $g$ and the values of $\chi$ are all in $\scO$. 

\begin{defi}
For a positive integer $c$ prime to $N_g$, we let $\Sigma_{\rm cc}(c,\mathfrak{N}_g,\varepsilon_g)$ denote the set of finite order characters $\chi$ such that $(g,\chi)$ is a self-dual pair and moreover:
\begin{itemize}
\item[(i)] The conductor of $\chi$ is
\[
\mathfrak{f}_\chi=(c)\mathfrak{N}_{\varepsilon_g},
\]
where $\mathfrak{N}_{\varepsilon_g}$ is the unique divisor of $\mathfrak{N}_g$ with norm equal to the conductor of $\varepsilon_g$;
\item[(ii)]  $\epsilon_q(g,\chi)=+1$ for all finite primes $q$.
\end{itemize}
\end{defi}

In particular, for $\chi\in\Sigma_{\rm cc}(c,\mathfrak{N}_g,\varepsilon_g)$ and $\xi$ any anticyclotomic Hecke character of $K$ of infinity type $(n,-n)$ and conductor divisible only by primes that split in $K$, the $L$-series $L(g/K,\chi\xi,s)$ is self-dual with center at $s=r$ and sign
\[
\epsilon(g,\chi\xi)=\begin{cases}
+1&\textrm{if $\vert n\vert\geq r$,}\\[0.2em]
-1&\textrm{if $\vert n\vert <r$.}
\end{cases}
\]
The $p$-adic $L$-functions in this section interpolate the central $L$-values $L(g/K,\chi\xi,r)$ for $\vert n\vert\geq r$.

\subsubsection{CM periods}\label{subsub:CM-periods}

Fix an elliptic curve $A_0/F$ defined over a number field $F$ with complex multiplication by $\cO_K$, and let $(\Omega_K,\Omega_p)\in\bC^\times\times\cO_{\bC_p}^\times$ be the complex and $p$-adic CM periods in \cite[\S{2.5}]{cas-hsieh1} (with $A_0$ corresponding to the CM elliptic curve denoted $A$ in \emph{loc.\,cit.}). Put also 
\[
\Omega:=2\pi i\cdot\Omega_K,
\] 
and note that this recovers (up to $F^\times$) the complex CM period appearing in \cite[II.4.2]{deshalit}. For any embedding $\sigma:F\rightarrow\bC$, we define $\Omega_\sigma\in\C^\times$ by replacing $A_0$ by $A_0^\sigma$ in the above definition.

\subsubsection{$p$-adic interpolation}

For any abelian extension $K'/K$, let $\Lambda_\scO(K')$ denote the Iwasawa algebra  $\scO[\![{\rm Gal}(K'/K)]\!]:=\varprojlim_{}\scO[{\rm Gal}(K''/K)]$, where $K''$ runs over the finite extensions of $K$ contained in $K'$, and the inverse limit is with respect to the natural projection maps. We also put
\[
\Lambda_\scO^{\rm ur}(K'):=\Lambda_\scO(K')\hat\otimes_{\Z_p}\Z_p^{\rm ur},
\]
where $\Z_p^{\rm ur}$ denotes the completion of the ring of integers of the maximal unramified extension of $\Q_p$. 

Let $H_{p^\infty}$ be the union of the ring class fields of $K$ of $p$-power conductor; the maximal torsion-free quotient of $\widetilde{\Gamma}:={\rm Gal}(H_{p^\infty}/K)$ is by definition the Galois group 
\[
\Gamma:={\rm Gal}(K_\infty/K)
\] 
of the anticyclotomic $\Z_p$-extension of $K$. Since anticyclotomic Iwasawa algebras will play a prominent role in the paper, we set
\[
\Lambda_\scO:=\Lambda_\scO(K_\infty),\quad\quad\Lambda_\scO^{\rm ur}:=\Lambda_\scO^{\rm ur}(K_\infty)
\]
for the ease of notation.

\begin{thm}\label{thm:BDP}
Let $(g,\chi)\in S_{2r}(\Gamma_1(N_g))\times\Sigma_{\rm cc}(c,\mathfrak{N}_g,\varepsilon_g)$ be a self-dual pair for some positive 
integer $c$ prime to $pN_g$ defined over $\scO$. There exists a ``square-root'' $p$-adic $L$-function 
\[
\mathscr{L}_v(g,\chi)\in\Lambda_\scO^{\rm ur}(H_{p^\infty})
\] 
characterized by the property that for every character $\xi$ of $\widetilde{\Gamma}$ crystalline at both $v$ and $\overline{v}$ corresponding to a Hecke character of $K$ of infinity type $(n,-n)$ with $n\geq r$ and $n\equiv 0\pmod{p-1}$, we have
\begin{align*}
\mathscr{L}_v(g,\chi)^2(\xi)&=\frac{\Omega_p^{4n}}{\Omega^{4n}}\cdot\Gamma(r+n)\Gamma(n+1-r)\cdot\biggl(\frac{2\pi}{\sqrt{D_K}}\biggr)^{2n-1}\\
&\quad\times\bigl(1-a_p(g)\chi\xi(\overline{v})p^{-r}+\varepsilon_g(p)\chi\xi(\overline{v})^2p^{-1}\bigr)^2\cdot L(g/K,\chi\xi,r).
\end{align*} 
\end{thm}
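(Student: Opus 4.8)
## Proof proposal for Theorem~\ref{thm:BDP}

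The plan is to construct $\mathscr{L}_v(g,\chi)$ by $p$-adic interpolation of the Waldspurger-type toric period integrals attached to the pair $(g,\chi)$, exactly as in the Bertolini--Darmon--Prasanna construction and its generalizations (e.g. \cite{BDP,cas-hsieh1}), adapting the normalization to track integrality. First I would recall that for a self-dual pair $(g,\chi)$ with all local signs $+1$, the central value $L(g/K,\chi\xi,r)$ is, up to explicit archimedean and $p$-adic fudge factors, a square of a toric period; the point of the ``square-root'' $p$-adic $L$-function is to interpolate the period itself rather than its square. Concretely, one evaluates $g$ (more precisely, a suitable $p$-stabilization / $p$-depletion $g^{[p]}$ so that the resulting measure is $p$-adically bounded) against CM points on the relevant (modular or Shimura) curve, twisted by the finite-order character $\chi$, and obtains an element of $\Lambda_\scO^{\rm ur}(H_{p^\infty})$ after dividing by the CM period $\Omega_p$ to the appropriate power. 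The Heegner hypothesis \eqref{eq:Heeg} is what guarantees the existence of the ideal $\mathfrak{N}_g$ needed to define the CM points with the correct level structure, and the splitting hypothesis \eqref{eq:spl} is what makes $v$-adic interpolation possible.

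The key steps, in order, are: (1) fix a choice of differential / test vector realizing $g$ as a $p$-adic modular form and define its $p$-depletion $g^{[p]}$, whose $\overline{v}$-adic Fourier expansion has no $p$-th power coefficients; (2) define, for each finite layer, the value of the candidate measure as the sum over CM points of $\chi$-twisted evaluations of the $t$-th ($t = n-r$, with $n \geq r$) coordinate of the overconvergent projection of $g^{[p]}$, normalized by $\Omega_p$; (3) check these are compatible under the norm maps in the tower $H_{p^\infty}/K$, hence assemble to $\mathscr{L}_v(g,\chi) \in \Lambda_\scO^{\rm ur}(H_{p^\infty})$; (4) prove the interpolation formula by specializing at a crystalline character $\xi$ of infinity type $(n,-n)$ with $n \geq r$, $n \equiv 0 \pmod{p-1}$: here the evaluation of the $p$-adic modular form at CM points becomes an algebraic special value (via a Katz-type evaluation / Shimura's algebraicity), the ratio $\Omega_p^{4n}/\Omega^{4n}$ arises from comparing the $p$-adic and complex CM periods to the power dictated by the infinity type, the $\Gamma$-factors $\Gamma(r+n)\Gamma(n+1-r)$ and the power of $2\pi/\sqrt{D_K}$ come from the explicit archimedean Rankin--Selberg integral (unfolding against an Eisenstein series of the appropriate weight), and the Euler factor $\bigl(1 - a_p(g)\chi\xi(\overline{v})p^{-r} + \varepsilon_g(p)\chi\xi(\overline{v})^2 p^{-1}\bigr)^2$ is the square of the $\overline{v}$-interpolation factor introduced by the $p$-depletion of $g$; (5) conclude that these properties characterize $\mathscr{L}_v(g,\chi)$ because the specialization points are Zariski-dense in $\Lambda_\scO^{\rm ur}(H_{p^\infty})$.

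The main obstacle I anticipate is step (4): matching the precise shape of the interpolation formula — in particular pinning down the exact power of the periods ($\Omega_p^{4n}/\Omega^{4n}$, which reflects that we are looking at a Rankin--Selberg convolution of two weight-$2r$-type objects, effectively doubling the period exponent relative to the Katz case), the exact $\Gamma$-factors and powers of $2\pi/\sqrt{D_K}$, and — crucially for this paper — the exact normalization so that $\mathscr{L}_v(g,\chi)$ is genuinely integral (lies in $\Lambda_\scO^{\rm ur}(H_{p^\infty})$ rather than its fraction field). The integrality is where the present treatment departs from \cite{BCST}: one must choose the test vector and the CM period normalization (the $(\Omega_K,\Omega_p)$ of \cite[\S{2.5}]{cas-hsieh1}) carefully, verify that the $p$-depleted form $g^{[p]}$ has $\scO$-integral $q$-expansion, and check that no denominators are introduced in the overconvergent projection or in the sum over the (finitely many, since $c$ is prime to $p$) CM points at each layer. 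A secondary technical point is the compatibility in step (3), which requires the usual distribution relation for CM points together with the fact that $\chi$ has conductor prime to $p$; this is routine but must be stated carefully over the non-torsion group $\widetilde{\Gamma}$ rather than just $\Gamma$. Everything else — the archimedean computation and the Katz-type evaluation at CM points — is by now standard, and I would cite \cite{BDP,BDP-PJM,cas-hsieh1} for the analytic input while carrying out the integrality bookkeeping in detail.
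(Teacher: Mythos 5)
Your proposal follows essentially the same route as the paper, which proves this theorem simply by citing the construction in \cite[\S{3}]{cas-hsieh1} (as packaged in \cite[Thm.~4.1]{BCST}) --- i.e.\ precisely the $p$-depletion/CM-point evaluation construction you sketch --- and then adjusting the normalization by replacing the period $\Omega_K$ of \cite{cas-hsieh1} with $\Omega=2\pi i\cdot\Omega_K$ and absorbing factors interpolated by a unit of $\Lambda_\scO^{\rm ur}(H_{p^\infty})^\times$. The integrality bookkeeping you flag as the main obstacle is exactly what the paper handles by this choice of period and by discarding only unit factors, so your outline matches the intended argument.
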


\begin{proof}
This follows from the results of \cite[\S{3}]{cas-hsieh1} as in \cite[Thm.~4.1]{BCST}. (Note that in the above interpolation property we have replaced the CM period $\Omega_K$ from \cite{cas-hsieh1} by the above $\Omega$, and have omitted terms interpolated by a unit in $\Lambda_{\scO}^{\rm ur}(H_{p^\infty})^\times$.) 
\end{proof}

\begin{rem}
Reversing the roles of $v$ and $\overline{v}$ in Theorem~\ref{thm:BDP}, we obtain an element $\mathscr{L}_{\overline{v}}(g,\chi)\in\Lambda_{\scO}^{\rm ur}(H_{p^\infty})$ interpolating the central critical values $L(g/K,\chi\xi,r)$ for $\xi$ of infinity type $(-n,n)$ with $n\geq r$ and $n\equiv 0\pmod{p-1}$. We shall use both $\mathscr{L}_v(g,\chi)$ and $\mathscr{L}_{\overline{v}}(g,\chi)$ in the following.
\end{rem}

With a slight abuse of notation, we also denote by $\mathscr{L}_v(g,\chi)$ its image under the natural projection $\Lambda_{\scO}^{\rm ur}(H_{p^\infty})\rightarrow\Lambda_\scO^{\rm ur}$, and similarly for $\mathscr{L}_{\overline{v}}(g,\chi)$.

\subsection{Katz $p$-adic $L$-functions}

The construction of $p$-adic $L$-functions in the next result is originally due to Katz \cite{katz}, but the alternative construction by de Shalit \cite{deshalit}, whose exposition we follow, will be most convenient for our purposes.

\subsubsection{Two-variable $p$-adic $L$-functions}

For an integral ideal $\cc$ of $K$, let $K(\cc p^\infty)$ 
be the union of the ray class fields of $K$ conductor $\cc p^n$ for $n\geq 0$, and put
\[
G_\cc:={\rm Gal}(K(\cc p^\infty)/K).
\]

For a character $\xi$ of $G_\cc$, let $\xi^\st$ denote the composition of $\xi$ with action of the non-trivial automorphism of $K/\Q$, and put $\xi^{-\tau}:=(\xi^\st)^{-1}$.

\begin{thm}\label{thm:katz}
Let $\cc$ be an integral ideal of $K$ prime to $p$. There exists an element 
\[
\mathscr{L}_{v,\cc}\in\Lambda_{\scO}^{\rm ur}(K(\cc p^\infty))
\]
characterized by the property that for every character $\xi$ of $G_\cc$ crystalline at both $v$ and $\overline{v}$ corresponding to a Hecke character of infinity type $(a,b)$ with $a>0$ and $b\leq 0$ we have 
\[
\mathscr{L}_{v,\cc}(\xi)=\frac{\Omega_p^{a-b}}{\Omega^{a-b}}\cdot\Gamma(a)\cdot\biggl(\frac{\sqrt{D_K}}{2\pi}\biggr)^b\cdot(1-\xi^{-1}(v)p^{-1})(1-\xi(\overline{v}))
\cdot L_\cc(\xi,0),
\]
where $L_\cc(\xi,s)$ denotes the Hecke $L$-function of $\xi$ with the Euler factors at the primes $w\mid\cc$ removed. Similarly, there exists an element $\mathscr{L}_{\overline{v},\cc}\in\Lambda_{\scO}^{\rm ur}(K(\cc p^\infty))$ such that for every character $\xi$ of $G_\cc$ crystalline at both $v$ and $\overline{v}$ corresponding to a Hecke character of infinity type $(b,a)$ with $a>0$ and $b\leq 0$ we have 
\[
\mathscr{L}_{\overline{v},\cc}(\xi)=\frac{\Omega_p^{a-b}}{\Omega^{a-b}}\cdot\Gamma(a)\cdot\biggl(\frac{\sqrt{D_K}}{2\pi}\biggr)^b\cdot(1-\xi^{-1}(\overline{v})p^{-1})(1-\xi(v))
\cdot L_\cc(\xi,0).
\]
Moreover, we have the functional equation 
\[
\mathscr{L}_{v,\cc}(\xi)=\mathscr{L}_{v,\overline{\cc}}(\xi^{-\tau}\mathbf{N}^{-1}),
\] 
where the equality is up to a $p$-adic unit, and similarly for $\mathscr{L}_{\overline{v},\cc}$.
\end{thm}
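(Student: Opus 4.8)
The plan is to construct $\mathscr{L}_{v,\cc}$ following de Shalit's exposition \cite{deshalit} of Katz's $p$-adic $L$-function, and then verify the claimed interpolation property and functional equation. First I would recall that de Shalit constructs, for each integral ideal $\mathfrak{f}$ of $K$ prime to $p$, a $p$-adic measure on the Galois group ${\rm Gal}(K(\mathfrak{f}p^\infty)/K)$ by interpolating values of Eisenstein series (or equivalently, Hecke's $L$-values attached to CM points), using the Katz Eisenstein measure and the algebraicity of the relevant Eisenstein--Kronecker--Lerch numbers divided by the complex CM period $\Omega$. Taking $\mathfrak{f}=\cc$ and viewing this measure as an element $\mathscr{L}_{v,\cc}\in\Lambda_\scO^{\rm ur}(K(\cc p^\infty))$ (after extending scalars to $\Z_p^{\rm ur}$ to accommodate the CM period $\Omega_p$), the interpolation formula in \cite[II.4.14]{deshalit} gives precisely an expression of the shape stated, with the Euler-like factors $(1-\xi^{-1}(v)p^{-1})$ and $(1-\xi(\overline v))$ arising from the $p$-stabilization at $v$ and $\overline v$ respectively. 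The main point to check carefully is that our normalization of the period $\Omega=2\pi i\cdot\Omega_K$ (rather than $\Omega_K$ itself) absorbs the powers of $2\pi i$ correctly, so that the factor $(\sqrt{D_K}/2\pi)^b$ in the statement is the genuine remaining transcendental factor after dividing $L_\cc(\xi,0)$ by $\Omega^{a-b}$; this is a bookkeeping matter comparing the period conventions of \cite{deshalit} and \cite{cas-hsieh1}.

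The element $\mathscr{L}_{\overline v,\cc}$ is then obtained by the evident symmetry, reversing the roles of $v$ and $\overline v$ (equivalently, applying $\tau$ to the CM type), and its interpolation formula follows by the same argument. The uniqueness (``characterized by'') assertion is immediate once one observes that the set of characters $\xi$ of $G_\cc$ crystalline at $v$ and $\overline v$ corresponding to Hecke characters of infinity type $(a,b)$ with $a>0$, $b\leq 0$ is Zariski-dense in ${\rm Spec}\,\Lambda_\scO^{\rm ur}(K(\cc p^\infty))$ — this density is the standard abundance-of-interpolation-points argument, since $\widetilde\Gamma$ has a $\Z_p^2$ worth of characters and the algebraic Hecke characters of the prescribed infinity types are dense among them.

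For the functional equation $\mathscr{L}_{v,\cc}(\xi)=\mathscr{L}_{v,\overline\cc}(\xi^{-\tau}\mathbf{N}^{-1})$ up to a $p$-adic unit, I would argue as follows. Both sides are elements of (a common) $\Lambda_\scO^{\rm ur}$-algebra, so by the density argument it suffices to check equality up to $p$-adic units on the interpolation points. Fix $\xi$ of infinity type $(a,b)$ with $a>0$, $b\leq 0$; then $\xi^{-\tau}\mathbf{N}^{-1}$ has infinity type $(b-1+1, a-1+1)$... more precisely, since $\xi^\st$ has infinity type $(b,a)$ and $\mathbf N$ has infinity type $(-1,-1)$, the character $\xi^{-\tau}\mathbf N^{-1}=(\xi^\st)^{-1}\mathbf N^{-1}$ has infinity type $(-b+1,-a+1)$; rewriting $(a',b')=(1-b,1-a)$ we have $a'=1-b>0$ and $b'=1-a\leq 0$, so $\xi^{-\tau}\mathbf N^{-1}$ is again in the interpolation range for $\mathscr{L}_{v,\overline\cc}$. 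Now compare the two interpolation formulas: the ratio of archimedean $\Gamma$-factors and powers of $2\pi/\sqrt{D_K}$ on the two sides is matched using the classical functional equation for the Hecke $L$-function $L_\cc(\xi,s)$ relating $s$ and $1-s$ (with the Hecke Gauss sum / root number contributing a factor that is a $p$-adic unit, since $\cc$ is prime to $p$), while the ratio of period factors $\Omega_p^{a-b}/\Omega^{a-b}$ versus $\Omega_p^{a'-b'}/\Omega^{a'-b'}$ is a $p$-adic unit because $\Omega_p\in\cO_{\bC_p}^\times$ and the complex periods cancel against the $\Omega$-normalized algebraic parts; finally the Euler factors $(1-\xi^{-1}(v)p^{-1})(1-\xi(\overline v))$ transform into $(1-(\xi^{-\tau}\mathbf N^{-1})^{-1}(v)p^{-1})(1-(\xi^{-\tau}\mathbf N^{-1})(\overline v))$ compatibly, again modulo the value of $\mathbf N$ at $v$ which is a unit. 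The matching of Gauss sums and local root numbers at bad primes as $p$-adic units is the delicate point; de Shalit's functional equation \cite[II.6]{deshalit} already packages this, so I would cite it directly rather than redo it.

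The step I expect to be the main obstacle is the period bookkeeping: making sure that the precise power of $2\pi i$ implicit in passing from $\Omega_K$ to $\Omega$, together with the exponent $a-b$ (rather than, say, $a-b$ shifted by the weight or by $1$), reproduces the stated clean formula with no stray transcendental constants; miscounting here is the classic trap. A secondary subtlety is checking that the ``missing Euler factors'' convention $L_\cc(\xi,s)$ (Euler factors at $w\mid\cc$ removed) is consistent on both sides of the functional equation — since $\cc$ and $\overline\cc$ have the same norm and the removed places are conjugate, this is harmless, but it must be stated.
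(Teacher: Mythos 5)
Your proposal is correct and follows essentially the same route as the paper: both construct $\mathscr{L}_{v,\cc}$ (resp.\ $\mathscr{L}_{\overline{v},\cc}$) from de Shalit's measures of \cite[Thm.~II.4.14]{deshalit} and quote de Shalit's functional equation \cite[Thm.~II.6.4]{deshalit} rather than reproving it. The one point you gloss over is that \cite[Thm.~II.4.14]{deshalit} only interpolates in the critical range $0\leq -b<a$, and it is precisely the functional equation (via \cite[Cor.~II.6.7]{deshalit}) that extends the interpolation formula to the full stated range $a>0$, $b\leq 0$ --- but since you already invoke that functional equation, this is a bookkeeping remark rather than a gap.
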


\begin{proof}
The first assertion follows from \cite[Thm.~II.4.14]{deshalit}: 
$\mathscr{L}_{v,\cc}$ (resp. $\mathscr{L}_{\overline{v},\cc}$) corresponds to the $p$-adic measure $\mu(\cc\overline{v}^\infty)$ (resp. $\mu(\cc v^\infty)$) on $G_\cc$ constructed in \emph{loc.\,cit.}. The functional equation is proved in \cite[Thm.~II.6.4]{deshalit}, which implies that the interpolation formulae  extend from the range $0\leq -b<a$ to the range $a>0$ and $b\leq 0$ (see \cite[Cor.~II.6.7]{deshalit}).
\end{proof}

\subsubsection{Anticyclotomic projection}\label{subsubsec:katz-ac}

We shall be particularly interested in the anticyclotomic projection of Katz's $p$-adic $L$-functions twisted by characters that are self-dual in the following sense.

\begin{defi}\label{def:sd-char}
We say that a Hecke character $\phi$ of $K$ is \emph{self-dual} if it satisfies
\[
\phi^\tau=\phi^{-1}\mathbf{N}^{-1}\quad\textrm{and}\quad\varepsilon_\phi=\eta_K,
\]
where $\eta_K$ is the quadratic Dirichlet character corresponding to $K/\Q$.
\end{defi}

In particular, if $\phi$ is self-dual then its $p$-adic avatar is a conjugate self-dual character of $G_K$, i.e. 
\[
\phi^\tau=\varepsilon_{\rm cyc}\phi^{-1}.
\] 
Note that for self-dual $\phi$, the Hecke $L$-function $L(\phi,s)$ is self-dual, with a functional equation relating its values at $s$ and $-s$. Moreover, the infinity type of such $\phi$ is of the form $(1-r,r)$ for some $r\in\Z$ and its conductor is invariant under complex conjugation.

Let $F/K$ be an abelian extension contained in $K(\cc p^\infty)$. For a character $\phi$ of ${\rm Gal}(F/K)$ valued in $\scO$ and having $\cc$ as the prime-to-$p$ part of its conductor, we denote by $\mathscr{L}_v(\phi)$ the image of $\mathscr{L}_{v,\cc}$ under the composite map
\[
\Lambda_\scO^{\rm ur}(K(\cc p^\infty))\xrightarrow{{\rm Tw}_\phi}\Lambda_\scO^{\rm ur}(K(\cc p^\infty))\twoheadrightarrow\Lambda_{\scO}^{\rm ur},
\]
where ${\rm Tw}_\phi$ is the $\scO$-linear isomorphism given by $\gamma\mapsto\phi(\gamma)\gamma$ for $\gamma\in G_\cc$ 
and the second arrow is the natural projection (noting that $K(\cc p^\infty)$ contains $H_{p^\infty}$, and hence also $K_\infty$).

\begin{rem}
Although not reflected in the notation for simplicity, when we write $\mathscr{L}_v(\phi)$ (resp. $\mathscr{L}_{\overline{v}}(\phi)$) it will be tacitly understood that this is obtained from $\mathscr{L}_{v,\cc}$ (resp. $\mathscr{L}_{\overline{v},\cc}$) with $\cc$ the prime-to-$p$ part of the conductor of $\phi$.
\end{rem}

\subsection{Factorization} 

Recall that if $\psi$ is a Hecke character of infinity type $(1-2r,0)$ and conductor $\mathfrak{f}_\psi$, then the theta series $g=\theta_\psi$ is an eigenform in $S_{2r}(\Gamma_0(N_g),\varepsilon_g)$ with $N_g=D_K\mathbf{N}(\mathfrak{f}_\psi)$ and $\varepsilon_g=\varepsilon_\psi\eta_K$.

\begin{prop}\label{prop:factor-BDP}
Let $\psi$ be a Hecke character of $K$ of infinity type $(1-2r,0)$ for some $r\geq 1$ and conductor a cyclic ideal $\mathfrak{f}_\psi$ of norm prime to $pD_K$. Let 
\[
(g,\chi)=(\theta_\psi,\chi)\in S_{2r}(\Gamma_0(N_g),\varepsilon_g)\times\Sigma_{\rm cc}(c,\mathfrak{N}_g,\varepsilon_g)
\] 
be a self-dual pair for some positive integer $c$ prime to $pN_g$ defined over $\scO$.  
Then for every $w\in\{v,\overline{v}\}$ we have
\[
\mathscr{L}_w(g,\chi)^2=u\cdot\mathscr{L}_{w}(\psi\chi\mathbf{N}^{-r})\cdot\mathscr{L}_{w}(\psi^\st\chi\mathbf{N}^{-r}),
\]
where $u$ is a unit in $\Lambda_\scO^{{\rm ur},\times}$.
\end{prop}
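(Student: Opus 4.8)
The plan is to prove the identity by comparing interpolation properties of both sides, which are elements of $\Lambda_\scO^{\rm ur}$ (or first of $\Lambda_\scO^{\rm ur}(H_{p^\infty})$), and then invoking the fact that such an element is determined by its values on a Zariski-dense set of crystalline characters. First I would recall the classical Rankin--Selberg factorization: since $g=\theta_\psi$ is the theta series attached to the Hecke character $\psi$ of infinity type $(1-2r,0)$, for any anticyclotomic Hecke character $\xi$ of infinity type $(n,-n)$ one has the Artin formalism identity
\[
L(g/K,\chi\xi,s)=L(\psi\chi\xi,s)\cdot L(\psi^\st\chi\xi,s),
\]
which on the level of $p$-adic $L$-functions should match the square of the left-hand side of the proposition (up to the explicit archimedean and local Euler factors recorded in Theorem~\ref{thm:BDP}) against the product of the two Katz-type factors on the right-hand side (with their Euler factors recorded in Theorem~\ref{thm:katz} after the twisting construction of $\S\ref{subsubsec:katz-ac}$).

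The key steps, in order, are: (1) fix $w=v$ (the case $w=\overline v$ being symmetric by the remark after Theorem~\ref{thm:BDP}); (2) for a crystalline character $\xi$ of $\widetilde\Gamma$ of infinity type $(n,-n)$ with $n\geq r$ and $n\equiv 0\pmod{p-1}$, write out $\mathscr{L}_v(g,\chi)^2(\xi)$ using Theorem~\ref{thm:BDP} and, separately, $\mathscr{L}_v(\psi\chi\mathbf{N}^{-r})(\xi)\cdot\mathscr{L}_v(\psi^\st\chi\mathbf{N}^{-r})(\xi)$ using Theorem~\ref{thm:katz} together with the definition of ${\rm Tw}_\phi$; here one must check that $\psi\chi\mathbf{N}^{-r}$ and $\psi^\st\chi\mathbf{N}^{-r}$ are indeed self-dual in the sense of Definition~\ref{def:sd-char} (using $\varepsilon_g=\varepsilon_\psi\eta_K$, $\varepsilon_\chi=\varepsilon_g^{-1}$, and the infinity types), so that the Katz construction applies and the relevant twisted characters have the right infinity type $(1-r,r)$ landing in the interpolation range after twisting by $\xi$; (3) match the $L$-value parts via the factorization above; (4) match the $\Gamma$-factors, checking $\Gamma(r+n)\Gamma(n+1-r)=\Gamma(a_1)\Gamma(a_2)$ for the two shifted infinity types $(r-n+(1-2r),\ldots)$... concretely the two factors contribute $\Gamma(n+r)$ and $\Gamma(n-r+1)$, so they agree on the nose; (5) match the period ratios: the left side gives $\Omega_p^{4n}/\Omega^{4n}$ while each Katz factor gives $\Omega_p^{a-b}/\Omega^{a-b}$ with $a-b=2n$, so the product gives $\Omega_p^{4n}/\Omega^{4n}$, matching; (6) match the powers of $2\pi/\sqrt{D_K}$: the left gives $(2\pi/\sqrt{D_K})^{2n-1}$, each Katz factor gives $(\sqrt{D_K}/2\pi)^{b}$ with $b$ the second component of the (shifted) infinity type; one checks the two $b$'s sum so that the total power is $(2\pi/\sqrt{D_K})^{2n-1}$ up to sign; (7) match the Euler factors at $v,\overline v$: the left side contributes the square of $1-a_p(g)\chi\xi(\overline v)p^{-r}+\varepsilon_g(p)\chi\xi(\overline v)^2p^{-1}$, which — since $a_p(g)=\psi(\mathfrak p)+\psi^\st(\mathfrak p)$ for $g=\theta_\psi$ and the Satake parameters of $g$ at $p$ are $\psi(v),\psi^\st(v)$ — factors as $(1-\psi\chi\xi(\overline v)p^{-r})(1-\psi^\st\chi\xi(\overline v)p^{-r})$ (using $\varepsilon_g(p)=\psi\psi^\st(\mathfrak p)p^{?}$...); this must be reconciled with the product of the two $(1-\phi^{-1}(v)p^{-1})(1-\phi(\overline v))$ Euler factors from Theorem~\ref{thm:katz} for $\phi=\psi\chi\mathbf{N}^{-r}\xi$ and $\phi=\psi^\st\chi\mathbf{N}^{-r}\xi$, using the self-duality relation $\phi^\tau(\overline v)=\phi^{-1}(v)p^{?}$ and the functional-equation normalizations, so that after rewriting they agree up to a $p$-adic unit; (8) conclude that $\mathscr{L}_v(g,\chi)^2$ and $u\cdot\mathscr{L}_v(\psi\chi\mathbf{N}^{-r})\cdot\mathscr{L}_v(\psi^\st\chi\mathbf{N}^{-r})$ agree on a dense set of characters, hence are equal as elements of $\Lambda_\scO^{\rm ur}$, where $u\in\Lambda_\scO^{{\rm ur},\times}$ absorbs the systematic sign and unit discrepancies from steps (6)–(7); finally note both sides are in fact in $\Lambda_\scO^{\rm ur}$ (after the projection from $H_{p^\infty}$) so the identity descends, as in \cite[\S{4}]{BCST}.

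The main obstacle I expect is step (7): carefully tracking the $v$- versus $\overline v$-Euler factors and showing that the product of the two "half" Euler factors from the Katz $p$-adic $L$-functions (which each carry a $(1-\phi^{-1}(v)p^{-1})$ term coming from the $p$-stabilization, reflecting that $\mathscr{L}_{v,\cc}$ interpolates the $L$-function with a modified Euler factor at $v$) reassembles exactly into the square of the Rankin--Selberg $p$-stabilization Euler factor appearing in Theorem~\ref{thm:BDP}, with the discrepancy being a genuine unit in $\Lambda_\scO^{\rm ur}$ and not merely a unit at each specialization. This requires using the self-duality relations of Definition~\ref{def:sd-char} to rewrite $\phi(\overline v)$-type terms in terms of $\phi^{-1}(v)$-type terms and absorbing the resulting powers of $p$ and $\xi$-values into $u$; one should also be slightly careful that $u$ is the \emph{same} unit for $w=v$ and $w=\overline v$ is \emph{not} claimed — the proposition allows different units — so the two cases can be treated independently. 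A secondary, more bookkeeping-level obstacle is verifying that the characters $\psi\chi\mathbf{N}^{-r}\xi$ and $\psi^\st\chi\mathbf{N}^{-r}\xi$ are crystalline at $v$ and $\overline v$ and have infinity type in the allowed range $a>0,\,b\leq 0$ of Theorem~\ref{thm:katz}, which follows from $n\geq r$ but needs the inequalities to be written out.
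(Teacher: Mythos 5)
Your proposal follows essentially the same approach as the paper's proof: fix $w=v$, apply the Artin factorization $L(\theta_\psi/K,\chi\xi,r)=L(\psi\chi\mathbf{N}^{-r}\xi,0)\cdot L(\psi^\st\chi\mathbf{N}^{-r}\xi,0)$ to characters $\xi$ in the interpolation range, use the self-duality of $\psi\chi\mathbf{N}^{-r}\xi$ and $\psi^\st\chi\mathbf{N}^{-r}\xi$ to rewrite the $(1-\phi^{-1}(v)p^{-1})$-type Euler factor in Theorem~\ref{thm:katz} as a $(1-\phi(\overline{v}))$-type factor (so each Katz factor contributes a perfect square that reassembles into the square of the Rankin--Selberg $p$-stabilization), match the $\Gamma$-factors and period ratios, and conclude by density. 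One small slip: in your step (5) the two shifted infinity types have $a-b$ equal to $2n-2r+1$ and $2n+2r-1$ respectively (not $2n$ each), though their sum is indeed $4n$ as needed.
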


\begin{proof}
We only explain the proof in the case $w=v$; the argument for $w=\overline{v}$ is of course the same.  By our assumption on $\mathfrak{f}_\psi$, the imaginary quadratic field $K$ satisfies  \eqref{eq:Heeg} relative to $N_g$ with $\mathfrak{N}_g=\mathfrak{d}_K\cdot\mathfrak{f}_\psi$. Let $\xi$ be a  character as in the statement of Theorem~\ref{thm:BDP}, corresponding to a Hecke character of $K$ of infinity type $(n,-n)$ with $n\geq r$, and assume  that $\xi$ factors through $\Gamma$. Then the central $L$-value $L(\theta_\psi/K,\chi\xi,r)$ factors as
\[
L(\theta_\psi/K,\chi\xi,r)=L(\psi\chi\xi,r)\cdot L(\psi^\tau\chi\xi,r)=L(\psi\chi\mathbf{N}^{-r}\xi,0)\cdot L(\psi^\tau\chi\mathbf{N}^{-r}\xi,0).
\]
The Hecke characters $\psi\chi\mathbf{N}^{-r}\xi$ and $\psi^\tau\chi\mathbf{N}^{-r}\xi$ have infinity type $(1-r+n,r-n)$ and $(n+r,1-n-r)$, and one easily checks (see \cite[Lem.~3.16]{BDP-PJM}) that they are both self-dual, with prime-to-$p$ conductor $(c)\mathfrak{d}_K$ and $(cM)\mathfrak{d}_K$, respectively, where $\mathfrak{d}_K:=(\sqrt{-D_K})$ and $M:=\mathbf{N}(\mathfrak{f}_\psi)$; thus 
they are in the range of interpolation of $\mathscr{L}_{v,(c)\mathfrak{d}_K}$ and $\mathscr{L}_{v,(cM)\mathfrak{d}_K}$, respectively. From the self-duality of $\psi\chi\mathbf{N}^{-r}\xi$ and $\psi^\tau\chi\mathbf{N}^{-r}\xi$ we immediately get the relations
\[
(1-\psi^{-1}\chi^{-1}\xi^{-1}(v)p^{r-1})=(1-\psi\chi\xi(\overline{v})p^{-r}),\quad
(1-\psi^{-\st}\chi^{-1}\xi^{-1}(\overline{v})p^{r-1})=(1-\psi^\st\chi\xi(\overline{v})p^{-r}),
\]
and so from the interpolation formula in Theorem~\ref{thm:katz} we obtain
\begin{equation}\label{eq:prod-katz}
\begin{aligned}
\mathscr{L}_{v,(c)\mathfrak{d}_K}(\psi\chi\mathbf{N}^{-r}\xi)&\cdot\mathscr{L}_{v,(c)\mathfrak{d}_KM}(\psi^\tau\chi\mathbf{N}^{-r}\xi)
=\frac{\Omega_p^{2n-2r+1}}{\Omega^{2n-2r+1}}\cdot\frac{\Omega_p^{2n+2r-1}}{\Omega^{2n+2r-1}}\cdot\Gamma(n-r+1)\cdot\Gamma(n+r)\\
&\quad\times\biggl(\frac{2\pi}{\sqrt{D_K}}\biggr)^{n-r}\cdot\biggl(\frac{2\pi}{\sqrt{D_K}}\biggr)^{n+r-1}\cdot\bigl(1-\psi\chi\xi(\overline{v})p^{-r}\bigr)^2\cdot\bigl(1-\psi^\st\chi\xi(\overline{v})p^{-r}\bigr)^2\\
&\quad\times L(\psi\chi\mathbf{N}^{-r}\xi,0)\cdot L(\psi^\tau\chi\mathbf{N}^{-r}\xi,0),
\end{aligned}
\end{equation}
using that the prime-to-$p$ conductors of $\psi\chi\mathbf{N}^{-r}\xi$ and $\psi^\tau\chi\mathbf{N}^{-r}\xi$ are $(c)\mathfrak{d}_K$ and $(cM)\mathfrak{d}_K$, respectively, to equivalently write the primitive Hecke $L$-values. Noting that
\[
(1-\psi\chi\xi(\bar{v})p^{-r}\bigr)\cdot\bigl(1-\psi^\st\chi\xi(\bar{v})p^{-r}\bigr)=(1-a_p(g)\chi\xi(\overline{v})p^{-r}+\varepsilon_g(p)\chi\xi(\overline{v})^2p^{-1}),
\]
comparing \eqref{eq:prod-katz} with the interpolation property of Theorem~\ref{thm:BDP}, 
the result follows.
\end{proof}

\section{Selmer groups and Main Conjectures}\label{sec:Sel-char}


Throughout this section, we fix a prime $p>3$ and integer $N_g\geq 1$ with $p\nmid N_g$, and let 
\[
(g,\chi)\in S_{2r}(\Gamma_1(N_g))\times\Sigma_{\rm cc}(c,\mathfrak{N}_g,\varepsilon_g)
\]
be a self-dual pair defined over $\scO$ for some positive integer $c$ prime to $pN_g$ and an imaginary quadratic field $K$ satisfying \eqref{eq:Heeg} and \eqref{eq:spl}.
%

\subsection{Selmer groups for self-dual pairs}\label{subsec:Sel-sd}

Let $V_g$ denote the $p$-adic Galois representation associated to $g$ by Deligne, and put 
\[
V_{g,\chi}:=V_g(r)\vert_{G_K}\otimes\chi.
\]

\begin{defi}\label{def:Sel-sd}
Let $w\in\{v,\overline{v}\}$ be a prime of $K$ above $p$, and let $F/K$ be a finite extension. For $\eta$ a prime of $F$ above $w$, define the local condition $\rH^1_w(F_{\eta},V_{g,\chi})$ by
\[
\rH^1_w(F_{\eta},V_{g,\chi}):=\begin{cases}
\rH^1(F_{\eta},V_{g,\chi})&\textrm{if $\eta\mid w$,}\\[0.2em]
0&\textrm{if $\eta\mid\overline{w}$,}
\end{cases}
\] 
and let $\rH^1_f(F_{\eta},V_{g,\chi})$ denote the Bloch--Kato finite subspace, i.e.
\[
\rH^1_f(F_{\eta},V_{g,\chi}):=\ker\bigl\{\rH^1(F_{\eta},V_{g,\chi})\rightarrow\rH^1(F_{\eta},V_{g,\chi}\otimes\mathbf{B}_{\rm cris})\bigr\},
\]
where $\mathbf{B}_{\rm cris}$ is Fontaine's ring of crystalline periods. For a finite prime $\eta\nmid p$ of $F$, let
\[
\rH^1_{\rm ur}(F_{\eta},V_{g,\chi}):={\rm ker}\bigl\{\rH^1(F_\eta,V_{g,\chi})\rightarrow\rH^1(I_\eta,V_{g,\chi})\bigr\}
\] 
be the unramified subspace. 
\begin{itemize}
\item The \emph{$w$-Selmer group} of $V_{g,\chi}$ is 
\[
{\rm Sel}_w(F,V_{g,\chi}):=\ker\biggl\{\rH^1(F,V_{g,\chi})\rightarrow\prod_{\eta\mid p}\frac{\rH^1(F_\eta,V_{g,\chi})}{\rH_w^1(F_\eta,V_{g,\chi})}\times\prod_{\eta\nmid p}\frac{\rH^1(F_\eta,V_{g,\chi})}{\rH^1_{\rm ur}(F_\eta,V_{g,\chi})}\biggr\}.
\]
\item The \emph{Bloch--Kato Selmer group} of $V_{g,\chi}$ is
\[
{\rm Sel}_\BK(F,V_{g,\chi}):=\ker\biggl\{\rH^1(F,V_{g,\chi})\rightarrow\prod_{\eta\vert p}\frac{\rH^1(F_\eta,V_{g,\chi})}{\rH_f^1(F_\eta,V_{g,\chi})}\times\prod_{w\nmid p}\frac{\rH^1(F_\eta,V_{g,\chi})}{\rH^1_{\rm ur}(F_\eta,V_{g,\chi})}\biggr\}.
\]
(Note that since $p$ is odd, for $\eta\mid\infty$ the groups $\rH^1(F_\eta,V_{g,\chi})$ all vanish.)
\end{itemize}
\end{defi}

Let $T_{g,\chi}\subset V_{g,\chi}$ be a $G_K$-stable $\scO$-lattice, and define $W_{g,\chi}$ by the exact sequence
\begin{equation}\label{eq:TVA}
0\rightarrow T_{g,\chi}\rightarrow V_{g,\chi}\rightarrow W_{g,\chi}\rightarrow 0.
\end{equation}
Then we define the Selmer groups ${\rm Sel}_w(F,T_{g,\chi}), {\rm Sel}_w(F,W_{g,\chi}), {\rm Sel}_\BK(F,T_{g,\chi}), {\rm Sel}_\BK(F,W_{g,\chi})$ as above, with the corresponding local conditions obtained from those of $V_{g,\chi}$ by propagating via \eqref{eq:TVA}.

Finally, for $\star\in\{v,\overline{v},\BK\}$, we define the $\Lambda$-adic Selmer groups
\[
\check{\mathcal{S}}_\star(g,\chi):=\varprojlim_{F}{\rm Sel}_\star(F,T_{g,\chi}),\quad\quad\mathcal{S}_\star(g,\chi):=\varinjlim_{F}{\rm Sel}_\star(F,W_{g,\chi}),
\]
where $F$ runs over the finite extensions of $K$ contained in $K_\infty$, and the limits are with respect to the corestriction and restriction maps, respectively, and put
\[
\mathcal{X}_\star(g,\chi):={\rm Hom}_{\rm cts}(\mathcal{S}_\star(g,\chi),\Q_p/\Z_p)
\]
for the Pontryagin dual of $\mathcal{S}_\star(g,\chi)$.

\subsection{Generalized Heegner cycles}\label{subsec:GHC}

We recall 
the construction of $\Lambda_\scO$-adic classes $\mathbf{z}_{g,\chi}$ 
interpolating 
the generalized Heegner cycles of Bertolini--Darmon--Prasanna \cite{BDP} over the anticyclotomic tower $K_\infty/K$, as well as their link with the $p$-adic $L$-functions $\mathscr{L}_w(g,\chi)$.


We begin by recalling an important formula from \cite{BDP} for the  value of $\mathscr{L}_w(g,\chi)$ at the trivial character $\xi=\mathds{1}$ of $\Gamma$ (outside the range of $p$-adic interpolation in Theorem~\ref{thm:BDP}). For $w\in\{v,\overline{v}\}$ a prime of $K$ above $p$, denote by
\[
{\rm log}_{V_{g,\chi}}:\rH^1_f(K_{\overline{w}},V_{g,\chi})\rightarrow\frac{\mathbf{D}_{\rm dR}(V_{g,\chi})}{{\rm Fil}^0\mathbf{D}_{\rm dR}(V_{g,\chi})}\simeq
{\rm Fil}^0\mathbf{D}_{\rm dR}(V_{g,\chi}^*(1))^\vee
\]
the Bloch--Kato logarithm map,
and recall the CM elliptic curve $A_0/F$ used for the definition of the CM periods in $\S\ref{subsub:CM-periods}$.

\begin{thm}[Bertolini--Darmon--Prasanna]\label{thm:BDP-formula}
There exists a class $z_{g,\chi}\in{\rm Sel}_\BK(K,V_{g,\chi})$ 
such that
\begin{align*}
\mathscr{L}_w(g,\chi)(\mathds{1})&=\frac{c^{-r}}{\Gamma(r)}\cdot\bigl(1-a_p(g)\chi(\overline{w})p^{-r}+\varepsilon_g(p)\chi(\overline{w})^2p^{-1}\bigr)\cdot\left\langle{\rm log}_{V_{g,\chi}}({\rm loc}_{\overline{w}}(z_{g,\chi})),\omega_g'\wedge\omega_{A_0}^{r-1}\eta_{A_0}^{r-1}\right\rangle,
\end{align*}
where $\omega_g'\in\mathbf{D}_{\rm dR}(V_g^*)$ is a differential attached to $g$ as in \cite{KLZ-AJM}, and $\omega_{A_0}$ and $\eta_{A_0}$ are differentials attached to $A_0$ as in \cite[\S{1.4}]{BDP}. 
\end{thm}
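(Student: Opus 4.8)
The plan is to construct the class $z_{g,\chi}$ directly from the generalized Heegner cycles of \cite{BDP} and then identify its image under the dual exponential/logarithm with the value $\mathscr{L}_w(g,\chi)(\mathds{1})$ via the $p$-adic Gross--Zagier/Rankin formula of \emph{loc.\ cit.} First I would recall the geometric setup: the generalized Kuga--Sato variety over the modular curve of level $\Gamma_1(N_g)$ fibered over powers of the CM elliptic curve $A_0/F$, and the codimension-$r$ null-homologous cycle $\Delta_\varphi$ supported over a CM point corresponding to $(A_0,\text{level structure})$, twisted by the isogeny data encoding $\chi$. Applying the $p$-adic étale Abel--Jacobi map and projecting onto the $(g,\chi)$-isotypic component produces a class in $\rH^1(K,V_{g,\chi})$. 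Its membership in ${\rm Sel}_\BK(K,V_{g,\chi})$ is the standard local analysis for cycle classes: at primes away from $p$ one uses that the cycle extends to a proper smooth model (good reduction, hence unramified, after using that $c$ is prime to $pN_g$ and the conductor hypotheses), and at $p$ one invokes that the Abel--Jacobi image of a cycle on a variety with good reduction lands in $\rH^1_f$ by Nekov\'a\v{r}/Niziol's comparison with syntomic cohomology (crystalline image). This gives $z_{g,\chi}\in{\rm Sel}_\BK(K,V_{g,\chi})$.

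Next I would recall from \cite{BDP} the $p$-adic Waldspurger/Rankin formula: the square-root $p$-adic $L$-function $\mathscr{L}_w(g,\chi)$, evaluated at the trivial character $\mathds{1}$ — which lies outside the interpolation range of Theorem~\ref{thm:BDP} — computes the Bloch--Kato logarithm of the localization at $\overline{w}$ of the generalized Heegner class, paired against the differential $\omega_g'\wedge\omega_{A_0}^{r-1}\eta_{A_0}^{r-1}$ spanning the relevant piece of ${\rm Fil}^0\mathbf{D}_{\rm dR}(V_{g,\chi}^*(1))$. Concretely, the key input is the computation in \cite[\S3]{BDP} expressing $\mathscr{L}_w(g,\chi)(\mathds{1})$ as a $p$-adic period integral, namely the value at a CM point of the Coleman primitive of the differential attached to $g$ on the relevant Kuga--Sato fiber; by Besser's / Nekov\'a\v{r}'s theory this Coleman primitive, evaluated at the CM point, \emph{is} (up to the explicit constants displayed) the syntomic realization of the Abel--Jacobi image of $\Delta_\varphi$, which in turn matches the Bloch--Kato logarithm via the comparison between syntomic and de Rham--Bloch--Kato descriptions of $\rH^1_f(K_w,V_{g,\chi})$. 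Tracking the normalization: the factor $c^{-r}/\Gamma(r)$ comes from the choice of level-$c$ structure and the Gamma-factor normalization of the Coleman integral, and the Euler factor $\bigl(1-a_p(g)\chi(\overline{w})p^{-r}+\varepsilon_g(p)\chi(\overline{w})^2p^{-1}\bigr)$ is precisely the $\overline{w}$-Euler factor appearing when one passes from the $p$-stabilized form used to build $\mathscr{L}_w$ to the form $g$ itself (it is the same Euler factor appearing in Theorems~\ref{thm:BDP} and \ref{thm:katz}, now at the ``extra'' point $\xi=\mathds{1}$).

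The main obstacle — and the step requiring the most care — is the precise matching of normalizations and Euler factors between the $p$-adic $L$-function $\mathscr{L}_w(g,\chi)$ as defined here (following \cite{cas-hsieh1}, with the CM period $\Omega$ rather than $\Omega_K$) and the object denoted $\mathcal{L}_p$ in \cite{BDP}, together with the identification of the differential $\omega_g'$ of \cite{KLZ-AJM} with the differential used in \cite{BDP}. This is a bookkeeping matter but a delicate one: one must verify that all the auxiliary $p$-adic units absorbed in Theorem~\ref{thm:BDP} do not affect the value at $\mathds{1}$ in a way that spoils the \emph{exact} constant, and that the comparison isomorphism ${\rm Fil}^0\mathbf{D}_{\rm dR}(V_{g,\chi})\backslash\mathbf{D}_{\rm dR}(V_{g,\chi})\simeq {\rm Fil}^0\mathbf{D}_{\rm dR}(V_{g,\chi}^*(1))^\vee$ is the one induced by the cup product pairing normalized as in \cite{BDP}. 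Once this normalization dictionary is fixed, the theorem follows by specializing the Rankin $p$-adic $L$-function construction of \cite{BDP} at the point outside the interpolation range and invoking their main Abel--Jacobi computation; I would cite \cite{BDP} for the analytic/geometric core and restrict the write-up to recording the class $z_{g,\chi}$ and the translation of constants into the present normalization.
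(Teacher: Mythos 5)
Your proposal is essentially the paper's approach: the paper establishes this theorem simply by citing \cite[Thm.~5.13]{BDP} as reformulated in \cite[Thm.~7.2.4]{JLZ}, and your sketch (generalized Heegner cycles, $p$-adic Abel--Jacobi image landing in $\rH^1_f$ via the syntomic/crystalline comparison, Coleman-primitive evaluation at CM points giving the value of $\mathscr{L}_w(g,\chi)$ at $\mathds{1}$ outside the interpolation range) is precisely the content of that cited result. The one practical difference is that the normalization dictionary you propose to verify by hand --- the differential $\omega_g'$ of \cite{KLZ-AJM}, the period $\Omega$ versus $\Omega_K$, and the Euler factor $\bigl(1-a_p(g)\chi(\overline{w})p^{-r}+\varepsilon_g(p)\chi(\overline{w})^2p^{-1}\bigr)$ --- is exactly what the reformulation in \cite{JLZ} supplies, so the paper simply invokes it rather than redoing the bookkeeping.
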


\begin{proof}
This is \cite[Thm.~5.13]{BDP}, as reformulated in \cite[Thm.~7.2.4]{JLZ}.
\end{proof}

By interpolating $p$-power conductor variants of the classes $z_{g,\chi}$ into a $\Lambda_\scO$-adic class $\mathbf{z}_{g,\chi}\in\check{\mathcal{S}}_\BK(g,\chi)$, a generalization of Theorem~\ref{thm:BDP-formula} allows one to recover $\mathscr{L}_v(g,\chi)$ as the image of $\mathbf{z}_{g,\chi}$ under a generalized Coleman power series map. This was first done in a joint work of the author with M.-L.\,Hsieh \cite{cas-hsieh1} for $\chi=\mathds{1}$, and later in \cite{JLZ} in the level of generality required for this paper. 


To state the result, given a class $\mathbf{z}\in\check{\mathcal{S}}_\BK(g,\chi)$, we let $\mathbf{z}(\mathds{1})$ denote the image of $\mathbf{z}$ under the natural projection
\[
\check{\mathcal{S}}_\BK(g,\chi)\rightarrow{\rm Sel}_\BK(K,V_{g,\chi}).
\]
Let $\alpha$ be the $p$-adic unit root of $x^2-a_p(g)x+\varepsilon_g(p)p^{2r-1}$, and let $\varpi\in\scO$ be a uniformizer.

\begin{thm}\label{thm:ERL}
Let $w\in\{v,\overline{v}\}$ be a prime of $K$ above $p$. There exists a class $\mathbf{z}_{g,\chi}\in\check{\mathcal{S}}_\BK(g,\chi)$ with 
\[
\mathbf{z}_{g,\chi}(\mathds{1})
=\mathcal{E}_p(g,\chi)\cdot z_{g,\chi},
\]
where $\mathcal{E}_p(g,\chi)=\bigl(1-\frac{\chi(v)p^{r-1}}{\alpha}\bigr)\bigl(1-\frac{\chi(\overline{v})p^{r-1}}{\alpha}\bigr)$, and an injective $\Lambda_\scO^{\rm ur}$-module homomorphism
\[
{\rm Col}_{\overline{w}}:\varprojlim_{F\subset K_\infty}\prod_{\eta\vert \overline{w}}\rH^1_f(F_\eta,T_{g,\chi})\hat\otimes_{\scO}\Lambda_\scO^{\rm ur}\rightarrow\Lambda_\scO^{\rm ur}
\]
with finite cokernel 
for which we have the ``explicit reciprocity law''
\begin{equation}\label{eq:ERL-GHC}
{\rm Col}_{\overline{w}}\bigl({\rm loc}_{\overline{w}}(\mathbf{z}_{g,\chi})\bigr)=\mathscr{L}_w(g,\chi).\nonumber
\end{equation}
\end{thm}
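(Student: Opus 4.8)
The plan is to construct $\mathbf{z}_{g,\chi}$ as a norm-compatible system of generalized Heegner cycles over the anticyclotomic tower, and then to identify $\mathrm{Col}_{\overline{w}}(\mathrm{loc}_{\overline{w}}(\mathbf{z}_{g,\chi}))$ with $\mathscr{L}_w(g,\chi)$ by comparing the two sides on a Zariski-dense set of characters of $\Gamma$ (or $\widetilde{\Gamma}$) in the interpolation range. First I would recall from \cite{BDP} the generalized Heegner cycles attached to $(g,\chi)$ at each layer of the $\mathbb{Z}_p$-extension, and invoke the norm-compatibility established in \cite{cas-hsieh1} (for $\chi=\mathds{1}$) and \cite{JLZ} (in general) to assemble the $\Lambda_\scO$-adic class $\mathbf{z}_{g,\chi}\in\check{\mathcal{S}}_\BK(g,\chi)$; the fact that it lands in the Bloch--Kato Selmer group is part of those constructions, since the cycle classes are crystalline at $p$ (the image of the $p$-adic Abel--Jacobi map on homologically trivial cycles on a smooth proper variety is always in $\rH^1_f$) and unramified away from $p$ by good reduction. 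The computation of $\mathbf{z}_{g,\chi}(\mathds{1})$ as $\mathcal{E}_p(g,\chi)\cdot z_{g,\chi}$ is then the comparison, already carried out in \cite{JLZ} (building on \cite{BDP}), between the $\Lambda$-adic class specialized at the trivial character and the individual generalized Heegner cycle class $z_{g,\chi}$ of Theorem~\ref{thm:BDP-formula}; the Euler factor $\mathcal{E}_p(g,\chi)$ arises from passing between the non-ordinary cycle and its $\alpha$-stabilization.

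Next I would construct the map $\mathrm{Col}_{\overline{w}}$. This is a Perrin-Riou/Coleman-type big logarithm: starting from the two-variable Perrin-Riou regulator for the crystalline representation $V_{g,\chi}|_{G_{K_{\overline{w}}}}$, one obtains a $\Lambda_\scO^{\rm ur}$-linear map from $\varprojlim_F\prod_{\eta\mid\overline{w}}\rH^1(F_\eta,T_{g,\chi})\hat\otimes\Lambda_\scO^{\rm ur}$ to $\Lambda_\scO^{\rm ur}$, which when restricted to the subspace of local classes that are $f$-local at $\overline{w}$ (the $\rH^1_f$'s) gives a map with the stated finite cokernel; injectivity follows because the relevant Iwasawa cohomology is torsion-free over $\Lambda_\scO^{\rm ur}$ (no trivial zeros, as $V_{g,\chi}$ has no $G_K$-invariants at the generic fiber by self-duality and the nonvanishing of the relevant Hodge--Tate weights) and the regulator is injective on the crystalline part. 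The precise normalization is fixed so that, evaluated at a de~Rham character $\xi$ of $\widetilde{\Gamma}$ in the interpolation range, $\mathrm{Col}_{\overline{w}}$ recovers the Bloch--Kato logarithm (resp. dual exponential) paired against $\omega_g'\wedge\omega_{A_0}^{r-1}\eta_{A_0}^{r-1}$, up to the interpolation factors appearing in Theorem~\ref{thm:BDP-formula}.

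Finally, to prove the explicit reciprocity law $\mathrm{Col}_{\overline{w}}(\mathrm{loc}_{\overline{w}}(\mathbf{z}_{g,\chi}))=\mathscr{L}_w(g,\chi)$, I would evaluate both sides at characters $\xi$ of $\widetilde{\Gamma}$ crystalline at $v$ and $\overline{v}$, of infinity type $(n,-n)$ with $n\geq r$. On the left, the interpolation property of $\mathrm{Col}_{\overline{w}}$ together with the (twisted) version of Theorem~\ref{thm:BDP-formula} over each finite layer expresses the value in terms of $\langle\log_{V_{g,\chi\xi}}(\mathrm{loc}_{\overline{w}}(z_{g,\chi\xi})),\omega_g'\wedge\omega_{A_0}^{r-1}\eta_{A_0}^{r-1}\rangle$ times the appropriate Euler factor; on the right, $\mathscr{L}_w(g,\chi)(\xi)$ is given by the same quantity via Theorem~\ref{thm:BDP-formula} applied to the pair $(g,\chi\xi)$. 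Matching the Euler factors and period normalizations shows the two $\Lambda_\scO^{\rm ur}$-valued functions agree on a Zariski-dense set, hence are equal. The main obstacle is the bookkeeping in the last step: one must verify that the normalization of $\mathrm{Col}_{\overline{w}}$ (involving $\Gamma$-factors, the period ratios $\Omega_p/\Omega$, and the local interpolation factors of the Perrin-Riou map) is precisely compatible with the normalization of $\mathscr{L}_w(g,\chi)$ from Theorem~\ref{thm:BDP}, and that the $\alpha$-stabilization Euler factor $\mathcal{E}_p(g,\chi)$ threads correctly through both the specialization $\mathbf{z}_{g,\chi}(\mathds{1})$ and the generic interpolation; this is where one genuinely relies on the detailed computations of \cite{BDP,cas-hsieh1,JLZ} rather than formal arguments.
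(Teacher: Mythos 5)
Your proposal is correct and follows essentially the same route as the paper, which simply cites \cite[\S5.2--5.3]{cas-hsieh1} (for $\varepsilon_g=1$) and \cite[Thm.~5.4.1, \S8]{JLZ} (in general) for the construction of $\mathbf{z}_{g,\chi}$, the map $\mathrm{Col}_{\overline{w}}$, and the explicit reciprocity law. Your sketch is a faithful summary of what those references actually do---norm-compatible generalized Heegner cycles, Perrin-Riou big logarithm, and comparison on a Zariski-dense set of interpolation characters---so there is no substantive difference in approach.
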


\begin{proof}
The construction of $\mathbf{z}_{g,\chi}$ is given in \cite[\S{5.2}]{cas-hsieh1} in the case $\varepsilon_g=1$ and in \cite[Thm.~5.4.1]{JLZ}\footnote{Note that this theorem achieves more, also interpolating $z_{g,\chi}$ along a ``weight'' variable for $g$, vastly generalizing the results of \cite{howard-invmath,cas-higher, cas-variation,ota-JNT} in the $p$-ordinary case; cf. \cite{disegni-universal,BL-coleman}. Here we only need the interpolation result in the anticyclotomic direction.} in general. Similarly, the construction of ${\rm Col}_{\overline{w}}$ and the proof of the explicit reciprocity law is given in \cite[\S{5.3}]{cas-hsieh1} in the case $\varepsilon_g=1$, and in \cite[\S{8}]{JLZ} in general. 
\end{proof}

\subsection{Main Conjectures for self-dual pairs}\label{subsec:IMC-sd}

We now state the anticyclotomic Main Conjectures that will be studied in this paper.


\begin{conj}[Iwasawa--Greenberg Main Conjecture]\label{conj:BDP-IMC} Let $w\in\{v,\overline{v}\}$ be a prime of $K$ above $p$, and assume that $\mathscr{L}_w(g,\chi)\neq 0$. Then:
\begin{itemize}
\item[(i)] ${\rm rank}_{\Lambda_\scO}\big(\check{\mathcal{S}}_w(g,\chi)\bigr)={\rm rank}_{\Lambda_\scO}\bigl(\mathcal{X}_w(g,\chi)\bigr)=0$;
\item[(ii)] We have
\[
{\rm char}_{\Lambda_\scO}\bigl(\mathcal{X}_w(g,\chi)\bigr)=\bigl(\mathscr{L}_w(g,\chi)^2\bigr)
\]
as ideals in $\Lambda_{\scO}^{\rm ur}$.
\end{itemize}
\end{conj}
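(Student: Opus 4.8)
The plan is to prove Conjecture~\ref{conj:BDP-IMC} not directly but as a theorem (this is exactly Theorem~\ref{thm:BDP-IMC} in the body), by combining a divisibility in one direction coming from an Euler system / Kolyvagin-type argument with a reverse divisibility coming from the explicit reciprocity law of Theorem~\ref{thm:ERL} and the control of the Heegner class. First I would record the structural input: by Proposition~\ref{prop:factor-BDP} we have a factorization $\mathscr{L}_w(g,\chi)^2 = u\cdot\mathscr{L}_w(\psi\chi\nr)\cdot\mathscr{L}_w(\psi^\st\chi\nr)$ into Katz $p$-adic $L$-functions, and correspondingly (via the decomposition \eqref{eq:intro-dec-Sel} of Proposition~\ref{prop:dec-Sel}) the dual Selmer group $\mathcal{X}_w(g,\chi)$ decomposes, up to pseudo-isomorphism, into two pieces, each of which is (a twist/variant of) the dual Selmer group attached to a self-dual Hecke character. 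This reduces the main conjecture for the pair to two instances of the anticyclotomic main conjecture for self-dual Hecke characters of $K$ — precisely the setting of Rubin's two-variable main conjecture and its anticyclotomic specialization (Agboola--Howard, Arnold, Hida's work on CM fields), which gives the equality of characteristic ideals in that case. The point of the paper's sharpening is to track the powers of $p$.

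Next I would carry out the three substantive steps. Step one: establish part (i), the rank statement — since $\mathscr{L}_w(g,\chi)\neq 0$, each Katz factor $\mathscr{L}_w(\psi\chi\nr)$, $\mathscr{L}_w(\psi^\st\chi\nr)$ is nonzero, so each summand in \eqref{eq:intro-dec-Sel} is $\Lambda_\scO$-torsion by the known torsion property of CM Selmer groups (equivalently, by a control-theorem argument plus the nonvanishing of the relevant Katz $L$-value), and dually $\check{\mathcal S}_w(g,\chi)$ has rank $0$. Step two: the divisibility $\mathrm{char}_{\Lambda_\scO}(\mathcal X_w(g,\chi))\mid(\mathscr{L}_w(g,\chi)^2)$ in $\Lambda_\scO^{\rm ur}$. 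Here I would invoke the explicit reciprocity law ${\rm Col}_{\overline w}({\rm loc}_{\overline w}(\mathbf z_{g,\chi}))=\mathscr{L}_w(g,\chi)$ together with a Poitou--Tate / Euler-system-of-Heegner-classes argument (in the style of \cite{AH-ord}, \cite{arnold}, incorporating Kato \cite{kato-295} and Johnson-Leung--Kings \cite{kings-JL}) to bound the dual Selmer group by the image of the Heegner class, hence by $\mathscr{L}_w(g,\chi)$; squaring (because the local condition at $w$ is ``everything'' while at $\overline w$ it is ``zero'', so $\mathscr{L}_w$ enters the characteristic ideal twice, once through the $\overline w$-local term and once through a global duality term) gives the $\mathscr{L}_w(g,\chi)^2$. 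Step three: the reverse divisibility. Here I would use the factorization to reduce to the two CM-character main conjectures, where the equality (not just divisibility) is available from Rubin/Hida after one is careful about $\Lambda_\scO^{\rm ur}$ versus $\Lambda_\scO$ and about the congruence-number/modular-degree discrepancy (this is where the hypothesis $p\nmid h_K$ is spent, cf. \cite{ARS}), and combine with Step two; alternatively, invoke Skinner--Urban-type arguments adapted to the CM case. The two divisibilities together force the equality of ideals.

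The main obstacle, as the introduction itself flags, is getting the equality \emph{without any ambiguity by powers of $p$}. The divisibility up to $p$-powers (after inverting $p$) is essentially \cite{BCST}; upgrading to an honest equality of $\Lambda_\scO^{\rm ur}$-ideals requires (a) an integral version of the explicit reciprocity law with control on the cokernel of ${\rm Col}_{\overline w}$ — this is why Theorem~\ref{thm:ERL} is stated with ${\rm Col}_{\overline w}$ having \emph{finite} cokernel, and one must argue that this cokernel does not contribute, typically via the $\Lambda$-adic non-vanishing and freeness of the relevant module of local classes; (b) an integral refinement of Rubin's CM main conjecture, which is where Finis's mod $p$ non-vanishing \cite{finis-modp} and the Hida--Tilouine--Hida work on the anticyclotomic CM main conjecture enter, and where the period comparison (integral CM periods vs.\ the automorphic normalization) must be pinned down — the existence of \emph{strong good pairs} (Definition~\ref{def:good}, Lemma~\ref{lem:good}) is used exactly to make this period comparison exact rather than up to $p$-powers; and (c) verifying that the pseudo-isomorphism in \eqref{eq:intro-dec-Sel} is in fact an isomorphism, or at least that the relevant characteristic ideals (not just their radicals) match, which uses that the two Hecke characters $\psi\chi\nr$ and $\psi^\st\chi\nr$ have coprime-enough conductors and that their Selmer groups have no common pseudo-null obstruction. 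I expect (b) to be the genuinely hard part, since it is where arithmetic of CM periods meets Iwasawa theory and where the new root-number dependence (Theorem~\ref{thm:Heeg-nonzero}, Remark~\ref{rem:v-vs-vbar}) has to be handled carefully to ensure $\mathscr{L}_w$ rather than $\mathscr{L}_{\overline w}$ is the correct factor on each side.
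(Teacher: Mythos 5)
Your overall skeleton (factor $\mathscr{L}_w(g,\chi)^2$ via Proposition~\ref{prop:factor-BDP}, decompose the Selmer group via Proposition~\ref{prop:dec-Sel}, and reduce to main conjectures for the two Hecke characters) is the paper's starting point, but there is a genuine gap at the heart of your reduction. You treat both summands $\mathcal{X}_w(\psi\chi\nr)$ and $\mathcal{X}_w(\psi^\st\chi\nr)$ as ``two instances of the anticyclotomic main conjecture for self-dual Hecke characters'' covered by Rubin/Agboola--Howard/Arnold. That is false for one of them: as Remark~\ref{rem:reversed} records, the local conditions at the primes above $p$ defining $\mathcal{X}_v(\psi\chi\nr)$ are \emph{reversed} relative to the Bloch--Kato conditions, so the existing CM results (which concern $\mathcal{X}_\BK=\mathcal{X}_{\overline v}$ for this character) do not apply to it, and its $\Lambda_\scO$-torsionness is not a ``known torsion property''---indeed its rank differs from that of the Bloch--Kato Selmer group. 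Supplying this is the paper's new Theorem~\ref{thm:AH-lambda}, whose proof needs Greenberg's nonvanishing theorem (Proposition~\ref{prop:Greenberg-nonv}, which ties $\mathscr{L}_v(\psi\chi\nr)\neq 0$ precisely to the root number $w(\psi\chi\nr)=-1$ and is the source of the $v$ versus $\overline v$ dichotomy you only gesture at), the elliptic-unit explicit reciprocity law of Theorem~\ref{thm:ERL-katz} (Kato/Yager), a Poitou--Tate flip between the relaxed/strict Selmer groups of $\lambda^\st\nr$ and $\mathcal{X}_v(\lambda\nr)$, and Arnold's descent from the integral two-variable main conjecture of Rubin and Johnson-Leung--Kings. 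The other summand is handled by Theorem~\ref{thm:AH} (essentially Agboola--Howard/Arnold plus the involution $\iota$). Both factors are obtained as \emph{equalities}, and multiplying them against Proposition~\ref{prop:factor-BDP} already gives the conjecture; note also that the decomposition of Proposition~\ref{prop:dec-Sel} is an actual isomorphism of $\Lambda_\scO$-modules, so your worry (c) about pseudo-null discrepancies is not where the difficulty lies.

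Your Step two is also off the paper's route and is problematic on its own terms: a Kolyvagin/Howard-type Euler system argument with the Heegner classes $\mathbf{z}_{g,\chi}$ requires (residual) irreducibility of $V_{g,\chi}\vert_{G_K}$-type hypotheses, which fail here exactly because $g=\theta_\psi$ has CM by $K$; this is why the paper never runs such an argument for Theorem~\ref{thm:BDP-IMC} and uses Theorem~\ref{thm:ERL} only afterwards, in Proposition~\ref{prop:equiv} and Corollary~\ref{cor:HP-IMC}, to transfer the result to the Heegner cycle main conjecture. Finally, you spend $p\nmid h_K$, strong good pairs, Hida--Tilouine and the ARS congruence-number issue on integrality of the main conjecture, but these play no role there: the integrality in Theorem~\ref{thm:BDP-IMC} comes from the unit $u$ in Proposition~\ref{prop:factor-BDP} (both $p$-adic $L$-functions are normalized by the same CM periods) together with the integral two-variable main conjecture; the period comparison and $p\nmid h_K$ are consumed only later, in the Gross--Zagier/BSD deduction of Section~\ref{sec:p-BSD}.
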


\begin{rem}\label{rem:isog-inv}
It follows from Perrin-Riou's results \cite{PR-isogeny} that the characteristic ideal of $\mathcal{X}_w(g,\chi)$ is independent of the choice of $G_K$-stable $\scO$-lattice $T_{g,\chi}$ in $V_{g,\chi}$ used in the definition of $\mathcal{X}_w(g,\chi)$ (see \cite[Prop.~2.9]{KO}). Note that this is consistent with Conjecture~\ref{conj:BDP-IMC}(ii), since the periods used in the construction of $\mathscr{L}_w(g,\chi)$ depend only on $K$.
\end{rem}

The following is a natural higher weight extension of the Heegner point Main Conjecture of \cite{BT-conv} and \cite{BCST} (see also \cite{LV-kyoto} for an analogous higher weight extension of the original Heegner point main conjecture by Perrin-Riou \cite{PR-HP}).

\begin{conj}[Heegner cycle Main Conjecture]\label{conj:HPMC}
The following hold:
\begin{itemize}
\item[(i)] $\mathbf{z}_{g,\chi}$ is not $\Lambda_\scO$-torsion; 
\item[(ii)] ${\rm rank}_{\Lambda_\scO}\bigl(\check{\mathcal{S}}_{\BK}(g,\chi)\bigr)={\rm rank}_{\Lambda_\scO}\bigl(\mathcal{X}_{\BK}(g,\chi)\bigr)=1$;
\item[(iii)] We have
\[
{\rm char}_{\Lambda_\scO}\bigl(\mathcal{X}_{\BK}(g,\chi)_{\rm tors}\bigr)={\rm char}_{\Lambda_\scO}\bigl(\check{\mathcal{S}}_{\BK}(g,\chi)/\Lambda_\scO\cdot\mathbf{z}_{g,\chi}\bigr)^2,
\]
where the subscript ${\rm tors}$ denotes the maximal $\Lambda_\scO$-torsion submodule.
\end{itemize}
\end{conj}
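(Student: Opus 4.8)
The plan is to deduce the Heegner cycle main conjecture (Conjecture~\ref{conj:HPMC}) from the Iwasawa--Greenberg main conjecture (Conjecture~\ref{conj:BDP-IMC}) via Poitou--Tate duality, using the explicit reciprocity law of Theorem~\ref{thm:ERL} as the bridge between the two. First I would set up the global Poitou--Tate exact sequence comparing the Bloch--Kato Selmer group $\check{\mathcal{S}}_{\BK}(g,\chi)$, whose local conditions at $p$ are the ``finite'' subspaces $\rH^1_f$, with the $w$-Selmer group $\check{\mathcal{S}}_w(g,\chi)$, whose local conditions at $p$ are ``everything at $w$, zero at $\overline{w}$''. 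Since the pair $(g,\chi)$ is self-dual, $W_{g,\chi}$ is (up to twist) its own Tate dual, and the local condition $\rH^1_w$ at $\overline{w}$ is the \emph{exact annihilator} of $\rH^1_w$ at $w$ under local Tate duality; hence the Greenberg-style Selmer group defined by $\rH^1_w$ and the one with the reversed condition are Tate-dual. Combining this with the fact that $\rH^1_f(K_\eta,-)$ differs from $\rH^1(K_\eta,-)$ and from $0$ only by bounded (torsion) defects, one gets an exact sequence relating $\check{\mathcal{S}}_{\BK}(g,\chi)$, $\mathcal{X}_w(g,\chi)$, and the local term $\varprojlim_F\prod_{\eta\mid\overline{w}}\rH^1_f(F_\eta,T_{g,\chi})$.

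Next I would feed in Theorem~\ref{thm:ERL}: the class $\mathbf{z}_{g,\chi}\in\check{\mathcal{S}}_{\BK}(g,\chi)$ has the property that $\mathrm{Col}_{\overline{w}}(\mathrm{loc}_{\overline{w}}(\mathbf{z}_{g,\chi}))=\mathscr{L}_w(g,\chi)$, where $\mathrm{Col}_{\overline{w}}$ is injective with finite cokernel. Under the hypothesis $\mathscr{L}_w(g,\chi)\neq 0$ (which holds in our eventual setting), this immediately shows $\mathrm{loc}_{\overline{w}}(\mathbf{z}_{g,\chi})$ is non-torsion, hence $\mathbf{z}_{g,\chi}$ is non-torsion, giving part~(i). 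For parts (ii) and (iii): the local term $\varprojlim_F\prod_{\eta\mid\overline{w}}\rH^1_f(F_\eta,T_{g,\chi})\hat\otimes\Lambda_\scO^{\rm ur}$ is free of rank one over $\Lambda_\scO^{\rm ur}$ (local Euler characteristic / Tate's local duality applied fiberwise, as in Agboola--Howard), and $\mathrm{loc}_{\overline{w}}$ maps $\check{\mathcal{S}}_{\BK}(g,\chi)$ into it. By Conjecture~\ref{conj:BDP-IMC}(i), $\check{\mathcal{S}}_w(g,\chi)$ is $\Lambda_\scO$-torsion, so from the Poitou--Tate sequence the kernel of $\mathrm{loc}_{\overline{w}}$ on $\check{\mathcal{S}}_{\BK}(g,\chi)$ is $\Lambda_\scO$-torsion — but it embeds in $\check{\mathcal{S}}_w(g,\chi)$ or in a strict Selmer group that is likewise torsion, and by the standard non-existence of globally-everywhere-trivial classes (using that $V_{g,\chi}$ is irreducible and not self-dual as a $G_K$-rep once $\chi$ is accounted for — or more precisely a ``big image''/no-exceptional-zero argument) one concludes the torsion submodule of $\check{\mathcal{S}}_{\BK}(g,\chi)$ vanishes and $\mathrm{loc}_{\overline{w}}$ is injective with pseudo-null cokernel analyzed via the snake lemma. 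This forces $\mathrm{rank}_{\Lambda_\scO}\check{\mathcal{S}}_{\BK}(g,\chi)=1$, and a parallel argument on the dual side (or the global Euler characteristic formula for $\mathcal{X}_{\BK}(g,\chi)$, noting that the self-dual sign is $-1$ so the generic rank is odd, hence $1$) gives $\mathrm{rank}_{\Lambda_\scO}\mathcal{X}_{\BK}(g,\chi)=1$.

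For the characteristic-ideal identity (iii), I would compare three $\Lambda_\scO$-modules through the Poitou--Tate sequence: $\mathcal{X}_w(g,\chi)$ (whose characteristic ideal is $(\mathscr{L}_w(g,\chi)^2)$ by Conjecture~\ref{conj:BDP-IMC}(ii)), the cokernel of $\mathrm{loc}_{\overline{w}}:\check{\mathcal{S}}_{\BK}(g,\chi)\to\varprojlim\prod\rH^1_f$, and $\mathcal{X}_{\BK}(g,\chi)_{\rm tors}$. The map $\mathrm{Col}_{\overline{w}}$ identifies (up to a finite cokernel, hence up to a unit after inverting $p$ and matching the finite parts carefully — this is where one uses $\mathscr{L}_w(g,\chi)\neq 0$ and keeps track that the cokernel of $\mathrm{Col}_{\overline{w}}$ is a specific power of $\varpi$) the characteristic ideal of $(\varprojlim\prod\rH^1_f)/\Lambda_\scO^{\rm ur}\cdot\mathrm{loc}_{\overline{w}}(\mathbf{z}_{g,\chi})$ with $(\mathscr{L}_w(g,\chi))$; squaring and reorganizing the three-term exact sequence via multiplicativity of characteristic ideals yields exactly $\mathrm{char}_{\Lambda_\scO}(\mathcal{X}_{\BK}(g,\chi)_{\rm tors})=\mathrm{char}_{\Lambda_\scO}(\check{\mathcal{S}}_{\BK}(g,\chi)/\Lambda_\scO\cdot\mathbf{z}_{g,\chi})^2$. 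The main obstacle I expect is precisely the bookkeeping of the \emph{finite} (bounded $\varpi$-torsion) error terms: the difference between $\rH^1_f$ and $\rH^1$ at $\overline{w}$, the finite cokernel of $\mathrm{Col}_{\overline{w}}$, the local $\rH^0$ and $\rH^2$ contributions, and the Tamagawa-type factors at bad primes must all cancel \emph{on the nose} (not just up to $p$-power) for the equality of characteristic ideals to hold without ambiguity — this is the step that requires the ``more attention to integrality'' flagged in the introduction, and it is where the precise root-number-dependent normalization of $\mathbf{z}_{g,\chi}$ and the choice of lattice (cf. Remark~\ref{rem:isog-inv}) become essential.
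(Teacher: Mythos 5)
Your overall strategy---deduce Conjecture~\ref{conj:HPMC} from Conjecture~\ref{conj:BDP-IMC} by Poitou--Tate duality together with the explicit reciprocity law of Theorem~\ref{thm:ERL}---is exactly the route the paper takes in Proposition~\ref{prop:equiv}, and your treatment of part (i) (non-torsionness of $\mathbf{z}_{g,\chi}$ from ${\rm Col}_{\overline{w}}({\rm loc}_{\overline{w}}(\mathbf{z}_{g,\chi}))=\mathscr{L}_w(g,\chi)\neq 0$) is fine. But as a proof of the statement it has two genuine gaps. First, you treat the nonvanishing input as given (``$\mathscr{L}_w(g,\chi)\neq 0$, which holds in our eventual setting'') and work with an unspecified $w\in\{v,\overline{v}\}$. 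In the CM case, which is the only case in which the paper proves the conjecture (Corollary~\ref{cor:HP-IMC}), exactly one of $\mathscr{L}_v(g,\chi)$, $\mathscr{L}_{\overline{v}}(g,\chi)$ vanishes, and which one is dictated by the root number $w(\psi\chi\nr)$ (Theorem~\ref{thm:Heeg-nonzero}, Remark~\ref{rem:v-vs-vbar}); so one must choose the correct $w$ and then \emph{prove} the nonvanishing, which the paper does via the factorization of Proposition~\ref{prop:factor-BDP}, Greenberg's nonvanishing results (Proposition~\ref{prop:Greenberg-nonv}) and Theorem~\ref{thm:AH}. Without this, both the hypothesis of your duality argument and the very applicability of Conjecture~\ref{conj:BDP-IMC}(ii) (which is only meaningful for the nonvanishing $w$) are unsupported; and of course the paper does not assume Conjecture~\ref{conj:BDP-IMC} but proves it in this setting (Theorem~\ref{thm:BDP-IMC}), which is the bulk of the work.

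Second, your mechanism for killing the torsion of $\check{\mathcal{S}}_\BK(g,\chi)$ and getting injectivity of ${\rm loc}_{\overline{w}}$ invokes irreducibility/``big image'' of $V_{g,\chi}$, which fails precisely here: for $g=\theta_\psi$ the restriction $V_{g,\chi}\vert_{G_K}$ is \emph{reducible}, splitting as $V_{\psi\chi}(r)\oplus V_{\psi^\st\chi}(r)$. The paper's substitute is Lemma~\ref{lem:local-tors-free} (torsion-freeness of the local quotients $\mathcal{H}_w$, proved through the character decomposition and Perrin-Riou's results) together with the $\Lambda_\scO$-torsion-freeness of $\varprojlim_F\rH^1(F,T_{g,\chi})$; this is what forces $\check{\mathcal{S}}_{{\rm rel},f}(g,\chi)=\check{\mathcal{S}}_\BK(g,\chi)$ and makes the rank and characteristic-ideal bookkeeping in \eqref{eq:take-char}--\eqref{eq:ERL-cor} go through. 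Your final concern about finite error terms needing to cancel ``on the nose'' is also misplaced at this stage: finite modules are pseudo-null over the two-dimensional ring $\Lambda_\scO$ (resp.\ $\Lambda_\scO^{\rm ur}$) and do not affect characteristic ideals, so the finite cokernel of ${\rm Col}_{\overline{w}}$ is harmless here; the delicate integrality issues in the paper occur later, in the descent and Euler-characteristic computations of Section~\ref{sec:TNC}, not in the equivalence you are sketching.
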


\begin{rem}\label{rem:iota}
It follows from Nekov\'{a}\v{r}'s results \cite{nekovar-310} that there is a $\Lambda_\scO$-module pseudo-isomorphism
\[
\mathcal{X}_\BK(g,\chi)_{\rm tors}\,\sim\,M\oplus M
\]
for a torsion $\Lambda_\scO$-module $M$ with ${\rm char}_{\Lambda_\scO}(M)^\iota={\rm char}_{\Lambda_\scO}(M)$, where $\iota$ denotes the involution on $\Lambda_\scO$ given by $\gamma\mapsto\gamma^{-1}$ for $\gamma\in\Gamma$ (see also \cite[p.~1464]{howard}). Thus we see that the equality of characteristic ideals in Conjecture~\ref{conj:HPMC} can alternatively be written as 
\[
{\rm char}_{\Lambda_\scO}\bigl(\mathcal{X}_{\BK}(g,\chi)_{\rm tors}\bigr)={\rm char}_{\Lambda_\scO}\bigl(\check{\mathcal{S}}_{\BK}(g,\chi)/\Lambda_\scO\cdot\mathbf{z}_{g,\chi}\bigr)\cdot{\rm char}_{\Lambda_\scO}\bigl(\check{\mathcal{S}}_{\BK}(g,\chi)/\Lambda_\scO\cdot\mathbf{z}_{g,\chi}\bigr)^\iota.
\]
\end{rem}


\subsection{Selmer groups for Hecke characters}\label{subsec:Sel-char}

In this subsection, the imaginary quadratic field $K$ is only required to satisfy \eqref{eq:spl} for our fixed prime $p>3$.

Let $\xi$ be a Hecke character of $K$ of infinity type $(a,b)\in\Z^2$ with $p$-adic avatar valued in the ring of integers $\scO$ of a finite extension $\Phi/\Q_p$. Put
\[
V_\xi=\Phi(\xi),
\]
where $\Phi(\xi)$ denotes the one-dimensional $\Phi$-vector space on which $G_K$ act via $\xi$. 

\begin{defi}\label{def:Sel-char}
Let $w\in\{v,\overline{v}\}$ be a prime of $K$ above $p$, and let $F/K$ be a finite extension. For $\eta$ a prime of $F$ above $p$, put 
\[
\rH^1_f(F_\eta,V_{\xi}):=\ker\big\{\rH^1(F_\eta,V_\xi)\rightarrow\rH^1(F_\eta,V_\xi\otimes\mathbf{B}_{\rm cris})\bigl\}
\] 
and 
\[
\rH^1_w(F_\eta,V_\xi):=\begin{cases}\rH^1(F_\eta,V_\xi)&\textrm{if $\eta\mid w$,}\\[0.2em]
0&\textrm{if $\eta\mid\overline{w}$};\end{cases}
\]
and for $\eta$ a finite prime of $F$ not dividing $p$, let $\rH^1_{\rm ur}(F_\eta,V_\xi)={\rm ker}\{\rH^1(F_\eta,V_{\xi})\rightarrow\rH^1(I_\eta,V_\xi)\}$ be the unramified subspace.
\begin{itemize}
\item The \emph{Bloch--Kato} Selmer group of $V_\xi$ is 
\[
{\rm Sel}_\BK(F,V_{\xi}):=\ker\biggl\{\rH^1(F,V_{\xi})\rightarrow\prod_{\eta\vert p}\frac{\rH^1(F_\eta,V_{\xi})}{\rH_f^1(F_\eta,V_{\xi})}\times\prod_{\eta\nmid p}\frac{\rH^1(F_\eta,V_{\xi})}{\rH^1_{\rm ur}(F_\eta,V_{\xi})}\biggr\}.
\]
\item The \emph{$w$-Selmer group} of $V_\xi$ is 
\[
{\rm Sel}_w(F,V_{\xi}):=\ker\biggl\{\rH^1(F,V_{\xi})\rightarrow\prod_{\eta\vert p}\frac{\rH^1(F_\eta,V_{\xi})}{\rH_w^1(F_\eta,V_{\xi})}\times\prod_{\eta\nmid p}\frac{\rH^1(F_\eta,V_{\xi})}{\rH^1_{\rm ur}(F_\eta,V_{\xi})}\biggr\}.
\]
\end{itemize}
\end{defi}

Let $T_\xi$ denote the free rank $1$ $\scO$-module on which $G_K$ acts via $\xi$, and put $W_\xi=V_\xi/T_\xi$. Similarly as above, for $\star\in\{\BK,v,\overline{v}\}$, we define ${\rm Sel}_\star(F,T_\xi)$ and ${\rm Sel}_\star(F,W_\xi)$ by propagation via $0\rightarrow T_\xi\rightarrow V_\xi\rightarrow W_\xi\rightarrow 0$, define the $\Lambda_\scO$-adic Selmer groups
\[
\check{\mathcal{S}}_\star(\xi):=\varprojlim_{F\subset K_\infty}{\rm Sel}_\star(F,T_\xi),\quad\quad
\mathcal{S}_\star(\xi):=\varinjlim_{F\subset K_\infty}{\rm Sel}_\star(F,W_\xi),
\]
with $F$ running over the finite extensions of $K$ contained in $K_\infty$, 
and let 
\[
\mathcal{X}_\star(\xi):={\rm Hom}_{\rm cts}(\mathcal{S}_\star(\xi),\Q_p/\Z_p)
\] 
be the Pontryagin dual of $\mathcal{S}_\star(\xi)$.

\begin{lemma}\label{lem:BK=v}
Suppose $\xi$ has infinity type $(a,b)$. Let $F/K$ be a finite extension and let $\eta$ be a prime of $F$ above $p$. Then
\[
\rH^1_f(F_\eta,V_\xi)=\begin{cases}
\rH^1(F_\eta,V_\xi)&\textrm{if $\eta\mid v$ and $a>0$, or $\eta\mid\overline{v}$ and $b>0$,} \\[0.2em]
0&\textrm{else.}
\end{cases}
\]
In particular, 
\[
\mathcal{X}_\BK(\xi)=\begin{cases}
\mathcal{X}_v(\xi)&\textrm{if $a>0$ and $b\leq 0$,}\\[0.2em]
\mathcal{X}_{\overline{v}}(\xi)& \textrm{if $b>0$ and $a\leq 0$.}
\end{cases}
\]
\end{lemma}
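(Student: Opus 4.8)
The plan is to compute the Bloch--Kato local condition $\rH^1_f(F_\eta, V_\xi)$ at a prime $\eta \mid p$ explicitly, using the fact that $V_\xi$ is a one-dimensional crystalline representation of $G_{F_\eta}$ (since $\xi$ is unramified at $p$ after our hypotheses, or more precisely crystalline at $v$ and $\overline v$ — $V_\xi$ restricted to a decomposition group at $p$ is a power of an unramified twist of the cyclotomic character), and Bloch--Kato's dimension formula. Recall that for a de Rham representation $W$ of $G_{F_\eta}$ one has
\[
\dim \rH^1_f(F_\eta, W) = \dim \rH^0(F_\eta, W) + \dim (\mathbf{D}_{\rm dR}(W)/\fil^0\mathbf{D}_{\rm dR}(W)),
\]
while $\dim \rH^1(F_\eta, W) = \dim \rH^0(F_\eta,W) + \dim \rH^2(F_\eta,W) + [F_\eta:\Q_p]\dim W$ by the local Euler characteristic formula, and $\rH^2(F_\eta,W) = \rH^0(F_\eta, W^*(1))^\vee$ by local Tate duality. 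First I would pin down the Hodge--Tate weights of $V_\xi\vert_{G_{F_\eta}}$: with the conventions in the excerpt, $V_\xi = \Phi(\xi)$ where $\xi$ has infinity type $(a,b)$, and its restriction to the decomposition group at $v$ (resp. $\overline v$) has Hodge--Tate weight governed by $a$ (resp. $b$); so $\mathbf{D}_{\rm dR}$ is concentrated in degree $0$ precisely when the relevant weight makes $\fil^0 = 0$, which happens for $\eta \mid v$ exactly when $a > 0$ (and symmetrically $\eta \mid \overline v$ when $b > 0$), and otherwise $\fil^0 \mathbf{D}_{\rm dR} = \mathbf{D}_{\rm dR}$.

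Next I would combine this with the observation that, generically, $\rH^0(F_\eta, V_\xi) = 0$ and $\rH^0(F_\eta, V_\xi^*(1)) = 0$ — these vanish unless $\xi$ or $\xi^{-1}\varepsilon_{\rm cyc}$ is trivial on the decomposition group, which is excluded by the infinity type being nonzero in the relevant component (a power of cyclotomic or unramified-times-cyclotomic character is nontrivial). Hence $\dim \rH^1(F_\eta, V_\xi) = [F_\eta:\Q_p]$. Then the Bloch--Kato formula gives $\dim \rH^1_f(F_\eta,V_\xi) = \dim(\mathbf{D}_{\rm dR}/\fil^0\mathbf{D}_{\rm dR})$, which is $[F_\eta:\Q_p]$ in the "full" case (so $\rH^1_f = \rH^1$) and $0$ in the complementary case. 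This establishes the first displayed equality. One subtlety to handle carefully: I should make sure the statement is about $V_\xi$ and not $T_\xi$ or $W_\xi$, so there are no torsion issues; over characteristic zero the dimension count is clean.

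For the second part, I would unwind the definitions of $\mathcal{X}_\BK(\xi)$, $\mathcal{X}_v(\xi)$ and $\mathcal{X}_{\overline v}(\xi)$: all three are Pontryagin duals of the direct limit over $F \subset K_\infty$ of Selmer groups cut out by local conditions that agree at every finite place $\eta \nmid p$ (the unramified condition) and differ only at $\eta \mid p$. When $a > 0$ and $b \le 0$, the first part shows that at $\eta \mid v$ the Bloch--Kato condition equals the full $\rH^1$, which is exactly the $v$-condition, and at $\eta \mid \overline v$ the Bloch--Kato condition is $0$, which again equals the $v$-condition; so the two Selmer groups ${\rm Sel}_\BK(F,W_\xi)$ and ${\rm Sel}_v(F,W_\xi)$ coincide (after propagating the conditions via $0 \to T_\xi \to V_\xi \to W_\xi \to 0$, which is compatible since the conditions match at the level of $V_\xi$). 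Taking direct limits and Pontryagin duals gives $\mathcal{X}_\BK(\xi) = \mathcal{X}_v(\xi)$; the case $b > 0$, $a \le 0$ is symmetric with $v$ replaced by $\overline v$. The main obstacle, such as it is, is bookkeeping: getting the Hodge--Tate weight sign conventions consistent with the excerpt's normalization (in particular the convention that $\mathbf{N}$ has $p$-adic avatar $\varepsilon_{\rm cyc}^{-1}$, which fixes the sign of the weight of a character of infinity type $(a,b)$), and confirming that the propagation of local conditions from $V_\xi$ to $W_\xi$ does not introduce spurious classes — but since the $V_\xi$-level conditions literally agree, this is automatic.
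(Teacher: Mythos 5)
Your proof is essentially the paper's: both identify the Hodge--Tate weights of $\xi$ as $a$ at $v$ and $b$ at $\overline{v}$ via the stated conventions on $p$-adic avatars, and then apply the Bloch--Kato dimension formula for $\rH^1_f$ --- the paper cites \cite[Thm.~4.1(ii)]{BK}, which is precisely the formula you invoke. You supply more detail (the local Euler-characteristic count, and the observation that propagating identical $V_\xi$-level conditions to $W_\xi$ is automatic); the one soft spot, shared with the paper's own one-line proof, is the implicit vanishing of $\rH^0(F_\eta,V_\xi)$ when the relevant Hodge--Tate weight is exactly $0$, which is not forced by the infinity type alone (at such $\eta$ the local character is merely unramified), though it does hold in the situations where the lemma is applied (cf.\ \cite[Lem.~2.5]{arnold}).
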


\begin{proof}
%
With the convention that the $p$-adic cyclotomic character $\varepsilon_{\rm cyc}:G_{\Q}\rightarrow\Z_p^\times$ has Hodge--Tate weight $+1$, our convention on infinity types implies that the $p$-adic avatar $\xi:G_K\rightarrow\scO$ has Hodge--Tate  weight $a$ (resp. $b$) at $v$ (resp. $\overline{v}$). In view of \cite[Thm.~4.1(ii)]{BK}, this implies the result.
\end{proof}

\subsection{Decompositions}

When $g$ is the theta series of a Hecke character $\psi$ of $K$, 
then $V_g\simeq{\rm Ind}_K^\Q V_\psi$ and so
\begin{equation}\label{eq:dec-V}
V_{g,\chi}\simeq V_{\psi\chi}(r)\oplus V_{\psi^\st\chi}(r).
\end{equation}
We fix an isomorphism as above, and let $T_{g,\chi}\subset V_{g,\chi}$ be the $G_K$-stable $\scO$-lattice with $T_{g,\chi}\simeq T_{\psi\chi}(r)\oplus T_{\psi^\st\chi}(r)$ under that isomorphism, so that
\[
W_{g,\chi}:=V_{g,\chi}/T_{g,\chi}\simeq W_{\psi\chi}(r)\oplus W_{\psi^\st\chi}(r)
\]
as $G_K$-modules.

\begin{prop}\label{prop:dec-Sel}
Suppose $g=\theta_\psi\in S_{2r}(\Gamma_1(N_g))$ is the theta series of a Hecke character $\psi$ of $K$, and $\chi$ is a finite order character with $\chi\in\Sigma_{\rm cc}(c,\mathfrak{N}_g,\varepsilon_g)$. 
Then there are $\Lambda_\scO$-module isomorphisms
\begin{align*}
\check{\mathcal{S}}_v(g,\chi)&\simeq\check{\mathcal{S}}_v(\psi\chi\nr)\oplus\check{\mathcal{S}}_v(\psi^\st\chi\nr),\\
\check{\mathcal{S}}_{\BK}(g,\chi)&\simeq\check{\mathcal{S}}_{\overline{v}}(\psi\chi\nr)\oplus\check{\mathcal{S}}_{v}(\psi^\st\chi\nr),
\end{align*}
and similarly
\begin{align*}
\mathcal{X}_{v}(g,\chi)&\simeq\mathcal{X}_v(\psi\chi\nr)\oplus\mathcal{X}_{v}(\psi^\st\chi\nr),\\
\mathcal{X}_{\BK}(g,\chi)&\simeq\mathcal{X}_{\overline{v}}(\psi\chi\nr)\oplus\mathcal{X}_{v}(\psi^\st\chi\nr).
\end{align*}
\end{prop}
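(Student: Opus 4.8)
The plan is to prove the isomorphisms by matching local conditions summand-by-summand under the fixed decomposition $V_{g,\chi}\simeq V_{\psi\chi}(r)\oplus V_{\psi^\st\chi}(r)$ of \eqref{eq:dec-V}, which we have arranged to be compatible with $\scO$-lattices. Since all the Selmer groups in sight are defined by kernels of restriction-corestriction maps to products of local cohomology quotients, and since Galois cohomology commutes with finite direct sums, everything reduces to checking that for each prime $\eta$ of a finite layer $F\subset K_\infty$ the local condition defining ${\rm Sel}_\star(F,T_{g,\chi})$ at $\eta$ is the direct sum of prescribed local conditions on $T_{\psi\chi}(r)$ and $T_{\psi^\st\chi}(r)$; the inverse/direct limit over $F$ and Pontryagin duality then propagate the isomorphisms to the $\Lambda_\scO$-adic and $\mathcal{X}$-level statements formally.

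First I would dispose of the primes $\eta\nmid p$: the unramified local condition $\rH^1_{\rm ur}$ is visibly additive in the coefficient module, so at these primes the $\star=v$, $\star=\overline v$, and $\star=\BK$ conditions all split as the direct sum of the unramified conditions on the two rank-one pieces, with no choices to make. The content is therefore entirely at $\eta\mid p$, where by \eqref{eq:spl} there is one prime above $v$ and one above $\overline v$ in each finite layer. The key input is Lemma~\ref{lem:BK=v}: since $\psi$ has infinity type $(1-2r,0)$ (recall $g=\theta_\psi$), the character $\psi\chi\nr$ has infinity type $(1-2r+ (-r)\cdot(-1)?,\dots)$ — more carefully, $\psi\chi$ has infinity type $(1-2r,0)$ and $\mathbf{N}^{-r}$ has infinity type $(r,r)$, so $\psi\chi\nr$ has infinity type $(1-r,r)$, while $\psi^\st\chi\nr$ has infinity type $(r,1-r)$; and the twist by $(r)$ in $V_{\psi\chi}(r)$ is reconciled with working with $V_{\psi\chi\nr}$ via the identification of $\mathbf{N}^{-1}$'s $p$-adic avatar with $\varepsilon_{\rm cyc}$ noted after Theorem~\ref{thm:katz}, so that $V_{\psi\chi}(r)\simeq V_{\psi\chi\nr}$ as $G_K$-modules. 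Thus Lemma~\ref{lem:BK=v} applies with $(a,b)=(1-r,r)$ for the first factor and $(a,b)=(r,1-r)$ for the second.

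Now I would run the bookkeeping. For $\star=v$: the $w=v$ condition on $V_{g,\chi}$ is ``full at $\eta\mid v$, zero at $\eta\mid\overline v$'', which by additivity is exactly $\rH^1_v$ on each of $V_{\psi\chi\nr}$ and $V_{\psi^\st\chi\nr}$, giving the first isomorphism (with the lattice/divisible-module versions obtained by propagation through \eqref{eq:TVA}). For $\star=\BK$: at $\eta\mid v$ the Bloch--Kato condition $\rH^1_f$ on $V_{g,\chi}$ is $\rH^1_f\oplus\rH^1_f$; by Lemma~\ref{lem:BK=v}, for $\psi\chi\nr$ (type $(1-r,r)$, so $a=1-r\le 0$ when $r\ge1$) we get $\rH^1_f(F_\eta,V_{\psi\chi\nr})=0=\rH^1_{\overline v}(F_\eta,V_{\psi\chi\nr})$, while for $\psi^\st\chi\nr$ (type $(r,1-r)$, so $a=r>0$) we get $\rH^1_f=\rH^1(F_\eta,V_{\psi^\st\chi\nr})=\rH^1_v(F_\eta,V_{\psi^\st\chi\nr})$; at $\eta\mid\overline v$, for $\psi\chi\nr$ we have $b=r>0$ so $\rH^1_f=\rH^1=\rH^1_{\overline v}$, and for $\psi^\st\chi\nr$ we have $b=1-r\le0$ so $\rH^1_f=0=\rH^1_v$. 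Assembling, the $\BK$ condition on $V_{g,\chi}$ matches the $\overline v$-condition on $V_{\psi\chi\nr}$ plus the $v$-condition on $V_{\psi^\st\chi\nr}$, which is the second isomorphism. Passing to lattices by propagation, to $\Lambda_\scO$-adic classes by $\varprojlim_F$ (corestriction) and $\varinjlim_F$ (restriction), and to duals, yields all four displayed isomorphisms.

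The main obstacle is not conceptual but notational: one must be scrupulous about the Tate twist versus the $\mathbf{N}^{-r}$-twist (using $\mathbf{N}\mapsto\varepsilon_{\rm cyc}^{-1}$), about the precise infinity type of $\psi\chi\nr$ and $\psi^\st\chi\nr$ so that the case division in Lemma~\ref{lem:BK=v} comes out as claimed, and about the edge case $r=1$ where some Hodge--Tate weights are $0$ (there Lemma~\ref{lem:BK=v} still gives $\rH^1_f=0$ since the condition is $a>0$, strictly). Once the infinity types are pinned down, the matching of local conditions is forced and the rest is formal.
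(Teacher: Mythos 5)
Your proposal is correct and follows essentially the same route as the paper: both rest on the fixed decomposition \eqref{eq:dec-V}, observe that the $v$-conditions and unramified conditions split termwise (giving the first and third isomorphisms immediately), and use Lemma~\ref{lem:BK=v} with the infinity types $(1-r,r)$ for $\psi\chi\mathbf{N}^{-r}$ and $(r,1-r)$ for $\psi^\st\chi\mathbf{N}^{-r}$ to match the Bloch--Kato condition at $\eta\mid p$ with the $\overline{v}$- and $v$-conditions on the respective summands. Your extra care with the primes away from $p$, the identification $V_{\psi\chi}(r)\simeq V_{\psi\chi\mathbf{N}^{-r}}$ via $\hat{\mathbf{N}}=\varepsilon_{\rm cyc}^{-1}$, and the passage to limits and duals only makes explicit what the paper treats as immediate.
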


\begin{proof}
The first and third isomorphisms are immediate from \eqref{eq:dec-V} and the definitions. On the other hand, note that the character $\psi\chi\varepsilon_{\rm cyc}^{r}$ has Hodge--Tate weight $1-r$ and $r$ above $v$ and $\overline{v}$, respectively, while $\psi^\st\chi\varepsilon_{\rm cyc}^{r}$ has Hodge--Tate weight $r$ and $1-r$ above $v$ and $\overline{v}$, respectively. In view of Lemma~\ref{lem:BK=v}, the second and fourth isomorphisms thus follow the decomposition
\[
\rH^1_f(F_\eta,V_{g,\chi})=\rH^1_f(F_\eta,V_{\psi\chi}(r))\oplus\rH^1_f(F_\eta,V_{\psi^\st\chi}(r))
\]
induced by \eqref{eq:dec-V} for any $\eta\mid p$.
\end{proof}

\begin{rem}\label{rem:reversed}
In the situation of Proposition~\ref{prop:dec-Sel}, the local conditions defining the Selmer groups $\check{\mathcal{S}}_v(\psi\chi\nr)$ and $\mathcal{X}_v(\psi\chi\nr)$ at the primes above $p$ are \emph{reversed} with respect to those defining the corresponding Bloch--Kato Selmer groups $\check{\mathcal{S}}_\BK(\psi\chi\nr)=\check{\mathcal{S}}_{\overline{v}}(\psi\chi\nr)$ and $\mathcal{X}_\BK(\psi\chi\nr)=\mathcal{X}_{\overline{v}}(\psi\chi\nr)$. 
\end{rem}

\subsection{Equivalence between the Main Conjectures}\label{subsec:equivalence}


In the case where $V_{g,\chi}\vert_{G_K}$ is irreducible, one can show that  $\mathscr{L}_w(g,\chi)\neq 0$ for both values of $w\in\{v,\overline{v}\}$ (see \cite[Thm.~3.9]{cas-hsieh1}), and Conjecture~\ref{conj:HPMC} can be shown to be equivalent to either version ($v$ or $\overline{v}$) of Conjecture~\ref{conj:BDP-IMC}. In the weight $2$ case, the argument for this appears in several places in the literature (see e.g. \cite[\S{3}]{wan-heegner} 
or \cite[\S{5}]{BCK}). 

In Theorem~\ref{thm:Heeg-nonzero} below, we shall see that when $g=\theta_\psi$ has CM by $K$ (and so $V_{g,\chi}\vert_{G_K}$ is reducible), the vanishing or not of $\mathscr{L}_w(g,\chi)$ \emph{depends} on the value of the root number 
\[
w(\psi\chi\nr)\in\{\pm{1}\}
\] 
in the functional equation relating $L(\psi\chi,s)$ and $L(\psi\chi,2r-s)$. In fact, in this case we shall see that $\mathscr{L}_w(g,\chi)$ vanishes for one value of $w(\psi\chi\nr)$, and it is nonzero for the other (see Remark~\ref{rem:v-vs-vbar}). Thus in this section we carefully explain the relation between Conjecture~\ref{conj:HPMC} and Conjecture~\ref{conj:BDP-IMC}. 


\begin{lemma}\label{lem:local-tors-free}
For any prime $w$ of $K$ above $p$, the quotient
\[
\mathcal{H}_{w}:=\varprojlim_{F\subset K_\infty}\prod_{\eta\vert w}\frac{\rH^1(F_\eta,T_{g,\chi})}{\rH^1_f(F_\eta,T_{g,\chi})}
\]
is $\Lambda_\scO$-torsion-free.
\end{lemma}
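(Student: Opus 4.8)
The goal is to show that the $\Lambda_\scO$-module
\[
\mathcal{H}_w=\varprojlim_{F\subset K_\infty}\prod_{\eta\mid w}\frac{\rH^1(F_\eta,T_{g,\chi})}{\rH^1_f(F_\eta,T_{g,\chi})}
\]
is torsion-free. The strategy is to reduce to an explicit description of the local cohomology at a prime above $p$ in the anticyclotomic tower, where the representation $T_{g,\chi}$ restricted to a decomposition group becomes (a twist of) a direct sum of crystalline characters, and then to quote (or reprove) the standard fact that the relevant Iwasawa-theoretic local quotient is torsion-free because it injects into a module of the form $\Lambda_\scO^{\rm ur}$ via a Perrin-Riou/Coleman-type map, or because it is isomorphic to a quotient of $\Lambda_\scO$ by an ideal of height one that has no $\Lambda_\scO$-torsion. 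Concretely, I would first fix a prime $\eta$ of $K_\infty$ above $w$ and note that, since $p$ splits in $K$ and $w\mid p$, the completion $K_{\infty,\eta}$ is a $\Z_p$-extension of $K_w\simeq\Q_p$ (indeed the anticyclotomic $\Z_p$-extension is, up to finite level, a Lubin--Tate-type tower at $w$). The group $\prod_{\eta\mid w}\rH^1(F_\eta,T_{g,\chi})$ is then, via Shapiro's lemma, $\rH^1(K_{\infty,\eta},T_{g,\chi})$ computed as a $\Lambda_\scO$-module, and similarly for the $\rH^1_f$-part.

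The second step is to identify $\rH^1_f(F_\eta,T_{g,\chi})$ inside $\rH^1(F_\eta,T_{g,\chi})$ using the decomposition \eqref{eq:dec-V} in the CM case, or more generally the two-step filtration of $V_{g,\chi}\vert_{G_{K_w}}$ coming from the ordinarity/crystallinity at $p$: one has a $G_{K_w}$-stable line $T_{g,\chi}^+\subset T_{g,\chi}$ with $T_{g,\chi}^-=T_{g,\chi}/T_{g,\chi}^+$, chosen so that $\rH^1_f(F_\eta,T_{g,\chi})$ is, up to finite index / after tensoring with $\Lambda_\scO^{\rm ur}$, the image of $\rH^1(F_\eta,T_{g,\chi}^+)$. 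Then the quotient $\rH^1(F_\eta,T_{g,\chi})/\rH^1_f(F_\eta,T_{g,\chi})$ maps with finite kernel and cokernel to $\rH^1(F_\eta,T_{g,\chi}^-)$, and after passing to the inverse limit over $F\subset K_\infty$ one gets that $\mathcal{H}_w$ is pseudo-isomorphic to $\varprojlim_F\prod_{\eta\mid w}\rH^1(F_\eta,T_{g,\chi}^-)$. The key input is then that this last module is $\Lambda_\scO$-torsion-free: by local Tate duality and the local Euler characteristic formula over the tower, $\varprojlim_F \rH^1(F_\eta, T^-)$ has no nonzero pseudo-null (hence no torsion, being a submodule of a free module up to pseudo-isomorphism) submodule — this is the standard statement that $\rH^1_{\rm Iw}$ of a one-dimensional crystalline local representation over a $\Z_p$-extension is $\Lambda$-free, as in Perrin-Riou's theory.

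However, ``pseudo-isomorphic to torsion-free'' is weaker than ``torsion-free'', so to get the honest statement I would argue more carefully: $\mathcal{H}_w$ is by construction a submodule of $\varprojlim_F\prod_{\eta\mid w}\rH^1(F_\eta,T_{g,\chi})$ (the quotient of an inverse limit need not be the inverse limit of quotients, so one should set this up via the snake lemma applied to the tower of exact sequences $0\to\rH^1_f\to\rH^1\to\rH^1/\rH^1_f\to 0$, checking Mittag--Leffler so that the limit of the $\rH^1/\rH^1_f$ is a submodule of nothing problematic). The cleanest route is: show $\varprojlim_F\rH^1_f(F_\eta,T_{g,\chi})$ and $\varprojlim_F\rH^1(F_\eta,T_{g,\chi})$ are both $\Lambda_\scO$-free (the latter by local Iwasawa cohomology vanishing of $\rH^2$ together with the Euler characteristic computation, the former likewise for the crystalline subrepresentation), and that the map between them is injective with torsion-free cokernel because its ``determinant'' is the Coleman/Perrin-Riou map ${\rm Col}_{w}$ of Theorem~\ref{thm:ERL}, which is injective with \emph{finite} cokernel into $\Lambda_\scO^{\rm ur}$; a free module mod the image of an injection with finite cokernel into a free module of the same rank has no torsion iff the cokernel is finite, which it is. The main obstacle I anticipate is precisely this bookkeeping between the $f$-part and the full $\rH^1$ in the limit — making sure the finite discrepancies at each layer do not accumulate to produce $\Lambda_\scO$-torsion — and the cleanest fix is to phrase everything after $\hat\otimes_\scO\Lambda_\scO^{\rm ur}$ and invoke ${\rm Col}_{\overline{w}}$ directly, since having \emph{finite} (not merely torsion) cokernel is exactly what rules out torsion in $\mathcal{H}_w$.
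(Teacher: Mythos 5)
Your proposal has a genuine gap, and it stems from not using the cleanest consequence of the CM structure. You do mention the decomposition $V_{g,\chi}\simeq V_{\psi\chi}(r)\oplus V_{\psi^\tau\chi}(r)$ early on, but you then retreat to the generic ordinary-filtration picture (a $G_{K_w}$-stable line $T^+\subset T$, with $\rH^1_f$ equal to the image of $\rH^1(-,T^+)$ ``up to finite index''). That route only yields a pseudo-isomorphism of $\mathcal{H}_w$ with $\varprojlim\rH^1(F_\eta,T^-)$, and, as you yourself note, pseudo-isomorphism does not preserve torsion-freeness. The paper avoids this entirely: by Lemma~\ref{lem:BK=v}, for each of the two characters and each of $v,\overline{v}$ the local finite subspace $\rH^1_f(F_\eta,-)$ is \emph{exactly} either $0$ or all of $\rH^1(F_\eta,-)$, so in the split decomposition $T_{g,\chi}\simeq T_{\psi\chi}(r)\oplus T_{\psi^\tau\chi}(r)$ the subspace $\rH^1_f(F_\eta,T_{g,\chi})$ is a genuine direct summand, and $\mathcal{H}_v$ (resp.\ $\mathcal{H}_{\overline{v}}$) is literally $\varprojlim_F\prod_{\eta\mid v}\rH^1(F_\eta,T_{\psi\chi}(r))$ (resp.\ $\varprojlim_F\prod_{\eta\mid\overline{v}}\rH^1(F_\eta,T_{\psi^\tau\chi}(r))$) — the full local Iwasawa cohomology of a one-dimensional $G_K$-representation over $K_\infty/K$. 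Torsion-freeness of that module is the content of \cite[Prop.~2.1.6]{PR-92}, applied as in \cite[Lem.~2.8]{arnold} (together with the observation that $\psi\chi\mathbf{N}^{-r}$ and $\psi^\tau\chi\mathbf{N}^{-r}$ do not have infinity type $(a,-a)$, so Arnold's Lem.~2.5 hypothesis holds). No ``finite discrepancy at each layer'' arises at all.

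Your proposed ``cleanest fix'' via ${\rm Col}_{\overline{w}}$ of Theorem~\ref{thm:ERL} does not repair the gap, for a concrete reason: that Coleman map has source $\varprojlim_F\prod_{\eta\mid\overline{w}}\rH^1_f(F_\eta,T_{g,\chi})\hat\otimes_{\scO}\Lambda_\scO^{\rm ur}$, i.e.\ it lives on the \emph{$\rH^1_f$-piece}, which is the complementary summand, not on the quotient $\mathcal{H}_w$ that you are trying to control. (The Coleman map that \emph{would} be relevant to $\mathcal{H}_w$ is rather the one on $\varprojlim\rH^1(F_\eta,T_\xi)$ for a character $\xi$, as in Theorem~\ref{thm:ERL-katz}.) Relatedly, your statement that ``a free module mod the image of an injection with finite cokernel into a free module of the same rank has no torsion'' does not apply: $\rH^1_f$ and $\rH^1$ here have \emph{different} $\Lambda_\scO$-ranks, and $\mathcal{H}_w$ is not a quotient of a free module by a full-rank submodule. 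The clean statement you actually need — that $\rH^1_f$ is a direct summand and the quotient is itself a rank-one local Iwasawa cohomology group — is exactly what Lemma~\ref{lem:BK=v} supplies in the CM setting, and it is this that makes the argument work without any pseudo-isomorphism or Coleman-map bookkeeping.
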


\begin{proof}
From Lemma~\ref{lem:BK=v} we immediately see that
\[
\mathcal{H}_v\simeq\varprojlim_{F\subset K_\infty}\prod_{\eta\vert v}\rH^1(F_\eta,T_{\psi\chi}(r)),\quad\quad
\mathcal{H}_{\overline{v}}\simeq\varprojlim_{F\subset K_\infty}\prod_{\eta\vert\overline{v}}\rH^1(F_\eta,T_{\psi^\st\chi}(r)).
\]
The result thus follows from \cite[Prop.~2.1.6]{PR-92} as in \cite[Lem.~2.8]{arnold}. (Note that \cite[Lem.~2.5]{arnold} also applies in our case, since neither of the characters $\psi\chi\mathbf{N}^{-r}$ and $\psi^\st\chi\mathbf{N}^{-r}$ has infinity type of the form $(a,b)$ with $a=-b$.)
\end{proof}


\begin{prop}\label{prop:equiv}
Suppose $\mathbf{z}_{g,\chi}$ is non-torsion, and the localization map
\begin{equation}\label{eq:Sel-locv}
{\rm loc}_{\overline{v}}:\check{\mathcal{S}}_\BK(g,\chi)\rightarrow\varprojlim_{F\subset K_\infty}\prod_{\eta\vert\overline{v}}\rH^1_f(F_\eta,T_{g,\chi})
\end{equation}
is nonzero. Then the following are equivalent:
\begin{itemize}
\item[$(i^+)$] 
${\rm rank}_{\Lambda_\scO}\bigl(\mathcal{X}_{{v}}(g,\chi)\bigr)={\rm rank}_{\Lambda_\scO}\bigl(\check{\mathcal{S}}_v(g,\chi)\bigr)=0$;
\item[$(i^-)$] 
${\rm rank}_{\Lambda_\scO}\bigl(\mathcal{X}_\BK(g,\chi)\bigr)={\rm rank}_{\Lambda_\scO}\bigl(\check{\mathcal{S}}_\BK(g,\chi)\bigr)=1$,
\end{itemize}
and in that case, the following are equivalent:
\begin{itemize}
\item[$(ii^+)$] ${\rm char}_{\Lambda_\scO}\bigl(\mathcal{X}_{v}(g,\chi)\bigr)\subset\bigl(\mathscr{L}_v(g,\chi)^2\bigr)$ in $\Lambda_\scO^{\rm ur}$;
\item[$(ii^-)$] ${\rm char}_{\Lambda_\scO}\bigl(\mathcal{X}_\BK(g,\chi)_{\rm tors}\bigr)\subset{\rm char}_{\Lambda_\scO}\bigl(\check{\mathcal{S}}_\BK(g,\chi)/{\Lambda_\scO}\cdot\mathbf{z}_{g,\chi}\bigr)^2$ in ${\Lambda_\scO}$,
\end{itemize}
and the same holds true for the opposite divisibilities. 
%
In particular, 
Conjecture~\ref{conj:BDP-IMC} for $w=v$ and Conjecture~\ref{conj:HPMC} are equivalent.
\end{prop}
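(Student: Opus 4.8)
The plan is to run a standard ``Poitou--Tate plus control'' argument comparing the two self-dual Selmer structures attached to $V_{g,\chi}$, namely the $v$-Selmer structure (which is strictly restricted at $v$ and strictly relaxed at $\overline v$, hence gives a rank-$0$ module conjecturally) and the Bloch--Kato structure (which is the ``balanced'' one, conjecturally of rank $1$). The key point is that these two Selmer groups differ only in their local conditions at the primes above $p$, and that the local conditions at $v$ and $\overline v$ for the two structures are ``dual'' to each other in the Greenberg sense. Concretely, using Lemma~\ref{lem:BK=v} one has $\rH^1_f(F_\eta,V_{g,\chi})=\rH^1(F_\eta,V_{g,\chi})$ for $\eta\mid v$ (since $\psi\chi\mathbf{N}^{-r}$ has positive Hodge--Tate weight at $v$ and $\psi^\st\chi\mathbf{N}^{-r}$ has positive weight there too, after the twist by $r$), while $\rH^1_f(F_\eta,V_{g,\chi})=0$ for $\eta\mid\overline v$ when one of the two characters has nonpositive weight there --- wait, more precisely, the decomposition \eqref{eq:dec-V} shows that at each $\eta\mid p$ exactly one of the two summands contributes its full $\rH^1$ and the other contributes $0$, with the roles of $v$ and $\overline v$ swapped between the summands. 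This is exactly the content of Proposition~\ref{prop:dec-Sel} and Remark~\ref{rem:reversed}.

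First I would set up the comparison exact sequence. Let $\mathcal{H}_{\overline v}$ be as in Lemma~\ref{lem:local-tors-free}; by definition of the two Selmer structures there is a four-term exact sequence of $\Lambda_\scO$-modules
\[
0\rightarrow\check{\mathcal{S}}_{\BK}(g,\chi)\rightarrow\check{\mathcal{S}}_{v}(g,\chi)\oplus\Bigl(\varprojlim_{F}\prod_{\eta\mid\overline v}\rH^1_f(F_\eta,T_{g,\chi})\Bigr)\xrightarrow{\ \mathrm{loc}_{\overline v}-\iota\ }\mathcal{H}_{\overline v}\rightarrow(\text{dual Selmer cokernel})\rightarrow 0,
\]
or more cleanly a short exact sequence
\[
0\rightarrow\check{\mathcal{S}}_{\BK}(g,\chi)\rightarrow\check{\mathcal{S}}_{v}(g,\chi)\oplus\mathcal{H}^{f}_{\overline v}\rightarrow\mathcal{H}_{\overline v},
\]
where $\mathcal{H}^f_{\overline v}:=\varprojlim_F\prod_{\eta\mid\overline v}\rH^1_f(F_\eta,T_{g,\chi})$, together with its Pontryagin-dual counterpart linking $\mathcal{X}_{\BK}(g,\chi)$, $\mathcal{X}_v(g,\chi)$, and the local term at $\overline v$. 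One then computes $\Lambda_\scO$-ranks: by Lemma~\ref{lem:local-tors-free} the module $\mathcal{H}_{\overline v}$ is torsion-free, and by the $\Gamma$-Euler-characteristic / local Tate duality computations (which give that $\mathcal{H}_{\overline v}$ and $\mathcal{H}^f_{\overline v}$ both have $\Lambda_\scO$-rank $1$ for the self-dual character summands) one reads off that $\mathrm{rank}_{\Lambda_\scO}\check{\mathcal{S}}_{\BK}=\mathrm{rank}_{\Lambda_\scO}\check{\mathcal{S}}_v+1$, and similarly on the $\mathcal{X}$ side; this yields the equivalence of $(i^+)$ and $(i^-)$. The nonvanishing hypothesis on $\mathrm{loc}_{\overline v}$ in \eqref{eq:Sel-locv} is precisely what forces the image of $\check{\mathcal{S}}_{\BK}$ in $\mathcal{H}^f_{\overline v}$ to be of full rank $1$, so that the rank-$1$ part of $\check{\mathcal{S}}_{\BK}$ is ``used up'' in the local term and $\check{\mathcal{S}}_v$ picks up the remaining rank $0$.

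Next, granting the rank equality, I would upgrade to characteristic ideals. The short exact sequence above plus the one for duals gives, after taking $\Lambda_\scO$-torsion and using multiplicativity of characteristic ideals in exact sequences, an identity of the shape
\[
\mathrm{char}_{\Lambda_\scO}\bigl(\mathcal{X}_v(g,\chi)\bigr)\cdot\mathrm{char}_{\Lambda_\scO}\bigl(\mathcal{H}_{\overline v}/\mathrm{loc}_{\overline v}\check{\mathcal{S}}_{\BK}(g,\chi)\bigr)\ \doteq\ \mathrm{char}_{\Lambda_\scO}\bigl(\mathcal{X}_{\BK}(g,\chi)_{\rm tors}\bigr)\cdot(\text{local correction at }\overline v),
\]
where $\doteq$ is equality up to units and the local correction terms are computed by the local Euler characteristic formula (they are controlled by $\mathbf{D}_{\mathrm{dR}}$-type invariants / Tamagawa-type factors, but crucially are the \emph{same} on both sides because the ambient Galois representation is the same). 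The Coleman map $\mathrm{Col}_{\overline v}$ of Theorem~\ref{thm:ERL}, which has finite cokernel and sends $\mathrm{loc}_{\overline v}(\mathbf{z}_{g,\chi})$ to $\mathscr{L}_v(g,\chi)$ (note $\mathscr{L}_w$ with the $v/\overline v$ bookkeeping as in that theorem), identifies $\mathrm{char}_{\Lambda_\scO}\bigl(\mathcal{H}_{\overline v}/\mathrm{loc}_{\overline v}\check{\mathcal{S}}_{\BK}(g,\chi)\bigr)$ with $\bigl(\mathscr{L}_v(g,\chi)\bigr)\cdot\mathrm{char}_{\Lambda_\scO}\bigl(\check{\mathcal{S}}_{\BK}(g,\chi)/\Lambda_\scO\cdot\mathbf{z}_{g,\chi}\bigr)^{-1}$ up to $\Lambda_\scO^{\rm ur,\times}$ (using that $\mathbf{z}_{g,\chi}$ generates a rank-$1$ submodule and that $\mathrm{loc}_{\overline v}$ is injective on the torsion-free quotient by Lemma~\ref{lem:local-tors-free}, together with $\mathbf{z}_{g,\chi}(\mathds1)=\mathcal{E}_p(g,\chi)\cdot z_{g,\chi}$ with $\mathcal{E}_p(g,\chi)$ a unit in the relevant localization). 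Substituting and rearranging turns the displayed identity into: $\mathrm{char}_{\Lambda_\scO}(\mathcal{X}_v(g,\chi))\subset(\mathscr{L}_v(g,\chi)^2)$ if and only if $\mathrm{char}_{\Lambda_\scO}(\mathcal{X}_{\BK}(g,\chi)_{\rm tors})\subset\mathrm{char}_{\Lambda_\scO}(\check{\mathcal{S}}_{\BK}(g,\chi)/\Lambda_\scO\cdot\mathbf{z}_{g,\chi})^2$, with the same equivalence for the reverse inclusions, which is the assertion that $(ii^+)\Leftrightarrow(ii^-)$; and the combination of both gives the equivalence of Conjecture~\ref{conj:BDP-IMC} (for $w=v$) with Conjecture~\ref{conj:HPMC}.

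I expect the main obstacle to be the bookkeeping of the local terms at the primes above $p$: verifying that the ``local correction factors'' produced by the Poitou--Tate nine-term sequence (the $\rH^0$ and $\rH^2$ contributions, and the cokernel of $\mathrm{Col}_{\overline v}$) cancel \emph{exactly}, not just up to powers of $\varpi$, since the whole point of the paper is to avoid ambiguity by powers of $p$. This requires pinning down the isogeny-invariance (Remark~\ref{rem:isog-inv}, via \cite{PR-isogeny,KO}) so that one may choose the lattice $T_{g,\chi}=T_{\psi\chi}(r)\oplus T_{\psi^\st\chi}(r)$ compatibly, and using that neither $\psi\chi\mathbf{N}^{-r}$ nor $\psi^\st\chi\mathbf{N}^{-r}$ has infinity type of the exceptional form $(a,-a)$ (as noted in the proof of Lemma~\ref{lem:local-tors-free}) so that the relevant $\rH^0$ and $\rH^2$ over $K_\infty$ vanish and $\mathcal{H}_{\overline v}$, $\mathcal{H}^f_{\overline v}$ are free of the expected rank. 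The rank statement $(i^+)\Leftrightarrow(i^-)$ is comparatively soft and follows purely from the exact sequence plus the hypothesis that $\mathrm{loc}_{\overline v}$ is nonzero and $\mathbf{z}_{g,\chi}$ is non-torsion; it is the passage to \emph{equality} of characteristic ideals on the nose that needs the careful integral comparison, which is why the hypotheses $p>3$, $p\nmid h_K$, and $p$ split in $K$ enter.
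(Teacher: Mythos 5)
Your overall strategy (global duality relating the $v$- and Bloch--Kato Selmer structures, torsion-freeness of the local modules from Lemma~\ref{lem:local-tors-free}, ranks for $(i^+)\Leftrightarrow(i^-)$, then the Coleman map of Theorem~\ref{thm:ERL} to bring in $\mathscr{L}_v(g,\chi)$ and $\mathbf{z}_{g,\chi}$) is the same as the paper's, and your rank argument is fine. But the step you yourself flag as ``the main obstacle'' is a genuine gap, not bookkeeping: your displayed identity
${\rm char}_{\Lambda_\scO}(\mathcal{X}_v(g,\chi))\cdot{\rm char}_{\Lambda_\scO}\bigl(\mathcal{H}_{\overline v}/{\rm loc}_{\overline v}\check{\mathcal{S}}_\BK(g,\chi)\bigr)\doteq{\rm char}_{\Lambda_\scO}(\mathcal{X}_\BK(g,\chi)_{\rm tors})\cdot(\text{local correction})$
leaves the correction term unidentified, and ``substituting and rearranging'' cannot produce the equivalence $(ii^+)\Leftrightarrow(ii^-)$ unless that correction is shown to equal exactly one more copy of ${\rm char}_{\Lambda_\scO}({\rm coker}({\rm loc}_{\overline v}))$. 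The target relation is ${\rm char}(\mathcal{X}_v)\cdot{\rm char}(\check{\mathcal{S}}_\BK/\Lambda_\scO\mathbf{z}_{g,\chi})^2={\rm char}(\mathcal{X}_{\BK,{\rm tors}})\cdot(\mathscr{L}_v(g,\chi)^2)$, i.e.\ the cokernel of ${\rm loc}_{\overline v}$ must enter \emph{squared}; if your correction were trivial, or a genuinely local Euler-characteristic/$\mathbf{D}_{\rm dR}$-type factor as you suggest, the identity would be off by one factor of ${\rm char}({\rm coker}({\rm loc}_{\overline v}))$ and the squares in $(ii^+)$ and $(ii^-)$ would not match.

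The mechanism that produces the square in the paper is missing from your proposal: one introduces the intermediate Selmer groups $\mathcal{X}_{{\rm rel},f}(g,\chi)$ and $\mathcal{X}_{f,{\rm str}}(g,\chi)$ (relaxed/strict at $v,\overline v$ in the two possible mixed ways), obtains from global duality \emph{two} exact sequences, $0\rightarrow{\rm coker}({\rm loc}_{\overline v})\rightarrow\mathcal{X}_{{\rm rel},f}\rightarrow\mathcal{X}_\BK\rightarrow 0$ and $0\rightarrow{\rm coker}({\rm loc}_{\overline v}')\rightarrow\mathcal{X}_{v}\rightarrow\mathcal{X}_{f,{\rm str}}\rightarrow 0$, proves $\check{\mathcal{S}}_{{\rm rel},f}(g,\chi)=\check{\mathcal{S}}_\BK(g,\chi)$ (using that the quotient injects into $\mathcal{H}_v$, which is torsion-free) so that the two cokernels coincide, and then invokes the duality identity ${\rm char}_{\Lambda_\scO}(\mathcal{X}_{f,{\rm str}})={\rm char}_{\Lambda_\scO}(\mathcal{X}_{{\rm rel},f,{\rm tors}})$ (the variant of \cite[Lem.~2.3(3)]{cas-JLMS}); chaining these gives ${\rm char}(\mathcal{X}_v)={\rm char}(\mathcal{X}_{\BK,{\rm tors}})\cdot{\rm char}({\rm coker}({\rm loc}_{\overline v}))^2$, and the explicit reciprocity law gives ${\rm char}(\check{\mathcal{S}}_\BK/\Lambda_\scO\mathbf{z}_{g,\chi})\cdot{\rm char}({\rm coker}({\rm loc}_{\overline v}))=(\mathscr{L}_v(g,\chi))$, from which $(ii^+)\Leftrightarrow(ii^-)$ follows. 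Without this reflection step your argument does not close; also note that the exactness ``on the nose'' here has nothing to do with $p\nmid h_K$ (that hypothesis enters only in the period comparison of Section~\ref{sec:p-BSD}), and the relevant correction is a global--local cokernel, not a Tamagawa-type local invariant.
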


\begin{proof}
For $\circ,\bullet\in\{{\rm str},f,{\rm rel}\}$, let $\check{\mathcal{S}}_{\circ,\bullet}(g,\chi)$ and $\mathcal{X}_{\circ,\bullet}(g,\chi)$ denote the Selmer groups defined as in $\S\ref{subsec:Sel-sd}$ by with the local conditions $\circ$ and $\bullet$ at the primes above $v$ and $\overline{v}$, respectively. Thus, for instance, $\mathcal{X}_{{\rm rel},{\rm str}}(g,\chi)$ and $\mathcal{X}_{f,f}(g,\chi)$ are the previously defined $\mathcal{X}_{{v}}(g,\chi)$ and $\mathcal{X}_{\BK}(g,\chi)$, respectively. 

By global duality, ${\rm coker}({\rm loc}_{\overline{v}})$ is the same\footnote{Using the fact that the representation $V_{g,\chi}$ is conjugate self-dual, so the Selmer group dual to $\check{\mathcal{S}}_{f,{\rm str}}(g,\chi)={\rm ker}({\rm loc}_{\overline{v}})$ is identified with $\mathcal{X}_{{\rm rel},f}(g,\chi)$.
} as the kernel of the  projection $\mathcal{X}_{{\rm rel},f}(g,\chi)\rightarrow\mathcal{X}_\BK(g,\chi)$. Similarly, by global duality the cokernel of the localization map
\[
{\rm loc}_{\overline{v}}':\check{\mathcal{S}}_{{\rm rel},f}(g,\chi)\rightarrow\varprojlim_{F\subset K_\infty}\prod_{\eta\vert\overline{v}}\rH^1_f(F_\eta,T_{g,\chi})
\]
is identified with the kernel of the projection $\mathcal{X}_{v}(g,\chi)\rightarrow\mathcal{X}_{{f},{\rm str}}(g,\chi)$. Since the same argument as in \cite[Lem.~2.3]{cas-JLMS} shows that ${\rm rank}_{\Lambda_\scO}(\mathcal{X}_{{\rm rel},f}(g,\chi))=1+{\rm rank}_{\Lambda_\scO}(\mathcal{X}_{f,{\rm str}}(g,\chi))$, under the assumption on ${\rm loc}_{\overline{v}}$ in the statement we thus conclude that
\[
{\rm rank}_{\Lambda_\scO}(\mathcal{X}_{\rm BK}(g,\chi))=1+{\rm rank}_{\Lambda_\scO}(\mathcal{X}_v(g,\chi)),
\]
whence the equivalence $(i^+)\Longleftrightarrow(i^-)$.

Now assume that $(i^+)$ (and therefore also $(i^-)$) holds. As above, from global duality we have the short exact sequences
\begin{equation}\label{eq:locv}
\begin{aligned}
0\rightarrow{\rm coker}({\rm loc}_{\overline{v}})&\rightarrow\mathcal{X}_{{\rm rel},f}(g,\chi)\rightarrow\mathcal{X}_\BK(g,\chi)\rightarrow 0,\\
0\rightarrow{\rm coker}({\rm loc}_{\overline{v}}')&\rightarrow\mathcal{X}_{v}(g,\chi)\rightarrow\mathcal{X}_{f,{\rm str}}(g,\chi)\rightarrow 0.
\end{aligned}
\end{equation}
Since the target of ${\rm loc}_{\overline{v}}'$ has $\Lambda_\scO$-rank $1$, and  ${\rm rank}_{\Lambda_\scO}(\check{\mathcal{S}}_{v}(g,\chi))=0$, the quotient $\check{\mathcal{S}}_{{\rm rel},f}(g,\chi)/\check{\mathcal{S}}_\BK(g,\chi)$ is $\Lambda_\scO$-torsion; since this quotient injects via ${\rm loc}_v$ into $\mathcal{H}_v$, which is $\Lambda_\scO$-torsion-free by Lemma~\ref{lem:local-tors-free}, we conclude that $\check{\mathcal{S}}_{{\rm rel},f}(g,\chi)=\check{\mathcal{S}}_\BK(g,\chi)$, so in particular ${\rm loc}_{\overline{v}}={\rm loc}_{\overline{v}}'$. From \eqref{eq:locv} we thus obtain
\begin{equation}\label{eq:take-char}
\begin{aligned}
{\rm char}_{\Lambda_\scO}(\mathcal{X}_v(g,\chi))&={\rm char}_{\Lambda_\scO}(\mathcal{X}_{f,{\rm str}}(g,\chi))\cdot{\rm char}_{\Lambda_\scO}({\rm coker}({\rm loc}_{\overline{v}}))\\
&={\rm char}_{\Lambda_\scO}(\mathcal{X}_{{\rm rel},f}(g,\chi)_{\rm tors})\cdot{\rm char}_{\Lambda_\scO}({\rm coker}({\rm loc}_{\overline{v}}))\\
&={\rm char}_{\Lambda_\scO}(\mathcal{X}_\BK(g,\chi)_{\rm tors})\cdot{\rm char}_{\Lambda_\scO}({\rm coker}({\rm loc}_{\overline{v}}))^2,
\end{aligned}
\end{equation}
using a variant of \cite[Lem.~2.3(3)]{cas-JLMS} for the second equality. On the other hand, since $\varprojlim_F\rH^1(F,T_{g,\chi})$ is $\Lambda_\scO$-torsion--free, the map ${\rm loc}_{\overline{v}}$ defines the short exact sequence
\[
0\rightarrow\check{\mathcal{S}}_\BK(g,\chi)/\Lambda_\scO\cdot\mathbf{z}_{g,\chi}\rightarrow\varprojlim_{F\subset K_\infty}\oplus_{\eta\vert\overline{v}}\rH^1_f(F_\eta,T_{g,\chi})/\Lambda_\scO\cdot{\rm loc}_{\overline{v}}(\mathbf{z}_{g,\chi})\rightarrow{\rm coker}({\rm loc}_{\overline{v}})\rightarrow 0,
\]
which together with Theorem~\ref{thm:ERL} yields the equality
\begin{equation}\label{eq:ERL-cor}
{\rm char}_{\Lambda_\scO}\bigl(\check{\mathcal{S}}_\BK(g,\chi)/\Lambda_\scO\cdot\mathbf{z}_{g,\chi}\bigr)\cdot{\rm char}_{\Lambda_\scO}({\rm coker}({\rm loc}_{\overline{v}}))=\bigl(\mathscr{L}_v(g,\chi)\bigr)
\end{equation}
in $\Lambda_\scO^{\rm ur}$. 
Combining \eqref{eq:take-char} and \eqref{eq:ERL-cor} we deduce
\[
{\rm char}_{\Lambda_\scO}(\mathcal{X}_{v}(g,\chi))\cdot{\rm char}_{\Lambda_\scO}\bigl(\check{\mathcal{S}}_\BK(g,\chi)/\Lambda_\scO\cdot\mathbf{z}_{g,\chi}\bigr)^2={\rm char}_{\Lambda_\scO}(\mathcal{X}_{\BK}(g,\chi)_{\rm tors})\cdot\bigl(\mathscr{L}_v(g,\chi)^2\bigr),
\]
which readily yields the equivalence $(ii^+)\Longleftrightarrow(ii^-)$.
\end{proof}


\begin{rem}\label{rem:sign-psichi}
Theorem~\ref{thm:ERL} also relates the image of $\mathbf{z}_{g,\chi}$ under the localization map
\[
{\rm loc}_{v}:\check{\mathcal{S}}_\BK(g,\chi)\rightarrow\varprojlim_{F\subset K_\infty}\prod_{\eta\vert v}\rH^1_f(F_\eta,T_{g,\chi})
\]
to the $p$-adic $L$-function $\mathscr{L}_{\overline{v}}(g,\chi)$. 
The same argument as in Proposition~\ref{prop:equiv} shows that, assuming the non-vanishing of both $\mathbf{z}_{g,\chi}$ and ${\rm loc}_{v}(\check{\mathcal{S}}_\BK(g,\chi))$,  Conjecture~\ref{conj:BDP-IMC} for $w=\overline{v}$ and Conjecture~\ref{conj:HPMC} are equivalent.
%
\end{rem}

\section{Proof of the Iwasawa Main Conjectures}

In this section we prove Conjecture~\ref{conj:BDP-IMC} and Conjecture~\ref{conj:HPMC} for self-dual pairs $(g,\chi)$ in the case where $g=\theta_\psi$ is the theta series  of a Hecke character $\psi$. In the course of the proof we shall establish the non-triviality of $\mathbf{z}_{g,\chi}$ in this case (which in weights $2r>2$ appears to be new).

\subsection{Statement of the main result}

Let $K$ be an imaginary quadratic field satisfying \eqref{eq:spl} for a prime $p>3$. For a self-dual character $\phi$, denote by $w(\phi)\in\{\pm{1}\}$ the sign in the functional equation
\[
L(\phi,s)=w(\phi)L(\phi,-s).
\]
The main result of this section is the following.

\begin{thm}\label{thm:BDP-IMC}
Let $\psi$ be a Hecke character of infinity type $(1-2r,0)$ for some $r\geq 1$ and conductor a cyclic ideal $\mathfrak{f}_\psi$ of norm prime to $pD_K$. Set 
\[
g=\theta_\psi,
\quad 
N_g=D_K\cdot\mathbf{N}(\mathfrak{f}_\psi),\quad\mathfrak{N}_g=\mathfrak{d}_K\cdot\mathfrak{f}_\psi.
\]
Let $c$ be a positive integer prime to $pN_g$, let 
$\chi$ be a finite order character in  
$\Sigma_{\rm cc}(c,\mathfrak{N}_g,\varepsilon_g)$, and suppose $(g,\chi)$ is defined over $\scO$. %
\begin{itemize}
\item[(i)] If $w(\psi\chi\nr)=-1$, then 
$\check{\mathcal{S}}_v(g,\chi)=0$ and $\mathcal{X}_{v}(g,\chi)$ is $\Lambda_{\scO}$-torsion, with
\[
{\rm char}_{\Lambda_\scO}\bigl(\mathcal{X}_{v}(g,\chi)\bigr)={\rm char}_{\Lambda_\scO}\bigl(\mathscr{L}_v(g,\chi)^2\bigr)
\]
in $\Lambda_\scO^{\rm ur}$.
\item[(ii)] If $w(\psi\chi\nr)=+1$, the same results hold with $v$ replaced by $\overline{v}$.
\end{itemize}
Hence Conjecture~\ref{conj:BDP-IMC} holds for $(g,\chi)$.
\end{thm}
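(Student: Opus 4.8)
The plan is to use the decomposition of Proposition \ref{prop:dec-Sel} to reduce the statement for the pair $(g,\chi)=(\theta_\psi,\chi)$ to two statements about the rank-one Hecke characters $\psi\chi\nr$ and $\psi^\st\chi\nr$, and then to invoke the (two-variable) Iwasawa main conjecture for $K$ proved by Rubin and its refinements by Johnson-Leung--Kings \cite{kings-JL} and Kato \cite{kato-295}, together with the factorization of $p$-adic $L$-functions in Proposition \ref{prop:factor-BDP}. Concretely, by Proposition \ref{prop:dec-Sel} one has
\[
\mathcal{X}_v(g,\chi)\simeq\mathcal{X}_v(\psi\chi\nr)\oplus\mathcal{X}_v(\psi^\st\chi\nr),
\]
so $\mathrm{char}_{\Lambda_\scO}(\mathcal{X}_v(g,\chi))$ is the product of the two characteristic ideals on the right; by Proposition \ref{prop:factor-BDP} the square of $\mathscr{L}_v(g,\chi)$ factors, up to a unit, as $\mathscr{L}_v(\psi\chi\nr)\cdot\mathscr{L}_v(\psi^\st\chi\nr)$. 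Thus it suffices to prove, for each of the two self-dual characters $\phi\in\{\psi\chi\nr,\psi^\st\chi\nr\}$, the equality of $\mathrm{char}_{\Lambda_\scO}(\mathcal{X}_v(\phi))$ with $(\mathscr{L}_v(\phi))$ in $\Lambda_\scO^{\rm ur}$, being careful that for one of the two factors the local condition at $p$ defining $\mathcal{X}_v(\phi)$ is the \emph{reversed} (relaxed/strict) condition rather than the Bloch--Kato one, as recorded in Remark \ref{rem:reversed} and Lemma \ref{lem:BK=v}.

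The next step is the case analysis on the root number. The character $\psi\chi\nr$ has infinity type $(1-r,r)$ and $\psi^\st\chi\nr$ has infinity type $(r,1-r)$, so by Lemma \ref{lem:BK=v} one of $\mathcal{X}_v(\psi\chi\nr)$, $\mathcal{X}_v(\psi^\st\chi\nr)$ is a Bloch--Kato Selmer group and the other is its ``reversed'' companion. When $w(\psi\chi\nr)=-1$ (hence $w(\psi^\st\chi\nr)=-1$ as well, since $\psi^\st\chi\nr=(\psi\chi\nr)^{-\tau}\mathbf{N}^{-1}$ up to the obvious twist and the root number is preserved under $\phi\mapsto\phi^{-\tau}\mathbf{N}^{-1}$), the Katz $p$-adic $L$-function $\mathscr{L}_v(\phi)$ is, after specialization, forced to have a nonzero value at the relevant trivial-type point so that $\mathcal{X}_v(\phi)$ is $\Lambda_\scO$-torsion, and one applies the main conjecture for imaginary quadratic fields in its sharp form (as in \cite{AH-ord}, \cite{arnold}, incorporating \cite{kings-JL} and \cite{kato-295}) to the Bloch--Kato factor, while for the reversed factor one passes from $\mathcal{X}_\BK$ to $\mathcal{X}_v$ via a Poitou--Tate/global-duality comparison exactly as in the proof of Proposition \ref{prop:equiv}: reversing the local condition at $p$ on one side changes the characteristic ideal by the characteristic ideal of a local cohomology quotient, which by the explicit reciprocity law / Coleman map of Theorem \ref{thm:ERL} (or its abelian-variety analogue for Hecke characters, i.e. Rubin's version of Coates--Wiles) is generated by the corresponding $\mathscr{L}_{\overline v}$; combining, the two contributions assemble into $(\mathscr{L}_v(g,\chi)^2)$. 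The case $w(\psi\chi\nr)=+1$ is symmetric under swapping the roles of $v$ and $\overline v$, giving (ii). Finally, that $\check{\mathcal{S}}_v(g,\chi)=0$ (and not merely of rank zero) follows because each summand $\check{\mathcal{S}}_v(\phi)$ injects into a $\Lambda_\scO$-torsion-free local module as in Lemma \ref{lem:local-tors-free} while being $\Lambda_\scO$-torsion, hence vanishes; and the last assertion, Conjecture \ref{conj:BDP-IMC} for both $w$, follows by combining (i), (ii) with Proposition \ref{prop:equiv} and Remark \ref{rem:sign-psichi}.

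The main obstacle, I expect, is \textbf{the integrality and period bookkeeping in the reduction to the Katz main conjecture}. The main conjecture for $K$ is naturally formulated for the two-variable Katz $p$-adic $L$-function over $\Lambda_\scO^{\rm ur}(K(\cc p^\infty))$, and one must (a) twist by the finite-order character $\phi$-datum correctly so that the anticyclotomic projection matches the $\mathscr{L}_v(\psi\chi\nr)$ appearing in Proposition \ref{prop:factor-BDP} \emph{on the nose} and not just up to a pseudo-null or up to $p$-power ambiguity — this is where the hypotheses $p\nmid h_K$ and $p>3$ enter, via control theorems for the anticyclotomic quotient and via the $\mu$-invariant results of Hida, and where the comparison of CM periods (our $\Omega$ versus the $\Omega_K$ of \cite{cas-hsieh1}, and the de Shalit period of \cite{deshalit}) must be tracked; and (b) verify that the ``reversed-at-$p$'' Selmer group for the $(r,1-r)$ factor is exactly what global duality produces from the $(1-r,r)$ factor, so that no spurious extra factor of a local Tamagawa-type term survives. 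Handling the exceptional-zero phenomenon (the Euler factor $1-\xi(\overline v)$ in Theorem \ref{thm:katz} vanishing at the point of interest when the root number is $-1$) without losing track of $p$-adic units is the delicate technical heart of the argument; once this bookkeeping is in place, the algebraic part is a routine, if intricate, application of global duality as in Proposition \ref{prop:equiv}.
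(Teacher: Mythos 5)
Your overall architecture (decompose via Proposition~\ref{prop:dec-Sel}, factor via Proposition~\ref{prop:factor-BDP}, reduce to main-conjecture statements for the two Hecke characters) is the same as the paper's, but there is a genuine error at the pivot of your case analysis: you assert that $w(\psi\chi\nr)=-1$ forces $w(\psi^\st\chi\nr)=-1$ "as well," justified by the identity $\psi^\st\chi\nr=(\psi\chi\nr)^{-\tau}\mathbf{N}^{-1}$. That identity is vacuous: since $\psi\chi\nr$ is conjugate self-dual, $(\psi\chi\nr)^{-\tau}\mathbf{N}^{-1}=\psi\chi\nr$ again, not $\psi^\st\chi\nr$. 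In fact the two factors have \emph{opposite} root numbers: because $\chi\in\Sigma_{\rm cc}(c,\mathfrak{N}_g,\varepsilon_g)$ all finite local signs of $(g,\chi)$ are $+1$ and the archimedean sign is $-1$, so $w(\psi\chi\nr)\cdot w(\psi^\st\chi\nr)=\epsilon(g,\chi)=-1$, i.e. $w(\psi^\st\chi\nr)=-w(\psi\chi\nr)$ (this is stated explicitly at the start of the paper's proof and is the "new insight" flagged in Theorem~\ref{thm:Heeg-nonzero} and Remark~\ref{rem:v-vs-vbar}). This sign bookkeeping is not cosmetic: the paper handles the root-number $-1$ factor $\lambda=\psi\chi$ by Theorem~\ref{thm:AH-lambda} (the \emph{reversed} Selmer group $\mathcal{X}_v(\lambda\nr)$, whose torsionness rests on Greenberg's nonvanishing, valid precisely because $w(\lambda\nr)=-1$, plus twisted elliptic units and the two-variable main conjecture), and the root-number $+1$ factor $\psi^\st\chi$ by Theorem~\ref{thm:AH} (the Bloch--Kato Selmer group, via the conjugate character and Agboola--Howard/Arnold). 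With your sign assignment the Bloch--Kato factor would have sign $-1$, its Selmer group would have $\Lambda_\scO$-corank one rather than being torsion, and the relevant anticyclotomic Katz $p$-adic $L$-function would vanish identically, so neither the torsionness of $\mathcal{X}_v(g,\chi)$ nor the characteristic-ideal identity could be established along the lines you describe.

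Two secondary inaccuracies: the explicit reciprocity law needed for the character-level arguments is the elliptic-unit/Coleman-map statement of Theorem~\ref{thm:ERL-katz} (with $\mathscr{L}_w(\phi)$ as the image), not Theorem~\ref{thm:ERL}, and the reversed factor is not treated by adding a local "correction term" generated by $\mathscr{L}_{\overline v}$ to a Bloch--Kato main conjecture --- Proposition~\ref{prop:char-v} proves ${\rm char}_{\Lambda_\scO}(\mathcal{X}_v(\lambda\nr))=(\mathscr{L}_v(\lambda\nr))$ directly from the two-variable equality, Poitou--Tate, and ${\rm Col}_v$. Also, for the prime $w$ opposite to the root number the $p$-adic $L$-function $\mathscr{L}_w(g,\chi)$ vanishes identically, so Conjecture~\ref{conj:BDP-IMC} holds for that $w$ vacuously rather than via Proposition~\ref{prop:equiv} (whose hypotheses on the localization maps fail there); and the hypothesis $p\nmid h_K$ plays no role in this theorem --- it enters only later, in the period comparison of Proposition~\ref{prop:petersson-CM} and Theorem~\ref{thm:YZZ-CST}.
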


\begin{rem}
The above assumption on $\mathfrak{f}_\psi$ implies that $\cO_K/\mathfrak{N}_g\simeq\Z/N_g\Z$, i.e. $K$ satisfies \eqref{eq:Heeg} relative to $N_g$. Moreover, it is easy to see that in the setting of Theorem~\ref{thm:BDP-IMC}, the character $\psi\chi\nr$ (and hence also $\psi^\st\chi\nr$) is self-dual (see \cite[Lem.~3.16]{BDP-PJM}).
\end{rem}


By Proposition~\ref{prop:factor-BDP} and Proposition~\ref{prop:dec-Sel}, the proof of Theorem~\ref{thm:BDP-IMC} is reduced to the study of the  
relation between the $\Lambda_\scO$-adic Selmer groups attached to the self-dual characters $\psi\chi\nr$ and $\psi^\st\chi\nr$ and the $p$-adic $L$-functions $\mathscr{L}_w(\psi\chi\nr)$ and $\mathscr{L}_w(\psi^\st\chi\nr)$, respectively. The main difficulty arises from the fact that the Selmer group $\mathcal{X}_v(\psi\chi\nr)$ is different from the Bloch--Kato Selmer group over $K_\infty/K$ (see Remark~\ref{rem:reversed}), and so a relation between the characteristic ideal of $\mathcal{X}_v(\psi\chi\nr)$  and the $p$-adic $l$-function $\mathscr{L}_v(\psi\chi\nr)$ is not immediate from the Main Conjecture. 

The proof of Theorem~\ref{thm:BDP-IMC} is concluded in $\S\ref{subsec:IMC-proof-end}$, where we also deduce a similar result on 
Conjecture~\ref{conj:HPMC}.

\subsection{Explicit reciprocity law}\label{subsec:ERL-katz}

For an ideal $\cc\subset\cO_K$ prime to $p$ and a non-trivial ideal $\mathfrak{a}$ prime to $6\cc p$, let $c_\mathfrak{a}(K(\cc p^k))\in\rH^1(K(\cc p^k),\Z_p(1))$ be the elliptic unit denoted $\vartheta_\mathfrak{a}(\mathfrak{\cc} p^k)$ in \cite[\S{2.3}]{AH-ord} and ${}_\mathfrak{a}z_{\cc p^k}$ in \cite[\S{15.5}]{kato-295}; these are norm-compatible as $k$ varies. 

As a piece of notation, for an infinite abelian extension $K'/K$ and a $\Z_p$-module $T$ with a continuous linear $G_K$-action, put
\[
\rH^1_{\rm Iw}(K',T):=\varprojlim_{K''\subset K'}\rH^1(K'',T),
\]
with $K''$ running over the finite extensions of $K$ contained in $K'$, and limit with respect to corestriction. For a character $\phi:{\rm Gal}(K(\cc p^\infty)/K)\rightarrow\scO^\times$, we put $\phi^*=\varepsilon_{\rm cyc}\phi^{-1}$ and let $c_\mathfrak{a}(K(\cc p^\infty))^{\phi^{-1}}$ denote the image of $\varprojlim_k c_{\mathfrak{a}}(K(\cc p^k))$ under the twisting homomorphism
\[
\rH^1_{\rm Iw}(K(\cc p^\infty),\Z_p(1))\xrightarrow{\otimes\phi^{-1}}\rH^1_{\rm Iw}(K(\cc p^\infty),T_{\phi^*}).
\]
For any subextension $L$ of $K(\cc p^\infty)$ (not necessarily finite over $K$), we define $c_{\mathfrak{a}}(L)^{\phi^{-1}}$ to be the image of $c_\mathfrak{a}(K(\cc p^\infty))^{\phi^{-1}}$ under the corestriction map $\rH^1_{\rm Iw}(K(\cc p^\infty),T_{\phi^*})\rightarrow\rH^1_{\rm Iw}(L,T_{\phi^*})$. 

%
%
Here we give a refinement of Yager's work \cite{yager} (restricted to the anticyclotomic line) building on Kato's explicit reciprocity law \cite{kato-295}.

\begin{thm}\label{thm:ERL-katz}
Let $w\in\{v,\overline{v}\}$ be a prime of $K$ above $p$, and let $\phi$ be a self-dual Hecke character of $K$ with values in $\scO$. Then there is an injective $\Lambda_\scO^{\rm ur}$-module homomorphism
\[
{\rm Col}_w:\varprojlim_{F\subset K_\infty}\prod_{\eta\vert w}\rH^1(F_\eta,T_{\phi^\st})\hat\otimes_{\Z_p}\Z_p^{\rm ur}\rightarrow\Lambda_\scO^{\rm ur}
\]
with finite cokernel, and a ``twisted elliptic unit'' $\mathbf{c}_{\phi^\st}\in\rH^1_{\rm Iw}(K_\infty,T_{\phi^\st})$, such that
\[
{\rm Col}_w\bigl({\rm loc}_w(\mathbf{c}_{\phi^\st})\bigr)=\mathscr{L}_w(\phi).
\]
\end{thm}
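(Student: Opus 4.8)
The strategy is to build the map $\mathrm{Col}_w$ and the twisted elliptic unit $\mathbf{c}_{\phi^\st}$ from the two-variable Coleman-type map over the full ray class tower $K(\cc p^\infty)$, then descend (twist by $\phi^{-1}$ and project) to the anticyclotomic line $K_\infty$. Write $\mathfrak{c}$ for the prime-to-$p$ part of the conductor of $\phi$; by self-duality $\mathfrak{c}$ is stable under complex conjugation. The first step is to recall from Kato's explicit reciprocity law \cite[\S{15}]{kato-295} (in the anticyclotomic reformulation of Yager \cite{yager}, as refined e.g.\ in \cite{AH-ord}) that there is a $\Lambda_\scO^{\rm ur}(K(\cc p^\infty))$-linear ``Coleman'' homomorphism
\[
\mathcal{C}ol_w\colon\rH^1_{\rm Iw}(K(\cc p^\infty)_w,\Z_p(1))\hat\otimes_{\Z_p}\Z_p^{\rm ur}\longrightarrow\Lambda_\scO^{\rm ur}(K(\cc p^\infty))
\]
which, applied to the norm-compatible system $\varprojlim_k c_\mathfrak{a}(K(\cc p^k))$ of elliptic units, produces (up to the elementary factor indexed by $\mathfrak{a}$, which is removed by choosing $\mathfrak{a}$ suitably or dividing it out) the two-variable Katz $p$-adic $L$-function $\mathscr{L}_{w,\cc}$ of Theorem~\ref{thm:katz}. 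Here I use that the local unit semi-local module at $w$ is, by Coleman theory for the Lubin--Tate group attached to $\cO_{K_w}=\Z_p$, a free $\Lambda^{\rm ur}$-module of rank one, so $\mathcal{C}ol_w$ is injective with finite (in fact, after adjusting by $\mathfrak{a}$, unit) cokernel.

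Next I would \emph{twist}. Applying the twisting isomorphism $\mathrm{Tw}_\phi$ on $\Lambda_\scO^{\rm ur}(K(\cc p^\infty))$ and the corresponding twist $\otimes\phi^{-1}$ on Iwasawa cohomology intertwines $\mathcal{C}ol_w$ with a map into $\Lambda_\scO^{\rm ur}(K(\cc p^\infty))$, sends the elliptic-unit system to $c_\mathfrak{a}(K(\cc p^\infty))^{\phi^{-1}}\in\rH^1_{\rm Iw}(K(\cc p^\infty),T_{\phi^*})$ (note $\phi^*=\phi^\st$ here precisely because $\phi$ is self-dual, so $\varepsilon_{\rm cyc}\phi^{-1}=\phi^\st$), and, by the very definition of $\mathscr{L}_w(\phi)$ in $\S\ref{subsubsec:katz-ac}$, sends $\mathscr{L}_{w,\cc}$ to (the image of) $\mathscr{L}_w(\phi)$. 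Then I would \emph{project} along the natural surjection $\Lambda_\scO^{\rm ur}(K(\cc p^\infty))\twoheadrightarrow\Lambda_\scO^{\rm ur}=\Lambda_\scO^{\rm ur}(K_\infty)$, equivalently corestrict the twisted unit down to $\rH^1_{\rm Iw}(K_\infty,T_{\phi^\st})$ to define $\mathbf{c}_{\phi^\st}$, and push forward the Coleman map along the same quotient to define $\mathrm{Col}_w$ on $\varprojlim_{F\subset K_\infty}\prod_{\eta\mid w}\rH^1(F_\eta,T_{\phi^\st})\hat\otimes_{\Z_p}\Z_p^{\rm ur}$. Compatibility of $\mathcal{C}ol_w$ with the projection maps in the tower (which is part of the Iwasawa-theoretic normalization in \cite{kato-295,yager}) then gives $\mathrm{Col}_w(\mathrm{loc}_w(\mathbf{c}_{\phi^\st}))=\mathscr{L}_w(\phi)$ directly.

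It then remains to check that $\mathrm{Col}_w$ remains \emph{injective with finite cokernel} after the specialization to the anticyclotomic quotient. Injectivity is inherited because twisting is an isomorphism and the kernel of the projection $\Lambda_\scO^{\rm ur}(K(\cc p^\infty))\to\Lambda_\scO^{\rm ur}$ meets the (free rank-one) image in the obvious ideal; more carefully, one invokes the local analysis à la \cite[Prop.~2.1.6]{PR-92} (as already used in Lemma~\ref{lem:local-tors-free}) to see that the semi-local module $\varprojlim_{F\subset K_\infty}\prod_{\eta\mid w}\rH^1(F_\eta,T_{\phi^\st})\hat\otimes\Z_p^{\rm ur}$ is itself free of rank one over $\Lambda_\scO^{\rm ur}$—this uses that $\phi^\st\nr$ is not of infinity type $(a,-a)$, exactly the hypothesis flagged in the proof of Lemma~\ref{lem:local-tors-free}—so that $\mathrm{Col}_w$ is a map between free rank-one modules whose cokernel, being the specialization of the finite (unit, after adjusting $\mathfrak{a}$) cokernel upstairs, is finite.

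\textbf{Main obstacle.} The genuinely delicate point is the bookkeeping in the \emph{twist-and-project} step: matching the precise normalization of $\mathscr{L}_w(\phi)$ (including the Euler-like factors and CM periods $\Omega_p,\Omega$ appearing in Theorem~\ref{thm:katz}, and the removal of Euler factors at $w\mid\cc$) against the image of the elliptic-unit system under Kato's reciprocity map, and controlling the auxiliary ideal $\mathfrak{a}$ so that the spurious factor $(\mathrm{Nm}(\mathfrak{a})-\sigma_\mathfrak{a})$ does not pollute either $\mathbf{c}_{\phi^\st}$ or the cokernel bound. Concretely one must either work with the $\mathfrak{a}$-stabilized objects throughout and divide at the end, or choose $\mathfrak{a}$ so that this factor becomes a unit in $\Lambda_\scO^{\rm ur}$; verifying that such a choice is compatible with self-duality of $\phi$ and with the conductor constraints is where the argument needs care. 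Everything else is a formal consequence of Coleman theory for $\Z_p$-height Lubin--Tate groups together with the already-cited compatibility of Kato's map with the tower structure.
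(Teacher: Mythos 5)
Your proposal follows the ``two-variable then descend'' strategy: build a Coleman map over $K(\cc p^\infty)$ from Yager/Kato, twist by $\phi^{-1}$, and project to $\Lambda_\scO^{\rm ur}$. This is essentially the Yager--Agboola--Howard--Arnold route, and the paper explicitly discusses it in the Remark immediately following Theorem~\ref{thm:ERL-katz}: it yields the stated conclusion \emph{only} under the extra hypotheses $p\nmid[K(\cc):K]$ and $\phi\vert_{G_{K_w}}\not\equiv 1\pmod{\varpi}$ (non-anomaly at $w$). The step you gloss over is the push-forward of the Coleman map along the quotient $\Lambda_\scO^{\rm ur}(K(\cc p^\infty))\twoheadrightarrow\Lambda_\scO^{\rm ur}$. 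For that descent to preserve injectivity with finite cokernel, one needs the natural corestriction map
\[
\biggl(\varprojlim_{F\subset K(\cc p^\infty)}\prod_{\eta\vert w}\rH^1(F_\eta,T_{\phi^*})\biggr)\otimes_{\Lambda_\scO(K(\cc p^\infty))}\Lambda_{\scO}\longrightarrow\varprojlim_{F\subset K_\infty}\prod_{\eta\vert w}\rH^1(F_\eta,T_{\phi^*})
\]
to be an isomorphism, and that is precisely what the extra hypotheses are used for. Knowing (via \cite[Prop.~2.1.6]{PR-92}, as in Lemma~\ref{lem:local-tors-free}) that the target semi-local module is pseudo-free of rank one over $\Lambda_\scO^{\rm ur}$ does not tell you the comparison map itself is (pseudo-)surjective; so your argument as written does not establish finiteness of the cokernel in the generality claimed by the theorem.

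The paper's proof avoids the descent entirely. It constructs $\mathrm{Col}_w$ directly over the anticyclotomic tower by observing that $H_{p^\infty,v}$ is obtained from $H_v$ by adjoining torsion of a height-one Lubin--Tate group relative to $H_v/K_v$, then invoking the Perrin-Riou big exponential map for that tower (following \cite{cas-hsieh-GKC}, cf.\ \cite{PR-94,kobayashi-AMQ}); this gives a $\Z_p[\![\Gamma_v]\!]$-linear map into $\scO[\![\Gamma_v]\!]\hat\otimes_{\Z_p}\Z_p^{\rm ur}$, which is then tensored up to $\Lambda_\scO^{\rm ur}$. Injectivity and finiteness of the cokernel come from the theory of Coleman power series as in \cite[\S 17.10]{kato-295}, with no class-number or non-anomaly assumptions. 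Your concern about the auxiliary ideal $\mathfrak{a}$ is real but secondary and is resolved exactly as you anticipate: one chooses $\mathfrak{a}$ coprime to $6\cc p$ so that $\sigma_\mathfrak{a}-\phi^\st(\sigma_\mathfrak{a})$ is a unit in $\Lambda_\scO^{\rm ur}$ and divides it out (independence of $\mathfrak{a}$ being \cite[(15.4.4)]{kato-295}); the reciprocity law then follows by combining the interpolation properties of $\mathscr{L}_w(\phi)$ and $\mathrm{Col}_w$ with \cite[Prop.~15.9]{kato-295}.
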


\begin{proof}
We first explain the construction of the map ${\rm Col}_w$ for $w=v$.   
For any field $F\subset\overline{\Q}$ over $K$, let $F_v$ denote the completion of $F$ at the prime above $p$ induced by $\imath_p$.  Then $H_{p^\infty,v}$ contains $K_{\infty,v}$, and it follows by local Class Field Theory that $H_{p^\infty,v}$ is obtained by adjoining to $H_v$ the torsion points of a height $1$ Lubin--Tate formal group relative to the extension $H_v/K_v$ (see \cite[Prop.~39]{shnidman-AIF}). 
Thus, as explained in \cite[\S{3}]{cas-hsieh-GKC}, from the Perrin-Riou big exponential map for $H_{p^\infty}/H_v$ (see \cite{PR-94,kobayashi-AMQ}) we have a $\Z_p[\![\Gamma_v]\!]$-linear map
\[
{\rm Col}_v:\varprojlim_{F\subset K_\infty}\rH^1(F_v,T_{\phi^\st})\hat\otimes_{\Z_p}\Z_p^{\rm ur}\rightarrow\scO[\![\Gamma_v]\!]\hat\otimes_{\Z_p}\Z_p^{\rm ur}
\]
interpolating the Bloch--Kato logarithm and dual exponential maps for varying specializations (see \cite[Thm.~3.4]{cas-hsieh-GKC}) which gives the map ${\rm Col}_v$ in the statement after tensoring with $\Z_p[\![\Gamma]\!]$ over $\Z_p[\![\Gamma_v]\!]$. The maps ${\rm Col}_{\overline{v}}$ is constructed in the same manner, replacing $\imath_p$ by $\imath_p\circ\st$, where $\st\in G_\Q$ is the complex conjugation induced by $\imath_\infty$;  the claim that both maps ${\rm Col}_w$ are injective with finite cokernel follows form the theory of Coleman power series as in \cite[\S{17.10}]{kato-295}.

On the other hand, letting $\cc$ be the prime-to-$p$ part of the conductor of $\phi$, we can find $\mathfrak{a}$ coprime to $6\cc p$ with $\sigma_{\mathfrak{a}}-\phi^\st(\sigma_\mathfrak{a})$ invertible in $\Lambda_{\scO}^{\rm ur}$ (cf. \cite[p.\,77]{deshalit}), and setting
\[
\mathbf{c}_{\phi^\st}:=(\sigma_{\mathfrak{a}}-\phi^\tau(\sigma_\mathfrak{a}))^{-1}\cdot c_{\mathfrak{a}}(K_\infty)^{\phi^{-1}}
\] 
(which is independent of $\mathfrak{a}$ by \cite[(15.4.4]{kato-295} and belongs to $\rH^1_{\rm Iw}(K_\infty,T_{\phi^*})=\rH^1_{\rm Iw}(K_\infty,T_{\phi^\st})$ by the self-duality of $\phi$), the last assertion follows directly from the interpolations properties of $\mathscr{L}_w(\phi)$ and ${\rm Col}_w$ together with Kato's explicit reciprocity law \cite[Prop.~15.9]{kato-295}.
\end{proof}

\begin{rem} 
Specializing Yager's result \cite{yager} to the anticyclotomic line as in \cite[Prop.~2.3.4]{AH-ord} and \cite[Prop.~2.6]{arnold} yields an analogue of Theorem~\ref{thm:ERL-katz} under the assumption that $p\nmid [K(\cc):K]$ and $\phi\vert_{G_{K_w}}\not\equiv 1\pmod{\varpi}$ 
(i.e. $\phi$ is ``non-anomalous'' at $w$). Indeed, following the argument in \emph{loc.\,cit.}, by local Tate duality one can show that under these additional hypotheses the corestriction map 
\[
\biggl(\varprojlim_{F\subset K(\cc p^\infty)}\prod_{\eta\vert w}\rH^1(F_\eta,T_{\phi^*})\biggr)\otimes_{\Lambda_\scO(K(\cc p^\infty))}\Lambda_{\scO}\rightarrow\varprojlim_{F\subset K_\infty}\prod_{\eta\vert w}\rH^1(F_\eta,T_{\phi^*})
\]
is an isomorphism. 
\end{rem}

\subsection{Main Conjectures for characters}\label{subsec:IMC-char}

The Iwasawa Main Conjecture for $K$ was proved by Rubin \cite{rubin-IMC} under mild hypotheses on the prime $p$, and by Johnson-Leung--Kings \cite{kings-JL} in general. 

%
The problem of deducing from the $2$-variable Main Conjecture a proof of the anticyclotomic Main Conjecture for self-dual Hecke characters $\phi$ 
was first studied in detail by Agboola--Howard \cite{AH-ord} in the case of CM elliptic curves $E/\Q$, and by Arnold \cite{arnold} for higher weight CM forms. The main result of this section is Theorem~\ref{thm:AH-lambda} below, which we deduce from an adaptation of their methods.
%

\subsubsection{The case of $\lambda=\psi\chi$} 

\begin{thm}\label{thm:AH-lambda}
Let $\lambda$ be a Hecke character of infinity type $(1-2r,0)$ for some $r\geq 1$ with $\varepsilon_\lambda=\eta_K$, conductor $\cc$ prime to $p$. Suppose 
\[
w(\lambda\nr)=-1.
\] 
Then $\check{\mathcal{S}}_v(\lambda\nr)=0$ and $\mathcal{X}_v(\lambda\nr)$ is $\Lambda_\scO$-torsion, with
\[
{\rm char}_{\Lambda_\scO}\bigl(\mathcal{X}_v(\lambda\nr)\bigr)=\bigl(\mathscr{L}_v(\lambda\mathbf{N}^{-r})\bigr)
\]
as ideals in $\Lambda^{\rm ur}$. 
\end{thm}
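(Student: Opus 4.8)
\textbf{Proof proposal for Theorem~\ref{thm:AH-lambda}.}
The plan is to deduce the anticyclotomic main conjecture for the self-dual character $\lambda\nr$ from the two-variable main conjecture for $K$, following the strategy of Agboola--Howard \cite{AH-ord} and Arnold \cite{arnold}, with the key point being to control what happens when one specializes the two-variable setting along the anticyclotomic line in the root number $-1$ case. First I would record the two-variable Iwasawa main conjecture (Rubin \cite{rubin-IMC} and Johnson-Leung--Kings \cite{kings-JL}): over $\Lambda_\scO^{\rm ur}(K(\cc p^\infty))$, the characteristic ideal of the relevant two-variable Selmer group equals the ideal generated by the two-variable Katz $p$-adic $L$-function $\mathscr{L}_{v,\cc}$ (suitably twisted by $\phi:=\lambda\nr$ via ${\rm Tw}_\phi$). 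The heart of the matter is then to pass from this two-variable statement, where the relevant Selmer group is the Bloch--Kato one (with the ``$f$'' condition at both $v$ and $\overline{v}$), to a one-variable statement about $\mathcal{X}_v(\lambda\nr)$, whose local condition at $v$ is \emph{reversed} (relative at ${\rm Sel}_{\BK}$, compare Remark~\ref{rem:reversed} and Lemma~\ref{lem:BK=v}): the character $\phi=\lambda\nr$ has infinity type $(1-r,r)$, hence Hodge--Tate weights $1-r\le 0$ at $v$ and $r>0$ at $\overline v$, so $\mathcal{X}_\BK(\phi)=\mathcal{X}_{\overline v}(\phi)$ and $\mathcal{X}_v(\phi)$ is the ``wrong'' Selmer group from the Bloch--Kato perspective.

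The mechanism to bridge this gap is the explicit reciprocity law of Theorem~\ref{thm:ERL-katz}: the twisted elliptic unit $\mathbf{c}_{\phi^\st}\in\rH^1_{\rm Iw}(K_\infty,T_{\phi^\st})$ has image under ${\rm Col}_v$ equal to $\mathscr{L}_v(\phi)$. The argument then proceeds by a Poitou--Tate/global-duality computation in the style of \cite[\S{2}]{arnold}: one sets up the short exact sequences relating $\check{\mathcal{S}}_v(\phi)$, $\check{\mathcal{S}}_{\overline v}(\phi)=\check{\mathcal{S}}_\BK(\phi)$, and their ``relaxed/strict'' variants, together with the local cohomology module $\mathcal{H}_v$, which by Lemma~\ref{lem:local-tors-free} (applied to the rank-one character, via \cite[Prop.~2.1.6]{PR-92}) is $\Lambda_\scO$-torsion-free. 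The root number hypothesis $w(\lambda\nr)=-1$ is exactly what forces, via the interpolation formula of Theorem~\ref{thm:katz} and the functional equation for $\mathscr{L}_{v,\cc}$, that $\mathscr{L}_v(\phi)\ne 0$ while the Bloch--Kato side $\mathscr{L}_{\overline v}(\phi)$ (equivalently the specialization governing $\mathcal{X}_\BK(\phi)$) \emph{vanishes} at $\mathds{1}$; correspondingly $\check{\mathcal{S}}_\BK(\phi)=\check{\mathcal{S}}_{\overline v}(\phi)$ has $\Lambda_\scO$-rank $1$ (generated up to torsion by the image of $\mathbf{c}_{\phi^\st}$, by the nonvanishing of $\mathscr{L}_v(\phi)$ and the injectivity of ${\rm Col}_v$), while $\check{\mathcal{S}}_v(\phi)$ has rank $0$. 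Concretely, I would show $\check{\mathcal{S}}_v(\lambda\nr)=0$ first (using that ${\rm loc}_v(\mathbf{c}_{\phi^\st})\ne 0$, so the strict Selmer group dies), then take characteristic ideals in the global duality exact sequence
\[
0\rightarrow\check{\mathcal{S}}_\BK(\phi)/\Lambda_\scO\cdot\mathbf{c}_{\phi^\st}\rightarrow\mathcal{H}_v/\Lambda_\scO\cdot{\rm loc}_v(\mathbf{c}_{\phi^\st})\rightarrow{\rm coker}({\rm Col}_v\circ{\rm loc}_v\text{ on }\check{\mathcal{S}}_\BK)\rightarrow 0
\]
and the dual sequence $0\to{\rm coker}({\rm loc}_v)\to\mathcal{X}_v(\phi)\to\mathcal{X}_\BK(\phi)_{\rm tors}\to 0$, combined with the two-variable main conjecture specialized along the anticyclotomic direction (which controls $\mathcal{X}_\BK(\phi)_{\rm tors}={\rm char}$ of the ``BK'' part). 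The reciprocity law ${\rm Col}_v({\rm loc}_v(\mathbf{c}_{\phi^\st}))=\mathscr{L}_v(\phi)$ turns the cokernel terms into the $p$-adic $L$-function, and the two contributions combine — exactly as in the weight-$2$ arguments of \cite{AH-ord}, \cite{arnold} — to give ${\rm char}_{\Lambda_\scO}(\mathcal{X}_v(\lambda\nr))=(\mathscr{L}_v(\lambda\mathbf{N}^{-r}))$ in $\Lambda_\scO^{\rm ur}$.

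The main obstacle I expect is the control of the specialization of the two-variable main conjecture along the anticyclotomic line: a priori the characteristic ideal of a two-variable torsion module can acquire spurious extra factors (or the module can fail to remain torsion) upon restriction to a subvariety, so one must verify that the relevant Selmer groups have no pseudo-null submodules supported on the anticyclotomic line and that the two-variable $L$-function does not vanish identically there. This is where the hypothesis $\mathfrak{d}_K\Vert\cc$ (entering through the precise conductor bookkeeping in Proposition~\ref{prop:factor-BDP}) and the non-anomalous-type inputs behind Theorem~\ref{thm:ERL-katz} are used, together with a control theorem comparing $\rH^1_{\rm Iw}$ over $K(\cc p^\infty)$ and over $K_\infty$; the delicate point specific to $r>1$ (as opposed to the CM elliptic curve case) is that one must run these arguments with the Tate twist $\nr$ present, keeping track of Hodge--Tate weights to know which of $v,\overline v$ carries the Bloch--Kato condition. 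Granting these (all available by extending \cite{arnold} verbatim, since the relevant characters $\psi\chi\nr$ and $\psi^\st\chi\nr$ are never of infinity type $(a,-a)$, so the exceptional ``anomalous'' cases do not arise), the divisibility in both directions follows and yields the stated equality.
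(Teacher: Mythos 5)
Your overall strategy is the same as the paper's: anticyclotomic descent of the two-variable main conjecture in the style of Agboola--Howard and Arnold, with the reversed local condition at $v$ handled by the twisted elliptic unit $\mathbf{c}_{\phi^\st}$, the Coleman map and explicit reciprocity law of Theorem~\ref{thm:ERL-katz}, and a Poitou--Tate/global duality comparison of the relaxed, strict, $v$- and $\overline{v}$-Selmer groups, exactly as in Propositions~\ref{prop:Xv-tors} and \ref{prop:char-v}. However, there is one genuine gap: you assert that the hypothesis $w(\lambda\nr)=-1$ forces $\mathscr{L}_v(\lambda\nr)\neq 0$ ``via the interpolation formula of Theorem~\ref{thm:katz} and the functional equation for $\mathscr{L}_{v,\cc}$.'' The functional equation can only give the \emph{vanishing} direction (when the interpolated central values have sign $-1$); it cannot produce nonvanishing. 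The actual input, which your argument needs at the very first step (to make ${\rm loc}_v$ of the elliptic-unit class nonzero, hence to kill $\check{\mathcal{S}}_v(\lambda\nr)$ and prove torsionness of $\mathcal{X}_v(\lambda\nr)$), is Greenberg's generic nonvanishing theorem for central critical Hecke $L$-values: one twists $\lambda\nr$ by high powers of $\overline{\lambda}/\lambda$ into the interpolation range, checks via Weil's root-number formula that the resulting self-dual characters have sign $+1$, and then invokes \cite{greenberg-critical} (this is Proposition~\ref{prop:Greenberg-nonv}). Without this analytic ingredient your claimed equivalence between $w(\lambda\nr)=-1$ and $\mathscr{L}_v(\lambda\nr)\neq 0$ is unsupported, and the whole duality computation has no nonzero class to feed into it.

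Two smaller points to clean up. First, your exact sequence mixes the two conjugate characters: $\mathbf{c}_{\phi^\st}$ lives in $\rH^1_{\rm Iw}(K_\infty,T_{\phi^\st})$, so the quotient it generates sits inside the \emph{relaxed} module $\check{\mathcal{S}}_{\rm rel}(\lambda^\st\nr)$ (membership there is the content of \cite[Lem.~2.4.2]{AH-ord}), not inside $\check{\mathcal{S}}_\BK(\phi)$; the paper runs the duality sequence for $\lambda^\st\nr$ and recovers the statement for $\lambda\nr$ by complex conjugation, and the Euler-system divisibility (Arnold's Theorem~2.14) plus the two-variable main conjecture descent (Arnold's \S{3}, with \cite{kings-JL} replacing Rubin to relax hypotheses on $p$) must be applied with the roles of $T_\lambda(r)$ and $T_{\lambda^\st}(r)$ swapped. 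Second, the hypothesis $\mathfrak{d}_K\Vert\cc$ is not part of Theorem~\ref{thm:AH-lambda} and is not used in its proof (it enters only later, for the factorization and good-pair constructions), and Theorem~\ref{thm:ERL-katz} as proved in the paper does not require non-anomalous-type hypotheses, since it is built from the Perrin-Riou/Kato big exponential rather than Yager's descent.
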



%
Following the method of \cite{AH-ord} and \cite{arnold}, by anticyclotomic descent, the proof of Theorem~\ref{thm:AH-lambda} will be based on a twisted variant of the results by Rubin \cite{rubin-IMC} (and Johnson-Leung--Kings \cite{kings-JL} more generally) on the $2$-variable Iwasawa Main Conjecture for $K$.
%
The starting point is the following key consequence of Greenberg's nonvanishing results \cite{greenberg-critical}. 

\begin{prop}\label{prop:Greenberg-nonv}
Let $w(\lambda\nr)\in\{\pm{1}\}$ be the root number of $\lambda\nr$. Then
\begin{align*}
\mathscr{L}_v(\lambda\nr)\neq 0\quad&\Longleftrightarrow\quad w(\lambda\nr)=-1.
\end{align*}
\end{prop}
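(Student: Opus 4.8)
The plan is to deduce Proposition~\ref{prop:Greenberg-nonv} by combining the interpolation formula for the Katz $p$-adic $L$-function $\mathscr{L}_v(\lambda\nr)$ at the trivial character with the complex-analytic nonvanishing results of Greenberg \cite{greenberg-critical}, together with the functional equation controlling the sign $w(\lambda\nr)$. First I would record that $\mathscr{L}_v(\lambda\nr)$, as an element of $\Lambda_\scO^{\rm ur}$, is obtained from $\mathscr{L}_{v,\cc}$ by twisting by $\phi:=\lambda\nr$ (with $\cc$ the prime-to-$p$ conductor of $\lambda$) and projecting to the anticyclotomic tower. Since $\phi$ is self-dual of infinity type $(1-r,r)$, the character $\mathds{1}$ on $\Gamma$ lies \emph{outside} the range of interpolation of $\mathscr{L}_{v,\cc}$ appearing in Theorem~\ref{thm:katz} (that range requires $a>0$, $b\le 0$, whereas here $b=r>0$). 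So the key is to produce a character $\xi$ in the interpolation range whose associated value controls the nonvanishing of $\mathscr{L}_v(\lambda\nr)$.

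The mechanism for this is twofold. On the one hand, the functional equation $\mathscr{L}_{v,\cc}(\xi)=\mathscr{L}_{v,\overline\cc}(\xi^{-\tau}\mathbf{N}^{-1})$ of Theorem~\ref{thm:katz} lets one move between the two halves of the critical strip; when $w(\lambda\nr)=-1$ the central value $L(\lambda,r)=0$ forces the relevant Euler-factor-free special value to vanish trivially at the center, but the \emph{derivative} (equivalently, the behavior of $\mathscr{L}_v(\lambda\nr)$ as a nonzero element of $\Lambda_\scO^{\rm ur}$, rather than its value at $\mathds{1}$) need not vanish. On the other hand, Greenberg's theorem \cite{greenberg-critical} asserts that, for a self-dual Hecke character with sign $-1$ in its functional equation, infinitely many of the central values $L(\phi\xi,0)$ of its anticyclotomic twists $\xi$ (of infinity type $(n,-n)$, $n$ large, in the appropriate range) are nonzero; conversely, when the sign is $+1$, the relevant $p$-adic $L$-function vanishes identically because the anticyclotomic twists that enter the interpolation all sit at a point where a global vanishing is forced. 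Concretely, I would argue: if $w(\lambda\nr)=-1$, apply the functional equation to realize $\mathscr{L}_v(\lambda\nr)$ (up to a $p$-adic unit) as $\mathscr{L}_{\overline v}$ of a related self-dual character, whose interpolation range now contains infinitely many anticyclotomic twists $\xi$ with $L(\lambda\chi_\xi,r)\neq 0$ by Greenberg; since a nonzero value at even one interpolation point shows $\mathscr{L}_v(\lambda\nr)\neq 0$ in $\Lambda_\scO^{\rm ur}$, this gives the forward implication. If instead $w(\lambda\nr)=+1$, then for \emph{every} anticyclotomic twist $\xi$ in the interpolation range the character $\lambda\nr\xi$ is self-dual with sign $-1$ (the twist by an anticyclotomic character of infinity type $(n,-n)$ with $|n|\geq r$ flips the relevant local sign at $\infty$ only when crossing the center, so the global sign at the interpolated points is forced to be $-1$), whence $L(\lambda\nr\xi,0)=0$ identically, and combined with the fact that the Euler factors $(1-\xi^{-1}(v)p^{-1})(1-\xi(\overline v))$ do not vanish generically, we conclude $\mathscr{L}_v(\lambda\nr)(\xi)=0$ for a Zariski-dense set of $\xi$, hence $\mathscr{L}_v(\lambda\nr)=0$.

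The main obstacle I anticipate is bookkeeping the precise relation between the sign $w(\lambda\nr)$ of the functional equation relating $L(\lambda\nr,s)$ to $L(\lambda\nr,-s)$ and the signs of the functional equations of the \emph{twisted} characters $\lambda\nr\xi$ that actually appear in the interpolation ranges of $\mathscr{L}_{v,\cc}$ and $\mathscr{L}_{\overline v,\cc}$ — that is, verifying that twisting by an anticyclotomic character of infinity type $(n,-n)$ preserves the global root number at the interpolation points while the center moves, so that the dichotomy ``$w(\lambda\nr)=-1 \iff$ generically nonvanishing central twist'' holds exactly as stated. This is a local sign computation (at $\infty$, at the primes above $p$, and at the ramified primes, where one uses that $\cc$ is prime to $p$ and $\mathfrak{d}_K\Vert\cc$) of the kind worked out in \cite[\S3]{BDP-PJM} and in \cite{AH-ord,arnold}; once this is in place, the forward direction is an immediate consequence of Greenberg's nonvanishing \cite{greenberg-critical} applied along the anticyclotomic line, and the reverse direction is the observation that a $p$-adic $L$-function interpolating an identically-zero family of classical values must itself vanish.
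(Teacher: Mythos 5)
Your overall dichotomy (a single nonzero value at an interpolation point shows $\mathscr{L}_v(\lambda\nr)\neq 0$; identically vanishing interpolated values along an infinite family of characters force $\mathscr{L}_v(\lambda\nr)=0$ by Weierstrass preparation) is the same as the paper's, but there is a genuine gap at the heart of your argument: the appeal to Greenberg. The theorem of \cite{greenberg-critical} is not a nonvanishing statement about arbitrary anticyclotomic twists $\lambda\nr\xi$ with $\xi$ of infinity type $(n,-n)$ in the interpolation range; it concerns the central values of the \emph{odd powers} $\lambda^{2k-1}$ of the fixed self-dual character. The paper's key move, which your proposal does not contain, is to choose the twisting family to consist of powers of the specific anticyclotomic character $\phi_0=\overline{\lambda}/\lambda$: taking $\phi=\phi_0^m$ with $m$ large enough that $\phi$ has trivial conductor and factors through $\Gamma$, the characters $\lambda\nr\phi^n$ lie in the Katz interpolation range for $n\gg 0$ (their infinity type is $(1-r+mn(2r-1),\,r+mn(1-2r))$), and the interpolated value $\mathscr{L}_v(\lambda\nr)(\phi^n)$ is, up to nonzero factors, exactly the central value $L(\lambda^{2mn-1},c)$ with $c=mn(2r-1)-r+1$ --- i.e.\ precisely Greenberg's family. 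Without this specific choice, your forward implication ($w(\lambda\nr)=-1\Rightarrow\mathscr{L}_v(\lambda\nr)\neq 0$) has no nonvanishing theorem to invoke: Rohrlich-type results cover finite-order twists, which are not in the interpolation range, and no theorem of Greenberg covers general $(n,-n)$-twists.

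Second, your root-number bookkeeping is incorrect as stated. It is not true that every $\lambda\nr\xi$ with $\xi$ anticyclotomic of infinity type $(n,-n)$, $|n|\geq r$, has sign $-w(\lambda\nr)$: the archimedean epsilon factor of a self-dual character of infinity type $(1-r+n,r-n)$ depends on the parity of $n$, so the global sign alternates along such a family (the constancy you quote from the paper's discussion of $\Sigma_{\rm cc}$ holds for the Rankin--Selberg \emph{pair}, where the two parities cancel, not for an individual character). This is exactly why the paper invokes Weil's formula (\cite[Prop.~2.1.6]{AH-ord}, arguing as on p.~51 of \cite{arnold}) and allows replacing $m$ by $2m$, so that $w(\lambda^{2mn-1})=-w(\lambda\nr)$ holds identically along the chosen subfamily. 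The constancy matters: the vanishing direction survives the parity issue (sign $-1$ at infinitely many points of the open unit disk already forces $\mathscr{L}_v(\lambda\nr)=0$), but the nonvanishing direction does not, since Greenberg's theorem requires sign $+1$ throughout the family. Your detour through the functional equation to pass to $\mathscr{L}_{\overline{v}}$ is unnecessary (the twists above already lie in the range of $\mathscr{L}_{v,\cc}$), and the local-sign verification you defer to the computations for pairs in \cite{BDP-PJM} does not supply the needed statement for individual characters.
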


\begin{proof}
We adapt the argument in the proof of \cite[Prop.~2.3]{arnold}. Put $\phi_0=\overline{\lambda}/\lambda$, and take $m>0$ (to be possibly enlarged later) large enough so that $\phi:=\phi_0^m$ factors through $\Gamma$ and has trivial conductor. Then for $n\gg 0$, the character $\lambda\mathbf{N}^{-r}\phi^{n}$ is in the range of interpolation of $\mathscr{L}_{v,\cc}$ (indeed, its infinity type $(1-r+mn(2r-1),r+mn(1-2r))$) and so from Theorem~\ref{thm:katz} and the functional equation for Hecke $L$-functions\footnote{See e.g. \cite[p.\,37]{deshalit}, whose conventions on infinity type are \emph{opposite} to ours.} we immediately obtain
\[
\mathscr{L}_v(\lambda\mathbf{N}^{-r})(\phi^{n})\doteq L(\lambda^{2mn-1},c),
\]
where $c=mn(2r-1)-r+1$ is the center of the functional equation. Applying Weil's formula for root numbers as stated in \cite[Prop.~2.1.6]{AH-ord}, arguing as in \cite[p.\,51]{arnold} we see that (after possibly replacing $m$ by $2m$) $w(\lambda^{2mn-1})=-w(\lambda\nr)$, and the result follows from \cite[Thm.~1]{greenberg-critical}. 
\end{proof}

\begin{prop}\label{prop:Xv-tors}
If $w(\lambda\nr)=-1$, then $\mathcal{X}_v(\lambda\nr)$ is $\Lambda_\scO$-torsion.
\end{prop}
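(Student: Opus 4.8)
The plan is to prove that $\mathcal{X}_v(\lambda\nr)$ is $\Lambda_\scO$-torsion by combining the nonvanishing of $\mathscr{L}_v(\lambda\nr)$ (Proposition~\ref{prop:Greenberg-nonv}) with the explicit reciprocity law of Theorem~\ref{thm:ERL-katz} and a control/duality argument. First I would set $\phi=\lambda\nr$, which is self-dual in the sense of Definition~\ref{def:sd-char} since $w(\lambda\nr)=-1$ already presupposes $\lambda\nr$ is self-dual (its infinity type is $(1-r,r)$ and $\varepsilon_{\lambda\nr}=\eta_K$). Under the hypothesis $w(\lambda\nr)=-1$, Proposition~\ref{prop:Greenberg-nonv} gives $\mathscr{L}_v(\lambda\nr)\neq 0$ in $\Lambda_\scO^{\rm ur}$.

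The key mechanism is that Theorem~\ref{thm:ERL-katz} produces a twisted elliptic unit $\mathbf{c}_{\phi^\st}\in\rH^1_{\rm Iw}(K_\infty,T_{\phi^\st})$ together with an injective $\Lambda_\scO^{\rm ur}$-homomorphism ${\rm Col}_v$ with finite cokernel such that ${\rm Col}_v({\rm loc}_v(\mathbf{c}_{\phi^\st}))=\mathscr{L}_v(\phi)$. Since $\mathscr{L}_v(\phi)\neq 0$ and ${\rm Col}_v$ is injective, the class ${\rm loc}_v(\mathbf{c}_{\phi^\st})$ is non-torsion in $\varprojlim_{F}\prod_{\eta\mid v}\rH^1(F_\eta,T_{\phi^\st})\hat\otimes\Z_p^{\rm ur}$; in particular $\mathbf{c}_{\phi^\st}$ itself is a non-torsion element of $\rH^1_{\rm Iw}(K_\infty,T_{\phi^\st})$, and a fortiori $\rH^1_{\rm Iw}(K_\infty,T_{\phi^\st})$ has $\Lambda_\scO$-rank $\geq 1$. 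By the global Euler characteristic formula (Greenberg/Tate) together with the fact that $H^0$ and $H^2$ are $\Lambda_\scO$-torsion here (the character $\phi^\st$ is non-trivial and not of the relevant exceptional type, as already noted in the proof of Lemma~\ref{lem:local-tors-free}), one gets that $\rH^1_{\rm Iw}(K_\infty,T_{\phi^\st})$ has rank exactly $1$. The localization-at-$v$ map is therefore injective modulo torsion onto a rank-one target, so its cokernel is $\Lambda_\scO$-torsion.

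The passage to $\mathcal{X}_v(\lambda\nr)$ being torsion is then a Poitou--Tate global duality argument, exactly parallel to the one in Proposition~\ref{prop:equiv}: the Selmer group $\check{\mathcal{S}}_v(\phi)$ (which imposes the relaxed condition at $v$ and the strict condition at $\overline v$) fits, via global duality, into an exact sequence relating it to $\rH^1_{\rm Iw}(K_\infty,T_{\phi^\st})$, the image of ${\rm loc}_v$, and the dual Selmer group $\mathcal{X}_v(\phi)$ (using that $\phi^\st = \varepsilon_{\rm cyc}\phi^{-1}$ is the Tate dual of $\phi$, so the dual of the $v$-relaxed/$\overline v$-strict condition is precisely the local condition defining $\mathcal{X}_v$). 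Since the rank-one module $\rH^1_{\rm Iw}(K_\infty,T_{\phi^\st})$ maps to the rank-one local term $\varprojlim_F\prod_{\eta\mid v}\rH^1(F_\eta,T_{\phi^\st})$ with torsion cokernel (shown above), a rank count in the Poitou--Tate nine-term sequence forces $\mathcal{X}_v(\lambda\nr)$ to have $\Lambda_\scO$-rank $0$, i.e. to be torsion. (This is also where one invokes that $\check{\mathcal{S}}_v(\lambda\nr)=0$, which comes from the injectivity of ${\rm loc}_v$ on the relevant one-dimensional piece together with $\rH^0(K_\infty, W_\phi)$ being cotorsion.)

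The main obstacle I anticipate is the bookkeeping in the global duality step — getting the local conditions and their annihilators to match up correctly at $v$ and $\overline v$, and carefully tracking the difference between $\mathcal{X}_v$ and the Bloch--Kato Selmer group (cf. Remark~\ref{rem:reversed}), since the whole point is that the local condition at $v$ defining $\mathcal{X}_v(\lambda\nr)$ is the \emph{reversed} one. One must also confirm that all the auxiliary $\rH^0$ and $\rH^2$ Iwasawa cohomology groups are $\Lambda_\scO$-torsion so that the rank count is clean; this is where the hypotheses on $\phi$ (non-triviality, split $p$, and the exclusion of infinity type $(a,-a)$ already flagged in Lemma~\ref{lem:local-tors-free}) enter, and it should follow from the references to \cite{PR-92} and \cite{arnold} cited there. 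The nonvanishing input itself is already packaged in Proposition~\ref{prop:Greenberg-nonv}, so no transcendence or analytic difficulty remains at this stage.
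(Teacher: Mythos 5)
Your first step is sound and is essentially the paper's: taking $\phi=\lambda\nr$, Proposition~\ref{prop:Greenberg-nonv} gives $\mathscr{L}_v(\phi)\neq 0$, and by Theorem~\ref{thm:ERL-katz} the injectivity of ${\rm Col}_v$ forces ${\rm loc}_v(\mathbf{c}_{\phi^\st})$ to be non-torsion; since the semi-local Iwasawa cohomology at $v$ is torsion-free of $\Lambda_\scO$-rank one, the cokernel of the localization map is $\Lambda_\scO$-torsion. The problem is the remaining rank count. The global duality sequence the paper uses is
\[
\check{\mathcal{S}}_{\rm rel}(\lambda^\st\nr)\rightarrow\varprojlim_{F\subset K_\infty}\prod_{\eta\vert v}\rH^1(F_\eta,T_{\lambda^\st}(r))\rightarrow\mathcal{X}_{v}(\lambda\nr)\rightarrow\mathcal{X}_{\rm str}(\lambda\nr)\rightarrow 0,
\]
so torsionness of ${\rm coker}({\rm loc}_v)$ only yields ${\rm rank}_{\Lambda_\scO}\mathcal{X}_v(\lambda\nr)={\rm rank}_{\Lambda_\scO}\mathcal{X}_{\rm str}(\lambda\nr)$; you still need the strict Selmer group to be cotorsion, and your argument never establishes this.

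In your write-up this missing input is hidden in the claim that the global $\rH^2_{\rm Iw}$ is $\Lambda_\scO$-torsion (so that $\rH^1_{\rm Iw}(K_\infty,T_{\phi^\st})$ has rank exactly one) ``as already noted in the proof of Lemma~\ref{lem:local-tors-free}''. But that lemma, and \cite[Lem.~2.5]{arnold} behind it, are purely local statements (finiteness of local $\rH^0$'s because the infinity type is not of the form $(a,-a)$); they say nothing about $\rH^2_{\rm Iw}(K_\Sigma/K_\infty,T_{\phi^\st})$ over the anticyclotomic tower. That statement is the weak Leopoldt conjecture for the anticyclotomic $\Z_p$-extension, which by Poitou--Tate is equivalent (up to cotorsion terms) to the cotorsionness of $\mathcal{X}_{\rm str}(\lambda\nr)$ that you need anyway; it is not a formal Euler-characteristic consequence, and for the anticyclotomic line it is genuinely deep. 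The paper supplies exactly this input via \cite[Thm.~2.14]{arnold}, an application of Rubin's Euler system machinery to the full system of twisted elliptic units --- the single class $\mathbf{c}_{\phi^\st}$ together with its nonzero localization is not sufficient. So your argument is circular at this point: the rank-one claim for $\rH^1_{\rm Iw}$ presupposes the very torsionness you are trying to prove. (A smaller remark: invoking $\check{\mathcal{S}}_v(\lambda\nr)=0$ here is neither needed nor available yet --- in the paper it is deduced later, in Proposition~\ref{prop:char-v}, after the torsionness statement.) To repair the proof, either quote the Euler system bound for $\mathcal{X}_{\rm str}(\lambda\nr)$ as the paper does, or give an independent proof of weak Leopoldt along the anticyclotomic line (e.g. by a careful descent from the two-variable main conjecture), neither of which is in your proposal.
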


\begin{proof}
By Theorem~\ref{thm:ERL-katz} (with $\phi=\lambda\mathbf{N}^{-r}$) and Proposition~\ref{prop:Greenberg-nonv}, if $w(\lambda\nr)=-1$ then the map
\begin{equation}\label{eq:loc-vbar-BK}
{\rm loc}_{v}:\check{\mathcal{S}}_{\rm rel}(\lambda^\st\nr)\rightarrow\varprojlim_{F\subset K_\infty}\prod_{\eta\vert v}\rH^1(F_\eta,T_{\lambda^\st}(r))\nonumber
\end{equation}
is nonzero\footnote{Note that the implicit inclusion $\mathbf{c}_{\lambda^\tau\mathbf{N}^{-r}}\in\check{\mathcal{S}}_{\rm rel}(\lambda^\st\nr)$ follows from 
\cite[Lem.~2.4.2]{AH-ord}.}. On the other hand, by \cite[Thm.~2.14]{arnold} (an application of the Euler system machinery, \cite[Thm.~2.3.3]{rubin-ES}), the $\Lambda_\scO$-module $\mathcal{X}_{\rm str}(\lambda\nr)$ is torsion. By the global duality exact sequence
\[
\check{\mathcal{S}}_{\rm rel}(\lambda^\tau\nr)\rightarrow\varprojlim_{F\subset K_\infty}\prod_{\eta\vert v}\rH^1(F_\eta,T_{\lambda^\tau}(r))\rightarrow\mathcal{X}_{v}(\lambda\nr)\rightarrow\mathcal{X}_{\rm str}(\lambda\nr)\rightarrow 0,
\]
this yields the result.
\end{proof}




\begin{prop}\label{prop:char-v}
Suppose $w(\lambda\nr)=-1$. Then 
\[
{\rm char}_{\Lambda_\scO}\bigl(\mathcal{X}_v(\lambda\nr)\bigr)=\bigl(\mathscr{L}_v(\lambda\mathbf{N}^{-r})\bigr)
\]
as ideals in $\Lambda_\scO^{\rm ur}$.
\end{prop}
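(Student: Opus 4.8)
The plan is to deduce the claimed equality of characteristic ideals from a twisted form of the two-variable Iwasawa main conjecture for $K$ (Rubin \cite{rubin-IMC}, Johnson-Leung--Kings \cite{kings-JL}), performing an anticyclotomic descent. First I would set $\phi=\lambda\mathbf{N}^{-r}$, which is self-dual by \cite[Lem.~3.16]{BDP-PJM}, and record that by Proposition~\ref{prop:Xv-tors} the module $\mathcal{X}_v(\lambda\nr)$ is $\Lambda_\scO$-torsion, so both sides define genuine ideals in $\Lambda_\scO^{\rm ur}$; by Proposition~\ref{prop:Greenberg-nonv} the right-hand side $\mathscr{L}_v(\lambda\nr)$ is nonzero. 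The key point is that, after the twist by $\phi^\st$, the relevant Selmer group $\mathcal{X}_v(\lambda\nr)$ sits in the global duality exact sequence used in the proof of Proposition~\ref{prop:Xv-tors}, namely
\[
\check{\mathcal{S}}_{\rm rel}(\lambda^\tau\nr)\xrightarrow{{\rm loc}_v}\varprojlim_{F\subset K_\infty}\prod_{\eta\vert v}\rH^1(F_\eta,T_{\lambda^\tau}(r))\rightarrow\mathcal{X}_{v}(\lambda\nr)\rightarrow\mathcal{X}_{\rm str}(\lambda\nr)\rightarrow 0.
\]
So one obtains
\[
{\rm char}_{\Lambda_\scO}\bigl(\mathcal{X}_v(\lambda\nr)\bigr)={\rm char}_{\Lambda_\scO}\bigl(\mathcal{X}_{\rm str}(\lambda\nr)\bigr)\cdot{\rm char}_{\Lambda_\scO}\bigl({\rm coker}({\rm loc}_v)\bigr),
\]
using that $\check{\mathcal{S}}_{\rm rel}(\lambda^\tau\nr)$ has $\Lambda_\scO$-rank $1$ (it contains the non-torsion twisted elliptic unit $\mathbf{c}_{\lambda^\tau\mathbf{N}^{-r}}$ of Theorem~\ref{thm:ERL-katz}, and torsion-freeness of $\varprojlim_F\rH^1(F,T)$ shows the rank is exactly $1$).

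Next I would handle each of the two factors. For the cokernel of ${\rm loc}_v$, Theorem~\ref{thm:ERL-katz} gives the injective Coleman map ${\rm Col}_v$ with finite cokernel and the identity ${\rm Col}_v({\rm loc}_v(\mathbf{c}_{\lambda^\tau\mathbf{N}^{-r}}))=\mathscr{L}_v(\lambda\nr)$; combining this with the short exact sequence
\[
0\rightarrow\check{\mathcal{S}}_{\rm rel}(\lambda^\tau\nr)/\Lambda_\scO\cdot\mathbf{c}_{\lambda^\tau\mathbf{N}^{-r}}\rightarrow\varprojlim_{F}\prod_{\eta\vert v}\rH^1(F_\eta,T_{\lambda^\tau}(r))/\Lambda_\scO\cdot{\rm loc}_v(\mathbf{c}_{\lambda^\tau\mathbf{N}^{-r}})\rightarrow{\rm coker}({\rm loc}_v)\rightarrow 0
\]
yields
\[
{\rm char}_{\Lambda_\scO}\bigl(\check{\mathcal{S}}_{\rm rel}(\lambda^\tau\nr)/\Lambda_\scO\cdot\mathbf{c}_{\lambda^\tau\mathbf{N}^{-r}}\bigr)\cdot{\rm char}_{\Lambda_\scO}\bigl({\rm coker}({\rm loc}_v)\bigr)=\bigl(\mathscr{L}_v(\lambda\nr)\bigr)
\]
in $\Lambda_\scO^{\rm ur}$. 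For the factor ${\rm char}_{\Lambda_\scO}(\mathcal{X}_{\rm str}(\lambda\nr))$, the Euler system divisibility of \cite[Thm.~2.14]{arnold} (via \cite[Thm.~2.3.3]{rubin-ES}) gives one containment
\[
{\rm char}_{\Lambda_\scO}\bigl(\mathcal{X}_{\rm str}(\lambda\nr)\bigr)\supseteq{\rm char}_{\Lambda_\scO}\bigl(\check{\mathcal{S}}_{\rm rel}(\lambda^\tau\nr)/\Lambda_\scO\cdot\mathbf{c}_{\lambda^\tau\mathbf{N}^{-r}}\bigr);
\]
putting the three displays together gives the divisibility ${\rm char}_{\Lambda_\scO}(\mathcal{X}_v(\lambda\nr))\supseteq(\mathscr{L}_v(\lambda\nr))$ for free. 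For the reverse divisibility I would invoke the two-variable main conjecture of Rubin and Johnson-Leung--Kings: specializing the equality of characteristic ideals over ${\rm Gal}(K(\cc p^\infty)/K)$ along the twisting map ${\rm Tw}_{\phi^\st}$ and then projecting to the anticyclotomic line controls $\mathcal{X}_{\rm str}(\lambda\nr)$ from above by a Katz $p$-adic $L$-function, after a control/descent argument identifying the anticyclotomic specialization of the two-variable strict Selmer group with $\mathcal{X}_{\rm str}(\lambda\nr)$ up to pseudo-null error — this is exactly the mechanism of \cite[\S{2}]{arnold} and \cite{AH-ord}, transported through the twist.

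The main obstacle I expect is the descent step: passing from the two-variable main conjecture (an equality of characteristic ideals over the full ray class tower, twisted by $\phi^\st$) down to the one-variable anticyclotomic statement for $\mathcal{X}_v(\lambda\nr)$, while keeping track of integrality (no spurious powers of $p$). This requires a clean control theorem for the strict Selmer group $\mathcal{X}_{\rm str}(\lambda\nr)$ along the cyclotomic direction and a matching factorization of the two-variable Katz $p$-adic $L$-function whose anticyclotomic restriction is $\mathscr{L}_v(\lambda\nr)$ — the subtlety being that, as emphasized in Remark~\ref{rem:reversed}, $\mathcal{X}_v(\lambda\nr)$ has its local conditions at $p$ \emph{reversed} relative to the Bloch--Kato Selmer group, so the relevant side of the two-variable main conjecture is the ``strict at $v$'' one, and one must verify the error terms in the descent are units in $\Lambda_\scO^{\rm ur}$. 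Once both divisibilities are in hand they force the equality, completing the proof of Theorem~\ref{thm:AH-lambda}.
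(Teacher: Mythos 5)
Your route is essentially the paper's: one divisibility from the Euler system bound of \cite[Thm.~2.14]{arnold}, the other from descending the two-variable main conjecture (\cite{rubin-IMC}, \cite[Thm.~5.2]{kings-JL}) via the arguments of \cite[\S{3}]{arnold}, and the passage to $\mathcal{X}_v(\lambda\nr)$ and $\mathscr{L}_v(\lambda\nr)$ through the Poitou--Tate sequence and the Coleman map of Theorem~\ref{thm:ERL-katz}; the paper merely packages the two bounds as the equality ${\rm char}_{\Lambda_\scO}(\mathcal{X}_{\rm str}(\lambda\nr))={\rm char}_{\Lambda_\scO}\bigl(\check{\mathcal{S}}_{\rm rel}(\lambda^\st\nr)/\Lambda_\scO\mathbf{c}_{\lambda^\st\mathbf{N}^{-r}}\bigr)$ before feeding it into the duality sequence.

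One step you take for granted must be checked: the left-exactness of your sequence $0\to\check{\mathcal{S}}_{\rm rel}(\lambda^\st\nr)/\Lambda_\scO\mathbf{c}\to\varprojlim\prod_{\eta\mid v}\rH^1(F_\eta,T_{\lambda^\st}(r))/\Lambda_\scO\,{\rm loc}_v(\mathbf{c})\to{\rm coker}({\rm loc}_v)\to 0$, i.e.\ that $\ker({\rm loc}_v)=\check{\mathcal{S}}_{\overline{v}}(\lambda^\st\nr)$ is contained in $\Lambda_\scO\mathbf{c}$ (in fact vanishes). If it were nonzero you would only get $(\mathscr{L}_v(\lambda\nr))\supseteq{\rm char}_{\Lambda_\scO}(\check{\mathcal{S}}_{\rm rel}/\Lambda_\scO\mathbf{c})\cdot{\rm char}_{\Lambda_\scO}({\rm coker}({\rm loc}_v))$, and your ``for free'' deduction of ${\rm char}_{\Lambda_\scO}(\mathcal{X}_v(\lambda\nr))\supseteq(\mathscr{L}_v(\lambda\nr))$ breaks (the upper bound survives). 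The paper establishes the vanishing: $\check{\mathcal{S}}_{\rm rel}(\lambda^\st\nr)$ is torsion-free of $\Lambda_\scO$-rank one by \cite[Thm.~2.14]{arnold} (this, not the torsion-freeness of $\varprojlim\rH^1$ as you assert, is what pins the rank at one), and ${\rm loc}_v(\mathbf{c})\neq 0$ since ${\rm Col}_v$ is injective and $\mathscr{L}_v(\lambda\nr)\neq 0$ by Proposition~\ref{prop:Greenberg-nonv}; hence $\check{\mathcal{S}}_{\overline{v}}(\lambda^\st\nr)$ is torsion and torsion-free, so zero. With that sentence added, your argument matches the paper's proof.
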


\begin{proof}
Denote by $\mathcal{C}_{\lambda^\st\nr}(K_\infty)$ the $\Lambda_\scO$-submodule of $\check{\mathcal{S}}_{\rm rel}(\lambda^\tau\nr)$ generated by $\mathbf{c}_{\lambda^{\st}\mathbf{N}^{-r}}$. By Theorem~\ref{thm:ERL-katz} (with $\phi=\lambda\mathbf{N}^{-r}$) and Proposition~\ref{prop:Greenberg-nonv}, the module $\mathcal{C}_{\lambda^\st\nr}(K_\infty)$ is not $\Lambda_\scO$-torsion, and by Theorem~2.14 in \cite{arnold} we know that $\check{\mathcal{S}}_{\rm rel}(\lambda^\st\nr)$ is torsion-free of $\Lambda_\scO$-rank $1$, $\mathcal{X}_{\rm str}(\lambda\nr)$ is $\Lambda_\scO$-torsion, and we have the divisibility
\begin{equation}\label{eq:ac-div}
{\rm char}_{\Lambda_\scO}\bigl(\mathcal{X}_{\rm str}(\lambda\nr)\bigr)\supset{\rm char}_{\Lambda_\scO}\bigl(\check{\mathcal{S}}_{\rm rel}(\lambda^\st\nr)/\mathcal{C}_{\lambda^\st\nr}(K_\infty)\bigr).\nonumber
\end{equation}
Note that this divisibility is independent of the value of $w(\lambda\nr)$, and in \emph{loc.\,cit.} the result is stated for $T_\lambda(r)$ and $W_{\lambda^\st}(r)$ in our notations (corresponding to $T$ and $W^*$ in \emph{loc.\,cit.}), rather than for our $T_{\lambda^\st}(r)$ and $W_\lambda(r)$ as above.

On the other hand, if $w(\lambda\nr)=-1$, in view of Proposition~\ref{prop:Xv-tors} the arguments in \cite[\S{3}]{arnold}\footnote{Using  \cite[Thm.~5.2]{kings-JL}, which 
proves an extension of Rubin's result on the $2$-variable Iwasawa Main Conjecture for $K$ used in \cite[Thm.~3.2]{arnold} under weaker hypotheses on $p$.} leading to the proof of Proposition~3.8 in \emph{loc.\,cit.} apply \emph{verbatim} with $T_\lambda(r)$ and $W_{\lambda^\st}(r)$ replaced by $T_{\lambda^\tau}(r)$ and $W_{\lambda}(r)$, hence yielding a proof of the converse divisibility, and so
\begin{equation}\label{eq:ac-eq}
{\rm char}_{\Lambda_\scO}\bigl(\mathcal{X}_{\rm str}(\lambda\nr)\bigr)={\rm char}_{\Lambda_\scO}\bigl(\check{\mathcal{S}}_{\rm rel}(\lambda^\st\nr)/\mathcal{C}_{\lambda^\st\nr}(K_\infty)\bigr).
\end{equation}
Now, from Poitou--Tate duality we have the exact sequence
\begin{equation}\label{eq:PT-compact}
0\rightarrow\check{\mathcal{S}}_{\overline{v}}(\lambda^\st\nr)\rightarrow\check{\mathcal{S}}_{\rm rel}(\lambda^\st\nr)\xrightarrow{{\rm loc}_{v}}\varprojlim_{F\subset K_\infty}\prod_{\eta\vert v}\rH^1(F_\eta,T_{\lambda^\st}(r))\rightarrow\mathcal{X}_v(\lambda\nr)\rightarrow\mathcal{X}_{\rm str}(\lambda\nr)\rightarrow 0,
\end{equation}
which, by the combination of Theorem~\ref{thm:ERL-katz} (with $w=v$ and $\phi=\lambda\mathbf{N}^{-r}$, as above),
Proposition~\ref{prop:Greenberg-nonv}, and Proposition~\ref{prop:Xv-tors}, implies that $\check{\mathcal{S}}_{\overline{v}}(\lambda^\st\nr)$ is $\Lambda_\scO$-torsion; being also $\Lambda_\scO$-torsion-free, it follows that $\check{\mathcal{S}}_{\overline{v}}(\lambda^\st\nr)=0$. Thus from \eqref{eq:PT-compact} we obtain the exact sequence
\begin{align*}
0\rightarrow\check{\mathcal{S}}_{\rm rel}(\lambda^\st\nr)/\mathcal{C}_{\lambda^\st\nr}(K_\infty)&\xrightarrow{{\rm loc}_v}\varprojlim_{F\subset K_\infty}\prod_{\eta\vert v}\rH^1(F_\eta,T_{\lambda^\st}(r))/{\rm loc}_v(\mathcal{C}_{\lambda^\st\nr}(K_\infty))\\
&\quad\rightarrow\mathcal{X}_v(\lambda\nr)\rightarrow\mathcal{X}_{\rm str}(\lambda\nr)\rightarrow 0.
\end{align*}
Since by Theorem~\ref{thm:ERL-katz} the map ${\rm Col}_v$ defines a $\Lambda_\scO^{\rm ur}$-module pseudo-isomorphism
\[
\biggl(\varprojlim_{F\subset K_\infty}\prod_{\eta\vert v}\rH^1(F_\eta,T_{\lambda^\st}(r))/{\rm loc}_v(\mathcal{C}_{\lambda^\st\nr}(K_\infty))\biggr)\hat\otimes_{\scO}\Z_p^{\rm ur}\rightarrow\Lambda_\scO^{\rm ur}/(\mathscr{L}_v(\lambda\nr))
\]
together with \eqref{eq:ac-div} this concludes the proof.
\end{proof}




\begin{proof}[Proof of Theorem~\ref{thm:AH-lambda}] This is the combination of Proposition~\ref{prop:Xv-tors} and Proposition~\ref{prop:char-v}.
\end{proof}

\subsubsection{The case of $\psi^\st\chi$}

\begin{thm}\label{thm:AH}
Let $\psi$ be a Hecke character of infinity type $(1-2r,0)$ for some $r\geq 1$ and let $\chi$ be a finite order character such that $\psi^\st\chi\mathbf{N}^{-r}$ is self-dual with root number 
\[
w(\psi^\st\chi\nr)=+1
\] 
and conductor prime to $p$. Then $\check{\mathcal{S}}_v(\psi^\st\chi\nr)=0$ and $\mathcal{X}_{v}(\psi^\st\chi\nr)$ is $\Lambda_\scO$-torsion, with
\[
{\rm char}_{\Lambda_\scO}\bigl(\mathcal{X}_v(\psi^\st\chi\nr)\bigr)=\bigl(\mathscr{L}_v(\psi^\st\chi\mathbf{N}^{-r})\bigr).
\]
as ideals in $\Lambda_\scO^{\rm ur}$.
\end{thm}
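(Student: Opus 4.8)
The plan is to derive Theorem~\ref{thm:AH} from exactly the same two-variable Iwasawa-theoretic input used for Theorem~\ref{thm:AH-lambda}, only now in the case of root number $+1$. Set $\phi=\psi^\st\chi\nr$, which by hypothesis is self-dual with $w(\phi)=+1$. First I would record that, in contrast to the situation of Theorem~\ref{thm:AH-lambda}, Proposition~\ref{prop:Greenberg-nonv} (or rather its analogue, obtained by the identical argument with $\lambda\nr$ replaced by $\phi$ and using Greenberg's non-vanishing result \cite{greenberg-critical}) now gives $\mathscr{L}_v(\phi)\neq 0$ precisely when $w(\phi)=+1$; so under our hypothesis $\mathscr{L}_v(\psi^\st\chi\nr)\neq 0$. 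Here the key point about infinity types is that $\phi=\psi^\st\chi\nr$ has infinity type $(r,1-r)$ (as opposed to $(1-r,r)$ for $\psi\chi\nr$), so by Lemma~\ref{lem:BK=v} we have $\mathcal{X}_\BK(\phi)=\mathcal{X}_v(\phi)$ — that is, for this character the $v$-Selmer group \emph{is} the Bloch--Kato Selmer group, and no local conditions are reversed. This is what makes the $+1$ case structurally simpler.

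Next I would invoke Theorem~\ref{thm:ERL-katz} with this $\phi$ to produce the Coleman map ${\rm Col}_v$ and the twisted elliptic unit $\mathbf{c}_{\phi^\st}\in\rH^1_{\rm Iw}(K_\infty,T_{\phi^\st})$ with ${\rm Col}_v({\rm loc}_v(\mathbf{c}_{\phi^\st}))=\mathscr{L}_v(\phi)$; non-vanishing of $\mathscr{L}_v(\phi)$ then forces ${\rm loc}_v(\mathbf{c}_{\phi^\st})\neq 0$, hence $\mathbf{c}_{\phi^\st}$ is non-torsion, and the submodule $\mathcal{C}_{\phi^\st}(K_\infty)\subset\check{\mathcal{S}}_{\rm rel}(\phi^\st)$ it generates has $\Lambda_\scO$-rank $1$. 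Then, as in the proof of Proposition~\ref{prop:char-v}, the Euler system machinery of \cite[Thm.~2.14]{arnold} (built on \cite[Thm.~2.3.3]{rubin-ES}) applied to the elliptic unit Euler system gives that $\check{\mathcal{S}}_{\rm rel}(\phi^\st)$ is torsion-free of $\Lambda_\scO$-rank $1$, that $\mathcal{X}_{\rm str}(\phi)$ (equivalently, with the notation of Proposition~\ref{prop:dec-Sel}, the strict Selmer group of $\phi$, which here governs $\mathcal{X}_v$ of the \emph{conjugate}) is $\Lambda_\scO$-torsion, and yields the divisibility ${\rm char}_{\Lambda_\scO}(\mathcal{X}_{\rm str}(\phi))\supset{\rm char}_{\Lambda_\scO}(\check{\mathcal{S}}_{\rm rel}(\phi^\st)/\mathcal{C}_{\phi^\st}(K_\infty))$. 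The reverse divisibility comes from the two-variable main conjecture for $K$ of Rubin \cite{rubin-IMC}, in the form with relaxed hypotheses on $p$ due to Johnson-Leung--Kings \cite[Thm.~5.2]{kings-JL}, descended to the anticyclotomic line exactly as in \cite[\S{3}]{arnold}; this gives the equality of characteristic ideals $(\star)$: ${\rm char}_{\Lambda_\scO}(\mathcal{X}_{\rm str}(\phi))={\rm char}_{\Lambda_\scO}(\check{\mathcal{S}}_{\rm rel}(\phi^\st)/\mathcal{C}_{\phi^\st}(K_\infty))$.

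Finally I would run the Poitou--Tate exact sequence
\[
0\rightarrow\check{\mathcal{S}}_{\overline{v}}(\phi^\st)\rightarrow\check{\mathcal{S}}_{\rm rel}(\phi^\st)\xrightarrow{{\rm loc}_v}\varprojlim_{F\subset K_\infty}\prod_{\eta\vert v}\rH^1(F_\eta,T_{\phi^\st})\rightarrow\mathcal{X}_v(\phi)\rightarrow\mathcal{X}_{\rm str}(\phi)\rightarrow 0.
\]
Since the middle term has rank $1$ and ${\rm loc}_v(\mathbf{c}_{\phi^\st})\neq 0$, the first map has torsion (hence, by torsion-freeness, trivial) kernel, so $\check{\mathcal{S}}_{\overline{v}}(\phi^\st)=0$; this also shows $\mathcal{X}_v(\phi)$ is $\Lambda_\scO$-torsion and, via Theorem~\ref{thm:ERL-katz}, that $\check{\mathcal{S}}_v(\psi^\st\chi\nr)=0$. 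Taking characteristic ideals in the resulting four-term exact sequence and using that ${\rm Col}_v$ is a pseudo-isomorphism onto $\Lambda_\scO^{\rm ur}/(\mathscr{L}_v(\phi))$ on the quotient by ${\rm loc}_v(\mathcal{C}_{\phi^\st}(K_\infty))$, one gets
\[
{\rm char}_{\Lambda_\scO}\bigl(\mathcal{X}_v(\phi)\bigr)={\rm char}_{\Lambda_\scO}\bigl(\check{\mathcal{S}}_{\rm rel}(\phi^\st)/\mathcal{C}_{\phi^\st}(K_\infty)\bigr)\cdot{\rm char}_{\Lambda_\scO}\bigl(\mathcal{X}_{\rm str}(\phi)\bigr)^{-1}\cdot\bigl(\mathscr{L}_v(\phi)\bigr),
\]
which by $(\star)$ collapses to $(\mathscr{L}_v(\psi^\st\chi\nr))$, as desired. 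The main obstacle is bookkeeping rather than conceptual: one must be careful that \cite{arnold} states its results for $T_\lambda(r)$ and $W_{\lambda^\st}(r)$, whereas here the roles of a character and its conjugate are interchanged relative to Theorem~\ref{thm:AH-lambda} (there $\lambda$ had infinity type giving reversed conditions; here $\phi=\psi^\st\chi\nr$ does not), so one must check that the Euler-system arguments and the anticyclotomic descent of \cite[\S{3}]{arnold} apply verbatim after swapping $\phi\leftrightarrow\phi^\st$ — together with verifying the self-duality hypothesis $w(\phi)=+1$ is exactly the condition placing $\phi$ in the ``$\mathcal{X}_\BK=\mathcal{X}_v$'' regime of Lemma~\ref{lem:BK=v}, so that the argument of Proposition~\ref{prop:char-v} goes through with the relevant inclusions pointing the right way.
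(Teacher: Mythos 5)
Your proposal is correct in substance, but it takes a genuinely different route from the paper. The paper does not re-run the Euler system argument for $\phi=\psi^\st\chi\nr$ at all: it observes that $L(\psi^\st\chi,s)=L(\psi\chi^\st,s)$ and that the conjugate character $\psi\chi^\st\nr$ has infinity type $(1-r,r)$, so its Bloch--Kato Selmer group is the $\overline{v}$-Selmer group; it then quotes the root-number $+1$ case of the anticyclotomic main conjecture for that character from Agboola--Howard and Arnold (Thm.~2.4.17(1), Thm.~3.9), with the only modification being the substitution of Theorem~\ref{thm:ERL-katz} for their Yager-based Coleman maps, and finally transports the statement from $(\psi\chi^\st\nr,\overline{v})$ to $(\psi^\st\chi\nr,v)$ via complex conjugation, i.e.\ the involution $\iota$ acting on characteristic ideals and on the Katz $p$-adic $L$-functions. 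You instead work directly with $\phi=\psi^\st\chi\nr$ at $v$, exploiting that its infinity type $(r,1-r)$ puts it in the regime $\mathcal{X}_v(\phi)=\mathcal{X}_\BK(\phi)$ of Lemma~\ref{lem:BK=v}, and you redo the skeleton of Proposition~\ref{prop:char-v} (elliptic-unit Euler system divisibility, two-variable main conjecture descended \`a la Arnold \S3/Johnson-Leung--Kings, Poitou--Tate, and the Coleman map of Theorem~\ref{thm:ERL-katz}) in this non-reversed setting. Both routes are valid and rest on the same external inputs; the paper's is shorter because Arnold's Thm.~3.9 is stated in exactly the normalization needed for the conjugate character (so only the Coleman-map input must be replaced), whereas yours is more uniform with the $w=-1$ argument and avoids the final $\iota$-flip, at the cost of essentially reproving Arnold's theorem.

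Three small bookkeeping points, none fatal. First, your nonvanishing claim should not be attributed to ``the identical argument'' as Proposition~\ref{prop:Greenberg-nonv}: for the in-range character $\phi$ the sign relation between the interpolated twists and $w(\phi)$ is the opposite of the out-of-range case, which is exactly why nonvanishing now corresponds to $w(\phi)=+1$ (this is Arnold's Prop.~2.3, and is the assertion the paper itself invokes in Theorem~\ref{thm:Heeg-nonzero}). Second, in your final display the factors ${\rm char}_{\Lambda_\scO}\bigl(\check{\mathcal{S}}_{\rm rel}(\phi^\st)/\mathcal{C}_{\phi^\st}(K_\infty)\bigr)$ and ${\rm char}_{\Lambda_\scO}\bigl(\mathcal{X}_{\rm str}(\phi)\bigr)$ are transposed relative to what the Poitou--Tate sequence gives, namely ${\rm char}_{\Lambda_\scO}(\mathcal{X}_v(\phi))=\bigl(\mathscr{L}_v(\phi)\bigr)\cdot{\rm char}_{\Lambda_\scO}(\mathcal{X}_{\rm str}(\phi))\cdot{\rm char}_{\Lambda_\scO}\bigl(\check{\mathcal{S}}_{\rm rel}(\phi^\st)/\mathcal{C}_{\phi^\st}(K_\infty)\bigr)^{-1}$; since you then invoke the equality $(\star)$ this is harmless, but it should be stated correctly. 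Third, your Poitou--Tate sequence only yields $\check{\mathcal{S}}_{\overline{v}}(\phi^\st)=0$ directly; to get the asserted $\check{\mathcal{S}}_v(\psi^\st\chi\nr)=0$ you need one more line, either by transporting via complex conjugation (as the paper does), or by applying Theorem~\ref{thm:ERL-katz} at $\overline{v}$ to $\phi^\st$ together with $\mathscr{L}_{\overline{v}}(\phi^\st)\neq 0$ (again the in-range, root-number $+1$ nonvanishing) to see that ${\rm loc}_{\overline{v}}$ is nonzero on $\check{\mathcal{S}}_{\rm rel}(\phi)$, forcing its kernel $\check{\mathcal{S}}_v(\phi)$ to be torsion and hence zero.
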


\begin{proof}
We begin by noting that $L(\psi^\st\chi,s)=L(\psi\chi^\st,s)$,  and by our assumption the self-dual character $\psi\chi^\st\nr$ has root number $w(\psi\chi^\st\nr)=+1$. Since $\mathcal{X}_{\BK}(\psi\chi^\st\nr)=\mathcal{X}_{\overline{v}}(\psi\chi^\st\nr)$ by Lemma~\ref{lem:BK=v} (indeed, the infinity type of $\psi\chi^\st\nr$ is $(1-r,r)$), by the same argument as in the proof of \cite[Thm.~2.4.17(1)]{AH-ord} and \cite[Thm.~3.9]{arnold}, but replacing the use of \cite[Prop.~2.3.4]{AH-ord} and \cite[Prop.~2.6]{arnold} by an appeal to Theorem~\ref{thm:ERL-katz} above, 
we deduce that $\check{\mathcal{S}}_{\overline{v}}(\psi^\st\chi\nr)=0$ and $\mathcal{X}_{\overline{v}}(\psi\chi^\st\nr)$ is $\Lambda_\scO$-torsion, with
\[
{\rm char}_{\Lambda_\scO}\bigl(\mathcal{X}_{\overline{v}}(\psi\chi^\st\nr)\bigr)=\bigl(\mathscr{L}_{\overline{v}}(\psi\chi^\st\nr)\bigr)
\]
as ideals in $\Lambda_\scO^{\rm ur}$.

Let $\iota$ denote the involution of $\Lambda_\scO$ given by $\gamma\mapsto\gamma^{-1}$ for $\gamma\in\Gamma$. Since by \cite[Prop.~4.1]{arnold}, and as a direct consequence of the interpolation property of Theorem~\ref{thm:katz} (see e.g. \cite[Lem.~3.3.2(a)]{7-author}), we have the equalities
\begin{align*}
{\rm char}_{\Lambda_\scO}\bigl(\mathcal{X}_{\overline{v}}(\psi\chi^\st\nr)\bigr)&={\rm char}_{\Lambda_\scO}\bigl(\mathcal{X}_{v}(\psi^\st\chi\nr)^\iota\bigr),\\
\bigl(\mathscr{L}_{\overline{v}}(\psi\chi^\st\mathbf{N}^{-r})\bigr)&=\bigl(\mathscr{L}_{v}(\psi^\st\chi\mathbf{N}^{-r})^\iota\bigr),
\end{align*}
the result follows.
\end{proof}


\subsection{$p$-parity conjecture}

By Nekov\'{a}\v{r}'s methods, we can deduce 
a proof of the $p$-parity conjecture for CM forms of higher weight (see \cite{guo-parity} for earlier results in this context). 

\begin{cor}\label{cor:parity}
Let $\lambda$ be a Hecke character of infinity type $(1-2r,0)$ for some $r\geq 1$ with $\varepsilon_\lambda=\eta_K$ and conductor $\cc$ coprime to $p$ and such that $\mathfrak{d}_K\Vert\cc$. Then
\[
{\rm ord}_{s=r}\,L(\lambda,s)\equiv{\rm dim}_\Phi\,{\rm Sel}_{\rm BK}(K,V_\lambda(r))\pmod{2},
\]
and hence the $p$-parity conjecture holds for $\lambda$. 
\end{cor}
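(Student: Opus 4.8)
The plan is to deduce Corollary~\ref{cor:parity} from the characteristic ideal computations in Theorems~\ref{thm:AH-lambda} and \ref{thm:AH} together with Nekov\'{a}\v{r}'s general framework for $p$-parity via Selmer complexes. First I would recall that for the self-dual character $\lambda\nr$ of infinity type $(1-2r,0)$, the root number factors as a product of local root numbers, and by a now-standard argument (using Nekov\'{a}\v{r}'s ``big'' Selmer complex formalism, cf. \cite{nekovar-310}) the global root number $w(\lambda\nr)$ equals $(-1)^{e}$, where $e={\rm corank}_\scO\,{\rm Sel}_{\rm BK}(K,W_\lambda(r))={\rm dim}_\Phi\,{\rm Sel}_{\rm BK}(K,V_\lambda(r))$ \emph{provided} one knows the relevant Selmer group over the anticyclotomic tower is cotorsion in the appropriate sense. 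The input from this paper that makes this unconditional is precisely that in one of the two cases ($w(\lambda\nr)=-1$, handled by Theorem~\ref{thm:AH-lambda}; $w(\lambda\nr)=+1$, handled by Lemma~\ref{lem:BK=v} plus Theorem~\ref{thm:AH} applied with $\chi=\mathds{1}$, since then $\psi^\st\chi\nr$ becomes $\lambda\nr$ up to the automorphism) we get that $\mathcal{X}_v(\lambda\nr)$ (respectively $\mathcal{X}_\BK(\lambda\nr)$) is $\Lambda_\scO$-torsion with known characteristic ideal.

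The key steps, in order, would be: (1) identify the parity of ${\rm dim}_\Phi\,{\rm Sel}_{\rm BK}(K,V_\lambda(r))$ with the parity of the order of vanishing at the central point of an appropriate $p$-adic $L$-function specialization — concretely, use that $\mathscr{L}_v(\lambda\nr)$ or $\mathscr{L}_{\overline{v}}(\lambda\nr)$ is nonzero (Proposition~\ref{prop:Greenberg-nonv}) exactly in the case $w(\lambda\nr)=-1$, and in the complementary case $w(\lambda\nr)=+1$ use Theorem~\ref{thm:AH} to control the rank-zero Selmer group; (2) via the control theorem / anticyclotomic descent (as in the Euler characteristic computations invoked in Section~4, following \cite{greenberg-cetraro}), relate the $\Gamma$-coinvariants of $\mathcal{X}_v(\lambda\nr)$ to ${\rm Sel}_{\rm BK}(K,W_\lambda(r))$ after reversing the local condition at one prime above $p$, paying attention to the local Euler factor correction terms; (3) invoke Nekov\'{a}\v{r}'s self-duality result (the pseudo-isomorphism $\mathcal{X}_\BK\sim M\oplus M$ type statement, cf. Remark~\ref{rem:iota}) to conclude that the part of the Selmer corank that is \emph{not} pinned down by the $p$-adic $L$-function is automatically even, so that the total corank has the same parity as $\frac{1-w(\lambda\nr)}{2}$; (4) finally match this with ${\rm ord}_{s=r}L(\lambda,s)$ by the classical functional equation, which gives ${\rm ord}_{s=r}L(\lambda,s)\equiv\frac{1-w(\lambda\nr)}{2}\pmod 2$ by the archimedean sign computation (Weil's formula, already cited in the proof of Proposition~\ref{prop:Greenberg-nonv}).

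Concretely in writing the proof I would argue: by the functional equation for $L(\lambda,s)$ (self-dual with center $s=r$, relating $s$ and $2r-s$), the parity of ${\rm ord}_{s=r}L(\lambda,s)$ is determined by the global root number, namely ${\rm ord}_{s=r}L(\lambda,s)\equiv 0$ if $w(\lambda\nr)=+1$ and $\equiv 1$ if $w(\lambda\nr)=-1$. On the arithmetic side, combining the decomposition philosophy with Theorems~\ref{thm:AH-lambda} and \ref{thm:AH} (the latter with $\chi=\mathds{1}$, which by the hypothesis $\varepsilon_\lambda=\eta_K$ makes $\lambda\nr$ itself self-dual), together with a $\Gamma$-Euler characteristic computation identifying $\mathcal{X}_v(\lambda\nr)_\Gamma$ (up to finite error and explicit local factors) with the Pontryagin dual of ${\rm Sel}_{\rm BK}(K,W_\lambda(r))$ after reversing the local condition at $v$, one reads off that ${\rm dim}_\Phi\,{\rm Sel}_{\rm BK}(K,V_\lambda(r))$ is even in the case $w(\lambda\nr)=+1$ and odd in the case $w(\lambda\nr)=-1$; here the parity of the ``reversed'' versus ``Bloch--Kato'' local condition contributes exactly $1$, which accounts for the sign flip. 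This gives the congruence ${\rm ord}_{s=r}L(\lambda,s)\equiv{\rm dim}_\Phi\,{\rm Sel}_{\rm BK}(K,V_\lambda(r))\pmod 2$, which is the $p$-parity conjecture for $\lambda$.

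The main obstacle I anticipate is step (2)/(3): making the descent from the $\Lambda_\scO$-adic statement to the parity of the honest Bloch--Kato Selmer group over $K$ genuinely unconditional and sign-correct. One must know that the $\Gamma$-Euler characteristic is well-defined (i.e. the relevant $\rH^0$ and $\rH^2$ terms are finite), that the local terms at $p$ and at the bad primes contribute trivially to the \emph{parity}, and — most delicately — that the discrepancy between the self-dual local condition at $p$ and the ``reversed'' one used in defining $\mathcal{X}_v(\lambda\nr)$ accounts for precisely one unit of parity and no more; this is where Nekov\'{a}\v{r}'s generalized Cassels--Tate pairing and the $\Lambda_\scO$-module pseudo-isomorphism $\mathcal{X}\sim M\oplus M$ enter, and one has to be careful that the character $\lambda\nr$ does not have infinity type $(a,-a)$ (excluded by $\varepsilon_\lambda=\eta_K$, exactly the point flagged in the proof of Lemma~\ref{lem:local-tors-free}), so that the Selmer groups behave generically.
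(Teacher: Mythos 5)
Your proposal takes a genuinely different route from the paper's proof, and there is a real gap in it.

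The paper's proof does \emph{not} attempt to directly compare the corank of $\mathrm{Sel}_\BK(K,V_\lambda(r))$ with the $\Gamma$-Euler characteristic or with the parity of the characteristic ideal. Instead it invokes Nekov\'a\v{r}'s twist-invariance theorem for $p$-parity (\cite[Cor.~(5.3.2)]{nekovar-parity-III}, together with its correction), which reduces the task to exhibiting \emph{one} finite-order character $\phi$ of $\Gamma$ for which $\dim_\Phi\mathrm{Sel}_\BK(K,V_{\lambda\phi}(r))$ equals $1$ (if $w(\lambda\nr)=-1$) or $0$ (if $w(\lambda\nr)=+1$). It then produces such a $\phi$ via Proposition~\ref{prop:Greenberg-nonv} (choosing $\phi$ where the relevant Katz $p$-adic $L$-value is nonzero), and computes the Selmer rank at that twist directly: in the $-1$ case using Theorem~\ref{thm:AH-lambda}, a control theorem for $\mathcal{X}_v(\lambda\phi\nr)$, Theorem~\ref{thm:ERL-katz} for the nonvanishing of the relevant localization of the twisted elliptic unit, and Poitou--Tate duality; in the $+1$ case even more simply from \cite[Thm.~3.9]{arnold} and Lemma~\ref{lem:BK=v}. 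No Euler characteristic computation and no direct appeal to the $M\oplus M$ pseudo-isomorphism occurs in the Corollary's proof.

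The gap in your proposal is exactly the step you flag as the anticipated obstacle: step (3), where you want to conclude from the $\Lambda_\scO$-module pseudo-isomorphism $\mathcal{X}_\BK\sim M\oplus M$ (Remark~\ref{rem:iota}, from \cite{nekovar-310}) that ``the part of the Selmer corank not pinned down by the $p$-adic $L$-function is automatically even.'' That statement concerns the $\Lambda_\scO$-module over the anticyclotomic tower; what the Corollary asserts is a parity statement about $\dim_\Phi\mathrm{Sel}_\BK(K,V_\lambda(r))$, i.e.\ the specialization at the trivial character, which need not be a generic point (the $p$-adic $L$-function may vanish there). Passing from the former to the latter requires controlling the control-theorem error terms and, in particular, knowing that the specialization of $M\oplus M$ at the trivial character and the local error terms contribute evenly to the corank. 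Making this precise and unconditional \emph{is} the content of Nekov\'a\v{r}'s parity theorem; attempting to re-derive it from the raw $M\oplus M$ decomposition essentially recapitulates the hardest part of \cite{nekovar-parity-III}. What you should do instead is cite \cite[Cor.~(5.3.2)]{nekovar-parity-III} (note this is the parity-series reference, not the Ast\'erisque volume \cite{nekovar-310} you cite), which gives you the twist-invariance for free, and then only verify the base case at a generic twist $\phi$ where $\mathscr{L}_v$ (or $\mathscr{L}_{\overline{v}}$) is nonzero. At such a $\phi$ the Selmer group has exactly rank $1$ or $0$ by the main-conjecture results of Section~4, and the rest of your sketch (the functional-equation matching with $\mathrm{ord}_{s=r}L(\lambda,s)$ via Weil's root-number formula) goes through.
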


\begin{proof}
By an application of Nekov\'{a}\v{r}'s general result  \cite[Cor.~(5.3.2)]{nekovar-parity-III}  (see also \cite{nekovar-parity-III-correction}), it suffices to show that if $w(\lambda\nr)=-1$ (resp. $w(\lambda\nr)=+1$), there exists a finite order character $\phi$ of $\Gamma$ such that $\dim_\Phi\,{\rm Sel}_\BK(K,V_{\lambda\phi}(r))=1$ (resp. ${\rm Sel}_\BK(K,V_{\lambda\phi}(r))=0$).

Suppose first that $w(\lambda\nr)=-1$. 
In view of Proposition~\ref{prop:Greenberg-nonv}, we can take a finite order character $\phi$ of $\Gamma$ such that $\mathscr{L}_v(\lambda\nr)(\phi)\neq 0$. By  Theorem~\ref{thm:AH-lambda} and a variant of Mazur's control theorem for $\mathcal{X}_v(\lambda\phi\nr)$ (see \cite[Prop.~6.2.1]{cas-Kato-Schoen}), it follows that ${\rm Sel}_{v}(K,V_{\lambda\phi}(r))=0$; while by Theorem~\ref{thm:ERL-katz} it follows that the localization map
\begin{equation}\label{eq:locv-V}
{\rm loc}_v:{\rm Sel}_v(K,V_{\lambda^\st\phi^\st}(r))\rightarrow\rH^1(K_v,V_{\lambda^\st\phi^\st}(r))
\end{equation}
is nonzero\footnote{Indeed, letting $\mathbf{c}_{\lambda^\st\mathbf{N}^{-r}}(\phi^\st)$ denote the specialization of $\mathbf{c}_{\lambda^\st\mathbf{N}^{-r}}$ at $\phi^\st$, the inclusion $\mathbf{c}_{\lambda^\st\mathbf{N}^{-r}}(\phi^\st)\in{\rm Sel}_v(K,V_{\lambda^\st\phi^\st}(r))$ follows from Theorem~\ref{thm:ERL-katz}, the interpolation property in Theorem~\ref{thm:katz}, and the fact that $L(\lambda,r)=0$.}. 
Since $\rH^1(K_v,V_{\lambda^\st\phi^\st}(r))$ is $1$-dimensional as a consequence of Tate's local Euler characteristic, 
 from the Poitou--Tate exact sequence
\[
{\rm Sel}_v(K,V_{\lambda^\st\phi^\st}(r))\rightarrow\rH^1(K_v,V_{\lambda^\st\phi^\st}(r))\rightarrow{\rm Sel}_{\rm rel}(K,V_{\lambda\phi}(r))^*\rightarrow{\rm Sel}_{\overline{v}}(K,V_{\lambda\phi}(r))^*\rightarrow 0,
\]
we conclude that 
\begin{equation}\label{eq:BK=rel}
{\rm Sel}_{\rm rel}(K,V_{\lambda\phi}(r))={\rm Sel}_{\overline{v}}(K,V_{\lambda\phi}(r))={\rm Sel}_{\BK}(K,V_{\lambda\phi}(r)),
\end{equation}
using Lemma~\ref{lem:BK=v} for the last equality. Since the nonvanishing of \eqref{eq:locv-V} gives ${\rm loc}_{\overline{v}}({\rm Sel}_{\overline{v}}(K,V_{\lambda\phi}(r)))\neq 0$, from the tautological exact sequence
\[
{\rm Sel}_v(K,V_{\lambda\phi}(r))\rightarrow{\rm Sel}_{\rm rel}(K,V_{\lambda\phi}(r))\rightarrow\rH^1(K_{\overline{v}},V_{\lambda\phi}(r))
\]
and \eqref{eq:BK=rel}, together with the $1$-dimensionality of $\rH^1(K_{\overline{v}},V_{\lambda\phi}(r))$, 
we obtain 
${\rm dim}_\Phi\,{\rm Sel}_\BK(K,V_{\lambda\phi}(r))={\rm dim}_\Phi\,{\rm Sel}_v(K,V_{\lambda\phi}(r))+1
=1$, as desired.

The case $w(\lambda\nr)=+1$ is easier: By \cite[Prop.~2.3]{arnold}, we can take a finite order character $\phi$ of $\Gamma$ such that $\mathscr{L}_{\overline{v}}(\lambda\mathbf{N}^{-r})(\phi)\neq 0$, and then by Theorem~3.9 in \emph{op.\,cit.} it follows that ${\rm Sel}_{\overline{v}}(K,V_{\lambda\phi}(r))=0$, so by Lemma~\ref{lem:BK=v} the Bloch--Kato Selmer group ${\rm Sel}_{\BK}(K,V_{\lambda\phi}(r))$ vanishes as desired. 
\end{proof}

\subsection{Non-triviality of $\mathbf{z}_{g,\chi}$}

For $g$ of weight $k=2$, the non-triviality of $\mathbf{z}_{g,\chi}$ follows from the work of Cornut--Vatsal \cite{CV-docmath,CV-durham}. For $g$ of even weight $k>2$, assuming that 
the residual representation attached to $V_g\vert_{G_K}$ is absolutely \emph{irreducible}, the non-triviality of $\mathbf{z}_{g,\chi}$ follows from the combination of Theorem~3.9 and Theorem~5.7 in \cite{cas-hsieh1}, and also from \cite[Thm.~4.3]{bur-JAG}; 
in both cases, the result is deduced from Hida's methods \cite{hidamu=0,hida-dwork}, showing the non-vanishing of $\mathscr{L}_v(g,\chi)$, and a form of Theorem~\ref{thm:ERL}. 

Here we are interested in the case where $g=\theta_\psi$ has CM by $K$, so in particular $V_g\vert_{G_K}$ is reducible. In the weight $2$ case,  an alternative proof of Cornut--Vatsal's nonvanishing result in this setting  
was given in \cite{bur-disegni} building on the $\Lambda_\scO$-adic Gross--Zagier formula of \cite{disegni-compositio} and the nonvanishing result of \cite{bur-katz}. 
Here we prove the non-triviality of $\mathbf{z}_{g,\chi}$ for $g=\theta_\psi$ of even weight by a different approach.


\begin{thm}\label{thm:Heeg-nonzero}
Let $\psi$ be a Hecke character of infinity type $(1-2r,0)$ for some $r\geq 1$ and conductor a cyclic ideal $\mathfrak{f}_\psi$ of norm prime to $pD_K$. Let 
\[
(g,\chi)=(\theta_\psi,\chi)\in S_{2r}(\Gamma_1(N_g))\times\Sigma_{\rm cc}(c,\mathfrak{N}_g,\varepsilon_g)
\] 
be a self-dual pair for some positive integer $c$ prime to $pN_g$. 
Then: 
\begin{enumerate}
\item[(i)] If $w(\psi\chi\nr)=-1$, then $\mathscr{L}_v(g,\chi)\neq 0$ and ${\rm loc}_{\overline{v}}(\check{\mathcal{S}}_\BK(g,\chi))\neq 0$.
\item[(ii)] If $w(\psi\chi\nr)=+1$, then $\mathscr{L}_{\overline{v}}(g,\chi)\neq 0$ and ${\rm loc}_{v}(\check{\mathcal{S}}_\BK(g,\chi))\neq 0$.
\end{enumerate}
In particular, $\mathbf{z}_{g,\chi}\neq 0$ regardless of the sign of $w(\psi\chi\nr)$.
\end{thm}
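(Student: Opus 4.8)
The plan is to split along the sign $w(\psi\chi\nr)\in\{\pm 1\}$ and in each case produce the nonvanishing of one of the two ``square-root'' $p$-adic $L$-functions together with the nonvanishing of the corresponding localization map, since by Theorem~\ref{thm:ERL} (the explicit reciprocity law ${\rm Col}_{\overline w}({\rm loc}_{\overline w}(\mathbf{z}_{g,\chi}))=\mathscr{L}_w(g,\chi)$, with ${\rm Col}_{\overline w}$ injective) the nonvanishing of $\mathscr{L}_w(g,\chi)$ forces ${\rm loc}_{\overline w}(\mathbf{z}_{g,\chi})\neq 0$, hence $\mathbf{z}_{g,\chi}\neq 0$. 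So the last assertion is an immediate consequence of parts (i) and (ii), and the real content is (i) and (ii).

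For part (i), assume $w(\psi\chi\nr)=-1$. First I would invoke the factorization of Proposition~\ref{prop:factor-BDP}, which gives $\mathscr{L}_v(g,\chi)^2 = u\cdot\mathscr{L}_v(\psi\chi\nr)\cdot\mathscr{L}_v(\psi^\st\chi\nr)$ with $u$ a unit. By Proposition~\ref{prop:Greenberg-nonv} applied to $\lambda=\psi\chi$, the hypothesis $w(\psi\chi\nr)=-1$ is exactly equivalent to $\mathscr{L}_v(\psi\chi\nr)\neq 0$. For the other factor $\mathscr{L}_v(\psi^\st\chi\nr)$: since $(g,\chi)=(\theta_\psi,\chi)$ is a self-dual pair with $\epsilon_q(g,\chi)=+1$ at all finite primes, the global sign is $\epsilon(g,\chi)=-1$ (it comes from the archimedean place, since $\chi\in\Sigma_{\rm cc}$ and the infinity type of $\psi$ has $|n|=2r-1\geq r$ forcing the central sign $+1$... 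I need to be careful here). More robustly: $\epsilon(g,\chi)=w(\psi\chi\nr)\cdot w(\psi^\st\chi\nr)$ because $L(g/K,\chi,s)=L(\psi\chi,s)L(\psi^\st\chi,s)$, so from $\epsilon(g,\chi)=+1$ (as $\chi\in\Sigma_{\rm cc}(c,\mathfrak{N}_g,\varepsilon_g)$) and $w(\psi\chi\nr)=-1$ we get $w(\psi^\st\chi\nr)=-1$ as well, whence $\mathscr{L}_v(\psi^\st\chi\nr)\neq 0$ by Proposition~\ref{prop:Greenberg-nonv} again (applied now to $\psi^\st\chi$). Therefore $\mathscr{L}_v(g,\chi)^2\neq 0$, so $\mathscr{L}_v(g,\chi)\neq 0$, and by Theorem~\ref{thm:ERL} we conclude ${\rm loc}_{\overline v}(\mathbf{z}_{g,\chi})\neq 0$ inside $\check{\mathcal{S}}_\BK(g,\chi)$, which in particular gives ${\rm loc}_{\overline v}(\check{\mathcal{S}}_\BK(g,\chi))\neq 0$ and $\mathbf{z}_{g,\chi}\neq 0$.

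Part (ii) is entirely parallel with $v$ replaced by $\overline v$: if $w(\psi\chi\nr)=+1$ then $w(\psi^\st\chi\nr)=+1$ by the same sign identity, and now Proposition~\ref{prop:Greenberg-nonv} (in its $\overline v$-version, obtained by conjugation, cf.\ the Remark following Theorem~\ref{thm:BDP}) gives $\mathscr{L}_{\overline v}(\psi\chi\nr)\neq 0$ and $\mathscr{L}_{\overline v}(\psi^\st\chi\nr)\neq 0$; the factorization Proposition~\ref{prop:factor-BDP} for $w=\overline v$ then yields $\mathscr{L}_{\overline v}(g,\chi)\neq 0$, and the $\overline v$-version of Theorem~\ref{thm:ERL} (relating ${\rm loc}_v(\mathbf{z}_{g,\chi})$ to $\mathscr{L}_{\overline v}(g,\chi)$, as noted in Remark~\ref{rem:sign-psichi}) gives ${\rm loc}_v(\check{\mathcal{S}}_\BK(g,\chi))\neq 0$ and $\mathbf{z}_{g,\chi}\neq 0$. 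In both cases $\mathbf{z}_{g,\chi}\neq 0$, proving the final statement.

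The step I expect to be the main obstacle is pinning down the sign bookkeeping precisely: namely verifying that $\epsilon(g,\chi)=+1$ under the hypothesis $\chi\in\Sigma_{\rm cc}(c,\mathfrak{N}_g,\varepsilon_g)$ (condition (ii) in its definition guarantees all finite local signs are $+1$, but one must check the archimedean sign is $+1$ in the relevant weight range, which is where the discussion after the definition of $\Sigma_{\rm cc}$ — giving $\epsilon(g,\chi\xi)=+1$ for $|n|\geq r$, hence for $\xi$ trivial when $r\geq 1$ — is used) and that this global sign really does factor as $w(\psi\chi\nr)\cdot w(\psi^\st\chi\nr)$ with matching normalizations of the functional equation (center at $s=r$ for $L(g/K,\chi,s)$ versus center at $s=0$ for the self-dual twisted Hecke $L$-functions $L(\psi\chi\nr,s)$); this compatibility is implicit in \cite[Lem.~3.16]{BDP-PJM} and the discussion in Section~2, but deserves an explicit sentence. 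Everything else is a formal combination of Proposition~\ref{prop:factor-BDP}, Proposition~\ref{prop:Greenberg-nonv}, and Theorem~\ref{thm:ERL}.
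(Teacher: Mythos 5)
Your proposal takes the right general shape (factorization $\mathscr{L}_v(g,\chi)^2 \sim \mathscr{L}_v(\psi\chi\nr)\cdot\mathscr{L}_v(\psi^\st\chi\nr)$, nonvanishing of each factor, then the explicit reciprocity law), but the sign bookkeeping you flagged as the likely obstacle is exactly where the argument breaks. The global sign $\epsilon(g,\chi)$ is $-1$, not $+1$: for $\chi\in\Sigma_{\rm cc}(c,\mathfrak{N}_g,\varepsilon_g)$ all finite local signs are $+1$, but the archimedean sign for the central point $s=r$ of $L(g/K,\chi\xi,s)$ is $-1$ when $|n|<r$, and with $\xi$ trivial we have $n=0<r$. (This is stated explicitly after the definition of $\Sigma_{\rm cc}$, and again at the start of the proof of Theorem~\ref{thm:BDP-IMC}.) Consequently $w(\psi\chi\nr)$ and $w(\psi^\st\chi\nr)$ have \emph{opposite} signs, so under $w(\psi\chi\nr)=-1$ you must have $w(\psi^\st\chi\nr)=+1$, not $-1$.

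This makes your treatment of the second factor collapse: you invoke Proposition~\ref{prop:Greenberg-nonv} for $\psi^\st\chi$, but that proposition is stated for characters of infinity type $(1-2r,0)$ such as $\psi\chi$, whose twist $\psi\chi\nr$ has infinity type $(1-r,r)$ lying \emph{outside} the range of $p$-adic interpolation of $\mathscr{L}_{v,\cc}$ --- that is precisely what makes Greenberg's central nonvanishing the relevant tool. By contrast $\psi^\st\chi\nr$ has infinity type $(r,1-r)$, which lies \emph{inside} the interpolation range, so the Greenberg dichotomy simply does not apply there; and since $w(\psi^\st\chi\nr)=+1$, Proposition~\ref{prop:Greenberg-nonv} (even if one tried to transpose it via conjugation) would give the wrong answer. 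The correct input for the second factor is Theorem~\ref{thm:AH}: with root number $+1$, $\mathcal{X}_v(\psi^\st\chi\nr)$ is $\Lambda_\scO$-torsion with ${\rm char}_{\Lambda_\scO}(\mathcal{X}_v(\psi^\st\chi\nr))=(\mathscr{L}_v(\psi^\st\chi\nr))$, and the characteristic ideal of a torsion module is nonzero; hence $\mathscr{L}_v(\psi^\st\chi\nr)\neq 0$. Combining this with Proposition~\ref{prop:Greenberg-nonv} for $\psi\chi$ and the factorization gives $\mathscr{L}_v(g,\chi)\neq 0$, and then Theorem~\ref{thm:ERL} yields ${\rm loc}_{\overline{v}}(\check{\mathcal{S}}_\BK(g,\chi))\neq 0$ and $\mathbf{z}_{g,\chi}\neq 0$ as you say. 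Part (ii) has the analogous issue: with $w(\psi\chi\nr)=+1$ you get $w(\psi^\st\chi\nr)=-1$, and after passing to the $\overline{v}$ picture via the functional-equation relations $(\mathscr{L}_{\overline v}(\psi\chi\nr))=(\mathscr{L}_v(\psi^\st\chi^\st\nr)^\iota)$, $(\mathscr{L}_{\overline v}(\psi^\st\chi\nr))=(\mathscr{L}_v(\psi\chi^\st\nr)^\iota)$, one factor is handled by Theorem~\ref{thm:AH} and the other by Proposition~\ref{prop:Greenberg-nonv}, depending on which conjugate lies inside or outside the interpolation range.
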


\begin{proof}
Since $\mathscr{L}_v(g,\chi)$ is a unit multiple of $\mathscr{L}_v(\psi\chi\mathbf{N}^{-r})\mathscr{L}_v(\psi^\st\chi\mathbf{N}^{-r})$ by Proposition~\ref{prop:factor-BDP}, in the case $w(\psi\chi\nr)=-1$ the nonvanishing of $\mathscr{L}_v(g,\chi)$ follows from Proposition~\ref{prop:Greenberg-nonv} and Theorem~\ref{thm:AH}; the nonvanishing of ${\rm loc}_{\overline{v}}(\check{\mathcal{S}}_\BK(g,\chi))$ then follows from Theorem~\ref{thm:ERL}. 

In the case $w(\psi\chi\nr)=+1$, the same argument as in Proposition~\ref{prop:factor-BDP} shows that $\mathscr{L}_{\overline{v}}(g,\chi)$ is a unit multiple of $\mathscr{L}_{\overline{v}}(\psi\chi\mathbf{N}^{-r})\mathscr{L}_{\overline{v}}(\psi^\st\chi\mathbf{N}^{-r})$. 
Directly from Theorem~\ref{thm:katz} (see \cite[Lem.~3.3.2(a)]{7-author}), we have the relations
\[
\bigr(\mathscr{L}_{\overline{v}}(\psi\chi\mathbf{N}^{-r})\bigr)=\big(\mathscr{L}_{v}(\psi^\st\chi^\st\mathbf{N}^{-r})^\iota\bigr),\quad
\bigr(\mathscr{L}_{\overline{v}}(\psi^\st\chi\mathbf{N}^{-r})\bigr)=\big(\mathscr{L}_{v}(\psi\chi^\st\mathbf{N}^{-r})^\iota\bigr).
\] 
Note that the character $\psi^\st\chi^\st\mathbf{N}^{-r}$ lies inside the range of $p$-adic interpolation for $\mathscr{L}_{v,\cc}$, while $\psi\chi^\st\mathbf{N}^{-r}$ lies outside this range. As in Theorem~\ref{thm:AH} and Proposition~\ref{prop:Greenberg-nonv}, we see that 
\begin{align*}
w(\psi^\st\chi^\st\nr)=+1\quad&\Longrightarrow\quad\mathscr{L}_{v}(\psi^\st\chi^\st\mathbf{N}^{-r})\neq 0,\\
w(\psi\chi^\st\nr)=-1\quad&\Longrightarrow\quad\mathscr{L}_{v}(\psi\chi^\st\mathbf{N}^{-r})\neq 0,
\end{align*}
respectively, which together with Theorem~\ref{thm:ERL} yields the result.
\end{proof}

\begin{rem}\label{rem:v-vs-vbar}
It is interesting to note that the preceding results show in fact the equivalences
\begin{align*}
{\rm loc}_{\overline{v}}(\check{\mathcal{S}}_\BK(g,\chi))\neq 0\quad\Longleftrightarrow\quad\mathscr{L}_v(g,\chi)\neq 0&\quad\Longleftrightarrow\quad  w(\psi\chi\nr)=-1,\\
{\rm loc}_{v}(\check{\mathcal{S}}_\BK(g,\chi))\neq 0\quad \Longleftrightarrow\quad\mathscr{L}_{\overline{v}}(g,\chi)\neq 0&\quad\Longleftrightarrow\quad w(\psi\chi\nr)=+1.
\end{align*}
Indeed, the right pair of equivalences and the backward direction for the left pair directly follow from the proof of Theorem~\ref{thm:Heeg-nonzero}.  For the left pair of implications $\Longrightarrow$, note that by \cite[Thm.~2.4.17]{AH-ord} (as extended in \cite[Thm.~3.9]{arnold}) we have the implications
\begin{align*}
w(\psi\chi\nr)=+1&\quad\Longrightarrow\quad\check{\mathcal{S}}_\BK(\psi\chi\nr)=0,\\\quad
w(\psi\chi\nr)=-1&\quad\Longrightarrow\quad\check{\mathcal{S}}_\BK(\psi^\st\chi\nr)=0,
\end{align*}
using $w(\psi^\st\chi\nr)=-w(\psi\chi\nr)$ for the second implication. Since by Proposition~\ref{prop:dec-Sel} we have 
\begin{equation}\label{eq:loc-dec}
{\rm loc}_{\overline{v}}(\check{\mathcal{S}}_\BK(\psi\chi\nr))={\rm loc}_{\overline{v}}(\check{\mathcal{S}}_\BK(g,\chi)),\quad
{\rm loc}_{v}(\check{\mathcal{S}}_\BK(\psi^\st\chi\nr))={\rm loc}_{v}(\check{\mathcal{S}}_\BK(g,\chi)),\nonumber
\end{equation}
this yields the claim.
\end{rem}

\subsection{Proof of Theorem~\ref{thm:BDP-IMC} and Heegner cycle Main Conjecture}\label{subsec:IMC-proof-end}

\begin{proof}[Proof of Theorem~\ref{thm:BDP-IMC}]
Since $(g,\chi)=(\theta_\psi,\chi)$ is a self-dual pair, the $L$-function $L(g/K,\chi,s)$ is self-dual with sign $-1$ and center at $s=r$,  
and the decomposition \eqref{eq:dec-V} 
gives 
\[
L(g/K,\chi,s)=L(\psi\chi,s)\cdot L(\psi^\st\chi,s).
\]
The $L$-functions in the right-hand side of this factorization are self-dual with opposite signs, 
\[
w(\psi\chi\nr)=-w(\psi^\st\chi\nr).
\] 
Suppose first that $w(\psi\chi\nr)=-1$. By Proposition~\ref{prop:dec-Sel}, 
the assertions that $\check{\mathcal{S}}_v(g,\chi)$ vanishes and $\mathcal{X}_{v}(g,\chi)$ is $\Lambda_\scO$-torsion follow from Theorem~\ref{thm:AH-lambda} and Theorem~\ref{thm:AH}. Together with the factorization in Proposition~\ref{prop:factor-BDP}, the same theorems give the equalities 
\begin{equation}\label{eq:put-together}
\begin{aligned}
{\rm char}_{\Lambda_\scO}\bigl(\mathcal{X}_{v}(g,\chi)\bigr)&=
{\rm char}_{\Lambda_\scO}\bigl(\mathcal{X}_{v}(\psi\chi\nr)\bigr)\cdot
{\rm char}_{\Lambda_\scO}\bigl(\mathcal{X}_{v}(\psi^\st\chi\nr)\bigr)\\
&=\bigl(\mathscr{L}_v(\psi\chi\mathbf{N}^{-r})\cdot\mathscr{L}_v(\psi^\st\chi\mathbf{N}^{-r})\bigr)\\
&=\bigl(\mathscr{L}_v(g,\chi)^2\bigr)
\end{aligned}
\end{equation}
in $\Lambda_\scO^{\rm ur}$, yielding the result in this case.
%

In the case $w(\psi\chi\nr)=+1$, we apply the result of Theorem~\ref{thm:AH-lambda} and Theorem~\ref{thm:AH} to $\psi\chi^\st\nr$ (which has $w(\psi\chi^\st\nr)=-1$) and $\psi^\st\chi^\st\nr$ (which has $w(\psi^\st\chi^\st\nr)=+1$), respectively, to obtain 
\[
\check{\mathcal{S}}_v(\psi\chi^\st\nr)=\check{\mathcal{S}}_v(\psi^\st\chi^\st\nr)=0
\] 
and that $\mathcal{X}_v(\psi\chi^\st\nr)$ and $\mathcal{X}_v(\psi^\st\chi^\st\nr)$ are both $\Lambda_\scO$-torsion, with
\[
{\rm char}_{\Lambda_\scO}\bigl(\mathcal{X}_v(\psi\chi^\st\nr)\bigr)=\bigl(\mathscr{L}_v(\psi\chi^\st\nr)\bigr),\quad
{\rm char}_{\Lambda_\scO}\bigl(\mathcal{X}_v(\psi^\st\chi^\st\nr)\bigr)=\bigl(\mathscr{L}_v(\psi^\st\chi^\st\nr)\bigr)
\]
as ideals in $\Lambda_\scO^{\rm ur}$. 
As in the proof of Theorem~\ref{thm:AH}, by the action of complex conjugation this gives $\check{\mathcal{S}}_v(\psi^\st\chi\nr)=\check{\mathcal{S}}_v(\psi\chi\nr)=0$ and that $\mathcal{X}_{\overline{v}}(\psi^\st\chi\nr)$ and $\mathcal{X}_{\overline{v}}(\psi\chi\nr)$ are both $\Lambda_\scO$-torsion, with
\[
{\rm char}_{\Lambda_\scO}\bigl(\mathcal{X}_{\overline{v}}(\psi^\st\chi\nr)\bigr)=\bigl(\mathscr{L}_{\overline{v}}(\psi^\st\chi\nr)\bigr),\quad
{\rm char}_{\Lambda_\scO}\bigl(\mathcal{X}_{\overline{v}}(\psi\chi\nr)\bigr)=\bigl(\mathscr{L}_{\overline{v}}(\psi\chi\nr)\bigr),
\]
which by the $\overline{v}$-versions of Proposition~\ref{prop:factor-BDP} and Proposition~\ref{prop:dec-Sel} yields the result as above.
\end{proof}

By the nonvanishing results established in the course of proving Theorem~\ref{thm:BDP-IMC}, we can deduce the following result on Conjecture~\ref{conj:HPMC}.

\begin{cor}\label{cor:HP-IMC}
Let $(g,\chi)$ be a self-dual pair as in Theorem~\ref{thm:BDP-IMC}. Then $\check{\mathcal{S}}_{\BK}(g,\chi)$ and $\mathcal{X}_{\BK}(g,\chi)$ both have $\Lambda_\scO$-rank one, and
\[
{\rm char}_{\Lambda_\scO}\bigl(\mathcal{X}_{\BK}(g,\chi)_{\rm tors}\bigr)={\rm char}_{\Lambda_\scO}\bigl(\check{\mathcal{S}}_{\BK}(g,\chi)/\Lambda_\scO\cdot\mathbf{z}_{g,\chi}\bigr)^2.
\]
In other words, Conjecture~\ref{conj:HPMC} holds for $(g,\chi)$.
\end{cor}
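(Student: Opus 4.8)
The plan is to obtain Conjecture~\ref{conj:HPMC} for $(g,\chi)$ by feeding the Iwasawa--Greenberg main conjecture already proved in Theorem~\ref{thm:BDP-IMC} into the equivalences recorded in Proposition~\ref{prop:equiv} and Remark~\ref{rem:sign-psichi}. Those equivalences are conditional on two inputs: that the Heegner class $\mathbf{z}_{g,\chi}$ is not $\Lambda_\scO$-torsion, and that a suitable localization map out of $\check{\mathcal{S}}_\BK(g,\chi)$ is nonzero. Both are supplied by Theorem~\ref{thm:Heeg-nonzero}, but in a way that depends on the sign of the root number $w(\psi\chi\nr)$ --- which, by Remark~\ref{rem:v-vs-vbar}, also dictates which of $\mathscr{L}_v(g,\chi)$ and $\mathscr{L}_{\overline{v}}(g,\chi)$ is nonzero --- so the argument naturally splits into the two cases $w(\psi\chi\nr)=\mp 1$.

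First I would record that $\mathbf{z}_{g,\chi}$ is non-torsion. When $w(\psi\chi\nr)=-1$, Theorem~\ref{thm:Heeg-nonzero}(i) gives $\mathscr{L}_v(g,\chi)\neq 0$; the explicit reciprocity law ${\rm Col}_{\overline{v}}({\rm loc}_{\overline{v}}(\mathbf{z}_{g,\chi}))=\mathscr{L}_v(g,\chi)$ of Theorem~\ref{thm:ERL}, together with the injectivity of ${\rm Col}_{\overline{v}}$ and the fact that $\Lambda_\scO^{\rm ur}$ is a domain, then forces ${\rm loc}_{\overline{v}}(\mathbf{z}_{g,\chi})$, hence $\mathbf{z}_{g,\chi}$, to generate a non-torsion $\Lambda_\scO$-submodule. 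When $w(\psi\chi\nr)=+1$ the same reasoning applies after interchanging $v$ and $\overline{v}$, using $\mathscr{L}_{\overline{v}}(g,\chi)\neq 0$ from Theorem~\ref{thm:Heeg-nonzero}(ii) and the reciprocity law ${\rm Col}_v({\rm loc}_v(\mathbf{z}_{g,\chi}))=\mathscr{L}_{\overline{v}}(g,\chi)$.

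With that in hand, in the case $w(\psi\chi\nr)=-1$ I would invoke Theorem~\ref{thm:BDP-IMC}(i), which is exactly Conjecture~\ref{conj:BDP-IMC} for $w=v$; in the notation of Proposition~\ref{prop:equiv} this is statement $(i^+)$ together with $(ii^+)$ and its opposite divisibility. Since $\mathbf{z}_{g,\chi}$ is non-torsion and ${\rm loc}_{\overline{v}}(\check{\mathcal{S}}_\BK(g,\chi))\neq 0$ by Theorem~\ref{thm:Heeg-nonzero}(i), Proposition~\ref{prop:equiv} applies and transports these into $(i^-)$ and $(ii^-)$ with both divisibilities, which are precisely parts (ii) and (iii) of Conjecture~\ref{conj:HPMC} (and part (i) is the non-torsionness just established). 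In the case $w(\psi\chi\nr)=+1$ one argues identically with $v$ replaced by $\overline{v}$: Theorem~\ref{thm:BDP-IMC}(ii) gives Conjecture~\ref{conj:BDP-IMC} for $w=\overline{v}$, and Remark~\ref{rem:sign-psichi} --- whose hypotheses, the non-torsionness of $\mathbf{z}_{g,\chi}$ and ${\rm loc}_v(\check{\mathcal{S}}_\BK(g,\chi))\neq 0$, are now verified --- yields Conjecture~\ref{conj:HPMC}.

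I do not expect a genuine obstacle at this stage: all the hard analytic and cohomological work is already absorbed into Theorem~\ref{thm:BDP-IMC}, Theorem~\ref{thm:Heeg-nonzero} (which rests on Greenberg's non-vanishing theorem, Theorem~\ref{thm:AH}, and the factorization of Proposition~\ref{prop:factor-BDP}), and the global-duality bookkeeping of Proposition~\ref{prop:equiv}. The only point that requires care is keeping the prime ($v$ versus $\overline{v}$) consistent across these ingredients and selecting the matching equivalence --- Proposition~\ref{prop:equiv} when $\mathscr{L}_v(g,\chi)\neq 0$, Remark~\ref{rem:sign-psichi} when $\mathscr{L}_{\overline{v}}(g,\chi)\neq 0$.
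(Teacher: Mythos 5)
Your proposal is correct and follows essentially the same route as the paper: Theorem~\ref{thm:Heeg-nonzero} supplies the non-torsionness of $\mathbf{z}_{g,\chi}$ and the nonvanishing of ${\rm loc}_{\overline{v}}$ (resp. ${\rm loc}_{v}$) according to the sign of $w(\psi\chi\nr)$, and then Proposition~\ref{prop:equiv} (resp. Remark~\ref{rem:sign-psichi}) transports Theorem~\ref{thm:BDP-IMC} into Conjecture~\ref{conj:HPMC}. Your extra step deducing non-torsionness from the explicit reciprocity law of Theorem~\ref{thm:ERL} is just an unpacking of how Theorem~\ref{thm:Heeg-nonzero} is used, so there is no substantive difference.
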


\begin{proof}
By Theorem~\ref{thm:Heeg-nonzero}, $\mathbf{z}_{g,\chi}$ is not $\Lambda_\scO$-torsion, and ${\rm loc}_{\overline{v}}(\check{\mathcal{S}}_\BK(g,\chi))\neq 0$ (resp. ${\rm loc}_{v}(\check{\mathcal{S}}_\BK(g,\chi))\neq 0$) when $w(\psi\chi\nr)=-1$ (resp. $w(\psi\chi\nr)=+1$). Hence by Proposition~\ref{prop:equiv} (see also Remark~\ref{rem:sign-psichi}) the result follows from Theorem~\ref{thm:BDP-IMC}.
\end{proof}

%

\section{Formula for the Bloch--Kato $\sha_{\BK}(W_{g,\chi}/K)$}\label{sec:TNC}

Let $K$ be an imaginary quadratic field satisfying 
\eqref{eq:spl} for a prime $p>3$. Let $\lambda$ be a Hecke character of $K$ of infinity type $(1-2r,0)$ for some $r\geq 1$ and central character 
\begin{equation}\label{eq:sd}
\varepsilon_\lambda=\eta_K.\tag{sd}
\end{equation} 
In particular, $\lambda\mathbf{N}^{-r}$ is self-dual in the sense of Definition~\ref{def:sd-char}, and the conductor $\cc$ of $\lambda$ is divisible by $\mathfrak{d}_K:=(\sqrt{-D_K})$. Throughout this section we assume that 
\begin{equation}\label{eq:div}
p\nmid\cc\quad\textrm{and}\quad\mathfrak{d}_K\Vert\cc.\tag{div}
\end{equation}
By self-duality, $\cc$ is invariant under complex conjugation, and so by \eqref{eq:div} can write 
\begin{equation}\label{eq:cond-sd}
\cc=(c)\mathfrak{d}_K
\end{equation}
for a unique positive integer $c$ prime to $pD_K$.

\subsection{Strong good pairs}

The following definition is a natural higher weight extension (and strengthening) of the notion of ``good pair'' for $\lambda$ introduced in \cite{BDP-PJM}. 

\begin{defi}\label{def:good}
Suppose the self-dual character $\lambda\nr$ has root number $w(\lambda\nr)=-1$. 
We say that a pair of Hecke characters $(\psi,\chi)$ is a \emph{good pair for $\lambda$} if it satisfies
\begin{enumerate}
\item[(S1)] $\psi$ has infinity type $(1-2r,0)$ and conductor a cyclic ideal $\mathfrak{f}_\psi$ of norm prime to $pD_K$. 
\item[(S2)] $\chi$ is a finite order character such that 
\[
(g,\chi)=(\theta_\psi,\chi)\in S_{2r}(\Gamma_1(N_g))\times\Sigma_{\rm cc}(c,\mathfrak{N}_g,\varepsilon_g),
\] 
where $\mathfrak{N}_g:=\mathfrak{d}_K\cdot\mathfrak{f}_\psi$ and $N_g=D_K\mathbf{N}(\mathfrak{f}_\psi)$.
\item[(S3)] $\lambda=\psi\chi$.
\item[(S4)] $L(\psi^\st\chi,r)\neq 0$.
\end{enumerate}
If $r=1$, we say that $(\psi,\chi)$ is a \emph{strong good pair for $\lambda$} if in addition
\begin{enumerate}
\item[(S5)] $\psi^-:=\psi^\st/\psi$ has order prime to $p$.
\item[(S6)] $\psi^-\vert_{G_v}\neq\mathds{1}$, where $G_v\subset G_K$ is a decomposition group at $v$.
\item[(S7)] $\psi^-$ has order $\geq 3$.
\item[(S8)] The value
\[
L^{\rm alg}(\psi^-,1):=
\frac{L(\psi^-,1)}{\Omega^2}
\]
is a $p$-adic unit.
\end{enumerate}
\end{defi}

For $r=1$, the existence of good pairs for $\lambda$ is shown in \cite[Prop.~3.28]{BDP-PJM} building on Greenberg \cite{greenberg-critical} and Rohrlich \cite{rohrlich-ac} nonvanishing results. Adapting their argument, and building on Hida's mod~$p$ non-vanishing results \cite{hida-durham,hida-ICTS}, we can prove the following.


\begin{lemma}\label{lem:good}
Suppose $\lambda\mathbf{N}^{-r}$ has root number $-1$. Then there exist good pairs for $\lambda$. Moreover, if $r=1$, then there exist strong good pairs  for $\lambda$.
%
\end{lemma}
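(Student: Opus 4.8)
The plan is to construct the characters $\psi$ and $\chi$ explicitly, imposing the local conditions at the relevant primes and then invoking non-vanishing results for twisted Hecke $L$-values to arrange (S4) (and, for $r=1$, the extra congruence condition (S9)). First I would fix an auxiliary ray class character $\rho$ of $K$ whose conductor is a cyclic ideal $\mathfrak{m}$ of norm prime to $pD_K$ such that each prime $\mathfrak{l}\mid\mathfrak{m}$ splits in $K$, and set $\chi$ to be the finite order part determined by requiring $\psi := \lambda\chi^{-1}$ to have infinity type $(1-2r,0)$ and the prescribed conductor; then (S3) holds by construction. The conditions (S1)--(S2) are then a bookkeeping exercise in choosing the conductor of $\chi$ so that $\mathfrak{f}_\psi = \cc\cdot\mathfrak{m}/\mathfrak{d}_K$ is cyclic and prime to $pD_K$, that $\mathfrak{N}_g = \mathfrak{d}_K\mathfrak{f}_\psi$ satisfies the Heegner hypothesis \eqref{eq:Heeg} for $N_g = D_K\mathbf{N}(\mathfrak{f}_\psi)$, and that $\epsilon_q(\theta_\psi,\chi)=+1$ for all finite $q$; the sign condition at finite places is where the splitting hypothesis on $\mathfrak{m}$ and the assumption $\mathfrak{d}_K\Vert\cc$ are used, exactly as in \cite[\S3]{BDP-PJM}. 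Note that since $w(\lambda\mathbf{N}^{-r})=-1$ and the local root numbers at finite places are all $+1$ by the $\Sigma_{\rm cc}$ condition, the character $\psi^\st\chi\mathbf{N}^{-r}$ automatically has root number $+1$, so (S4) asks precisely for the non-vanishing of the central value of a self-dual Hecke $L$-function with sign $+1$.

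To secure (S4) I would appeal to the non-vanishing results of Rohrlich \cite{rohrlich-ac} (for $r=1$) together with Greenberg \cite{greenberg-critical}: as $\rho$ (equivalently $\chi$) ranges over an appropriate infinite family of characters of the prescribed conductor type, the central values $L(\psi^\st\chi,r)$ are non-zero for all but finitely many members of the family. One must check that the family is genuinely infinite after imposing the finitely many local constraints from (S1)--(S2); this follows because there are infinitely many split primes $\mathfrak{l}$ and one has freedom in the choice of $\mathfrak{m}$. For $r>1$ the analogous non-vanishing is again available via Rohrlich-type arguments (or via the cited work), so good pairs exist in all weights; this completes the first assertion of the lemma.

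For the second assertion ($r=1$, strong good pairs), the additional conditions (S5)--(S9) are all conditions on the anticyclotomic character $\psi^- = \psi^\st/\psi$, which has infinity type $(2,-2)$... more precisely $(1,-1)$ up to the normalization, and conductor $\mathfrak{f}_{\psi^-}$ dividing $\mathfrak{f}_\psi^2$. The order and ramification conditions (S5)--(S8) can be arranged by choosing $\psi$ (equivalently the auxiliary data) so that $\psi^-$ has large order and ramifies only at split primes, with $p$ prime to its order and conductor; the condition (S7) that $\psi^-\vert_{G_v}\neq\mathds{1}$ and (S8) that the order is $\geq 3$ exclude only finitely many choices. The genuinely delicate point is (S9): one needs the normalized Katz $L$-value $L^{\rm alg}(\psi^-,0)=\mathscr{L}_{v,\mathfrak{f}_{\psi^-}}(\psi^-)$ to be a $p$-adic \emph{unit}, not merely non-zero. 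This is where Finis' mod $p$ non-vanishing theorem \cite{finis-modp} enters: it guarantees that, within a suitable family of anticyclotomic twists, the reduction mod $\varpi$ of the relevant Katz $L$-values is non-zero infinitely often, hence the $L$-value is a unit for infinitely many $\psi^-$ in the family. The main obstacle is precisely the simultaneous satisfaction of (S4) and (S9) together with the rigid conductor constraints (S1)--(S2): one must exhibit a single infinite family of pairs $(\psi,\chi)$ for which the $\Sigma_{\rm cc}$ and Heegner-hypothesis bookkeeping holds identically, Rohrlich's theorem kills the $L(\psi^\st\chi,r)=0$ locus, and Finis' theorem kills the non-unit locus for $L^{\rm alg}(\psi^-,0)$ — and then invoke that the union of two ``thin'' (density zero, or finite-exception) bad sets cannot exhaust an infinite family. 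I would organize the argument so that the family is parametrized by a single varying prime or prime-power conductor, reducing the final step to the statement that infinitely many primes avoid two exceptional sets, each of which is controlled by one of the cited non-vanishing theorems.
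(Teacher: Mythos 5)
Your construction is the same as the paper's: take $\psi$ of infinity type $(1-2r,0)$ with cyclic conductor prime to $p\cc$, set $\chi=\psi^{-1}\lambda$, get (S1)--(S3) by construction and (S2) from the bookkeeping in \cite{BDP-PJM}, observe that $\epsilon(g,\chi)=-1$ together with $w(\lambda\mathbf{N}^{-r})=-1$ forces $w(\psi^\st\chi\mathbf{N}^{-r})=+1$, and then vary the pair inside a one-parameter family of twists at a single auxiliary split prime so that two ``thin'' bad loci (the vanishing locus for (S4) and, when $r=1$, the non-unit locus for (S9)) can be avoided simultaneously. The paper's family is exactly the one you gesture at: $(\psi',\chi')=(\psi\alpha,\chi\alpha^{-1})$ with $\alpha$ of conductor a power of one prime above a split $\ell\neq p$, which fixes $\lambda=\psi\chi$ and conditions (S1)--(S3) while replacing $L(\psi^\st\chi,s)$ by $L(\psi^\st\chi\alpha^{-},s)$ and $\psi^-$ by $\psi^-\alpha^-$; since each bad set is finite in this family, both can be dodged at once.

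The one concrete soft spot is your treatment of (S4) for $r>1$. Rohrlich's anticyclotomic nonvanishing theorem \cite{rohrlich-ac} is a weight-two (infinity type $(1,0)$) statement, and Greenberg's theorem \cite{greenberg-critical} is used in this paper for a different purpose (nonvanishing of the Katz $p$-adic $L$-function along the family, i.e.\ at \emph{some} interpolation point), not for the central value of a fixed self-dual character of infinity type $(r,1-r)$ with root number $+1$; so ``Rohrlich-type arguments'' do not by themselves deliver $L(\psi^\st\chi\alpha^-,r)\neq 0$ for almost all $\alpha$ when $r>1$. The paper closes this by applying Finis' Theorem 1.1 \cite{finis-modp} (with the roles of $\ell$ and $p$ reversed, and noting $\psi^\st\chi\alpha^-\mathbf{N}^{-r}$ has infinity type $(r,1-r)$) uniformly in all weights, which gives nonvanishing even modulo $\varpi$ for all but finitely many $\alpha$; the same theorem then yields (S9) and allows (S5)--(S8) to be arranged, exactly as in your last paragraph. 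So the repair is already in your toolkit---invoke Finis (or, as the paper remarks, Hsieh \cite{hsieh-AJM}) for (S4) in place of Rohrlich/Greenberg when $r>1$---but as written that step is not justified by the results you cite.
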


\begin{proof}
Take $\psi$ a Hecke character of infinity type $(1-2r,0)$ and conductor a cyclic ideal $\mathfrak{f}_\psi$ of norm coprime to $p\cc$, and put 
\[
\chi:=\psi^{-1}\lambda.
\]
By \eqref{eq:cond-sd}, $\psi$ satisfies (S1), and by construction, the pair $(\psi,\chi)$ satisfies (S3). By \cite[Rem.~3.20]{BDP-PJM}, it follows that $\chi$ satisfies (S2), so it remains to verify the much subtler condition (S4). 

We begin by noting 
that the functional equation for $L(\psi^\st\chi,s)$ relating its values at $s$ and $2r-s$ has sign $w(\psi^\st\chi\nr)=+1$.  Indeed, by the inclusion $\chi\in\Sigma_{\rm cc}(c,\mathfrak{N}_g,\varepsilon_g)$, the sign in the functional equation for $L(g/K,\chi,s)=L(\theta_\psi/K,\chi,s)$ 
is $\epsilon(g,\chi)=-1$, and so the claim follow from the factorization
\begin{equation}\label{eq:factor-good}
\begin{aligned}
L(\theta_\psi/K,\chi,s)&=L(\lambda,s)\cdot L(\psi^\st\chi,s)
\end{aligned}
\end{equation}
and the assumption that $w(\lambda\mathbf{N}^{-r})=-1$. Let $\ell=\lambda\overline{\lambda}$ be a prime split in $K$, with $\ell\neq p$ and prime to  the conductors of $\psi$ and $\chi$. 
Given a pair $(\psi,\chi)$ satisfying conditions (S1)--(S3), 
for any finite order character $\alpha$ of conductor dividing $\lambda^\infty$, the pair $(\psi',\chi'):=(\psi\alpha,\chi\alpha^{-1})$  satisfies the same conditions (with $\mathfrak{f}_\psi$ replaced by $\mathfrak{f}_\psi\lambda^m$ for some $m\geq 0$), while \eqref{eq:factor-good} becomes
\[
L(\theta_{\psi'}/K,\chi',s)=L(\lambda,s)\cdot L(\psi^\st\chi\alpha^-,s),
\]  
where $\alpha^-:=\alpha^\st/\alpha$.  
Since $\alpha^-$ is anticyclotomic of $\ell$-power conductor, by \cite[Thm.~1.1]{finis-modp} all but finitely many of the values $L(\psi^\st\chi\alpha^-,r)=L(\psi^\st\chi\alpha^-\mathbf{N}^{-r},0)$ are nonzero\footnote{Reversing the roles of $\ell$ and $p$. Note that $\psi^\st\chi\alpha^-\mathbf{N}^{-r}$ has infinity type $(r,1-r)$, and our conventions are \emph{opposite} to those in \cite{finis-modp}.} (even nonzero modulo $\varpi$) as $\alpha$ varies, whence the first assertion. Moreover, when $r=1$ noting that 
\[
L((\psi')^-,1)=L(\psi^-\alpha^-,1),
\]
by \cite[Thm.~4.3]{hida-durham} (see also \cite{hida-ICTS}) we can find $\ell$ and $\alpha$ as above so that 
further $L^{\rm alg}((\psi')^-,1)$ is a $p$-adic unit and conditions (S5)--(S7) hold for $(\psi')^-$, whence the result.
\end{proof}

\begin{rem}
In the proof of Lemma~\ref{lem:good}, the use of the main result of \cite{finis-modp} could alternatively be replaced by an appeal to Hsieh's mod $p$ nonvanishing result \cite[Thm.~A]{hsieh-AJM} or the nonvanishing results of Greenberg and Rohrlich \cite{greenberg-critical,rohrlich-ac}. 
\end{rem}


\subsection{Proof of Theorem~\ref{thmintro:sha}}\label{subsec:proof-thmC}

We begin by noting that by the assumption that $\lambda\mathbf{N}^{-r}$ has root number $-1$, Lemma~\ref{lem:good} ensures the existence of good pairs  for $\lambda$. We fix once and for all a good pair $(\psi,\chi)$ for $\lambda$, and put $g=\theta_\psi$. 
%
%
Exploiting the decomposition 
\[
\mathcal{X}_v(g,\chi)\simeq\mathcal{X}_v(\lambda\nr)\oplus\mathcal{X}_v(\psi^\st\chi\nr)
\] 
from Proposition~\ref{prop:dec-Sel}, the formula for $\#\sha_\BK(W_{g,\chi}/K)$ of Theorem~\ref{thmintro:sha} will be obtained by computing the $\Gamma$-Euler characteristic for the constituent Selmer groups in this decomposition and combining the resulting formulae. Note that some of the intermediate results will be obtained in greater generality than needed for the proof of Theorem~\ref{thmintro:sha}.

For a finite extension $\Phi$ of $\Q_p$ with ring of integers $\scO$, and a $\Phi$-linear $G_K$-representation $V$ with a fixed $G_K$-stable $\scO$-lattice $T\subset V$, recall that the Bloch--Kato Tate--Shafarevich group $\sha_\BK(W/K)$, where $W=V/T$, is defined by
\[
\sha_\BK(W/K):={\rm Sel}_\BK(K,W)/{\rm Sel}_\BK(K,W)_{\rm div},
\]
where ${\rm Sel}_\BK(K,W)_{\rm div}$ denotes the maximal divisible submodule of ${\rm Sel}_\BK(K,W)$; and for a finite prime $w\nmid p$ of $K$, the Tamagawa number of $W$ at $w$ is defined by
\[
c_w(W/K):=\#\bigl(\rH^1_{\rm ur}(K_w,W)/\rH^1_f(K_w,W)\bigr),
\]
where $\rH^1_{\rm ur}(K_w,W):={\rm ker}\{\rH^1(K_w,W)\rightarrow\rH^1(I_w,W)\}$ is the unramified submodule, and $\rH^1_f(K_w,W)$ is the image of the unramified subspace $\rH^1_{\rm ur}(K_w,V)$ under the natural map $\rH^1(K_w,V)\rightarrow\rH^1(K_w,W)$ (which is thus contained in $\rH^1_{\rm ur}(K_w,W)$).

\subsubsection{Formula for $\psi^\st\chi$}\hfill 


\begin{prop}\label{prop:control-BK}
Let $(\psi,\chi)$ be as in Theorem~\ref{thm:AH}, and let $\mathscr{F}_v(\psi^\st\chi\nr)\in\Lambda_\scO$ be a generator of the characteristic ideal of $\mathcal{X}_v(\psi^\st\chi\nr)$. If ${\rm Sel}_\BK(K,W_{\psi^\st\chi}(r))$ is finite, then $\mathscr{F}_v(\psi^\st\chi\nr)(0)\neq 0$ with
\[
\mathscr{F}_v(\psi^\st\chi\nr)(0)\;\sim_p\;\#\rH^0(K_{\overline{v}},W_{\psi^\st\chi}(r))^2\cdot
\frac{\#\sha_{\BK}(W_{\psi^\st\chi}(r)/K)}{\#\rH^0(K,W_{\psi^\st\chi}(r))^2}
\cdot\prod_{\substack{w\in\Sigma,w\nmid p}}c_w(W_{\psi^\st\chi}(r)/K).
\]
\end{prop}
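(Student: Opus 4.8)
The plan is to obtain the formula by descending the Iwasawa main conjecture of Theorem~\ref{thm:AH} along the anticyclotomic $\Z_p$-extension, that is, by computing the $\Gamma$-Euler characteristic of $\mathcal{X}_v(\psi^\st\chi\nr)$ in the sense of \cite[\S 4]{greenberg-cetraro}. Throughout write $W:=W_{\psi^\st\chi}(r)$, $\mathcal{X}:=\mathcal{X}_v(\psi^\st\chi\nr)$, and $\mathcal{S}:=\mathcal{S}_v(\psi^\st\chi\nr)=\varinjlim_{F\subset K_\infty}{\rm Sel}_v(F,W)$, so that $\mathcal{X}=\mathcal{S}^\vee$; since $\psi^\st\chi\nr$ has infinity type $(r,1-r)$, Lemma~\ref{lem:BK=v} gives $\mathcal{X}=\mathcal{X}_\BK(\psi^\st\chi\nr)$, so that $\mathcal{S}$ is the direct limit of the Bloch--Kato Selmer groups ${\rm Sel}_\BK(F,W)$ along $K_\infty/K$. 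By Theorem~\ref{thm:AH}, $\mathcal{X}$ is $\Lambda_\scO$-torsion. I will use the standard fact that for a $\Lambda_\scO$-torsion module with characteristic ideal $(\mathscr{F})$ one has $\mathscr{F}(0)\neq 0$ exactly when $\mathcal{X}_\Gamma$ (equivalently $\mathcal{X}^\Gamma$) is finite, in which case $\mathscr{F}(0)\sim_p\#\mathcal{X}_\Gamma/\#\mathcal{X}^\Gamma$ (the right-hand ratio being a pseudo-isomorphism invariant). By Pontryagin duality this reduces the proposition to showing that $\mathcal{S}^\Gamma$ is finite and
\[
\frac{\#\mathcal{S}^\Gamma}{\#\mathcal{S}_\Gamma}\;\sim_p\;\#\rH^0(K_{\overline{v}},W)^2\cdot\frac{\#{\rm Sel}_\BK(K,W)}{\#\rH^0(K,W)^2}\cdot\prod_{w\in\Sigma,\,w\nmid p}c_w(W/K),
\]
where the finiteness hypothesis on ${\rm Sel}_\BK(K,W)$ allows us to replace $\sha_\BK(W/K)$ by ${\rm Sel}_\BK(K,W)$.

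The next step is a control theorem. By inflation--restriction together with ${\rm cd}(\Gamma)=1$, the restriction map $\rH^1(K,W)\to\rH^1(K_\infty,W)^\Gamma$ is surjective with kernel $\rH^1(\Gamma,\rH^0(K_\infty,W))$; here $\rH^0(K_\infty,W)$ is finite---because $\psi^\st\chi\varepsilon_{\rm cyc}^r$ has nonzero Hodge--Tate weight at $v$ and hence remains ramified over $K_{\infty,v}$---and $\#\rH^1(\Gamma,\rH^0(K_\infty,W))=\#\rH^0(K,W)$. Comparing the defining local conditions over $K$ and over $K_\infty$ place by place then yields exact sequences expressing $\mathcal{S}^\Gamma$ (and $\mathcal{S}_\Gamma$) in terms of ${\rm Sel}_v(K,W)$, $\rH^0(K,W)$, and local $\Gamma$-cohomology; alternatively one may invoke the variant of Mazur's control theorem in \cite[Prop.~6.2.1]{cas-Kato-Schoen}. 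The local error terms are computed in the usual way: at $w\in\Sigma$ with $w\nmid p$ the standard argument (as in \cite[\S 4]{jsw}) identifies the discrepancy between $\rH^1_f(K_w,W)$ and its $K_\infty$-limit with the Tamagawa factor $c_w(W/K)$; at the primes above $p$, where $K_\infty/K$ is ramified and $\mathcal{S}_v$ imposes the full $\rH^1$ at $v$ and $0$ at $\overline{v}$, the local contributions are assembled from $\rH^1(\Gamma_v,\rH^0(K_{\infty,v},W))$, $\rH^1(\Gamma_{\overline{v}},\rH^0(K_{\infty,\overline{v}},W))$ and the quotient $\rH^1(K_v,W)/\rH^1_f(K_v,W)$. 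In particular the finiteness of ${\rm Sel}_\BK(K,W)$ forces $\mathcal{S}^\Gamma$ to be finite, so $\mathscr{F}(0)\neq 0$.

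It then remains to pass from ${\rm Sel}_v(K,W)$ to ${\rm Sel}_\BK(K,W)=\sha_\BK(W/K)$ and to pin down the factors at $p$. Over $K$ these two Selmer groups differ only in the condition at $v$ (both impose $0$ at $\overline{v}$ and $\rH^1_f$ at $w\nmid p$), so Poitou--Tate duality---equivalently the Greenberg--Wiles formula---relates their orders through the finite quotient $\rH^1(K_v,W)/\rH^1_f(K_v,W)$ and a correction coming from a dual Selmer group over $W^{*}(1)$; and by local Tate duality together with the self-duality $W^{*}(1)\simeq W^{\st}$, which exchanges $v$ and $\overline{v}$ and intertwines the local conditions there, all of these quantities are ultimately expressed through $\#\rH^0(K_{\overline{v}},W)$ (note that $W^{*}(1)$ has Hodge--Tate weight $1-r\le 0$ at $v$, so $\rH^1_f(K_v,W^{*}(1))$ is finite of order $\#\rH^0(K_{\overline{v}},W)$ by Lemma~\ref{lem:BK=v}). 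Feeding these identifications, the local Euler characteristics at $v$ and $\overline{v}$, and the global $\rH^0(K,W)$-contributions from the control theorem into the ratio $\#\mathcal{S}^\Gamma/\#\mathcal{S}_\Gamma$ then yields the asserted formula.

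I expect the main obstacle to be exactly this bookkeeping at the prime $p$: one has to track the local Euler-characteristic factors at $v$ and at $\overline{v}$ simultaneously, exploiting that the character has positive Hodge--Tate weight at $v$ and non-positive weight at $\overline{v}$ (so that $\rH^1_f(K_v,W)=\rH^1(K_v,W)_{\rm div}$ while $\rH^1_f(K_{\overline{v}},W)=0$), and check that the various $\rH^0$, $\rH^1$ and $\rH^2$ local terms at $v$ and $\overline{v}$ collapse to exactly $\#\rH^0(K_{\overline{v}},W)^2/\#\rH^0(K,W)^2$ with no residual power of $p$---this is the cancellation that removes the archimedean and $p$-adic CM periods and makes the final expression purely arithmetic. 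A secondary technical point is to verify that the control maps have trivial cokernel at the primes above $p$ (where one again uses ${\rm cd}(\Gamma)=1$ to kill $\rH^2(\Gamma,-)$), and that the Poitou--Tate inputs behave well in the exceptional cases where some of the relevant $\rH^0$'s fail to be trivial.
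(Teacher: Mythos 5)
Your proposal is correct and takes essentially the same route as the paper: computing the $\Gamma$-Euler characteristic of $\mathcal{X}_v(\psi^\st\chi\nr)$ via a control theorem, identifying the local error terms (the Tamagawa factors at $w\nmid p$; the contributions at $v$ and $\overline v$ collapsing to $\#\rH^0(K_{\overline v},W_{\psi^\st\chi}(r))^2$ by local Tate duality and complex conjugation), with global-to-local surjectivity from Pollack--Weston and a Cassels-type correction as in Greenberg. One small superfluity: since for this character $\mathrm{Sel}_v(K,W_{\psi^\st\chi}(r))=\mathrm{Sel}_{\BK}(K,W_{\psi^\st\chi}(r))$ already (as you correctly note at the outset via Lemma~\ref{lem:BK=v}), the additional Poitou--Tate step you describe to ``pass from $\mathrm{Sel}_v$ to $\mathrm{Sel}_{\BK}$'' is not needed.
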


\begin{proof}
By Lemma~\ref{lem:BK=v}, 
$\mathcal{X}_v(\psi^\st\chi\nr)$ interpolates 
Selmer groups ${\rm Sel}_\BK(F,W_{\psi^\st\chi}(r))$  as $F$ varies over the finite extensions of $K$ contained in $K_\infty$. Therefore, by the control theorem of  \cite[Prop.~4.3]{arnold} the natural restriction map
\[
{\rm Sel}_{\BK}(K,W_{\psi^\st\chi}(r))={\rm Sel}_v(K,W_{\psi^\st\chi}(r))\rightarrow\mathcal{S}_v(\psi^\st\chi\nr)^\Gamma
\]
has finite kernel and cokernel, and so the finiteness of ${\rm Sel}_{\BK}(K,W_{\psi^\st\chi}(r))$ gives $\#\mathcal{S}_v(\psi^\st\chi\nr)^\Gamma<\infty$.  
Letting $\mathscr{F}_{v}(\psi^\st\chi\nr)\in\Lambda_\scO$ be any  generator of ${\rm char}_{\Lambda_\scO}(\mathcal{X}_v(\psi^\st\chi\nr))$, 
it is then easily checked that the $\Gamma$-coinvariants $\mathcal{S}_v(\psi^\st\chi\nr)_\Gamma$ are also finite, that $\mathscr{F}_v(\psi^\st\chi\nr)(0)\neq 0$, and that
\begin{equation}\label{eq:val-0}
\mathscr{F}_v(\psi^\st\chi\nr)(0)\,\sim_p\,\frac{\#\mathcal{S}_v(\psi^\st\chi\nr)^\Gamma}{\#\mathcal{S}_v(\psi^\st\chi\nr)_\Gamma}
\end{equation}
(see \cite[Lem.~4.2]{greenberg-cetraro} for example). For the proof of the formula in the Proposition, we shall adapt the methods of \cite[\S{4}]{greenberg-cetraro} and \cite[\S{3}]{jsw}. 

First we set up some notations. Fix $\Sigma$ a finite set of primes of $K$ containing the archimedean prime, the primes above $p$, the primes dividing the conductor of $\psi^\st$ or $\chi$ and their complex conjugates, and let $K^\Sigma$ denote the maximal extension of $K$ unramified outside $\Sigma$. 
Put
\[
\mathcal{P}_{}^\Sigma(K):=\frac{\rH^1(K_v,W_{\psi^\st\chi}(r))}{\rH^1(K_v,W_{\psi^\st\chi}(r))_{\rm div}}\times\rH^1(K_{\overline{v}},W_{\psi^\st\chi}(r))\times\prod_{w\in\Sigma,w\nmid p}\rH^1(K_w,W_{\psi^\st\chi}(r)).
\]
Similarly, consider the $\Lambda_\scO$-module $\mathbf{W}_{\psi^\st\chi}(r):=W_{\psi^\st\chi}(r)\otimes_{\scO}\Lambda_\scO^\vee$ with diagonal $G_K$-action, letting $G_K$ act on the second factor through the inverse of the tautological character $G_K\twoheadrightarrow\Gamma\hookrightarrow\Lambda_\scO^\times$, and put 
\[
\mathcal{P}^\Sigma(K_\infty):=\{0\}\times\rH^1(K_{\overline{v}},\mathbf{W}_{\psi^\st\chi}(r))\times\prod_{w\in\Sigma,w\nmid p}\rH^1(K_{w},\mathbf{W}_{\psi^\st\chi}(r)),
\]
thinking of $\{0\}$ as a subspace of $\rH^1(K_{v},\mathbf{W}_{\psi^\st\chi}(r))$. Then from the definitions and Shapiro's lemma we have  exact sequences
\begin{equation}\label{eq:glob-loc-K}
\begin{aligned}
0&\rightarrow{\rm Sel}_\BK(K,W_{\psi^\st\chi}(r))\rightarrow\rH^1(K^\Sigma/K,W_{\psi^\st\chi}(r))\rightarrow\mathcal{P}^\Sigma(K),
\end{aligned}
\end{equation}
\begin{equation}\label{eq:glob-loc-Kinfty}
\begin{aligned}
0&\rightarrow\mathcal{S}_v(\psi^\st\chi\nr)\rightarrow\rH^1(K^\Sigma/K,\mathbf{W}_{\psi^\st\chi}(r))\rightarrow\mathcal{P}^\Sigma(K_\infty).
\end{aligned}
\end{equation}
Denote by $\mathcal{G}^\Sigma(K)\subset\mathcal{P}^\Sigma(K)$ and $\mathcal{G}^\Sigma(K_\infty)\subset\mathcal{P}^\Sigma(K_\infty)$ the image of the right maps in the above exact sequence, whose $\Gamma$-invariants then fit into the commutative diagram
\begin{equation}\label{eq:comm-diag}
\begin{aligned}
\xymatrix{
0\ar[r]&{\rm Sel}_{\BK}(K,W_{\psi^\st\chi}(r))\ar[r]\ar[d]^{r^*}&\rH^1(K^\Sigma/K,W_{\psi^\st\chi}(r))\ar[r]\ar[d]^{s^*}&\mathcal{G}^\Sigma(K)\ar[r]\ar[d]^{t^*}&0\\
0\ar[r]&\mathcal{S}_v(\psi^\st\chi\nr)^\Gamma\ar[r]&\rH^1(K^\Sigma/K,\mathbf{W}_{\psi^\st\chi}(r))^\Gamma\ar[r]&\mathcal{G}^\Sigma(K_\infty)^\Gamma.&
}
\end{aligned}
\end{equation}
The surjectivity of $s^*$ follows from the fact that $\Gamma$ has $p$-cohomological dimension $1$, while the kernel of $s^*$ is given by 
\[
\rH^1(\Gamma,\rH^0(K,\mathbf{W}_{\psi^\st\chi}(r)))=\rH^1(\Gamma,\rH^0(K_\infty,W_{\psi^\st\chi}(r))),
\] 
whose order is the same as $\#\rH^0(K,W_{\psi^\st\chi}(r))$ (using that $\#\rH^0(K_\infty,W_{\psi^\st\chi}(r))<\infty$, as follows from \cite[Lem.~2.5]{arnold}). Hence from the Snake Lemma applied to \eqref{eq:comm-diag} we immediately get the relation
\begin{equation}\label{eq:gamma-inv}
\#\mathcal{S}_v(\psi^\st\chi\nr)^\Gamma=\#{\rm Sel}_{\rm BK}(K,W_{\psi^\st\chi}(r))\cdot\frac{\#{\rm ker}(t^*)}{\#\rH^0(K,W_{\psi^\st\chi}(r))}.
\end{equation}
To compute the order of ${\rm ker}(t^*)$, we first compute the order of the kernel of the map
\[
\tau^*=(\tau_w^*)_{w\in\Sigma,w\nmid p}:\mathcal{P}^\Sigma(K)\rightarrow\mathcal{P}^\Sigma(K_\infty)^\Gamma.
\]
For $w=v$ we find
\begin{equation}\label{eq:tau-v}
\begin{aligned}
{\rm ker}(\tau_v^*)=\frac{\rH^1(K_v,W_{\psi^\st\chi}(r))}{\rH^1(K_v,W_{\psi^\st\chi}(r))_{\rm div}}
&\simeq\rH^1(K_v,T_{\psi\chi^\st}(r))_{\rm tors}\\
&={\rm ker}\bigl(\rH^1(K_v,T_{\psi\chi^\st}(r))\rightarrow\rH^1(K_v,V_{\psi\chi^\st}(r))\bigr)\\
&={\rm coker}\bigl(\rH^0(K_v,V_{\psi\chi^\st}(r))\rightarrow\rH^0(K_v,W_{\psi\chi^\st}(r))\bigr)\\
&=\rH^0(K_v,W_{\psi\chi^\st}(r)),
\end{aligned}
\end{equation}
using local Tate duality, the cohomology long exact sequence associated to $0\rightarrow T_{\psi\chi^\st}(r)\rightarrow V_{\psi\chi^\st}(r)\rightarrow W_{\psi\chi^\st}(r)\rightarrow 0$, and the vanishing of $\rH^0(K_v,V_{\psi\chi^\st}(r))$ (as follows from \cite[Lem.~2.5]{arnold}). Similarly, we find
\begin{equation}\label{eq:tau-vbar}
\#{\rm ker}(\tau_{\overline{v}}^*)=\#\rH^1(\Gamma,\rH^0(K_{\overline{v}},\mathbf{W}_{\psi^\st\chi}(r)))=\#\rH^0(K_{\overline{v}},W_{\psi^\st\chi}(r)),
\end{equation}
using the finiteness of $\rH^0(K_{\overline{v}},\mathbf{W}_{\psi^\st\chi}(r))$ (a consequence of \cite[Lem.~2.5]{arnold}) for the last equality. 
On the other hand, consider a prime $w\in\Sigma$ with $w\nmid p$. If $W_{\psi^\st\chi}$ is unramified at $w$, then ${\rm ker}(\tau_w^*)=0$ by the same argument as in \cite[p.\,389]{jsw}; if $W_{\psi^\st\chi}$ is ramified at $w$, then the group $\rH^0(K_w,\mathbf{W}_{\psi^\st\chi}(r))$ is finite (see \cite[Lem.~3.4]{arnold}), and as in \eqref{eq:tau-vbar} we find $\#{\rm ker}(\tau_{w}^*)=\#\rH^0(K_{w},W_{\psi^\st\chi}(r))$. Together with \cite[Lem.~5.10]{BK} and the discussion in [\emph{op.\,cit.}, p.\,373], this shows
\begin{equation}\label{eq:tau-notp}
\#{\rm ker}(\tau_w^*)=c_w(W_{\psi^\st\chi}(r)/K)
\end{equation}
for all $w\in\Sigma$ with $w\nmid p$.

Next, the relation between $\#{\rm ker}(t^*)$ and $\#{\rm ker}(\tau^*)$ can be found similarly as in \cite[Lem.~4.7]{greenberg-cetraro}. Indeed, by 
\cite[Prop.~A.2]{pollack-weston} (extending and generalizing \cite[Prop.~2.1]{greenvats} to the anticyclotomic setting), the $\Lambda_\scO$-torsionness of $\mathcal{X}_v(\psi^\st\chi\nr)$  (as shown in Theorem~\ref{thm:AH}) implies 
surjectivity of the right arrow in \eqref{eq:glob-loc-Kinfty}, i.e. 
$\mathcal{G}^\Sigma(K_\infty)=\mathcal{P}^\Sigma(K_\infty)$. On the other hand, from Poitou--Tate duality we have the exact sequence
\[
0\rightarrow{\rm Sel}_\BK(K,W_{\psi^\st\chi}(r))\rightarrow\rH^1(K,W_{\psi^\st\chi}(r))\rightarrow\mathcal{P}^\Sigma(K)\rightarrow{\rm Sel}_{\overline{v}}(K,T_{\psi\chi^\st}(r))^\vee\rightarrow\rH^2(K,W_{\psi^\st\chi}(r))^\vee\rightarrow 0,
\]
using that ${\rm Hom}_{}(W_{\psi^\st\chi}(r),\mu_{p^\infty})\simeq T_{\psi\chi^\st}(r)$ by the self-duality of $\psi^\st\chi\mathbf{N}^{-r}$. Since $\#{\rm Sel}_\BK(K,W_{\psi^\st\chi}(r))<\infty$ by assumption, the generalization of Cassels' theorem in \cite[Prop.~4.13]{greenberg-cetraro} gives\footnote{Taking $M=W_{\psi^\st\chi}(r)$ 
in the notations of \emph{loc.\,cit.}. Note also that the hypothesis in \emph{loc.\,cit.} that $\rH^0(K_w,M)$ is finite for some finite $w\in\Sigma$ holds in our case taking $w=v$ thanks to \cite[Lem.~2.5]{arnold}.} 
\[
\mathcal{P}^\Sigma(K)/\mathcal{G}^\Sigma(K)\simeq\rH^0(K,W_{\psi\chi^\st}(r))^\vee,
\]
and with this isomorphism, the argument in \cite[Prop.~4.7]{greenberg-cetraro} gives
\[
\#{\rm ker}(t^*)=\#{\rm ker}(\tau^*)\cdot\frac{\#\mathcal{S}_v(\psi^\st\chi\nr)_\Gamma}{\#\rH^0(K,W_{\psi\chi^\st}(r))}.
\]
Substituting \eqref{eq:tau-v}, \eqref{eq:tau-vbar}, and \eqref{eq:tau-notp} 
into this, and the resulting formula for $\#{\rm ker}(t^*)$ into \eqref{eq:gamma-inv}, we obtain
\begin{align*}
\#\mathcal{S}_v(\psi^\st\chi\nr)^\Gamma&=\#\sha_{\BK}(K,W_{\psi^\st\chi}(r))\cdot
\#\rH^0(K_{\overline{v}},W_{\psi^\st\chi}(r))^2\\
&\quad\times
\frac{\#\mathcal{S}_v(\psi^\st\chi\nr)_\Gamma}{\#\rH^0(K,W_{\psi^\st\chi}(r))^2}
\cdot\prod_{\substack{w\in\Sigma,w\nmid p}}c_w(W_{\psi^\st\chi}(r)/K),
\end{align*}
using $\rH^0(K_v,W_{\psi\chi^\st}(r))\simeq\rH^0(K_{\overline{v}},W_{\psi^\st\chi}(r))$ and $\rH^0(K,W_{\psi\chi^\st}(r))\simeq\rH^0(K,W_{\psi^\st\chi}(r))$ by the action of complex conjugation and the equality $\sha_\BK(K,W_{\psi^\st\chi}(r))={\rm Sel}_\BK(K,W_{\psi^\st\chi}(r))$ that follows from the finiteness of the latter. Together with \eqref{eq:val-0} this concludes the proof.
\end{proof}

\subsubsection{Interlude: $p$-part of TNC in rank $0$}

Combined with Theorem~\ref{thm:AH}, the preceding proposition yields a result on the Tamagawa number conjecture of Bloch--Kato \cite{BK} that will play an important role in the proof of Theorem~\ref{thmintro:p-BSD-A}.

Recall that a Hecke character $\xi:K\backslash\mathbb{A}_K^\times\rightarrow\C^\times$ of infinity type $(a,b)\in\Z^2$ in the sense of \S\ref{subsec:Lp-pairs} can alternatively be viewed as a continuous group  homomorphism 
\[
\xi:\hat K^\times/U\rightarrow\C^\times
\] 
trivial on an open subgroup $U$ of $\hat K^\times$ (that can be described explicitly in terms of the conductor of $\xi$) satisfying $\xi(\alpha)=\alpha^{-a}(\alpha^\st)^{-b}$ for all $\alpha\in K^\times$. Under our fixed embedding $\imath_\infty$, 
$\xi$ then takes values in a number field $L$, and for every embedding $\sigma\in{\rm Hom}(L,\C)$ we obtain the character 
\[
\xi_\sigma:\hat K^\times/U\xrightarrow{\xi} L^\times\xrightarrow{\sigma}\C^\times
\] 
by composition. Combining their associated Hecke $L$-functions, we can consider the $L_\C:=L\otimes_{\Q}\C$-valued $L$-function 
\[
(L(\xi_\sigma,s))_{\sigma:L\hookrightarrow\C},
\]
with $L(\xi,s)$ corresponding to $\sigma:L\subset\overline{\Q}\xrightarrow{\imath_\infty}\C$.

\begin{thm}\label{thm:pTNC-rk0}
Let $(\psi,\chi)$ be as in Theorem~\ref{thm:AH}, with values in a number field $L$, and suppose that $L(\psi^\st\chi,r)\neq 0$. Then 
\[
\biggl(\frac{L((\psi^\st\chi)_\sigma,r)}{\Omega_\sigma}\biggr)\in L^\times\subset L_\C^\times,
\]
and for all primes $\PP$ of $L$ above $p$ we have
\[
{\rm ord}_{\PP}\biggl(\frac{L(\psi^\st\chi,r)}{\Omega}\biggr)=
{\rm ord}_{\PP}
\biggl(\frac{\#\sha_{\BK}(W_{\psi^\st\chi}(r)/K)}{\#\rH^0(K,W_{\psi^\st\chi}(r))^2}\biggr)
+\sum_{\substack{w\in\Sigma,w\nmid p}}{\rm ord}_{\PP}(c_w(W_{\psi^\st\chi}(r)/K)).
\]
In other words, the $p$-part of the Tamagawa number conjecture holds for $\psi^\st\chi$.
\end{thm}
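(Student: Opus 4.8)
The plan is to chain three of the preceding results, all specialized at the trivial character $\mathds{1}\in\Gamma$: the anticyclotomic main conjecture of Theorem~\ref{thm:AH}, which computes ${\rm char}_{\Lambda_\scO}(\mathcal{X}_v(\psi^\st\chi\nr))=(\mathscr{L}_v(\psi^\st\chi\nr))$; the $\Gamma$-Euler characteristic formula of Proposition~\ref{prop:control-BK}, which computes $\mathscr{F}_v(\psi^\st\chi\nr)(0)$ in terms of $\#\sha_\BK$, the orders of the relevant $\rH^0$'s, and the Tamagawa numbers; and Katz's interpolation Theorem~\ref{thm:katz}, which computes $\mathscr{L}_v(\psi^\st\chi\nr)(\mathds{1})$ in terms of $L(\psi^\st\chi,r)$, a $p$-adic fudge factor, and the crystalline Euler factors at $v$ and $\overline{v}$. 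The $L$-rationality of $(L((\psi^\st\chi)_\sigma,r)/\Omega_\sigma)_\sigma$ is the classical algebraicity of critical Hecke $L$-values (Damerell, Shimura), already built into Katz's construction, so I would take it as input and concentrate on the $\PP$-adic valuation formula.

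\textbf{Step 1: reduce to finiteness of the Bloch--Kato Selmer group.} Write $\phi:=\psi^\st\chi\nr$, of infinity type $(r,1-r)$; it lies in the range of interpolation of $\mathscr{L}_{v,\cc}$ for $\cc$ its conductor, so that $L_\cc(\phi,0)=L(\psi^\st\chi,r)$, and its self-duality $\phi^\tau=\phi^{-1}\mathbf{N}^{-1}$ gives $\phi^{-1}(v)p^{-1}=\phi(\overline{v})$, so the two crystalline Euler factors in Theorem~\ref{thm:katz} coincide and their product equals $(1-\phi(\overline{v}))^2$. Since $L(\psi^\st\chi,r)\neq 0$ by hypothesis and this Euler factor is nonzero --- the absence of an exceptional zero being read off from \eqref{eq:div} (and, when $r=1$, from the strong-good-pair hypotheses) --- Theorem~\ref{thm:katz} gives $\mathscr{L}_v(\phi)(\mathds{1})\neq 0$, hence $\mathscr{L}_v(\phi)\neq 0$. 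Then Theorem~\ref{thm:AH} shows $\mathcal{X}_v(\phi)$ is $\Lambda_\scO$-torsion with ${\rm char}_{\Lambda_\scO}(\mathcal{X}_v(\phi))=(\mathscr{L}_v(\phi))$, so a generator $\mathscr{F}_v(\phi)\in\Lambda_\scO$ of this characteristic ideal satisfies $\mathscr{F}_v(\phi)\doteq\mathscr{L}_v(\phi)$ up to $\Lambda_\scO^{{\rm ur},\times}$, whence $\mathscr{F}_v(\phi)(0)\sim_p\mathscr{L}_v(\phi)(\mathds{1})\neq 0$. A control theorem (as in Proposition~\ref{prop:control-BK}, via \cite[Prop.~4.3]{arnold}, together with the Mazur-type control of \cite[Prop.~6.2.1]{cas-Kato-Schoen}) then forces the $\Gamma$-coinvariants of $\mathcal{S}_v(\phi)$ to be finite; as $\phi$ has infinity type $(r,1-r)$ with $r>0$ and $1-r\leq 0$, Lemma~\ref{lem:BK=v} identifies $\mathcal{X}_\BK(\phi)=\mathcal{X}_v(\phi)$ and ${\rm Sel}_\BK(K,-)={\rm Sel}_v(K,-)$, so I would conclude that ${\rm Sel}_\BK(K,V_{\psi^\st\chi}(r))=0$, whence ${\rm Sel}_\BK(K,W_{\psi^\st\chi}(r))$ is finite and equals $\sha_\BK(W_{\psi^\st\chi}(r)/K)$.

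\textbf{Step 2: assemble the formula.} With this finiteness, Proposition~\ref{prop:control-BK} applies and gives
\[
\mathscr{F}_v(\phi)(0)\;\sim_p\;\#\rH^0(K_{\overline{v}},W_{\psi^\st\chi}(r))^2\cdot\frac{\#\sha_\BK(W_{\psi^\st\chi}(r)/K)}{\#\rH^0(K,W_{\psi^\st\chi}(r))^2}\cdot\prod_{w\in\Sigma,\,w\nmid p}c_w(W_{\psi^\st\chi}(r)/K),
\]
while Step~1 together with Theorem~\ref{thm:katz} gives, up to a $p$-adic unit,
\[
\mathscr{F}_v(\phi)(0)\;\sim_p\;(1-\phi(\overline{v}))^2\cdot\frac{L(\psi^\st\chi,r)}{\Omega},
\]
the $\Gamma$-factor $\Gamma(r)$, the CM periods $\Omega_p\in\cO_{\bC_p}^\times$, the archimedean constants $(\sqrt{D_K}/2\pi)^{1-r}$, and the passage from Katz's $\Omega_K$-normalization to the Bloch--Kato period $\Omega$ of $M(\psi^\st\chi)(r)$ having been absorbed into the unit --- this is the point at which I would invoke the explicit description of the motivic structure of Hecke characters in Kato \cite{kato-295} and Burungale--Flach \cite{burungale-flach}. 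Since $\phi$ is unramified at $\overline{v}$ one has $\rH^0(K_{\overline{v}},W_{\psi^\st\chi}(r))\simeq\scO/(1-\phi(\overline{v}))\scO$, so $(1-\phi(\overline{v}))^2\sim_p\#\rH^0(K_{\overline{v}},W_{\psi^\st\chi}(r))^2$; cancelling this factor between the two displays yields
\[
\frac{L(\psi^\st\chi,r)}{\Omega}\;\sim_p\;\frac{\#\sha_\BK(W_{\psi^\st\chi}(r)/K)}{\#\rH^0(K,W_{\psi^\st\chi}(r))^2}\cdot\prod_{w\in\Sigma,\,w\nmid p}c_w(W_{\psi^\st\chi}(r)/K),
\]
and running the argument with $\psi^\st\chi$ replaced by $(\psi^\st\chi)_\sigma$ for the $\sigma$ corresponding to $\PP$ under $\imath_p$ gives the asserted equality of $\PP$-adic valuations.

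\textbf{Main obstacle.} The hard part will be the bookkeeping in Step~2: matching Katz's normalization (carrying $\Omega_K$, the $\Gamma$-factor, and $(\sqrt{D_K}/2\pi)^{b}$) against the Bloch--Kato/Deligne normalization (carrying the motivic period $\Omega$ and no such factors), and at the same time checking that the crystalline Euler factor at $\overline{v}$ coming out of Theorem~\ref{thm:katz} is exactly $\#\rH^0(K_{\overline{v}},W_{\psi^\st\chi}(r))$ up to a $p$-adic unit, so that it cancels the square of local $\rH^0$'s produced by the $\Gamma$-Euler characteristic. The self-duality $\phi^\tau=\phi^{-1}\mathbf{N}^{-1}$ --- which forces $(1-\phi^{-1}(v)p^{-1})=(1-\phi(\overline{v}))$ --- is what makes this come out as a clean square, and one also has to rule out exceptional zeros of these Euler factors, which is where \eqref{eq:div} (and, for $r=1$, the strong-good-pair conditions) is needed.
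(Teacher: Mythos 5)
Your proposal is correct and follows essentially the same route as the paper: the paper's own proof is a one-line combination of Theorem~\ref{thm:katz}, Theorem~\ref{thm:AH} and Proposition~\ref{prop:control-BK}, with Goldstein--Schappacher and Blasius cited for the rationality assertion --- exactly the three ingredients you chain together. Your Step 1 (nonvanishing of $\mathscr{L}_v(\psi^\st\chi\nr)(\mathds{1})$ via self-duality of the Euler factors, hence finiteness of ${\rm Sel}_\BK(K,W_{\psi^\st\chi}(r))$ by control) and the cancellation of $(1-\phi(\overline{v}))^2$ against $\#\rH^0(K_{\overline{v}},W_{\psi^\st\chi}(r))^2$ are precisely the details the paper leaves implicit in the word ``immediate''.
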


\begin{proof}
The first assertion is proved in \cite{goldstein-shappacher} and (more generally) in \cite{blasius} (cf. \cite[Thm.~2.12]{BDP-PJM}), and the second is immediate from 
Theorem~\ref{thm:katz}, Theorem~\ref{thm:AH}, and  Proposition~\ref{prop:control-BK}.
\end{proof}

\subsubsection{Formula for $\lambda$} 

\begin{prop}\label{prop:control-opp-BK}
Let $\lambda$ be as in Theorem~\ref{thm:AH-lambda}. If  
$\mathscr{L}_v(\lambda\mathbf{N}^{-r})(0)\neq 0$, 
then the map
\begin{equation}\label{eq:loc-psichi}
{\rm loc}_{\overline{v}}:{\rm Sel}_{\BK}(K,T_{\lambda}(r))\rightarrow
\rH^1_f(K_{\overline{v}},T_{\lambda}(r))
\end{equation}
is nonzero, and letting $\mathscr{F}_v(\lambda\nr)\in\Lambda_\scO$ be any generator of the characteristic ideal of $\mathcal{X}_v(\lambda\nr)$ we have
\[
\mathscr{F}_v(\lambda\nr)(0)\;\sim_p\;\#\rH^0(K_{\overline{v}},W_{\lambda}(r))^2\cdot
\frac{\#\sha_{\BK}(W_{\lambda}(r)/K)}{\#\rH^0(K,W_{\lambda}(r))^2}
\cdot\#{\rm coker}({\rm loc}_{\overline{v}/{\rm tors}})^2\cdot\prod_{\substack{w\in\Sigma,w\nmid p}}c_w(W_{\lambda}(r)/K),
\]
where ${\rm loc}_{\overline{v}/{\rm tors}}$ is the composition of ${\rm loc}_{\overline{v}}$ with $\rH^1_f(K_{\overline{v}},T_{\lambda}(r))\rightarrow\rH^1_f(K_{\overline{v}},T_{\lambda}(r))/\rH^1_f(K_{\overline{v}},T_\lambda(r))_{\rm tors}$.
\end{prop}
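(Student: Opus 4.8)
The plan is to mimic the proof of Proposition~\ref{prop:control-BK}, i.e.\ to run a $\Gamma$-Euler characteristic computation for $\mathcal{X}_v(\lambda\nr)$, but now taking account of the fact that, as recorded in Remark~\ref{rem:reversed}, the Selmer group $\mathcal{X}_v(\lambda\nr)$ is defined with the local condition at $v$ \emph{reversed} relative to the Bloch--Kato one. Concretely, since $\lambda\nr$ has infinity type $(1-r,r)$, Lemma~\ref{lem:BK=v} tells us that $\rH^1_f(F_\eta,V_\lambda(r))=\rH^1(F_\eta,V_\lambda(r))$ for $\eta\mid\overline v$ and $=0$ for $\eta\mid v$; the $w$-Selmer group $\mathcal{X}_v(\lambda\nr)$ instead imposes the full cohomology at $v$ and $0$ at $\overline v$, so it interpolates the Selmer groups ${\rm Sel}_{\rm rel,str}(F,W_\lambda(r))$ over the anticyclotomic tower (``relaxed at $v$, strict at $\overline v$'').

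First I would establish that $\mathscr{L}_v(\lambda\nr)(0)\neq 0$ forces the localization map in \eqref{eq:loc-psichi} to be nonzero: this is exactly the argument already used in the proof of Theorem~\ref{thm:nekovar-bis}. Namely, from Theorem~\ref{thm:ERL-katz} the twisted elliptic unit class $\mathbf{c}_{\lambda\nr}(\mathds 1)\in\rH^1(K,T_\lambda(r))$ satisfies ${\rm loc}_{\overline v}(\mathbf{c}_{\lambda\nr}(\mathds 1))\neq 0$, and (as in \emph{loc.\,cit.}) it lies in ${\rm Sel}_\BK(K,T_\lambda(r))$ because $L(\lambda,r)=0$ makes its $v$-localization vanish via the $\overline v$-version of Theorem~\ref{thm:ERL-katz} and Theorem~\ref{thm:katz}. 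This gives the nonvanishing of \eqref{eq:loc-psichi}, hence in particular ${\rm Sel}_\BK(K,T_\lambda(r))$ has $\scO$-rank $1$ by Theorem~\ref{thm:nekovar-bis}, so that ${\rm loc}_{\overline v/{\rm tors}}$ has finite cokernel and $\#{\rm coker}({\rm loc}_{\overline v/{\rm tors}})$ is finite.

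Next I would run the control/Euler-characteristic computation for $\mathcal{X}_v(\lambda\nr)$. One sets $\mathscr F_v(\lambda\nr)$ to be a generator of its characteristic ideal; Theorem~\ref{thm:AH-lambda} ensures $\mathcal{X}_v(\lambda\nr)$ is $\Lambda_\scO$-torsion, and I need to check $\mathscr F_v(\lambda\nr)(0)\neq 0$, equivalently that $\mathcal{S}_v(\lambda\nr)^\Gamma$ and $\mathcal{S}_v(\lambda\nr)_\Gamma$ are finite, after which $\mathscr F_v(\lambda\nr)(0)\sim_p \#\mathcal{S}_v(\lambda\nr)^\Gamma/\#\mathcal{S}_v(\lambda\nr)_\Gamma$ by \cite[Lem.~4.2]{greenberg-cetraro}. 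Then I would assemble the commutative diagram of Snake-Lemma type as in \eqref{eq:comm-diag}, with the global cohomology $\rH^1(K^\Sigma/K,-)$ in the middle and the product of local terms $\mathcal{P}^\Sigma$ on the right, \emph{now taking the $v$-place with its full local cohomology (minus the divisible part) and the $\overline v$-place with the strict, i.e.\ trivial, condition}. The local computations run as before: at $\overline v$ one gets $\#{\rm ker}(\tau_{\overline v}^*)=\#\rH^0(K_{\overline v},W_\lambda(r))$ from \cite[Lem.~2.5]{arnold}; at $v$ the reversed condition means the relevant kernel is controlled by $\rH^0(K_v,W_{\lambda^\st}(r))\simeq\rH^0(K_{\overline v},W_\lambda(r))$ by the action of complex conjugation; and at the bad primes $w\nmid p$ one recovers the Tamagawa factor $c_w(W_\lambda(r)/K)$ exactly as in \eqref{eq:tau-notp}. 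The new feature is the $v$-term of the Poitou--Tate/Cassels analysis: because the condition at $v$ is relaxed and at $\overline v$ strict, the cokernel of the global-to-local map $\mathcal{G}^\Sigma(K)\subset\mathcal{P}^\Sigma(K)$ now involves, in addition to $\rH^0$-type terms, precisely the image ${\rm loc}_{\overline v}({\rm Sel}_\BK(K,T_\lambda(r)))$ inside the rank-one local group $\rH^1_f(K_{\overline v},T_\lambda(r))$, whose index contributes $\#{\rm coker}({\rm loc}_{\overline v/{\rm tors}})$; this gets squared in the Euler-characteristic bookkeeping (once from each of the two instances of the localization map in the two short exact sequences \eqref{eq:glob-loc-K}–\eqref{eq:glob-loc-Kinfty} dualized against each other), accounting for the $\#{\rm coker}({\rm loc}_{\overline v/{\rm tors}})^2$ in the displayed formula. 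Finally, combining the formula for $\#\mathcal{S}_v(\lambda\nr)^\Gamma$ with \eqref{eq:val-0} and using $\sha_\BK(W_\lambda(r)/K)={\rm Sel}_\BK(K,W_\lambda(r))_{/{\rm div}}$ together with the rank-one statement (so that the divisible part has corank one and is controlled by the cokernel of localization) yields the stated identity.

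The main obstacle I anticipate is the careful accounting in this last step: one must correctly identify how $\#{\rm Sel}_\BK(K,W_\lambda(r))$ and the corank-one divisible part interact with the $\Gamma$-cohomology, and in particular track down exactly where the \emph{square} on $\#{\rm coker}({\rm loc}_{\overline v/{\rm tors}})$ comes from — this requires invoking the generalization of Cassels' theorem in \cite[Prop.~4.13]{greenberg-cetraro} with the reversed local condition and matching it against a version of \cite[Prop.~4.7]{greenberg-cetraro}, plus keeping the Poitou--Tate nine-term sequence straight when the Selmer group has positive rank. The computations at $v$ versus $\overline v$ must also be swapped relative to Proposition~\ref{prop:control-BK} in a way that is easy to get backwards, so I would be especially careful that the nonzero localization is at $\overline v$ (the Bloch--Kato place) while the extra relaxed cohomology sits at $v$. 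Everything else — the finiteness statements, the Snake Lemma, and the bad-prime Tamagawa contributions — is routine given the inputs from \cite{arnold}, \cite{greenberg-cetraro}, \cite{jsw}, \cite{pollack-weston}, and Theorem~\ref{thm:AH-lambda}.
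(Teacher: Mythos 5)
Your overall strategy is the one the paper follows: the nonvanishing of ${\rm loc}_{\overline{v}}$ on ${\rm Sel}_\BK(K,T_\lambda(r))$ via the twisted elliptic unit class $\mathbf{c}_{\lambda\mathbf{N}^{-r}}(\mathds{1})$ exactly as in the proof of Theorem~\ref{thm:nekovar-bis}, followed by a $\Gamma$-Euler characteristic computation for $\mathcal{X}_v(\lambda\nr)$ run as in Proposition~\ref{prop:control-BK} (the paper uses the control theorem of \cite[Prop.~6.2.1]{cas-Kato-Schoen} for the reversed local conditions). That part of your plan is sound, and the output of the Euler characteristic step is, as in the paper, a formula for $\mathscr{F}_v(\lambda\nr)(0)$ in terms of $\#{\rm Sel}_v(K,W_\lambda(r))$, $\#\rH^0(K_{\overline{v}},W_\lambda(r))^2$, $\#\rH^0(K,W_\lambda(r))^2$, and the Tamagawa factors.

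The genuine gap is in the one step that distinguishes this proposition from Proposition~\ref{prop:control-BK}: converting $\#{\rm Sel}_v(K,W_\lambda(r))$ (the finite Selmer group with relaxed condition at $v$ and strict condition at $\overline{v}$) into $\#\sha_\BK(W_\lambda(r)/K)\cdot\#{\rm coker}({\rm loc}_{\overline{v}/{\rm tors}})^2$. Your proposed mechanism for the square — one copy of the cokernel from each of the two exact sequences \eqref{eq:glob-loc-K} and \eqref{eq:glob-loc-Kinfty} ``dualized against each other'' — is not correct: the Iwasawa-level sequence does not contribute a factor of ${\rm coker}({\rm loc}_{\overline{v}/{\rm tors}})$, and folding the cokernel into the Cassels/Poitou--Tate bookkeeping of \cite[Prop.~4.13]{greenberg-cetraro} with reversed conditions is precisely the bookkeeping you flag as unresolved. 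The paper instead performs this conversion as a separate, purely finite-level global duality argument, namely the argument of \cite[Prop.~3.2.1]{jsw}: in the rank-one situation (guaranteed here by the nonvanishing of ${\rm loc}_{\overline{v}}$ and Theorem~\ref{thm:nekovar-bis}), comparing the Bloch--Kato structure with the relaxed-at-$v$/strict-at-$\overline{v}$ structure via Poitou--Tate, and using conjugate self-duality of $\lambda\mathbf{N}^{-r}$ (so that $W^*\simeq T^\tau$, which substitutes for the irreducibility hypothesis of \emph{loc.\,cit.}) to identify the two cokernels that appear, yields $\#{\rm Sel}_v(K,W_\lambda(r))=\#\sha_\BK(W_\lambda(r)/K)\cdot\#{\rm coker}({\rm loc}_{\overline{v}/{\rm tors}})^2$. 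Without supplying this finite-level duality lemma (or an equivalent), your argument does not reach the stated formula; with it, the rest of your outline goes through.
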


\begin{proof}
By Theorem~\ref{thm:AH-lambda} we know that $\mathcal{X}_v(\lambda\nr)$ is $\Lambda_\scO$-torsion and the nonvanishing of $\mathscr{L}_v(\lambda\nr)(0)$ implies that $\mathscr{F}_v(\lambda\nr)(0)\neq 0$. Since by \cite[Prop.~6.2.1]{cas-Kato-Schoen} the natural restriction map
\[
{\rm Sel}_{v}(K,W_{\lambda}(r))\rightarrow\mathcal{S}_{v}(\lambda\nr)^\Gamma
\]
has finite kernel and cokernel, it follows that ${\rm Sel}_v(K,W_{\lambda}(r))$ if finite. The same argument as in Proposition~\ref{prop:control-BK} then gives 
\begin{equation}\label{eq:early-arg}
\mathscr{F}_v(\lambda\nr)(0)\;\sim_p\;\#\rH^0(K_{\overline{v}},W_{\lambda}(r))^2\cdot
\frac{\#{\rm Sel}_{v}(K,W_{\lambda}(r))}{\#\rH^0(K,W_{\lambda}(r))^2}
\cdot\prod_{\substack{w\in\Sigma,w\nmid p}}c_w(W_{\lambda}(r)/K).
\end{equation}
Thus it remains to show the nonvanishing of \eqref{eq:loc-psichi} and to relate the orders of ${\rm Sel}_v(K,W_{\lambda}(r))$ and $\sha_\BK(K,W_\lambda(r))$. 
%
%
%
 
By Theorem~\ref{thm:ERL-katz}, letting $\mathbf{c}_{\lambda\mathbf{N}^{-r}}(\mathds{1})\in\rH^1(K,T_\lambda(r))$ denote the image of $\mathbf{c}_{\lambda\mathbf{N}^{-r}}$ under the projection $\rH^1_{\rm Iw}(K_\infty,T_\lambda(r))\rightarrow\rH^1(K,T_{\lambda}(r))$, the nonvanishing of $\mathscr{L}_v(\lambda\nr)(0)$ implies that ${\rm loc}_{\overline{v}}(\mathbf{c}_{\lambda\mathbf{N}^{-r}}(\mathds{1}))\neq 0$. Thus to show the nonvanishing of \eqref{eq:loc-psichi}, it suffices to show the inclusion $\mathbf{c}_{\lambda\mathbf{N}^{-r}}(\mathds{1})\in{\rm Sel}_\BK(K,T_\lambda(r))$, 
which is seen in the proof of 
Theorem~\ref{thm:nekovar-bis} in the Appendix. 
%
%
Hence ${\rm loc}_{\overline{v}/{\rm tors}}$ has finite cokernel (since its target has $\scO$-rank one),
and from the global duality argument in the proof of \cite[Prop.~3.2.1]{jsw}\footnote{The irreducibility assumption (irred$_{\mathcal{K}}$) in \emph{loc.\,cit.} is only used to deduce that $W^*\simeq T^\tau$, which is automatic for us.} we find 
\[
\#{\rm Sel}_v(K,W_{\lambda}(r))=\#\sha_\BK(W_{\lambda}(r)/K)\cdot\#{\rm coker}({\rm loc}_{\overline{v}/{\rm tors}})^2,
\]
which together with \eqref{eq:early-arg} yields the result.
\end{proof}


\subsubsection{A consequence of the explicit reciprocity law}

Building on the explicit reciprocity law for $\mathbf{z}_{g,\chi}$, 
we can obtain a useful expression for the order of the cokernel of the map ${\rm loc}_{\overline{v}/{\rm tors}}$ in Proposition~\ref{prop:control-opp-BK}. 

\begin{prop}\label{prop:BCGS}
Let $\lambda$ be as in Theorem~\ref{thm:AH-lambda}. Suppose also that $\mathfrak{d}_K\Vert\cc$, let $(\psi,\chi)$ be a good pair for $\lambda$, and put $g=\theta_\psi$. If $\mathscr{L}_v(\lambda\mathbf{N}^{-r})(0)\neq 0$, then the following hold:
\begin{enumerate}
\item $\mathscr{L}_v(g,\chi)(0)\neq 0$.
\item $z_{g,\chi}$ is non-torsion.
\item ${\rm rank}_{\scO}\,{\rm Sel}_\BK(K,T_{g,\chi})=1$.
\item The map ${\rm loc}_{\overline{v}/{\rm tors}}$ of Proposition~\ref{prop:control-opp-BK} satisfies
\[
\#{\rm coker}({\rm loc}_{\overline{v}/{\rm tors}})=\frac{\#\bigl(\mathscr{O}/\mathscr{L}_v(g,\chi)(0)\bigr)}{\#\bigl({\rm Sel}_{\BK}(K,T_{g,\chi})/\scO\cdot z_{g,\chi})}\cdot\frac{\#\rH^0(K,W_{\lambda}(r))}{\#\rH^0(K_{\overline{v}},W_{\lambda}(r))}\cdot\frac{\#\rH^0(K,W_{\psi^\st\chi}(r))}{\#\rH^0(K_{\overline{v}},W_{\psi^\st\chi}(r))}.
\]
\end{enumerate}
\end{prop}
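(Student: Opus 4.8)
The plan is to deduce (1)--(3) from the factorization and explicit reciprocity laws already in place, and then to obtain (4) by descending the $\Lambda_\scO$-adic explicit reciprocity law to the trivial character.

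\emph{Parts (1)--(3).} By Proposition~\ref{prop:factor-BDP}, $\mathscr{L}_v(g,\chi)^2$ is a unit multiple of $\mathscr{L}_v(\lambda\mathbf{N}^{-r})\cdot\mathscr{L}_v(\psi^\st\chi\mathbf{N}^{-r})$ in $\Lambda_\scO^{\rm ur}$; evaluating at the trivial character, the first factor is nonzero by hypothesis, and since $\psi^\st\chi\mathbf{N}^{-r}$ has infinity type $(r,1-r)$ it lies in the interpolation range of $\mathscr{L}_{v,\cc}$, so $\mathscr{L}_v(\psi^\st\chi\mathbf{N}^{-r})(0)\neq 0$ follows from Theorem~\ref{thm:katz} together with the non-vanishing $L(\psi^\st\chi,r)\neq 0$ built into condition (S4) of a good pair (the Euler factors at $v,\overline{v}$ being nonzero for weight reasons). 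This gives (1). For (2), Theorem~\ref{thm:ERL} gives ${\rm Col}_{\overline{v}}({\rm loc}_{\overline{v}}(\mathbf{z}_{g,\chi}))=\mathscr{L}_v(g,\chi)$ and $\mathbf{z}_{g,\chi}(\mathds{1})=\mathcal{E}_p(g,\chi)z_{g,\chi}$; since ${\rm Col}_{\overline{v}}$ is a $\Lambda_\scO^{\rm ur}$-module map with finite cokernel and $\mathscr{L}_v(g,\chi)(0)\neq 0$, $\mathbf{z}_{g,\chi}$ is non-torsion with $\mathbf{z}_{g,\chi}(\mathds{1})\neq 0$, and since $\alpha$ is a Weil number of weight $2r-1$ (of complex absolute value $p^{(2r-1)/2}\neq p^{r-1}$) we have $\mathcal{E}_p(g,\chi)=(1-\chi(v)p^{r-1}/\alpha)(1-\chi(\overline{v})p^{r-1}/\alpha)\neq 0$, whence $z_{g,\chi}$ is non-torsion. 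For (3), the decomposition $V_{g,\chi}\simeq V_\lambda(r)\oplus V_{\psi^\st\chi}(r)$ and Lemma~\ref{lem:BK=v} give ${\rm Sel}_\BK(K,V_{g,\chi})={\rm Sel}_\BK(K,V_\lambda(r))\oplus{\rm Sel}_\BK(K,V_{\psi^\st\chi}(r))$; the first summand is $1$-dimensional by Theorem~\ref{thm:nekovar-bis} (whose hypothesis is exactly $\mathscr{L}_v(\lambda\mathbf{N}^{-r})(0)\neq 0$) and the second vanishes as $L(\psi^\st\chi,r)\neq 0$ (rank $0$: Theorem~\ref{thm:AH} plus a control theorem, or Theorem~\ref{thm:pTNC-rk0}). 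Hence ${\rm rank}_\scO\,{\rm Sel}_\BK(K,T_{g,\chi})=1$. (Parts (2)--(3) also follow from Theorem~\ref{thm:Heeg-nonzero} and Corollary~\ref{cor:HP-IMC}.)

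\emph{Part (4): setup.} Since $w(\psi^\st\chi\mathbf{N}^{-r})=+1$, Proposition~\ref{prop:dec-Sel}, Theorem~\ref{thm:AH} and Lemma~\ref{lem:BK=v} yield $\check{\mathcal{S}}_v(\psi^\st\chi\mathbf{N}^{-r})=0$ and ${\rm Sel}_\BK(K,V_{\psi^\st\chi}(r))=0$, hence $\check{\mathcal{S}}_\BK(g,\chi)=\check{\mathcal{S}}_\BK(\lambda\mathbf{N}^{-r})$, $\varprojlim_F\prod_{\eta\mid\overline{v}}\rH^1_f(F_\eta,T_{g,\chi})=\varprojlim_F\prod_{\eta\mid\overline{v}}\rH^1(F_\eta,T_\lambda(r))$, and the classes $\mathbf{z}_{g,\chi},z_{g,\chi}$ have trivial $\psi^\st\chi$-component, so identify with classes $\mathbf{z}_\lambda,z_\lambda$ on the $\lambda$-part. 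The kernel of ${\rm loc}_{\overline{v}}\colon\check{\mathcal{S}}_\BK(g,\chi)\to\varprojlim_F\prod_{\eta\mid\overline{v}}\rH^1(F_\eta,T_\lambda(r))$ is $\check{\mathcal{S}}_{f,{\rm str}}(g,\chi)\subseteq\check{\mathcal{S}}_v(g,\chi)=0$ by Theorem~\ref{thm:BDP-IMC}, so ${\rm loc}_{\overline{v}}$ is injective; and by parts (2)--(3) and Theorem~\ref{thm:Heeg-nonzero} the hypotheses of Proposition~\ref{prop:equiv} hold, so \eqref{eq:ERL-cor} is available: ${\rm char}_{\Lambda_\scO}(\check{\mathcal{S}}_\BK(g,\chi)/\Lambda_\scO\mathbf{z}_{g,\chi})\cdot{\rm char}_{\Lambda_\scO}({\rm coker}({\rm loc}_{\overline{v}}))=(\mathscr{L}_v(g,\chi))$ in $\Lambda_\scO^{\rm ur}$.

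\emph{Part (4): descent and conclusion.} I would evaluate the last identity at the trivial character. Taking $\Gamma$-homology of $0\to\Lambda_\scO\xrightarrow{\cdot\mathbf{z}_{g,\chi}}\check{\mathcal{S}}_\BK(g,\chi)\to\check{\mathcal{S}}_\BK(g,\chi)/\Lambda_\scO\mathbf{z}_{g,\chi}\to 0$ and using that $\check{\mathcal{S}}_\BK(g,\chi)$ is $\Lambda_\scO$-torsion-free identifies ${\rm char}_{\Lambda_\scO}(\check{\mathcal{S}}_\BK(g,\chi)/\Lambda_\scO\mathbf{z}_{g,\chi})(0)$, up to a $p$-adic unit, with $\#(\check{\mathcal{S}}_\BK(g,\chi)_\Gamma/\scO\cdot\mathbf{z}_{g,\chi}(\mathds{1}))$; the control theorem \cite[Prop.~6.2.1]{cas-Kato-Schoen} and $\mathbf{z}_{g,\chi}(\mathds{1})=\mathcal{E}_p(g,\chi)z_{g,\chi}$ turn this into $\#({\rm Sel}_\BK(K,T_{g,\chi})/\scO z_{g,\chi})\cdot\#(\scO/\mathcal{E}_p(g,\chi))$ up to the finite kernel and cokernel of the control map. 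Taking $\Gamma$-homology of $0\to\check{\mathcal{S}}_\BK(g,\chi)\xrightarrow{{\rm loc}_{\overline{v}}}\varprojlim_F\prod_{\eta\mid\overline{v}}\rH^1(F_\eta,T_\lambda(r))\to{\rm coker}({\rm loc}_{\overline{v}})\to 0$ (the middle term being $\Lambda_\scO$-torsion-free by Lemma~\ref{lem:local-tors-free}) identifies $\ker(({\rm loc}_{\overline{v}})_\Gamma)$ and ${\rm coker}(({\rm loc}_{\overline{v}})_\Gamma)$ with ${\rm coker}({\rm loc}_{\overline{v}})^\Gamma$ and ${\rm coker}({\rm loc}_{\overline{v}})_\Gamma$, so that ${\rm char}_{\Lambda_\scO}({\rm coker}({\rm loc}_{\overline{v}}))(0)$ has the same valuation as $\#{\rm coker}(({\rm loc}_{\overline{v}})_\Gamma)/\#\ker(({\rm loc}_{\overline{v}})_\Gamma)$; comparing via the local control maps (with error the finite $\Gamma$-cohomology of $\rH^0(K_{\infty,\overline{v}},-)$, of order a power of $\#\rH^0(K_{\overline{v}},W_\lambda(r))$) with the level-$0$ localization ${\rm Sel}_\BK(K,T_\lambda(r))\to\rH^1_f(K_{\overline{v}},T_\lambda(r))$ — which factors injectively through the free quotients for rank reasons and on which $z_\lambda$ has finite-index image — produces $\#{\rm coker}({\rm loc}_{\overline{v}/{\rm tors}})\cdot\#({\rm Sel}_\BK(K,T_\lambda(r))^{\rm free}/\scO\bar z_\lambda)^{-1}$ up to finite local corrections. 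Substituting the two evaluations back and using the decomposition ${\rm Sel}_\BK(K,T_{g,\chi})={\rm Sel}_\BK(K,T_\lambda(r))\oplus{\rm Sel}_\BK(K,T_{\psi^\st\chi}(r))$ to relate $\#({\rm Sel}_\BK(K,T_{g,\chi})/\scO z_{g,\chi})$ to $\#({\rm Sel}_\BK(K,T_\lambda(r))^{\rm free}/\scO\bar z_\lambda)$ then yields the stated formula. The main obstacle is precisely this last bookkeeping: one must check that the factor $\mathcal{E}_p(g,\chi)$, the interpolation factor $1-a_p(g)\chi(\overline{v})p^{-r}+\varepsilon_g(p)\chi(\overline{v})^2p^{-1}=(1-\psi\chi(\overline{v})p^{-r})(1-\psi^\st\chi(\overline{v})p^{-r})$ implicit in Theorem~\ref{thm:ERL} (cf. Proposition~\ref{prop:factor-BDP}), the several control-theorem errors, and the torsion discrepancies between the lattices $T_{g,\chi}$ and $T_\lambda(r)$ all conspire to produce exactly the two factors $\#\rH^0(K_{\overline{v}},W_\lambda(r))^{-1}$ and $\#\rH^0(K_{\overline{v}},W_{\psi^\st\chi}(r))^{-1}$ and nothing more; everything else is formal.
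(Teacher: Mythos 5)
Parts (1)--(3) are essentially right and match the paper: (1) follows from Proposition~\ref{prop:factor-BDP}, Theorem~\ref{thm:katz}, and condition (S4) of a good pair; (2) the paper deduces directly and more cleanly from Theorem~\ref{thm:BDP-formula} (the BDP $p$-adic special value formula expresses $\mathscr{L}_v(g,\chi)(\mathds 1)$ as a non-zero multiple of $\langle\log{\rm loc}_{\overline v}(z_{g,\chi}),\cdot\rangle$, so $z_{g,\chi}\neq 0$ immediately), whereas your route via Theorem~\ref{thm:ERL} and $\mathcal{E}_p(g,\chi)\neq 0$ also works but requires you to first argue that $\mathbf{z}_{g,\chi}(\mathds 1)\neq 0$, which is not an immediate consequence of "$\mathbf z_{g,\chi}$ is non-$\Lambda$-torsion" plus "${\rm Col}_{\overline v}$ has finite cokernel"; (3) matches.

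Part (4) is where the proposal falls short. You correctly identify the right strategy — descend the $\Lambda_\scO$-adic relation $\eqref{eq:ERL-cor}$ to the trivial character — but you explicitly defer the entire computation: "one must check that the factor $\mathcal{E}_p(g,\chi)$, the interpolation factor ..., the several control-theorem errors, and the torsion discrepancies ... all conspire to produce exactly the two factors ... and nothing more; everything else is formal." That bookkeeping \emph{is} the content of the statement; asserting that it will conspire correctly is not a proof. The paper avoids redoing this descent from scratch by first making a structural observation you only gesture at: by Lemma~\ref{lem:BK=v}, $\rH^1_f(K_{\overline v},T_{\psi^\tau\chi}(r))=\rH^1(K_{\overline v},T_{\psi^\tau\chi}(r))_{\rm tors}$ while $\rH^1_f(K_{\overline v},T_\lambda(r))=\rH^1(K_{\overline v},T_\lambda(r))$; hence, after passing to free quotients, the decomposition $T_{g,\chi}\simeq T_\lambda(r)\oplus T_{\psi^\tau\chi}(r)$ shows that the map ${\rm loc}_{\overline v/{\rm tors}}$ of Proposition~\ref{prop:control-opp-BK} (source the $\lambda$-Selmer group, target the free quotient at $\overline v$ for $T_\lambda(r)$) coincides with the map ${\rm loc}'_{\overline v/{\rm tors}}\colon{\rm Sel}_\BK(K,T_{g,\chi})\to\rH^1_f(K_{\overline v},T_{g,\chi})_{/{\rm tors}}$. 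This identification lets the paper invoke the already-carried-out descent in \cite[Lem.~1.2.3, Rem.~1.2.4]{BCGS}, which produces the cokernel formula with the single factor $\#\rH^0(K_{\overline v},W_{g,\chi})^{-1}$, and then split that factor as $\#\rH^0(K_{\overline v},W_\lambda(r))^{-1}\cdot\#\rH^0(K_{\overline v},W_{\psi^\tau\chi}(r))^{-1}$. So you need either to make the identification ${\rm loc}_{\overline v/{\rm tors}}={\rm loc}'_{\overline v/{\rm tors}}$ precise and then cite the BCGS lemma, or actually to carry out the Euler-characteristic bookkeeping you outline; as written you do neither.
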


\begin{proof}
Part (1) follows from the factorization  
of Proposition~\ref{prop:factor-BDP}, the fact that $\mathscr{L}_v(\psi^\st\chi\mathbf{N}^{-r})(0)$ is a nonzero multiple of $L(\psi^\st\chi,r)$ by Theorem~\ref{thm:katz}, and the Definition~\ref{def:good} of good pair; part (2) then follows from Theorem~\ref{thm:BDP-formula}. Similarly as in Proposition~\ref{prop:dec-Sel}, from \eqref{eq:dec-V} we have the decomposition
\begin{equation} 
{\rm Sel}_\BK(K,T_{g,\chi})\simeq{\rm Sel}_{\BK}(K,T_\lambda(r))\oplus{\rm Sel}_v(K,T_{\psi^\st\chi}(r)),\nonumber
\end{equation}
and Theorem~\ref{thm:nekovar-bis} in the Appendix shows the implication 
\[
\mathscr{L}_v(\lambda\mathbf{N}^{-r})(0)\neq 0\quad\Longrightarrow\quad{\rm rank}_{\scO}\,{\rm Sel}_{\BK}(K,T_\lambda(r))=1.
\] 
Since the nonvanishing of $\mathscr{L}_v(\psi^\tau\chi\mathbf{N}^{-r})(0)$ implies $\#{\rm Sel}_v(K,T_{\psi^\st\chi}(r))<\infty$ by virtue of Theorem~\ref{thm:AH} and Mazur's control theorem, 
this yields part (3).

Finally, for the proof of part (4) note that by Lemma~\ref{lem:BK=v} we have
\[
\rH^1_f(K_{\overline{v}},T_\lambda(r))=\rH^1(K_{\overline{v}},T_\lambda(r)),\quad\quad
\rH^1_f(K_{\overline{v}},T_{\psi^\st\chi}(r))=\rH^1(K_{\overline{v}},T_{\psi^\st\chi}(r))_{\rm tors},
\]
and so from the decomposition $T_{g,\chi}\simeq T_\lambda(r)\oplus T_{\psi^\st\chi}(r)$ we see that the map ${\rm loc}_{\overline{v}/{\rm tors}}$ of Proposition~\ref{prop:control-opp-BK} is the same as the composite
\[
{\rm loc}_{\overline{v}/{\rm tors}}':{\rm Sel}_\BK(K,T_{g,\chi})\xrightarrow{{\rm loc}_{\overline{v}}}\rH^1_f(K_{\overline{v}},T_{g,\chi})\rightarrow\rH^1_f(K_{\overline{v}},T_{g,\chi})_{/{\rm tors}},
\]
where $\rH^1_f(K_{\overline{v}},T_{g,\chi})_{/{\rm tors}}:=\rH^1_f(K_{\overline{v}},T_{g,\chi})/\rH^1_f(K_{\overline{v}},T_{g,\chi})_{\rm tors}$. 
As a consequence of Theorem~\ref{thm:ERL}, the argument in Lemma~1.2.3  in \cite{BCGS} (using Remark~1.2.4 in \emph{loc.\,cit.}) shows that
\[
\#{\rm coker}({\rm loc}_{\overline{v}/{\rm tors}}')=\frac{\#\bigl(\mathscr{O}/\mathscr{L}_v(g,\chi)(0)\bigr)}{\#\bigl({\rm Sel}_{\BK}(K,T_{g,\chi})/\scO\cdot z_{g,\chi})}\cdot\frac{\#\rH^0(K,W_{g,\chi})}{\#\rH^0(K_{\overline{v}},W_{g,\chi})}.
\]
Since $\#\rH^0(K,W_{g,\chi})=\#\rH^0(K,W_{\lambda}(r))\cdot\#\rH^0(K,W_{\psi^\st\chi}(r))$, and likewise for the $G_{K_{\overline{v}}}$-invariants, the proof of part (4) follows.
\end{proof}

\begin{cor}\label{cor:opp-BK+BCGS}
In the setting of Proposition~\ref{prop:BCGS}, if $\mathscr{L}_v(\lambda\mathbf{N}^{-r})(0)\neq 0$ then 
for any generator $\mathscr{F}_v(\lambda\nr)\in\Lambda_\scO$ of ${\rm char}_{\Lambda_\scO}(\mathcal{X}_v(\lambda\nr))$ we have
\begin{align*}
\mathscr{F}_v(\lambda\nr)(0)\;&\sim_p\;
\frac{\#\rH^0(K,W_{\psi^\st\chi}(r))^2}{\#\rH^0(K_{\overline{v}},W_{\psi^\st\chi}(r))^2}
\cdot\frac{\#\bigl(\mathscr{O}/\mathscr{L}_v(g,\chi)(0)\bigr)^2}{\#\bigl({\rm Sel}_{\BK}(K,T_{g,\chi})/\scO\cdot z_{g,\chi})^2}\\
&\quad\times\#\sha_\BK(W_{\lambda}(r)/K)\cdot\prod_{\substack{w\in\Sigma,w\nmid p}}c_w(W_{\lambda}(r)/K).
\end{align*}
\end{cor}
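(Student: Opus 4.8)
The plan is to combine Proposition~\ref{prop:control-opp-BK} with Proposition~\ref{prop:BCGS}, so that the argument amounts to bookkeeping. Since $\lambda$ is as in Theorem~\ref{thm:AH-lambda} we have $w(\lambda\nr)=-1$, hence by Lemma~\ref{lem:good} there exists a good pair $(\psi,\chi)$ for $\lambda$; I would fix such a pair and put $g=\theta_\psi$ (this is the data implicit in the statement). With the standing hypothesis $\mathscr{L}_v(\lambda\mathbf{N}^{-r})(0)\neq 0$, both Proposition~\ref{prop:control-opp-BK} and Proposition~\ref{prop:BCGS} apply.

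First I would record the formula of Proposition~\ref{prop:control-opp-BK},
\[
\mathscr{F}_v(\lambda\nr)(0)\;\sim_p\;\#\rH^0(K_{\overline{v}},W_{\lambda}(r))^2\cdot\frac{\#\sha_{\BK}(W_{\lambda}(r)/K)}{\#\rH^0(K,W_{\lambda}(r))^2}\cdot\#{\rm coker}({\rm loc}_{\overline{v}/{\rm tors}})^2\cdot\prod_{\substack{w\in\Sigma,w\nmid p}}c_w(W_{\lambda}(r)/K),
\]
and then substitute for $\#{\rm coker}({\rm loc}_{\overline{v}/{\rm tors}})$ the expression furnished by Proposition~\ref{prop:BCGS}(4). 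Upon squaring that expression, the factor $\#\rH^0(K_{\overline{v}},W_{\lambda}(r))^2$ appearing in its denominator cancels the prefactor $\#\rH^0(K_{\overline{v}},W_{\lambda}(r))^2$ above, and one is left with exactly
\[
\mathscr{F}_v(\lambda\nr)(0)\;\sim_p\;\frac{\#\sha_{\BK}(W_{\lambda}(r)/K)}{\#\rH^0(K,W_{\lambda}(r))^2}\cdot\frac{\#\bigl(\scO/\mathscr{L}_v(g,\chi)(0)\bigr)^2}{\#\bigl({\rm Sel}_{\BK}(K,T_{g,\chi})/\scO\cdot z_{g,\chi}\bigr)^2}\cdot\frac{1}{\#\rH^0(K_{\overline{v}},W_{\psi^\st\chi}(r))^2}\cdot\prod_{\substack{w\in\Sigma,w\nmid p}}c_w(W_{\lambda}(r)/K),
\]
which is the asserted identity. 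The only input from the CM structure used at this step is the decomposition $W_{g,\chi}\simeq W_{\lambda}(r)\oplus W_{\psi^\st\chi}(r)$ (valid since $\lambda=\psi\chi$), already built into Proposition~\ref{prop:BCGS}(4) through the equality $\#\rH^0(K_{\overline{v}},W_{g,\chi})=\#\rH^0(K_{\overline{v}},W_{\lambda}(r))\cdot\#\rH^0(K_{\overline{v}},W_{\psi^\st\chi}(r))$.

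All of the substantive content — the two-variable main conjecture of Rubin \cite{rubin-IMC} and Johnson-Leung--Kings \cite{kings-JL} entering through Theorem~\ref{thm:AH-lambda} and hence through Proposition~\ref{prop:control-opp-BK}, the control theorem behind that proposition, the global-duality computation of \cite{greenberg-cetraro} and \cite{jsw}, and the explicit reciprocity law of Theorem~\ref{thm:ERL} used in Proposition~\ref{prop:BCGS} via \cite{BCGS} — has already been carried out in the two cited propositions, so there is no genuine obstacle at this step. The only point deserving care is that each $\rH^0$-factor must be matched on the nose rather than merely up to $\sim_p$; this is harmless since all these groups are finite $p$-groups and the formulas in Propositions~\ref{prop:control-opp-BK} and \ref{prop:BCGS} are exact identities of orders. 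I would therefore present the corollary as a two-line deduction from the preceding two propositions.
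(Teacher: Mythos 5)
Your proposal is correct and follows exactly the paper's own reasoning: substitute the expression for $\#{\rm coker}({\rm loc}_{\overline{v}/{\rm tors}})$ from Proposition~\ref{prop:BCGS}(4) into the Euler characteristic formula of Proposition~\ref{prop:control-opp-BK}, observe the cancellation of the $\#\rH^0(K_{\overline{v}},W_\lambda(r))^2$ factors, and read off the result. The paper's proof is likewise a one-sentence deduction noting precisely this cancellation.
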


\begin{proof}
This is the combination of Proposition~\ref{prop:control-opp-BK} and Proposition~\ref{prop:BCGS}. Note that the contributions from $\#\rH^0(K,W_\lambda(r))$ and $\#\rH^0(K_{\overline{v}},W_\lambda(r))$ in the two formulae cancel each other.
\end{proof}


\begin{proof}[Proof of Theorem~\ref{thmintro:sha}]
Multiplying the formulas (up to a unit) for $\mathscr{F}_v(\psi^\st\chi\nr)(0)$ and $\mathscr{F}_v(\lambda\nr)(0)$ in Proposition~\ref{prop:control-BK} and Corollary~\ref{cor:opp-BK+BCGS}, and using  Proposition~\ref{prop:factor-BDP} and Proposition~\ref{prop:dec-Sel}, we obtain
\[
\#\bigl(\scO/\mathscr{F}_v(g,\chi)(0)\bigr)=
\#\sha_\BK(W_{g,\chi}/K)
\cdot\frac{\#\bigl(\mathscr{O}/\mathscr{L}_v(g,\chi)(0)\bigr)^2}{\#\bigl({\rm Sel}_{\BK}(K,T_{g,\chi})/\scO\cdot z_{g,\chi})^2}\cdot\prod_{\substack{w\in\Sigma,w\nmid p}}c_w(W_{g,\chi}/K).
\] 
Since Theorem~\ref{thm:BDP-IMC} gives
\[  
\#\bigl(\scO/\mathscr{F}_v(g,\chi)(0)\bigr)=\#\bigl(\scO/\mathscr{L}_v(g,\chi)(0)\bigr)^2,
\] 
and this is nonzero by part (1) of Proposition~\ref{prop:control-opp-BK},  this concludes the proof of Theorem~\ref{thmintro:sha}.
\end{proof}


\section{The $p$-part of the Birch--Swinnerton-Dyer formula}\label{sec:p-BSD}

In this section we deduce our main results on the $p$-part of the Birch and Swinnerton-Dyer formula in analytic rank $1$. After a period comparison (responsible for our assumption that $p\nmid h_K$), the result is obtained by  specializing Theorem~\ref{thmintro:sha} to the weight $2$ case and combining it with the general Gross--Zagier formula 
\cite{YZZ,CST}.

\subsection{Periods and heights of Heegner points}\label{subsec:GZ-periods}

Fix a prime $p$, and let $g\in S_2(\Gamma_1(N_g))$ be a newform of weight $2$ and level $N_g$ with $p\nmid N_g$.


\begin{defi}\label{def:can-period}
The \emph{canonical period} of $g$ is
\[
\Omega_g^{\rm can}:=\frac{8\pi^2\langle g,g\rangle_\Gamma}{\eta_g},
\]
where $\langle g,g\rangle_\Gamma=\int_{\Gamma_1(N_g)\backslash\mathfrak{H}}g(z)\overline{g(z)}\frac{dxdy}{y^2}$ is the Petersson norm of $g$, and  $\eta_g$ is the congruence number of $g$ relative to $S_2(\Gamma_1(N_g))$ (see \cite[\S{7}]{hida-congruence-81}).
\end{defi}

We are interested in the case where $g=\theta_\psi$ is the theta series of a Hecke character $\psi$ of an imaginary quadratic field $K$ satisfying \eqref{eq:spl}. Suppose $L$ is a number field containing the Fourier coefficients of $g$, and let $\Phi$ be the completion of $L$ at a fixed prime above $p$, with ring of integers $\scO$. 

From now on, we also suppose $p>3$.

\begin{prop}\label{prop:petersson-CM}
Let $\psi$ be a Hecke character of $K$ of infinity type $(-1,0)$, and conductor prime to $p$, and put $g=\theta_\psi$. Suppose $\psi^-:=\psi^\st/\psi$ satisfies conditions (S5)--(S7) of Definition~\ref{def:good} and has conductor divisible only by primes that are split in $K$. Then
%
\[
\Omega_g^{\rm can}=\Omega^2
\]
up to a unit in $\cO_L^\times$.
\end{prop}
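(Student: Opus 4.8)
The plan is to compare the canonical period $\Omega_g^{\rm can} = 8\pi^2\langle g,g\rangle_\Gamma/\eta_g$ with the CM period $\Omega^2$ by relating both sides to the Katz $p$-adic $L$-function of the anticyclotomic character $\psi^- = \psi^\st/\psi$ of infinity type $(1,-1)$. First I would recall the classical Rankin--Selberg computation of the Petersson norm of a theta series: for $g = \theta_\psi$ with $\psi$ of infinity type $(-1,0)$, there is an identity (due to Shimura, and used in this precise form by Hida--Tilouine and in \cite{hsieh-AJM}, \cite{cas-hsieh1}) expressing $8\pi^2\langle g,g\rangle_\Gamma$, up to elementary fudge factors (powers of $2$, $\pi$, $\sqrt{D_K}$, and Euler factors at primes dividing the level), as $L(\psi^-,0)\cdot\Omega_K^2$ times an algebraic number, where $\Omega_K$ is the complex CM period of \S\ref{subsub:CM-periods} (recall $\Omega = 2\pi i\cdot\Omega_K$). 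In other words, after dividing by $\Omega^2$ one gets $\langle g,g\rangle_\Gamma/\Omega^2 \doteq L^{\rm alg}(\psi^-,0)$ times a ratio of Euler-type factors, where $L^{\rm alg}(\psi^-,0)$ is exactly the normalized value appearing in condition (S9).

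The second ingredient is to control the congruence number $\eta_g$. Here I would invoke the relation between $\eta_g$ and the anticyclotomic Katz $p$-adic $L$-function established by Hida--Tilouine \cite{HT-ENS,HT-invmath} and completed by Hida \cite{hida-coates} (the anticyclotomic main conjecture for CM fields), which identifies, up to a $p$-adic unit, the $p$-adic valuation of $\eta_g$ with that of $\mathscr{L}_{v,\cc}(\psi^-)$ for $\cc = \mathfrak{f}_{\psi^-}$ — equivalently, by Remark~\ref{rem:good-unit}, with the valuation of $L^{\rm alg}(\psi^-,0)$. Conditions (S5)--(S8) are exactly what is needed to apply these results cleanly: (S5) guarantees $\psi^-$ has prime-to-$p$ order and prime-to-$p$ conductor so the relevant Iwasawa algebra behaves well and no extra $p$-power appears in the descent; (S6) forces the conductor to be split, keeping us in the Heegner-hypothesis-compatible setting; (S7) (non-triviality of $\psi^-$ on $G_v$) rules out exceptional zeros in the interpolation; and (S8) (order $\geq 3$) ensures the residual representation $\bar\rho_g = \mathrm{Ind}_K^\Q\bar\psi^-$ is absolutely irreducible and $p$-distinguished, so that the congruence-module arguments of Hida--Tilouine apply and $\eta_g$ genuinely measures congruences of $g$ with forms of different residual type.

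Combining the two displays, $\Omega_g^{\rm can} = 8\pi^2\langle g,g\rangle_\Gamma/\eta_g \doteq \Omega^2\cdot L^{\rm alg}(\psi^-,0)/\eta_g^{\rm alg}$, and since both $L^{\rm alg}(\psi^-,0)$ (by condition (S9)) and the quantity measuring $\eta_g$ are $p$-adic units, and in fact match each other up to a unit by the Hida--Tilouine/Hida result, the ratio $\Omega_g^{\rm can}/\Omega^2$ is a $p$-adic unit; tracking the archimedean and away-from-$p$ fudge factors shows it lies in $\cO_L^\times$. I expect the main obstacle to be bookkeeping: making sure the elementary Euler factors at primes dividing $N_g = D_K\mathbf{N}(\mathfrak{f}_\psi)$ that appear in Shimura's Rankin--Selberg formula exactly cancel against the Euler factors built into the definition of $L^{\rm alg}(\psi^-,0)$ and into the Hida--Tilouine expression for $\eta_g$ — i.e. confirming there is no stray rational prime contributing to the valuation — and in checking that the hypothesis $p>3$ together with (S5)--(S8) suffices to exclude torsion and congruence-module pathologies in the CM setting, where $\bar\rho_g$ is induced rather than "large" in the usual $\GL_2$ sense.
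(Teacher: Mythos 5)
Your plan follows essentially the same route as the paper's proof: Hida's Rankin--Selberg/Petersson-norm formula expressing $\langle g,g\rangle_\Gamma$ in terms of $h_K\cdot L(\psi^-,1)$ (via the class number formula), the Hida--Tilouine/Hida proof of the anticyclotomic main conjecture identifying the congruence number with the Katz value $\mathscr{L}_{v}(\psi^-\mathbf{N}^{-1})$, and the Katz interpolation formula together with the rationality of $L(\psi^-,1)/\Omega^2$ (Goldstein--Schappacher, Blasius) to conclude the comparison of periods. The only bookkeeping you gloss over is that the main-conjecture identity is stated for the congruence power series of the CM Hida family and carries a factor $h_K$ that cancels against the $h_K$ in the Petersson-norm formula (so no hypothesis $p\nmid h_K$ is needed here), that one descends from $\eta_{g_\alpha}$ to $\eta_g$ via $\eta_{g_\alpha}=\mathcal{E}_p(\psi^-)\eta_g$, and that condition (S9) is not actually used in this proposition---it enters only later (Theorem~\ref{thm:YZZ-CST}) to show $\eta_g$ is a $p$-adic unit.
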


\begin{proof}
Let $\boldsymbol{g}$ be the Hida family passing through the ordinary $p$-stabilization 
\[
g_\alpha:=g(q)-\varepsilon_g(p)p\psi(\overline{v})^{-1}g(q^p),
\]
and let $\eta_{\boldsymbol{g}}$ 
be its associated congruence power series, 
normalized so that its specialization at 
the trivial character gives the congruence number $\eta_{g_\alpha}$ of $g_\alpha$.  
Under our hypotheses on $\psi^-$, it follows from the proof of the anticyclotomic Iwasawa main conjecture for CM fields by Hida--Tilouine \cite{HT-ENS,HT-invmath} and Hida \cite{hida-coates} that 
\begin{equation}\label{eq:HT-IMC}
\eta_{\boldsymbol{g}}=h_K\cdot\mathscr{L}_{v}(\psi^-\mathbf{N}^{-1})
\end{equation}
up to a $p$-adic unit, where $h_K=\#{\rm Pic}(\cO_K)$ is the class number of $K$. (Note that in \eqref{eq:HT-IMC} we have used the functional equation of Theorem~\ref{thm:katz} to write $\mathscr{L}_v(\psi^-\mathbf{N}^{-1})$ in place of $\mathscr{L}_v(\psi^-)$.) 

Having infinity type $(2,0)$, the character $\psi^-\mathbf{N}^{-1}$ is in the range of interpolation of the Katz $v$-adic $L$-function of Theorem~\ref{thm:katz}, and from there we obtain
\begin{equation}\label{eq:interp-katz}
h_K\cdot\frac{\mathscr{L}_{v}(\psi^-\mathbf{N}^{-1})(\mathds{1})}{\Omega_p^2}=\mathcal{E}_p(\psi^-)\cdot h_K\cdot\frac{L(\psi^-,1)}{\Omega^2},
\end{equation}
where $\mathcal{E}_p(\psi^-):=(1-\psi^-(v))(1-\psi^-(v)p^{-1})$. By Hida's formula (see \cite[Thm.\,7.1]{HT-ENS}) and Dirichlet's class number formula, we have
\begin{equation}\label{eq:hida-formula}
\langle g,g\rangle_\Gamma
=\frac{D_K^2}{2^3\pi^2}\cdot\frac{h_K}{w_K\sqrt{D_K}}\cdot L(\psi^-,1),\nonumber
\end{equation}
where $w_K=\#\cO_K^\times$, and so together with \eqref{eq:HT-IMC}, equality \eqref{eq:interp-katz} can be rewritten as the equality
\begin{equation}\label{eq:interp-rewrite}
\frac{\eta_{g_\alpha}}{\Omega_p^2}\cdot u=\mathcal{E}_p(\psi^-)\cdot\frac{8\pi^2\langle g,g\rangle_\Gamma}{\Omega^2}
\end{equation}
for some unit $u\in\cO_{\C_p}^\times$. Since the right-hand side of \eqref{eq:interp-katz} is in $L^\times$ as a consequence of the proof of Deligne's conjecture for Hecke characters (see \cite{goldstein-shappacher} and \cite{blasius}), it follows that $u/\Omega_p^2\in\cO_L^\times$, and hence \eqref{eq:interp-rewrite} together with the well-known relation $\eta_{g_\alpha}=\mathcal{E}_p(\psi^-)\eta_g$ (see e.g. \cite[Rem.~2.18]{wiles}) 
yields the result.
\end{proof}

Using 
Proposition~\ref{prop:petersson-CM}, we can  now rewrite the general Gross--Zagier formula  for modular curves in a form that will be convenient for us. 

Let $B_{g,\chi}/K$ be the abelian variety (up to $K$-isogeny)  associated to $(g,\chi)$; letting $B_\psi/K$ be a CM abelian variety  in the isogeny class of such abelian varieties associated to $\psi$ by Casselman's theorem (see \cite[Thm.~6]{shimura-1971}), it can be described as the Serre tensor $B_{g,\chi}:=B_\psi\otimes\chi$, and satisfies
\begin{equation}\label{eq:L-B-g-chi}
L(B_{g,\chi}/K,s)=\prod_{\sigma:L\hookrightarrow\C}L(g^\sigma/K,\chi^\sigma,s).\nonumber
\end{equation}
After possibly changing $B_{g,\chi}$ it within its $K$-isogeny class, we have an embedding $\cO_L\hookrightarrow{\rm End}_K(B_{g,\chi})$. 

The N\'{e}ron--Tate height pairing gives a $\Q$-bilinear non-degenerate pairing
\begin{equation}\label{eq:NT}
\langle\cdot,\cdot\rangle_{\rm NT}:B_{g,\chi}(K)_\Q\times B_{g,\chi}^\vee(K)_\Q\rightarrow\R, \nonumber
\end{equation}
where we put $(\cdot)_\Q=(\cdot)\otimes_{\Z}\Q$ and $B_{g,\chi}^\vee/K$ is the dual abelian variety. As explained in \cite[\S{1.2.4}]{YZZ}, 
$\langle\cdot,\cdot\rangle_{\rm NT}$ induces an $L$-linear pairing
\begin{equation}\label{eq:NT-L}
\langle\cdot,\cdot\rangle_{L}:B_{g,\chi}(K)_\Q\otimes_L B_{g,\chi}^\vee(K)_\Q\rightarrow L_\R:=L\otimes_\Q\R.
\end{equation}

In the next result we view $L(B_{g,\chi}/K,s)$ 
as valued in $L_\C:=L\otimes_{\Q}\C$. 

\begin{thm}\label{thm:YZZ-CST}
Let $\psi$ be as in Proposition~\ref{prop:petersson-CM}, and assume further that it satisfies condition (S8) of Definition~\ref{def:good}. Put $g=\theta_\psi$, and let $\chi$ be  such that we have $(g,\chi)\in S_2(\Gamma_0(N_g),\varepsilon_g)\times\Sigma_{\rm cc}(c,\mathfrak{N}_g,\varepsilon_g)$ for some $c>0$ prime to $N_g$. If $p\nmid h_K$, there exist Heegner points $y_{g,\chi}\in B_{g,\chi}(K)$ and $y_{g,\chi}'\in B_{g,\chi}^\vee(K)$ such that
%
\[
\frac{L'(B_{g,\chi}/K,1)}{\Omega_g^{\rm can}}=\langle y_{g,\chi},y_{g,\chi}'\rangle_L,
\]
where the equality is up to a $p$-adic unit.
\end{thm}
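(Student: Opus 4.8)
The plan is to invoke the general Gross--Zagier formula of Yuan--Zhang--Zhang \cite{YZZ}, in the explicit form of Cai--Shu--Tian \cite{CST}, and then to verify that the resulting ``fudge factor'' is a $p$-adic unit under the running hypotheses. First I would observe that, since $\chi\in\Sigma_{\rm cc}(c,\mathfrak{N}_g,\varepsilon_g)$, the self-dual Rankin--Selberg $L$-function $L(g/K,\chi,s)$ has all finite local signs $\epsilon_q(g,\chi)=+1$ and global sign $\epsilon(g,\chi)=-1$, so the pair $(g,\chi)$ lies in the Heegner range to which \cite{YZZ,CST} apply. The abelian variety $B_{g,\chi}=B_\psi\otimes\chi$ is the Serre tensor of the CM abelian variety $B_\psi$ attached to $\theta_\psi$, and the Heegner point attached to $(g,\chi)$ on the relevant modular curve (of level divisible by $N_g$ and conductor $c\,\mathfrak{N}_g$), pushed forward to $B_{g,\chi}$, produces a point $y_{g,\chi}\in B_{g,\chi}(K)$; running the same construction on the dual side (or composing with a polarization) produces $y_{g,\chi}'\in B_{g,\chi}^\vee(K)$. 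Interpreting $L'(B_{g,\chi}/K,1)$ as the leading term at $s=1$ of the $L_\C$-valued $L$-function, i.e. the tuple $(L'(g^\sigma/K,\chi^\sigma,1))_{\sigma:L\hookrightarrow\C}$, the formula of \cite{YZZ,CST} then yields an identity of the shape
\[
\frac{L'(B_{g,\chi}/K,1)}{\Omega_g^{\rm can}}=u(g,\chi)\cdot\langle y_{g,\chi},y_{g,\chi}'\rangle_L,
\]
where $u(g,\chi)\in L^\times$ is an explicit product of: (i) an elementary rational constant built from $2$, $w_K=\#\cO_K^\times$ and $\sqrt{-D_K}$; (ii) local Tamagawa-type terms at the finite primes dividing $N_gc$ and their conjugates; and (iii) a normalization factor comparing the automorphic period $8\pi^2\langle g,g\rangle_\Gamma$ of \cite{YZZ} with the congruence-number period $\Omega_g^{\rm can}$, i.e. essentially the ratio of the modular degree of $g$ and its congruence number $\eta_g$.

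The theorem then amounts to the vanishing of ${\rm ord}_\PP(u(g,\chi))$ for every prime $\PP\mid p$ of $L$, which I would establish factor by factor. For (i), $p>3$ is split in $K$, so $p\nmid 6D_K$ and the elementary constant is a $p$-adic unit. For (ii), I would check that the local components of the automorphic representation of $(g,\chi)$ at the primes of $K$ dividing $N_gc$ are of the minimal type forced by the conductor conditions built into $\Sigma_{\rm cc}$ and by conditions (S1)--(S2) of Definition~\ref{def:good} (in particular $\mathfrak{d}_K\Vert\mathfrak{N}_g$ and $\mathfrak{f}_\psi$ cyclic of norm prime to $pD_K$); since $p\nmid N_gc$, the corresponding local Tamagawa factors of \cite{CST} are then $p$-adic units. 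For (iii), the delicate step, I would note that here the residual representation attached to $V_g$ is reducible, so the usual Ihara-lemma bound on the modular-degree/congruence discrepancy is unavailable; instead I would invoke Proposition~\ref{prop:petersson-CM}, which --- building on the anticyclotomic main conjecture for CM fields of Hida--Tilouine and Hida --- identifies $\Omega_g^{\rm can}$ with $\Omega^2$ up to a unit, together with the analysis of \cite{ARS} relating the modular degree to the congruence number; the point is that in the CM case this discrepancy is, up to $p$-adic units, a power of $h_K$, which is exactly where the hypothesis $p\nmid h_K$ enters. Multiplying (i)--(iii) shows that $u(g,\chi)$ is a $p$-adic unit, which is the assertion.

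I expect step (iii) --- the period and modular-degree comparison in the residually reducible CM setting --- to be the main obstacle, together with the careful bookkeeping of all the local factors of \cite{CST} at the ramified primes; these are precisely the points where the hypotheses $p>3$, $p$ split in $K$, the conductor conditions of Definition~\ref{def:good} (notably $\mathfrak{d}_K\Vert\mathfrak{N}_g$), and $p\nmid h_K$ are all used.
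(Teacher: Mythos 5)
Your overall route is the paper's: apply the explicit Gross--Zagier formula of \cite[Thm.~1.5]{CST} for $(g,\chi)$ and reduce to comparing the automorphic period $8\pi^2\langle g,g\rangle_\Gamma$ (which enters \cite{CST} divided by ${\rm deg}(\pi_g)$) with $\Omega_g^{\rm can}=8\pi^2\langle g,g\rangle_\Gamma/\eta_g$, i.e.\ to comparing ${\rm deg}(\pi_g)$ with $\eta_g$. However, your step (iii) is where the argument actually has to close, and as you state it there is a gap: you assert that ``in the CM case this discrepancy is, up to $p$-adic units, a power of $h_K$,'' but no such statement is available --- indeed the paper explicitly flags the lack of control on the modular-degree/congruence-number discrepancy (beyond \cite{ARS}) as the very reason the hypothesis $p\nmid h_K$ is imposed. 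What \cite[Thm.~3.6(a)]{ARS} gives is only the one-sided divisibility ${\rm deg}(\pi_g)\mid\eta_g$, which by itself cannot show the ratio is a $p$-adic unit. The paper closes the gap differently: it shows that $\eta_g$ itself is a $p$-adic unit. From the proof of Proposition~\ref{prop:petersson-CM} (Hida--Tilouine and Hida's anticyclotomic main conjecture for CM fields) one has, up to a $p$-adic unit, $\mathcal{E}_p(\psi^-)\cdot\eta_g=h_K\cdot\mathscr{L}_v(\psi^-\mathbf{N}^{-1})(\mathds{1})$; under $p\nmid h_K$ and condition (S9) of Definition~\ref{def:good} --- which via Remark~\ref{rem:good-unit} says exactly that $\mathscr{L}_v(\psi^-\mathbf{N}^{-1})$ is invertible in $\Lambda_\scO^{\rm ur}$ --- the right-hand side is a unit, hence $\eta_g$ is a $p$-adic unit, and then the \cite{ARS} divisibility forces ${\rm deg}(\pi_g)$ to be one as well. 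Your write-up never invokes (S9)/Remark~\ref{rem:good-unit}, and without that input the period/degree comparison does not go through; this is the missing idea rather than a mere bookkeeping issue.

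A smaller point: your step (ii) overcomplicates the source. In the normalization of \cite[Thm.~1.5]{CST} quoted in the paper, the only extra constants are $u_c^2\cdot\sqrt{D_K}\cdot c$ (with $u_c=\tfrac{1}{2}\#\cO_c^\times$), which are prime to $p$ because $p>3$ is split in $K$ and $p\nmid N_gc$; there are no further local Tamagawa-type factors at the ramified primes to analyze, the conditions defining $\Sigma_{\rm cc}(c,\mathfrak{N}_g,\varepsilon_g)$ serving only to put $(g,\chi)$ in the Heegner range where the formula applies.
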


\begin{proof}
The general Gross--Zagier formula of \cite{YZZ}, as made explicit in \cite[Thm.~1.5]{CST}, reads
\[
L'(B_{g,\chi}/K,1)=\frac{8\pi^2\langle g,g\rangle_\Gamma}{u_c^2\cdot\sqrt{D_K}\cdot c}\cdot\frac{\langle y_{g,\chi},y_{g,\chi}'\rangle_L}{{\rm deg}(\pi_g)},
\]
where $u_c:=\frac{1}{2}\#\cO_c^\times$ and $\pi_g:X_1(N_g)\rightarrow A_g$ is an optimal quotient. Hence it suffices to show that the congruence number $\eta_g$ and ${\rm deg}(\pi_g)$ differ by a $p$-adic unit. As shown in the proof of Proposition~\ref{prop:petersson-CM}, combining \eqref{eq:HT-IMC} and \eqref{eq:interp-katz} we obtain the equality up to a $p$-adic unit
\[
\mathcal{E}_p(\psi^-)\cdot\eta_g=\mathcal{E}_p(\psi^-)\cdot h_K\cdot\frac{L(\psi^-,1)}{\Omega^2}.
\]
In particular, if $p\nmid h_K$ and $\psi^-$ satisfies (S8) in Definition~\ref{def:good}, this shows that $\eta_g$ is a $p$-adic unit. Since \cite[Thm.~3.6(a)]{ARS} shows that ${\rm deg}(\pi_g)$ divides $\eta_g$, the result follows. 
\end{proof}

\begin{rem}
Let $\PP$ be a prime of $L$ above $p$ and denote by $\scO$ the ring of integers of the completion of $L$ at $\PP$. Then $T_{g,\chi}\simeq\varprojlim_m B_{g,\chi}[\PP^m]$ as $G_K$-modules, and the Heegner point $y_{g,\chi}$ of Theorem~\ref{thm:YZZ-CST} can be taken so that its image under the Kummer map 
\[
B_{g,\chi}\otimes_{\cO_L}\scO\rightarrow\varprojlim_m{\rm Sel}_{\PP^m}(B_{g,\chi}/K)\simeq{\rm Sel}_\BK(K,T_{g,\chi})
\] 
agrees with the Heegner class $z_{g,\chi}$ of Theorem~\ref{thm:BDP-formula}. and $y_{g,\chi}'$ so that its Kummer image agrees with image of $z_{g,\chi}$ under the isomorphism ${\rm Sel}_\BK(K,T_{g,\chi})\simeq{\rm Sel}_\BK(K,T_{g,\chi}^\st)$ given by the action of complex conjugation.
\end{rem}



\subsection{Theorem~\ref{thmintro:p-BSD-A} implies Theorem~\ref{thmintro:p-BSD-E}}




As is well-known (see e.g. \cite[Thm.~4.1]{goldstein-shappacher}), the assumption that the field extension $F(E_{\rm tors})/K$ is abelian implies 
that the Weil restriction $B:={\rm Res}_{F/K}(E)$ is an abelian variety with complex multiplication by an order in a product of CM fields 
\[
L=L_1\times\cdots\times L_r
\] 
containing $K$ with $[L:K]=\sum_{i=1}^r[L_i:K]
={\rm dim}(B)$. Moreover, for each $i$ there is an abelian variety $B_i/K$ with CM by an order in $L_i$ which combine to an isogeny
\[
B\rightarrow\prod_{i=1}^rB_i
\] 
defined over $K$. By invariance of the Birch Swinnerton-Dyer conjecture under isogenies and restriction of scalars, Theorem~\ref{thmintro:p-BSD-E} thus follows from  Theorem~\ref{thmintro:p-BSD-A} applied to each of the isogeny factors $B_i$.

\subsection{Proof of Theorem~\ref{thmintro:p-BSD-A}}\label{subsec:p-BSD} 

\begin{proof}[Proof of Theorem~\ref{thmintro:p-BSD-A}(i)] 
%
Let $\mathcal{L}$ be a number field containing the values of $\psi$ and $\chi$ (so $\mathcal{L}$ contains the field $L$ of values of $\lambda=\psi\chi$). Since ${\rm ord}_{s=1}L(\lambda,s)=1$, $\lambda\mathbf{N}^{-1}$ has root number $w(\lambda\mathbf{N}^{-1})=-1$, and by Lemma~\ref{lem:good} we can fix a good pair $(\psi,\chi)$ for $\lambda$, and let $B_{g,\chi}/K$ be the associated abelian variety. By Theorem~\ref{thm:YZZ-CST}, the nonvanishing of $L'(g/K,\chi,1)$ also gives $y_{g,\chi}\notin B_{g,\chi}(K)_{\rm tors}$, and so 
\[
{\rm rank}_{\cO_{\mathcal{L}}}B_{g,\chi}(K)\geq 1.
\]

On the other hand, let $\PP$ be a prime of $\mathcal{L}$ above $p$, and denote by $\scO_\PP$ and $T_{\lambda,\PP}(1)$ the completion of $\cO_{\mathcal{L}}$ at $\PP$ and the associated $\scO_\PP$-module of rank $1$ with $G_K$-action via the $\PP$-adic avatar of $\lambda$, and define $T_{\psi^\st\chi,\PP}(1)$, $T_{g,\chi,\PP}$, $W_{\psi^\st\chi,\PP}(1)$, etc. similarly. By Theorem~\ref{thm:nekovar-bis} and Remark~\ref{rem:wt2-ran1} in the Appendix, the assumption that ${\rm ord}_{s=1}L(\lambda,s)=1$ implies that ${\rm rank}_{\scO_\PP}{\rm Sel}_\BK(K,T_{\lambda,\PP})=1$; since by Theorem~\ref{thm:AH} the nonvanishing of $L(\psi^\st\chi,1)$ gives $\#{\rm Sel}_\BK(K,T_{\psi^\st\chi,\PP}(1))<\infty$, by the decomposition
\[
{\rm Sel}_\BK(K,T_{g,\chi,\PP})\simeq{\rm Sel}_\BK(K,T_{\lambda,\PP}(1))\oplus{\rm Sel}_\BK(K,T_{\psi^\st\chi,\PP}(1))
\]
it follows that ${\rm rank}_{\scO_\PP}{\rm Sel}_\BK(K,T_{g,\chi,\PP})=1$, and hence
\[
{\rm rank}_{\cO_{\mathcal{L}}}B_{g,\chi}(K)\leq 1.
\]
Thus ${\rm rank}_{\cO_{\mathcal{L}}}B_{g,\chi}(K)=1$ and $\#\sha(B_{g,\chi}/K)[\PP^\infty]<\infty$; in particular, ${\rm rank}_{\Z} B_{g,\chi}(K)=[{\mathcal{L}}:\Q]$.  
Since up to isogeny the CM abelian variety $A$ is the CM abelian variety $B_\lambda$ attached to $K$ by Casselman's theorem, there is an isogeny defined over $K$
\[
i_\lambda:B_{g,\chi}\rightarrow A\otimes_{\cO_{L}}\cO_{\mathcal{L}}
\]
compatible with the action of $\cO_{\mathcal{L}}$ by endomorphisms on both sides (cf. \cite[Lem.~2.9]{BDP-PJM}), so the above shows that
\[
{\rm rank}_\Z A(K)=[L:\Q].
\]
Since $L(A/K,s)=\prod_{\sigma:L\hookrightarrow\C}L(\lambda^\sigma,s)$, this gives 
\[
{\rm ord}_{s=1}L(A/K,s)={\rm rank}_\Z A(K).
\] 
Moreover, since we have also shown that
\[
{\rm rank}_{\scO_{\PP}}{\rm Sel}_\BK(K,T_{\lambda,\PP}(1))={\rm rank}_{\scO_\PP}{\rm Sel}_\BK(K,T_{g,\chi,\PP})=1,
\]
the conclusion $\#\sha(A/K)[\PP^\infty]<\infty$ also follows.
\end{proof}

\begin{proof}[Proof of Theorem~\ref{thmintro:p-BSD-A}(ii)] 
Put $\Lg_\C=\Lg\otimes_\Q\C\simeq\prod_\sigma\C$, where $\sigma$ runs over all field embeddings $\Lg\hookrightarrow\C$, write the $\Lg$-linear
pairing $\langle\cdot,\cdot\rangle_\Lg$ in \eqref{eq:NT-L} as $(\langle\cdot,\cdot\rangle_{\Lg,\sigma})_\sigma$ according to this decomposition, and write the associated regulator ${\rm Reg}(B_{g,\chi})$ as $({\rm Reg}^\sigma(B_{g,\chi}))_\sigma$. Similar remarks apply to the $L$-linear pairing 
\[
\langle\cdot,\cdot\rangle_{L}:A(K)_\Q\otimes_L A^\vee(K)_\Q\rightarrow L_\R.
\] 
Then by Theorem~\ref{thm:YZZ-CST} we have
\begin{equation}\label{eq:GZ-rational}
\biggl(\frac{L'(g^\sigma/K,\chi^\sigma,1)}{\Omega_{g^\sigma}^{\rm can}\cdot\langle y_{g,\chi},y_{g,\chi}'\rangle_{\Lg,\sigma}}\biggr)_\sigma\in \Lg^\times\subset \Lg_\C^\times.
\end{equation}
Recall the $K$-isogeny $i_\lambda:B_{g,\chi}\rightarrow A\otimes_{\cO_{L}}\cO_\Lg$. Put 
\[
y_\lambda:=i_\lambda(y_{g,\chi})\in A(K),
\] 
and define $y_\lambda'\in A^\vee(K)$ by $i_\lambda^\vee(y_\lambda')=y_{g,\chi}'$. 
Then from the factorization $L(g/K,\chi,s)=L(\lambda,s)L(\psi^\st\chi,s)$, Proposition~\ref{prop:petersson-CM}, and the projection formula for heights (see \cite[(3.4.3)]{MT-biextensions}), we have
\begin{equation}\label{eq:factor-rational}
\eqref{eq:GZ-rational}=\biggl(\frac{L'(\lambda^\sigma,1)}{\Omega_{\sigma}\cdot\langle y_\lambda,y_\lambda'\rangle_{L,\sigma}}\cdot\frac{L((\psi^\st\chi)^\sigma,1)}{\Omega_\sigma}\biggr)_\sigma
\end{equation}
up to multiplication by an element in $\Lg^\times$. Combining the first assertion of Theorem~\ref{thm:pTNC-rk0}, \eqref{eq:GZ-rational} and \eqref{eq:factor-rational}, we conclude that 
\[
\biggl(\frac{L'(\lambda^\sigma,1)}{\Omega_\sigma\cdot\langle y_\lambda,y_\lambda'\rangle_{L,\sigma}}\biggr)_\sigma\in L^\times\subset L_{\C}^\times.
\]
Since ${\rm rank}_{\cO_{L}}A(K)=1$ by part (i) and $y_\lambda\notin A(K)_{\rm tors}$ (given that $y_{g,\chi}\notin B_{g,\chi}(K)_{\rm tors}$ by Theorem~\ref{thm:YZZ-CST} and the fact that ${\rm ord}_{s=1}L(g/K,\chi,s)=1$), we deduce that 
\[
\biggl(\frac{L'(\lambda^\sigma,1)}{\Omega_\sigma\cdot{\rm Reg}^\sigma(A)}\biggr)_\sigma\in L^\times\subset L_{\C}^\times, 
\]
which together with the equalities $L(A/K)=\prod_\sigma L(\lambda^\sigma,s)$ and $\Omega(A)=\prod_\sigma\Omega_\sigma$ 
(see\cite[Prop.~3.3]{burungale-flach}) yields the result.
\end{proof}

\begin{proof}[Proof of Theorem~\ref{thmintro:p-BSD-A}(iii)] 
Since ${\rm rank}_{\cO_\Lg} B_{g,\chi}(K)=1$ and $y_{g,\chi}\notin B_{g,\chi}(K)_{\rm tors}$, 
the definition of ${\rm Reg}(B_{g,\chi})$ gives
\[
\langle y_{g,\chi},y_{g,\chi}'\rangle_{\Lg}={\rm Reg}(B_{g,\chi})\cdot[B_{g,\chi}(K):\cO_\Lg\cdot y_{g,\chi}]\cdot[B_{g,\chi}^\vee(K):\cO_\Lg\cdot y_{g,\chi}'].
\]
Thus letting $\PP$ be any prime of $\Lg$ above $p$, by Theorem~\ref{thm:YZZ-CST} we have
\begin{equation}\label{eq:YZZ-pBSD}
{\rm ord}_{\PP}\biggl(\frac{L'(g/K,\chi,1)}{\Omega_{g}^{\rm can}\cdot{\rm Reg}(B_{g,\chi})}\biggr)={\rm ord}_\PP\bigl([B_{g,\chi}(K)\otimes_{\cO_\Lg}\scO_\PP:\scO_\PP\cdot y_{g,\chi}]\cdot[B_{g,\chi}^\vee(K)\otimes_{\cO_\Lg}\scO_\PP:\scO_\PP\cdot y_{g,\chi}']\bigr).
\end{equation}
Since $\#\sha(B_{g,\chi}/K)[\PP^\infty]<\infty$ as shown in the proof of part (i), the Kummer map gives isomorphisms
\[
B_{g,\chi}(K)\otimes_{\cO_\Lg}\scO_\PP\rightarrow{\rm Sel}_\BK(K,T_{g,\chi,\PP}(1)),\quad
B_{g,\chi}^\vee(K)\otimes_{\cO_\Lg}\scO_\PP\rightarrow{\rm Sel}_\BK(K,T_{g,\chi,\PP}^\st(1)).
\]
By the isomorphism ${\rm Sel}_\BK(K,T_{g,\chi,\PP}^\st(1))\simeq{\rm Sel}_\BK(K,T_{g,\chi,\PP}(1))$ given by the action of complex conjugation, \eqref{eq:YZZ-pBSD} can therefore be rewritten as
\[
{\rm ord}_{\PP}\biggl(\frac{L'(g/K,\chi,1)}{\Omega_{g}^{\rm can}\cdot{\rm Reg}(B_{g,\chi})}\biggr)=2\cdot{\rm length}_{\scO_\PP}
\bigl({\rm Sel}_{\BK}(K,T_{g,\chi,\PP}(1))/\scO_\PP\cdot z_{g,\chi}\bigr),
\]
which by Theorem~\ref{thmintro:sha} 
becomes the equality
\begin{equation}\label{eq:use-GZ}
{\rm ord}_{\PP}\biggl(\frac{L'(g/K,\chi,1)}{\Omega_{g}^{\rm can}\cdot{\rm Reg}(B_{g,\chi})}\biggr)
=
{\rm ord}_\PP(\#\sha_\BK(W_{g,\chi,\PP}(1)/K))
+\sum_{\substack{w\in\Sigma,w\nmid p}}{\rm ord}_\PP(c_w(W_{g,\chi,\PP}(1)/K)).
\end{equation}
On the other hand, in the same manner as in the proof of part (ii), from the factorization $L(g/K,\chi,s)=L(\lambda,s) L(\psi^\st\chi,s)$ and Proposition~\ref{prop:petersson-CM}, and using Theorem~\ref{thm:pTNC-rk0} for the second equality, we obtain
\begin{equation}\label{eq:use-work}
\begin{aligned}
&{\rm ord}_{\PP}\biggl(\frac{L'(g/K,\chi,1)}{\Omega_{g}^{\rm can}\cdot{\rm Reg}(B_{g,\chi})}\biggr)\\
&\quad={\rm ord}_\PP\biggl(\frac{L'(\lambda,1)}{\Omega\cdot{\rm Reg}(A)}\biggr)+{\rm ord}_\PP(\#\rH^0(K,W_{\psi^\st\chi,\PP}(1))^2)+{\rm ord}_\PP\biggl(\frac{L(\psi^\st\chi,1)}{\Omega}\biggr)\\
&\quad={\rm ord}_\PP\biggl(\frac{L'(\lambda,1)}{\Omega\cdot{\rm Reg}(A)}\biggr)+{\rm ord}_\PP(\#\sha_{\BK}(W_{\psi^\st\chi,\PP}(1)/K))
+\sum_{\substack{w\in\Sigma,w\nmid p}}{\rm ord}_\PP(c_w(W_{\psi^\st\chi,\PP}(1)/K)).
\end{aligned}
\end{equation}
Combining \eqref{eq:use-GZ} and \eqref{eq:use-work} and using the decomposition $W_{g,\chi}\simeq W_{\lambda}(1)\oplus W_{\psi^\st\chi}(1)$ we thus arrive at
\begin{equation}\label{eq:combined-pBSD}
{\rm ord}_{\PP}\biggl(\frac{L'(\lambda,1)}{\Omega\cdot{\rm Reg}(A)}\biggr)={\rm ord}_{\PP}(\#\sha_{\BK}(W_{\lambda,\PP}(1)/K))+\sum_{\substack{w\in\Sigma,w\nmid p}}{\rm ord}_{\PP}(c_w(W_{\lambda,\PP}(1)/K)).
\end{equation}
Noting that the right-hand side of \eqref{eq:combined-pBSD} can be rewritten as
\[
{\rm ord}_{\PP}(\#\sha(A/K)[\PP^\infty])+\sum_{\substack{\substack{w\in\Sigma,w\nmid p}}}{\rm ord}_{\PP}({\rm Tam}(A/K)),
\]
this concludes the proof of Theorem~\ref{thmintro:p-BSD-A}.
\end{proof}

\appendix

\section{Higher weight CM analogues of theorems of Nekov\'{a}\v{r} and Skinner}

\smallskip
\begin{center}by \textsc{Francesc Castella and Mychelle Parker\footnote{\emph{Email address: \tt{mychelleparker@math.ucsb.edu}}}}\end{center}
\medskip

In this appendix we prove analogues, for CM modular forms of  weight $2r\geq 2$, of Nekov\'{a}\v{r}'s Euler system argument \cite{nekovar-invmath} and Skinner's $p$-converse  
to the theorem of Gross--Zagier--Kolyvagin \cite{skinner-converse}. 
The first result (Theorem~\ref{thm:nekovar-bis})  
is an ingredient in the proof of the main results in the body of the paper 
and is new even in weight $2$, while the second (Theorem~\ref{thm:p-conv}) is a higher weight generalization of the main result of \cite{BT-conv} and \cite{BCST}. 
A form of these results is studied by the second author of this Appendix in her PhD thesis \cite{parker-PhD}.

Throughout we let $K$ be an imaginary quadratic field satisfying 
\eqref{eq:spl} for a prime $p>3$.

\subsection{Nekov\'{a}\v{r}'s theorem for higher weight CM forms} 


Let $f\in S_{2r}(\Gamma_0(N_f))$ be an eigenform of weight $2r\ge 2$ and level $N_f$ prime to $p$. Granted the expected  injectivity of the $p$-adic 
Abel--Jacobi map, the works of Nekov\'{a}\v{r} \cite{nekovar-invmath} and S.-W.~Zhang \cite{zhang-GZ} (extending to weights $2r>2$ landmark results of Kolyvagin \cite{kolyvagin-mw-sha} and Gross--Zagier \cite{grosszagier}) yield a proof of the implication
\begin{equation}\label{eq:nek-swzhang}
{\rm ord}_{s=r}L(f,s)=1\quad\Longrightarrow\quad
{\rm dim}_\Phi\,{\rm Sel}_\BK(\Q,V_f(r))=1.
\end{equation}
Key to the proof of this result are Heegner cycles attached to an auxiliary imaginary quadratic field $K'$ satisfying the Heegner hypothesis \eqref{eq:Heeg} relative to $N_f$.

Here we are interested in the case where $f=\theta_\lambda$ is the theta series of a Hecke character $\lambda$ of infinity type $(1-2r,0)$, so 
$\varepsilon_\lambda=\eta_K$. In particular, the conductor $\cc$ of $\lambda$ is divisible by $\mathfrak{d}_K$,  and so $K$ does \emph{not} satisfy the Heegner hypothesis \eqref{eq:Heeg} relative to $N_f=D_K\mathbf{N}(\cc)$. Notwithstanding, 
sidestepping the use of Heegner classes we can prove the following  analogue of \eqref{eq:nek-swzhang} for $f=\theta_\lambda$ with CM by $K$ using only arithmetic over $K$. 

\begin{thm}\label{thm:nekovar-bis}
Let $\lambda$ be a Hecke character of $K$ of infinity type $(1-2r,0)$ for some $r\geq 1$, conductor $\cc$ prime to $p$, and central character $\varepsilon_\lambda=\eta_K$.  Then
\[
\mathscr{L}_v(\lambda\mathbf{N}^{-r})(0)\neq 0\quad\Longrightarrow\quad{\rm dim}_\Phi\,{\rm Sel}_\BK(K,V_\lambda(r))=1.
\]
Equivalently, if $\mathscr{L}_v(\lambda\mathbf{N}^{-r})(0)\neq 0$ then ${\rm Sel}_\BK(\Q,V_f(r))$ is $1$-dimensional, where $f=\theta_\lambda$. 
\end{thm}

\begin{rem}\label{rem:wt2-ran1}
When $r=1$ and $\mathfrak{d}_K\Vert\cc$, the assumption $\mathscr{L}_v(\lambda\nr)(0)\neq 0$ follows from ${\rm ord}_{s=1}L(\lambda,s)=1$. Indeed, letting $(\psi,\chi)$ be a good pair for $\lambda$ (as they exist by Lemma~\ref{lem:good}; note that $w(\lambda\mathbf{N}^{-1})=-1$ by the assumption that ${\rm ord}_{s=1}L(\lambda,s)=1$), putting $g=\theta_\psi$  and letting $z_{g,\chi}\in{\rm Sel}_\BK(K,T_{g,\chi})$ be the Heegner class of Theorem~\ref{thm:BDP-formula}, which in this case arises as the image under the Kummer map
\begin{equation}\label{eq:Kummer}
B_{g,\chi}(K)\otimes_{\cO_L}\scO\rightarrow{\rm Sel}_\BK(K,T_{g,\chi})
\end{equation}
of a Heegner cycle on a certain CM abelian variety $B_{g,\chi}/K$, we have
\begin{align*}
{\rm ord}_{s=1}L(\lambda,s)=1&\quad\Longrightarrow\quad{\rm ord}_{s=1}L(g/K,\chi,s)=1\tag{by \eqref{eq:factor-good}}\\
&\quad\Longrightarrow\quad z_{g,\chi}\neq 0\tag{by \cite{YZZ}}\\
&\quad\Longrightarrow\quad\mathscr{L}_v(g,\chi)(0)\neq 0\tag{by Theorem~\ref{thm:BDP-formula}}\\
&\quad\Longrightarrow\quad\mathscr{L}_v(\lambda\mathbf{N}^{-1})(0)\neq 0\tag{by Proposition~\ref{prop:factor-BDP}}.
\end{align*}
Note that the second implication relies on the injectivity   \eqref{eq:Kummer}, and the third on the non-triviality of the localization map 
\[
{\rm loc}_{\overline{v}}:{\rm Sel}_\BK(K,T_{g,\chi})\rightarrow\rH^1_f(K_{\overline{v}},T_{g,\chi}),
\] 
both of which are expected---but not known---to continue to hold in weight $2r>2$, with \eqref{eq:Kummer} replaced by the $p$-adic \'{e}tale Abel--Jacobi map on generalized Kuga--Sato varieties studied in \cite{BDP}.
\end{rem}

\begin{proof}[Proof of Theorem~\ref{thm:nekovar-bis}]
Let $\mathbf{c}_{\lambda\mathbf{N}^{-r}}\in\rH^1_{\rm Iw}(K_\infty,T_\lambda(r))$ be the $\Lambda_\scO$-adic class (of twisted elliptic units)
of Theorem~\ref{thm:ERL-katz}, and denote by $\mathbf{c}_{\lambda\mathbf{N}^{-r}}(\mathds{1})$ its image  under the projection 
\[
\rH^1_{\rm Iw}(K_\infty,T_\lambda(r))\rightarrow\rH^1(K,T_{\lambda}(r)).
\]
As a consequence of Theorem~\ref{thm:ERL-katz}, we have
\[
\mathscr{L}_v(\lambda\mathbf{N}^{-r})(0)\neq 0\quad\Longrightarrow\quad{\rm loc}_{\overline{v}}(\mathbf{c}_{\lambda\mathbf{N}^{-r}}(\mathds{1}))\neq 0.
\] 
We claim that $\mathbf{c}_{\lambda\mathbf{N}^{-r}}(\mathds{1})$ lands in ${\rm Sel}_\BK(K,T_\lambda(r))$. Indeed, since $\lambda\mathbf{N}^{-r}$ has infinity type $(1-r,r)$, by Lemma~\ref{lem:BK=v} the claim amounts to the assertion that
\begin{equation}\label{eq:ell-Sel}
{\rm loc}_v(\mathbf{c}_{\lambda\mathbf{N}^{-r}}(\mathds{1}))=0,
\end{equation}
which by Theorem~\ref{thm:ERL-katz} for $w=\overline{v}$ amounts to the assertion that $\mathscr{L}_{v}(\lambda^\st\mathbf{N}^{-r})(0)=0$. Since $\lambda^\st\mathbf{N}^{-r}$ is within the range of interpolation of $\mathscr{L}_{v,\cc}$ (as its infinity type is $(r,1-r)$), by Theorem~\ref{thm:katz} we have
\[
\mathscr{L}_{v}(\lambda^\st\mathbf{N}^{-r})(0)=0\quad\Longleftrightarrow\quad L(\lambda^\st,r)=L(\lambda,r)=0, 
\]
whence the claim.  In particular, this shows that the map 
\begin{equation}\label{eq:locv-nek}
{\rm loc}_{\overline{v}}:{\rm Sel}_\BK(K,T_\lambda(r))
\rightarrow\rH^1_f(K_{\overline{v}},T_\lambda(r))=\rH^1(K_{\overline{v}},T_\lambda(r))
\end{equation}
is nonzero, using Lemma~\ref{lem:BK=v} for the last equality. 

It follows from the discussion in $\S{3.3}$ and Theorem~6.4.1 in \cite{rubin-ES} that the class $\mathbf{c}_{\lambda\mathbf{N}^{-r}}(\mathds{1})$ extends to an anticyclotomic Euler system for $T_\lambda(r)$; since the above shows that this Euler system is non-trivial, by \cite[Thm.~2.2.3]{rubin-ES} (see also  [\emph{loc.\,cit.}, \S{9.3}]) it follows that ${\rm rank}_\scO\,{\rm Sel}_{\rm rel}(K,T_\lambda(r))=1$. From the global duality exact sequence
\[
{\rm Sel}_\BK(K,T_{\lambda^\st}(r))\xrightarrow{{\rm loc}_{v}}\rH^1(K_{v},T_{\lambda^{\st}}(r))\rightarrow{\rm Sel}_{\rm rel}(K,W_\lambda(r))^\vee\rightarrow{\rm Sel}_\BK(K,W_\lambda(r))^\vee\rightarrow 0,
\]
noting that the non-vanishing of \eqref{eq:locv-nek} implies the non-vanishing of the map ${\rm loc}_v$ in this sequence, we conclude that 
${\rm corank}_\scO\,{\rm Sel}_\BK(K,W_\lambda(r))={\rm corank}_\scO\,{\rm Sel}_{\rm rel}(K,W_\lambda(r))=1$, whence the result.
%
%
\end{proof}


\subsection{Skinner's $p$-converse theorem for higher weight CM forms} \label{subsec:conv}

As in the preceding section, let $f=\theta_\lambda\in S_{2r}(\Gamma_0(N_f))$. If $(g,\chi)$ is any good pair for $\lambda$, we denote by $z_f\in{\rm Sel}_\BK(\Q,V_f(r))$ the projection of the Heegner class $z_{g,\chi}$ of Theorem~\ref{thm:BDP-formula} onto the first factor in the decomposition
\begin{equation}\label{eq:dec-Sel-BK}
{\rm Sel}_\BK(K,V_{g,\chi})\simeq{\rm Sel}_{\BK}(\Q,V_f(r))\oplus{\rm Sel}_v(K,V_{\psi^\st\chi})
\end{equation}
arising from \eqref{eq:dec-V} as in Proposition~\ref{prop:dec-Sel}. (Note that here we used the identifications ${\rm Sel}_{\overline{v}}(K,V_\lambda)={\rm Sel}_\BK(K,V_\lambda)\simeq{\rm Sel}_{\BK}(\Q,V_f(r))$ arising from Lemma~\ref{lem:BK=v} and Shapiro's lemma.)

\begin{thm}\label{thm:p-conv}
Let $f=\theta_\lambda\in S_{2r}(\Gamma_0(N_f))$ be an eigenform of weight $2r\geq 2$ with CM by $K$, with associated Hecke character $\lambda$ of infinity type $(1-2r,0)$, conductor $\cc$, and central character $\varepsilon_\lambda=\eta_K$.  Suppose $\mathfrak{d}_K\Vert\cc$. Then
\[
{\rm dim}_\Phi\,{\rm Sel}_{\rm BK}(\Q,V_f(r))=1\quad\Longrightarrow\quad z_f\neq 0.
\]
In particular, if ${\rm loc}_{\overline{v}}({\rm Sel}_\BK(\Q,V_f(r)))\neq 0$, then the converse to Theorem~\ref{thm:nekovar-bis} holds.
\end{thm}

\begin{proof}
By Corollary~\ref{cor:parity}, if ${\rm Sel}_\BK(\Q,V_f(r))$ 
is $1$-dimensional then $\lambda$ has root number 
$w(\lambda\mathbf{N}^{-r})=-1$. Let $(g,\chi)$ be a strong good pair for $\lambda$ (see Lemma~\ref{lem:good}). As in Proposition~\ref{prop:dec-Sel}\footnote{Noting that by Remark~\ref{rem:isog-inv} we may assume that $T_{g,\chi}\simeq T_\lambda\oplus T_{\psi^\st\chi}$.}, we then have the decomposition
\begin{equation}\label{eq:dec-pconv}
\begin{aligned}
\mathcal{X}_\BK(g,\chi)&\simeq\mathcal{X}_{\overline{v}}(\lambda)\oplus\mathcal{X}_{v}(\psi^\st\chi),
\end{aligned}
\end{equation}
Let $\gamma\in\Gamma$ denote a topological generator. By Mazur's control theorem\footnote{See e.g. \cite[Prop.~4.3]{arnold} 
for the case at hand.}, the natural map
\begin{equation}\label{eq:dec-lambda}
\mathcal{X}_v(\psi^\st\chi)/(\gamma-1)\mathcal{X}_v(\psi^\st\chi)\rightarrow{\rm Hom}_{\Z_p}({\rm Sel}_v(K,A_{\psi^\st\chi}),\Q_p/\Z_p)
\end{equation}
has finite kernel and cokernel, and so from Theorem~\ref{thm:katz} and Theorem~\ref{thm:AH}, we find
\begin{equation}\label{eq:psichi=0}
\begin{aligned}
L(\psi^\st\chi,r)\neq 0\quad\Longrightarrow\quad\mathscr{L}_v(\psi^\st\chi\mathbf{N}^{-r})(\mathds{1})\neq 0\quad&\Longrightarrow\quad\#\bigl(\mathcal{X}_v(\psi^\st\chi)/(\gamma-1)\mathcal{X}_v(\psi^\st\chi)\bigr)<\infty\\
\quad&\Longrightarrow\quad{\rm Sel}_v(K,V_{\psi^\st\chi})=0.
\end{aligned}
\end{equation}
In view of \eqref{eq:dec-Sel-BK}, we are reduced to showing the implication
\begin{equation}\label{eq:heeg-nonzero}
{\rm dim}_\Phi\,{\rm Sel}_\BK(K,V_{g,\chi})=1\quad\Longrightarrow\quad z_{f}\neq 0.\nonumber
\end{equation}
By Corollary~\ref{cor:HP-IMC} and Mazur's control theorem for $\mathcal{X}_\BK(g,\chi)$ (as follows from e.g. \cite[Prop.~4.3]{arnold} applied to each of the direct summands in \eqref{eq:dec-pconv}), we see that if $\check{\rm Sel}_\BK(K,V_{g,\chi})$ is $1$-dimensional then ${\rm char}_\Lambda(\mathcal{S}_\BK(g,\chi)/\Lambda\cdot\mathbf{z}_{g,\chi})$ is not divisible by $(\gamma-1)$, and therefore $\mathbf{z}_{g,\chi}$ has non-torsion image in the quotient $\check{\mathcal{S}}_\BK(g,\chi)/(\gamma-1)\check{\mathcal{S}}_\BK(g,\chi)$. Since by another application of Mazur's control theorem and \eqref{eq:psichi=0} the composite map
\[
\check{\mathcal{S}}_\BK(g,\chi)/(\gamma-1)\check{\mathcal{S}}_\BK(g,\chi)\rightarrow{\rm Sel}_{\BK}(K,T_{g,\chi})\rightarrow{\rm Sel}_{\BK}(\Q,V_f(r))
\]
has finite kernel, and by Theorem~\ref{thm:ERL} it sends 
$\mathbf{z}_{g,\chi}$ to a nonzero multiple of $z_f$, the first assertion follows. The second assertion then follows from Theorem~\ref{thm:BDP-formula}.
\end{proof}

\bibliography{Katz-converse-references}
\bibliographystyle{alpha}
\end{document}